\numberwithin{equation}{section} \DeclareMathSizes{2}{10}{12}{13}
\newcommand*{\doublerightarrow}[2]{\mathrel{
  \settowidth{\@tempdima}{$\scriptstyle#1$}
  \settowidth{\@tempdimb}{$\scriptstyle#2$}
  \ifdim\@tempdimb>\@tempdima \@tempdima=\@tempdimb\fi
  \mathop{\vcenter{
    \offinterlineskip\ialign{\hbox to\dimexpr\@tempdima+1em{##}\cr
    \rightarrowfill\cr\noalign{\kern.5ex}
    \rightarrowfill\cr}}}\limits^{\!#1}_{\!#2}}}
\newcommand*{\triplerightarrow}[1]{\mathrel{
  \settowidth{\@tempdima}{$\scriptstyle#1$}
  \mathop{\vcenter{
    \offinterlineskip\ialign{\hbox to\dimexpr\@tempdima+1em{##}\cr
    \rightarrowfill\cr\noalign{\kern.5ex}
    \rightarrowfill\cr\noalign{\kern.5ex}
    \rightarrowfill\cr}}}\limits^{\!#1}}}
\newtheorem{thm}{Proposition}[section]
\newtheorem{Thm}[thm]{Theorem}
\newtheorem{cor}[thm]{Corollary}
\newtheorem{lem}[thm]{Lemma}
\newtheorem{defn}[thm]{Definition}
\newtheorem{prop}[thm]{Proposition}
\title{Comodule theories in Grothendieck categories and relative Hopf objects}
\author{Mamta Balodi \footnote{Stat-Math Unit, Indian Statistical Institute, Bangalore, India. Email: mamta.balodi@gmail.com.} $\qquad$ Abhishek Banerjee \footnote{Department of Mathematics, Indian Institute of Science, Bangalore, India. Email: abhishekbanerjee1313@gmail.com.}
$\qquad$ Surjeet Kour \footnote{Department of Mathematics, Indian Institute of Technology, Delhi, India. Email: koursurjeet@gmail.com.}}
\date{}
\begin{document}

\maketitle 

\medskip

\begin{abstract} We develop the categorical algebra of the noncommutative base change of a comodule category by means of a Grothendieck category $\mathfrak S$. We describe when the resulting category of comodules is locally finitely generated, locally noetherian or may be recovered as a coreflective subcategory of the noncommutative base change of a module category. We also introduce the category ${_A}\mathfrak S^H$ of relative $(A,H)$-Hopf modules in $\mathfrak S$, where $H$ is a Hopf algebra and $A$ is a right $H$-comodule algebra. We study the cohomological theory in ${_A}\mathfrak S^H$ by means of spectral sequences. Using coinduction functors and functors of coinvariants, we study torsion theories and how they relate to injective resolutions in ${_A}\mathfrak S^H$. Finally, we use the theory of associated primes and support in noncommutative base change of module categories to give direct sum decompositions of minimal injective resolutions in ${_A}\mathfrak S^H$. 
\end{abstract}

\smallskip

{\bf MSC(2020) Subject Classification:} 16T05, 16T15, 18E10, 18E40

\smallskip
{\bf Keywords:} \emph{comodule categories, $(A,H)$-Hopf modules, noncommutative base change}

\smallskip

\section{Introduction}

In this paper we aim to study a theory of modules over a noncommutative base change of categorified fiber bundles. We consider a Hopf algebra $H$ over a field $k$, and a $k$-algebra $A$ which carries the structure of a right $H$-comodule algebra. The geometric version of this consists of an affine algebraic group scheme $G$ acting freely on an affine scheme $X$. Accordingly, the role of the quotient space $X/G$ is played by the algebra $B=A^{coH}$ of $H$-coinvariants of $A$. The sheaves over the quotient space are then replaced by ``relative $(A,H)$-Hopf modules'' (see, for  instance, \cite{Ca98}, \cite{Doi83}, \cite{Doi92}, \cite{Tak79}). This consists of a left $A$-module $M$ equipped with a right $H$-comodule structure $M\longrightarrow M\otimes H$ such that the multiplication
$A\otimes M\longrightarrow M$ is compatible with the $H$-coactions on $A$ and $M$. Under certain faithful flatness conditions (see Schneider \cite{Sch}), the category of relative $(A,H)$-Hopf modules becomes identical to the category of modules over $B=A^{coH}$. 

\smallskip
The idea of noncommutative base change of an algebra by means of a Grothendieck category goes back to Popescu \cite{Pop}. If $R$ is a $k$-algebra, and $\mathfrak S$ is a $k$-linear Grothendieck category, an $R$-module object in $\mathfrak S$ consists of an object $\mathcal M\in \mathfrak S$ along with a morphism $\rho_{\mathcal M}:R\longrightarrow \mathfrak S(\mathcal M,\mathcal M)$ of $k$-algebras. If $\mathfrak S$ is the category of modules over a $k$-algebra $R'$, then an $R$-module object in $\mathfrak S$ is the same as a module over
$R\otimes_kR'$. This justifies the expression ``noncommutative base change'' for the category $\mathfrak S_R$ of (right) $R$-module objects in $\mathfrak S$. In \cite{AZ}, Artin and Zhang have developed in great detail the categorical aspects of this theory, studying finitely generated objects and finitely presented objects in $\mathfrak S_R$, as well as the properties of being locally finitely generated or locally noetherian. When applied to a certain quotient of the category of graded modules over a graded algebra, they capture the geometry of the noncommutative projective schemes of Artin and Zhang \cite{AZ0x} (see also
Artin, Small and Zhang \cite{AZ1x}). The authors in \cite{AZ} also developed the categorical algebra and the  homological algebra of $R$-module objects in $\mathfrak S$, by establishing versions of localization, Nakayama Lemma, Hilbert Basis Theorem and the derived functors $Tor$ and $Ext$. Other aspects such as coherence properties and deformation theory in $\mathfrak S_R$ have been studied by Lowen and Van den Bergh \cite{LV}. The non-noetherian descent in these module categories with respect to the classical Beauville-Laszlo theorem (see 
\cite{BL1}, \cite{BL2})
has been studied in \cite{ABBs}. Our starting point in this paper is the notion of  a $C$-comodule object in a Grothendieck category $\mathfrak S$, where $C$ is a $k$-coalgebra, introduced by Brzezi\'{n}ski and Wisbauer \cite[$\S$ 39]{Wis}.

\smallskip
This paper consists of two parts. In the first part, we work with the categorical properties of $\mathfrak S^C$, the collection of (right) $C$-comodule objects in $\mathfrak S$. We show that $\mathfrak S^C$ may be recovered as a coreflective subcategory of ${_A}\mathfrak S$, where $C\otimes A\longrightarrow k$ is an $\mathfrak S$-rational linear pairing (see
Definition \ref{D31v}) of $C$ with a $k$-algebra $A$. We obtain a family of generators for $\mathfrak S^C$, and show that  $\mathfrak S^C$ is a Grothendieck category. When $\mathfrak S$ is locally finitely generated, we use $\mathfrak S$-rational pairings to give conditions for $\mathfrak S^C$ to be locally finitely generated.  We also study injective envelopes in $\mathfrak S^C$, and give conditions for $\mathfrak S^C$ to be locally noetherian.

\smallskip In the second part of the paper, we study the category ${_A}\mathfrak S^H$ of relative $(A,H)$-Hopf module objects in $\mathfrak S$, where $A$ is a right $H$-comodule algebra. As indicated before, the objects in ${_A}\mathfrak S^H$ may be seen as modules over the ``quotient space of ${_A}\mathfrak S$'' with respect to the base change of the $H$-coaction on $A$. We will study the cohomology theory and the properties of relative  $(A,H)$-Hopf module objects in $\mathfrak S$ by adapting the methods of Caenepeel and Gu\'{e}d\'{e}non \cite{CG}. First, we construct a spectral sequence that computes the higher derived Hom functors in ${_A}\mathfrak S^H$ (see Theorem \ref{T6.3spec})
\begin{equation}
E_2^{pq}=R^p(-)^{co H}(R^q{_{{_A}\mathfrak S}}HOM(\mathcal M,\_\_)(\mathcal N)) \Rightarrow (R^{p+q}{_A}\mathfrak S^H(\mathcal M,\_\_))(\mathcal N)
\end{equation}  for $\mathcal M$, $\mathcal N\in {_A}\mathfrak S^H$. Here, $(-)^{coH}$ is the functor of taking $H$-coinvariants on the category $Com-H$ of right $H$-comodules. The functor ${_{{_A}\mathfrak S}}HOM(\mathcal M,\_\_)$ for $\mathcal M\in {_A}\mathfrak S^H$ is the rational Hom that is right adjoint to $\mathcal M\otimes\_\_: Com-H\longrightarrow {_A}\mathfrak S^H$. We show that the rational Hom object ${_{{_A}\mathfrak S}}HOM(\mathcal M,\mathcal N)$ may also be recovered as the largest $H$-comodule contained inside the morphism space ${_A}\mathfrak S(\mathcal M,\mathcal N)$. 

\smallskip Thereafter, we suppose that the Hopf algebra $H$ is cosemisimple and 
let $B=A^{coH}$ be the subalgebra of coinvariants of $A$. We obtain an adjoint triple of functors
\begin{equation}\label{triple4c}
A\otimes_B\_\_:{_B}\mathfrak S\longrightarrow {_A}\mathfrak S^H\qquad \mathfrak C^H:{_A}\mathfrak S^H\longrightarrow {_B}\mathfrak S \qquad {_B}HOM(A,\_\_):{_B}\mathfrak S\longrightarrow {_A}\mathfrak S^H
\end{equation}
relating ${_A}\mathfrak S^H$ to the category ${_B}\mathfrak S$ of $B$-module objects in $\mathfrak S$.  Here $\mathfrak C^H$ is   the functor of $H$-coinvariants, and we construct the coinduction functor $ {_B}HOM(A,\_\_):{_B}\mathfrak S\longrightarrow {_A}\mathfrak S^H$. We use the functors in \eqref{triple4c} to build torsion theories in ${_A}\mathfrak S^H$ as well as  to relate injective envelopes and minimal injective resolutions in  ${_A}\mathfrak S^H$ and  ${_B}\mathfrak S$. If $\mathcal N\in {_B}\mathfrak S$, we show that $\mathfrak C^H({_A}\mathcal E^H({_B}HOM(A,\mathcal N)))\cong {_B}\mathcal E(\mathcal N)$, where ${_A}\mathcal E^H(-)$ denotes the injective envelope in ${_A}\mathfrak S^H$ and 
${_B}\mathcal E(-)$ denotes the injective envelope in ${_B}\mathfrak S$. We also show that the full subcategory \begin{equation}\label{trix1}
 \mathcal T({_A}\mathfrak S^H):=\{\mbox{$\mathcal M\in {_A}\mathfrak S^H$ $\vert$ $\mathfrak C^H(\mathcal M)=0$ }\} 
\end{equation} is a torsion class in ${_A}\mathfrak S^H$. The corresponding torsion free class $ \mathcal F({_A}\mathfrak S^H)$ consists of those $\mathcal M\in {_A}\mathfrak S^H$ for which the canonical morphism
$\mathcal M\longrightarrow {_B}HOM(A,\mathfrak C^H(\mathcal M)) $ is a monomorphism. In terms of injective cogenerators of $\mathfrak S$, we give conditions for all objects in
${_A}\mathfrak S^H$ to be torsion free, i.e., $ \mathcal F({_A}\mathfrak S^H)={_A}\mathfrak S^H$.

\smallskip Finally, we apply some of our results from \cite{BanKr} on associated primes and support in abstract module categories. If   $\mathfrak S$ is a strongly locally noetherian Grothendieck category and $T$ is a commutative and noetherian $k$-algebra, we have shown in \cite{BanKr} that any injective object in ${_T}\mathfrak S$ decomposes as a direct sum of injective envelopes of ``$T$-elementary objects in $\mathfrak S$''. In the abstract module category ${_T}\mathfrak S$, the  $T$-elementary objects  introduced in \cite{BanKr} play a role similar to quotients over prime ideals. In this paper, we show that any injective $\mathcal E\in {_T}\mathfrak S$ decomposes as a direct sum (see Proposition \ref{P8.18xk})
\begin{equation}
\mathcal E\cong \underset{\mathfrak p\in Ass_{{_T}\mathfrak S}(\mathcal E)}{\bigoplus}\textrm{ }\mathcal E(\mathfrak p)
\end{equation}
where $\mathcal E(\mathfrak p)\subseteq \mathcal E$    is a direct sum of injective envelopes of $T$-elementary objects
 and  maximal   with respect to the  collection of subobjects $\mathcal N\subseteq \mathcal E$ such that $Ass_{{_T}\mathfrak S}(\mathcal N)=\{\mathfrak p\}$. We refine some of our results from \cite{BanKr} and combine with the methods of   Caenepeel and Gu\'{e}d\'{e}non \cite{CG} to obtain a direct sum decomposition of objects appearing in a minimal injective resolution ${_A}\mathcal E^{H\bullet}(\mathcal M)$ 
of $\mathcal M\in {_A}\mathfrak S^H$ in terms of injective envelopes of elementary objects associated to prime ideals in $B=A^{coH}$ (see Theorem \ref{Tfin6}) \begin{equation}\label{827qzva}
{_A}\mathcal E^{H\bullet}(\mathcal M)= \underset{\mathfrak p\in Ass_{{_B}\mathfrak S}({_B}\mathcal E^\bullet(\mathfrak C^H(\mathcal M)))}{\bigoplus}\textrm{ } {_B}HOM(A,{_B}\mathcal E^\bullet(\mathfrak C^H(\mathcal M))(\mathfrak p))
\end{equation} Similar to \cite{CG}, this is done under certain additional noetherian and torsion freeness assumptions that we extend to the datum $(A,H,\mathfrak S)$. As such, we hope that this paper will be the first step towards a systematic development of the theory of $(A,H)$-Hopf modules and closely related notions in a Grothendieck category $\mathfrak S$. The literature on the usual $(A,H)$-Hopf modules, as well as Doi-Hopf modules, Yetter-Drinfeld modules and entwined modules is vast, for which we refer the reader, for instance, to  \cite{BBR1}, \cite{BBR2}, \cite{Ban}, \cite{Zhou}, \cite{Brz99},  \cite{BCMZ}, \cite{Brz2002}, \cite{CMZ}, \cite{CMZ0}, \cite{CMIZ}, \cite{Gued}.

\section{$\mathfrak S$-Rational pairings, module objects and comodule objects}

Let $k$ be a field. Throughout, we let $\mathfrak S$ be a $k$-linear Grothendieck category. If $R$ is a $k$-algebra, a (right) $R$-module object in $\mathfrak S$ (see Popescu \cite[p 108]{Pop}) is a pair $(\mathcal M,\rho_{\mathcal M})$, where $\mathcal M$ is an object of $\mathfrak S$ and $\rho_{\mathcal M}:R\longrightarrow  \mathfrak S(\mathcal M,\mathcal M)$ is a morphism of $k$-algebras, i.e., $\rho_{\mathcal M}(rs)=\rho_{\mathcal M}(s)\circ \rho_{\mathcal M}(r)$ for $r$, $s\in R$.  A morphism $\phi:(\mathcal M,\rho_{\mathcal M})\longrightarrow (\mathcal M',\rho_{\mathcal M'})$ of $R$-module objects consists of a morphism $\phi:\mathcal M\longrightarrow \mathcal M'$ in $\mathfrak S$ such that $\rho_{\mathcal M'}\circ \phi=\phi\circ \rho_{\mathcal M}$. The category of right $R$-module objects in $\mathfrak S$ will be denoted by $\mathfrak S_R$. We will usually write an object $(\mathcal M,\rho_{\mathcal M})\in \mathfrak S_R$ simply as $\mathcal M\in \mathfrak S_R$. By \cite[Proposition B2.2]{AZ}, we know that $\mathfrak S_R$ is also a Grothendieck category. We may similarly consider
the category ${_R}\mathfrak S$ of left $R$-module objects in $\mathfrak S$. 

\smallskip If $R-Mod$ denotes the category of left $R$-modules, there is a bifunctor
\begin{equation}\label{bif}
\_\_\otimes_R \_\_ : \mathfrak S_R \times R-Mod\longrightarrow \mathfrak S
\end{equation} For a detailed study of the properties of this bifunctor, we refer the reader to Artin and Zhang \cite{AZ}. For any $\mathcal M\in \mathfrak S_R$, we have $\mathcal M\otimes_RR\cong \mathcal M$. If $\mathcal M\in \mathfrak S_R$ and $\mathcal N\in \mathfrak S$, we note that there is a canonical left $R$-module structure on $\mathfrak S(\mathcal M,\mathcal N)$. The functor $\mathcal M\otimes_R\_\_:  R-Mod\longrightarrow \mathfrak S$ is left adjoint to $\mathfrak S(\mathcal M,\_\_):\mathfrak S\longrightarrow R-Mod$ (see \cite[$\S$ B3]{AZ}). In addition, if $\mathcal M\otimes_R\_\_:  R-Mod\longrightarrow \mathfrak S$  is exact, $\mathcal M\in \mathfrak S_R$ is said to be $R$-flat. 

\smallskip
On the other hand, if $V$ is a left $R$-module, the functor $\_\_\otimes_RV:\mathfrak S_R\longrightarrow \mathfrak S$ is also right exact and has a right adjoint (see \cite[Proposition B3.6]{AZ}) which we denote by $\underline{Hom}(V,\_\_):\mathfrak S\longrightarrow \mathfrak S_R$. In addition, if $\_\_\otimes_RV:\mathfrak S_R\longrightarrow \mathfrak S$ is exact, then $V\in R-Mod$ is said to be $\mathfrak S_R$-flat. Also, if $V$ is an $(R,R')$-bimodule, the functor $\_\_\otimes_RV$ takes values in $\mathfrak S_{R'}$. 

\smallskip
In particular, we always write $\otimes:=\otimes_k$. A right $R$-module object in $\mathfrak S$ may also be described in the following manner (see \cite[Lemma B3.14]{AZ}). 

\begin{defn}\label{D2.1}
An object $(\mathcal M,\rho_{\mathcal M})\in \mathfrak S_R$ is determined by a morphism $\mu_{\mathcal M}:\mathcal M\otimes R\longrightarrow \mathcal M$ in $\mathfrak S=\mathfrak S_k$ satisfying the following two conditions

\smallskip
(1) If $u:k\longrightarrow R$ is the unit of $R$, the composition $\mathcal M=\mathcal M\otimes k\xrightarrow{\mathcal M\otimes u}\mathcal M\otimes R\xrightarrow{\mu_{\mathcal M}}\mathcal M$ is the identity.

\smallskip
(2) The following diagram commutes
\begin{equation}
\begin{CD}
\mathcal M\otimes R\otimes R @>\mathcal M\otimes \mu_R>> \mathcal M\otimes R\\
@V\mu_{\mathcal M}\otimes RVV @VV\mu_{\mathcal M}V\\
\mathcal M\otimes R @>\mu_{\mathcal M}>> \mathcal M\\
\end{CD}
\end{equation} where $\mu_R$ is the multiplication on $R$. 
We will denote by $\mu'_{\mathcal M}:\mathcal M\longrightarrow \underline{Hom}(R,\mathcal M)$ the morphism corresponding to $\mu_{\mathcal M}:\mathcal M\otimes R\longrightarrow \mathcal M$ via the adjunction. 
\end{defn}

Similarly, there is the notion of $C$-comodule objects in $\mathfrak S$, where $C$ is a $k$-coalgebra (see \cite[$\S$ 39]{Wis}).

\begin{defn}\label{D2.2} Let $C$ be a $k$-coalgebra. A right $C$-comodule object in $\mathfrak S$ consists of an object $\mathcal P\in \mathfrak S$ and a morphism
$\Delta_{\mathcal P}: \mathcal P\longrightarrow \mathcal P\otimes C$ satisfying the following two conditions

\smallskip
(1) If $\epsilon: C\longrightarrow k$ is the counit of $C$, the composition $\mathcal P\xrightarrow{\Delta_{\mathcal P}}\mathcal P\otimes C\xrightarrow{\mathcal P\otimes \epsilon}\mathcal 
P$ is the identity.

\smallskip
(2) The following diagram commutes
\begin{equation}
\begin{CD}
\mathcal P  @>\Delta_{\mathcal P}>> \mathcal P\otimes C\\
@V\Delta_{\mathcal P}VV @VV{\mathcal P\otimes \Delta_C} V\\
\mathcal P\otimes C @>\Delta_{\mathcal P}\otimes C>> \mathcal P\otimes C\otimes C\\
\end{CD}
\end{equation} where $\Delta_C$ is the comultiplication on $C$. 
A morphism $\phi:(\mathcal P,\Delta_{\mathcal P})\longrightarrow (\mathcal P',\Delta_{\mathcal P'})$ of right $C$-comodules in $\mathfrak S$ consists of a morphism $\phi: \mathcal P
\longrightarrow \mathcal P'$ in $\mathfrak S$ such that $\Delta_{\mathcal M'}\circ \phi=(\phi\otimes C)\circ \Delta_{\mathcal M}$. The category of right $C$-comodule objects in $\mathfrak S$ will be denoted by $\mathfrak S^C$. Similarly, there is a category of left $C$-comodule objects in $\mathfrak S$, which will be denoted by $ {^C}\mathfrak S$. 
\end{defn}

Since $k$ is a field, the coalgebra $C$ is always flat over $k$. Using \cite[Proposition C1.7]{AZ}, it now follows that $\_\_\otimes C$ is exact. In fact, since every monomorphism in $k-Mod$ splits, we see that the bifunctor $\_\_\otimes \_\_: \mathfrak S\times k-Mod\longrightarrow \mathfrak S$ is exact in both variables. Accordingly, we see that $\mathfrak S^C$ contains both kernels and cokernels. This makes $\mathfrak S^C$ an abelian category. Further,  colimits and finite limits of systems in $\mathfrak S^C$ may be computed in $\mathfrak S$. 

\smallskip
We also observe that if $(\mathcal P,\Delta_{\mathcal P})$, $(\mathcal P',\Delta_{\mathcal P'})$ are $C$-comodule objects in $\mathfrak S$, then $\mathfrak S^C(\mathcal P,\mathcal P')$ can be expressed as the equalizer
\begin{equation}\label{comeq}
\mathfrak S^C(\mathcal P,\mathcal P')=Eq\left(\mathfrak S(\mathcal P,\mathcal P')\doublerightarrow{\qquad \phi\mapsto \Delta_{\mathcal P'}\circ \phi\qquad }{\phi\mapsto (\phi\otimes C)\circ \Delta_{\mathcal P}}\mathfrak S(\mathcal P,\mathcal P'\otimes C)\right)
\end{equation} Now let $f:C\longrightarrow D$ a morphism of $k$-coalgebras. Then, there is an obvious functor $f^*:\mathfrak S^C\longrightarrow 
\mathfrak S^D$ that takes $(\mathcal P,\Delta_{\mathcal P})$ to $(\mathcal P,(\mathcal P\otimes f)\circ \Delta_{\mathcal P})$. On the other hand, if
$(\mathcal Q,\Delta_{\mathcal Q})\in \mathfrak S^D$, we set
\begin{equation}\label{cot2}
\mathcal Q\square_DC :=Eq\left(\mathcal Q\otimes C\doublerightarrow{\qquad \Delta_{\mathcal Q}\otimes C\qquad }{(\mathcal Q\otimes f\otimes C)\circ (\mathcal Q\otimes \Delta_C)}\mathcal Q\otimes D\otimes C\right)
\end{equation} This defines a functor $f_*:\mathfrak S^D\longrightarrow \mathfrak S^C$.

\begin{thm}\label{P2.3k}
Let $f:C\longrightarrow D$ be a morphism of $k$-coalgebras.  Then, $(f^*,f_*)$ is a pair of adjoint functors.
\end{thm}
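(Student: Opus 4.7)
The strategy is to construct, for each $\mathcal{P}\in \mathfrak{S}^C$ and $\mathcal{Q}\in \mathfrak{S}^D$, an explicit natural bijection
\[
\Phi_{\mathcal{P},\mathcal{Q}}: \mathfrak{S}^D(f^*\mathcal{P},\mathcal{Q})\longrightarrow \mathfrak{S}^C(\mathcal{P},\mathcal{Q}\square_D C),
\]
where the forward direction propagates a morphism through the $C$-coaction $\Delta_{\mathcal{P}}$ and the reverse direction applies the counit $\epsilon_C$. The content lies in verifying that these formulas land in the prescribed subspaces (the equalizer of \eqref{cot2} on one side and the equalizer of \eqref{comeq} on the other) and that they are mutually inverse. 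Each such check reduces to a single application of coassociativity of $\Delta_{\mathcal{P}}$, coassociativity or the counit axiom of $C$, the counit axiom of $\mathcal{P}$, or the identity $(f\otimes \epsilon_C)\circ \Delta_C=f$ coming from $f$ being a coalgebra morphism.

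In the forward direction, given a $D$-colinear $\phi: f^*\mathcal{P}\to \mathcal{Q}$, I set $\Phi(\phi):=(\phi\otimes C)\circ \Delta_{\mathcal{P}}:\mathcal{P}\to \mathcal{Q}\otimes C$. To see that this factors through $\mathcal{Q}\square_D C$, I use the $D$-colinearity relation $\Delta_{\mathcal{Q}}\circ \phi=(\phi\otimes f)\circ \Delta_{\mathcal{P}}$ together with coassociativity of $\Delta_{\mathcal{P}}$ to rewrite
\[
(\Delta_{\mathcal{Q}}\otimes C)\circ (\phi\otimes C)\circ \Delta_{\mathcal{P}} = (\phi\otimes f\otimes C)\circ (\mathcal{P}\otimes \Delta_C)\circ \Delta_{\mathcal{P}},
\]
and then observe that this also equals $(\mathcal{Q}\otimes f\otimes C)\circ (\mathcal{Q}\otimes \Delta_C)\circ (\phi\otimes C)\circ \Delta_{\mathcal{P}}$, so the two legs of the equalizer in \eqref{cot2} agree on $\Phi(\phi)$. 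That the resulting map is $C$-colinear with respect to the inherited $C$-coaction $\mathcal{Q}\otimes \Delta_C$ on $\mathcal{Q}\square_D C$ is again coassociativity of $\Delta_{\mathcal{P}}$.

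Conversely, writing $\iota:\mathcal{Q}\square_D C\hookrightarrow \mathcal{Q}\otimes C$ for the canonical monomorphism, I define $\Psi(\psi):=(\mathcal{Q}\otimes \epsilon_C)\circ \iota\circ \psi$ for $\psi: \mathcal{P}\to \mathcal{Q}\square_D C$ in $\mathfrak{S}^C$. Applying $\mathcal{Q}\otimes \epsilon_C\otimes C$ to the $C$-colinearity identity $(\mathcal{Q}\otimes \Delta_C)\circ \iota\circ \psi=(\iota\circ \psi\otimes C)\circ \Delta_{\mathcal{P}}$ yields, via the counit axiom of $C$, the key identity $\iota\circ \psi=(\Psi(\psi)\otimes C)\circ \Delta_{\mathcal{P}}$, which immediately gives $\Phi(\Psi(\psi))=\psi$. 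For the $D$-colinearity of $\Psi(\psi)$, I would apply $\mathcal{Q}\otimes D\otimes \epsilon_C$ to the equalizer condition defining $\mathcal{Q}\square_D C$: the left-hand side collapses to $\Delta_{\mathcal{Q}}\circ \Psi(\psi)$, while the right-hand side, using $(f\otimes \epsilon_C)\circ \Delta_C=f$, collapses to $(\mathcal{Q}\otimes f)\circ \iota\circ \psi=(\Psi(\psi)\otimes f)\circ \Delta_{\mathcal{P}}$. Finally, $\Psi(\Phi(\phi))=\phi$ is a one-line consequence of the counit axiom of $\mathcal{P}$, while naturality of $\Phi_{\mathcal{P},\mathcal{Q}}$ in both variables is transparent from the defining formulas. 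The main obstacle throughout is not conceptual but merely the careful Sweedler-style bookkeeping required to keep the tensor factors straight.
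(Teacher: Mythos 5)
Your proof is correct and takes essentially the same route as the paper's: both sides of the adjunction are realized by the same formulas, $\phi\mapsto (\phi\otimes C)\circ\Delta_{\mathcal P}$ and $\psi\mapsto (\mathcal Q\otimes\epsilon_C)\circ\iota\circ\psi$, with the same colinearity-plus-coassociativity computation showing factorization through the equalizer \eqref{cot2}. You merely carry out in full the colinearity and mutual-inverse checks that the paper dismisses with ``it may be verified'' (and note in passing that the identity $(f\otimes\epsilon_C)\circ\Delta_C=f$ is just the counit axiom of $C$, not specifically the coalgebra-morphism property of $f$), so this is a completion of the same argument rather than a different one.
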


\begin{proof}
We consider $(\mathcal P,\Delta_{\mathcal P})\in \mathfrak S^C$ and $(\mathcal Q,\Delta_{\mathcal Q})\in \mathfrak S^D$. We consider $\phi\in \mathfrak S^D(f^*\mathcal P,\mathcal Q)$, i.e., $\phi\in \mathfrak S(\mathcal P,\mathcal Q)$ such that $ \Delta_{\mathcal Q}\circ \phi=(\phi\otimes D)\circ \Delta_{f^*(\mathcal P)}=(\phi\otimes D)\circ (\mathcal P\otimes f)\circ \Delta_{\mathcal P}$. Then, we have
\begin{equation}
\begin{array}{ll}
(\Delta_{\mathcal Q}\otimes C)\circ (\phi\otimes C)\circ \Delta_{\mathcal P}&=(\phi\otimes D\otimes C)\circ (\mathcal P\otimes f\otimes C)\circ (\Delta_{\mathcal P}\otimes C)\circ \Delta_{\mathcal P}\\
&=(\mathcal Q\otimes f \otimes C)\circ (\phi\otimes C\otimes C)\circ (\mathcal P\otimes \Delta_C)\circ \Delta_{\mathcal P}\\
&=(\mathcal Q\otimes f \otimes C)\circ (\mathcal Q\otimes \Delta_C)\circ (\phi\otimes C)\circ \Delta_{\mathcal P}\\
\end{array}
\end{equation} From the definition in \eqref{cot2}, it follows that $(\phi\otimes C)\circ \Delta_{\mathcal P}$ factors through $\mathcal Q\square_DC$ and it may be verified that this gives a morphism in $\mathfrak S^C(\mathcal P,\mathcal Q\square_DC)$. Conversely, if $\psi\in \mathfrak S^C(\mathcal P,\mathcal Q\square_DC)$, it may be verified that the composition $\mathcal P\xrightarrow{\psi}\mathcal Q\square_DC\hookrightarrow \mathcal Q\otimes C
\xrightarrow{\mathcal Q\otimes \epsilon_C}\mathcal Q$ is a morphism in $S^D(f^*\mathcal P,\mathcal Q)$, where $\epsilon_C:C\longrightarrow k$ is the counit. This proves the result.
\end{proof} 

Applying Proposition \ref{P2.3k} to the counit morphism $\epsilon_C: C\longrightarrow k$, we have in particular that
\begin{equation}\label{adj2.7}
\mathfrak S(\mathcal P,\mathcal Q)=\mathfrak S^C(\mathcal P,\mathcal Q\otimes C)
\end{equation} for any $\mathcal P\in \mathfrak S^C$ and $\mathcal Q\in \mathfrak S$. 

\smallskip
We now let $C$ be a $k$-coalgebra and $A$ be a $k$-algebra. A pairing of $C$ and $A$ is a $k$-linear map $\beta:C \otimes A \longrightarrow k$ such that the induced map $\hat{\beta}:A \longrightarrow C^*$ is  morphism of $k$-algebras. Here, the multiplication on $C^*$ is defined by setting
\begin{equation}
(f \ast g)(c)=\sum f(c_1)g(c_2)
\end{equation} for $c\in C$ and $f$, $g\in C^\ast$. 
We now consider the category ${_A}\mathfrak S$ of left $A$-module objects in $\mathfrak S$ and the category $\mathfrak S^C$ of right $C$-comodule objects in $\mathfrak S$. If $\beta: C\otimes A\longrightarrow k$ is a pairing, then any $C$-comodule object $\mathcal M$ in $\mathfrak S$ is naturally an $A$-module object in $\mathfrak S$ by
\begin{equation}\label{str3d}
A\otimes \mathcal{M}  \xrightarrow{A\otimes \Delta_\mathcal{M}} A\otimes \mathcal{M} \otimes C \xrightarrow{\cong }\mathcal M\otimes C\otimes A \xrightarrow{\mathcal{M} \otimes \beta} \mathcal{M}
\end{equation}
Thus, we have an embedding ${_A}{\mathfrak I}^C:\mathfrak S^C \longrightarrow {_A}\mathfrak S$ of categories. We recall that the functor $\_\_\otimes A:\mathfrak S \longrightarrow \mathfrak S$ has the right adjoint $\underline{Hom}(A,-):\mathfrak S \longrightarrow \mathfrak S$. 
It also follows that for a finite dimensional vector space $V$ and an object $\mathcal{M} \in \mathfrak S$, we have 
\begin{equation}\label{fd}
\mathcal{M} \otimes V \cong \underline{Hom}(V^*,\mathcal{M})
\end{equation}

\begin{defn}\label{D31v}
A pairing $\beta:C\otimes A\longrightarrow k$ is said to be $\mathfrak S$-rational if for any $\mathcal M \in \mathfrak S=\mathfrak S_k$, the morphism
\begin{equation}
\alpha_\mathcal{M}:\mathcal M \otimes C \longrightarrow \underline{Hom}(A, \mathcal M),
\end{equation}
corresponding by adjunction to the morphism $\mathcal M\otimes \beta: \mathcal{M} \otimes C \otimes A \longrightarrow \mathcal{M}$  is a monomorphism in $\mathfrak S$.
\end{defn}

Using the definition in \eqref{str3d}, we note that for $\mathcal M\in \mathfrak S^C$, the structure map of ${_A}{\mathfrak I}^C(\mathcal M)\in {_A}\mathfrak S$ can also be given by the composition $\alpha_\mathcal{M} \circ \Delta_\mathcal{M}$.

\begin{lem}\label{Lfull}
Let $\beta:C\otimes A\longrightarrow k$ be an $\mathfrak S$-rational pairing. Then, ${_A}{\mathfrak I}^C:\mathfrak S^C \longrightarrow {_A}\mathfrak S$ is a full embedding of categories.
\end{lem}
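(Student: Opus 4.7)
The main content is fullness, since ${_A}\mathfrak I^C$ acts as the identity on underlying morphisms in $\mathfrak S$ and is therefore automatically faithful (and injective on objects). Let $\phi:\mathcal M\to \mathcal N$ be a morphism in ${_A}\mathfrak S$ with $\mathcal M,\mathcal N\in \mathfrak S^C$; we must show that $\Delta_\mathcal N\circ \phi=(\phi\otimes C)\circ \Delta_\mathcal M$.

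The plan is to translate $A$-linearity through the adjunction $\_\_\otimes A\dashv \underline{Hom}(A,\_\_)$. By the observation immediately preceding the lemma, the $A$-module structure on ${_A}\mathfrak I^C(\mathcal M)$ has adjoint form $\alpha_\mathcal M\circ \Delta_\mathcal M:\mathcal M\to \underline{Hom}(A,\mathcal M)$, and similarly for $\mathcal N$. Consequently, $A$-linearity of $\phi$ is equivalent to commutativity of the square
$$\begin{CD}
\mathcal M @>\alpha_\mathcal M \circ \Delta_\mathcal M>> \underline{Hom}(A,\mathcal M) \\
@V\phi VV @VV\underline{Hom}(A,\phi) V \\
\mathcal N @>\alpha_\mathcal N \circ \Delta_\mathcal N>> \underline{Hom}(A,\mathcal N)
\end{CD}$$

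Next, I would invoke naturality. The morphism $\alpha_{(-)}$ is natural in its argument, because its adjunct $(-)\otimes \beta:(-)\otimes C\otimes A\to (-)$ is manifestly natural. Therefore $\underline{Hom}(A,\phi)\circ \alpha_\mathcal M=\alpha_\mathcal N\circ (\phi\otimes C)$. Substituting this into the top-right path of the square yields
$$\alpha_\mathcal N\circ (\phi\otimes C)\circ \Delta_\mathcal M=\alpha_\mathcal N\circ \Delta_\mathcal N\circ \phi.$$
The $\mathfrak S$-rationality of $\beta$ guarantees that $\alpha_\mathcal N$ is a monomorphism in $\mathfrak S$, so we may cancel it on the left to conclude $(\phi\otimes C)\circ \Delta_\mathcal M=\Delta_\mathcal N\circ \phi$, which is the required $C$-colinearity.

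The only genuine obstacle is the adjoint bookkeeping to recognise $\alpha_\mathcal M\circ \Delta_\mathcal M$ as the adjoint form of the induced $A$-action, a point already recorded in the text just before the lemma. Once this reformulation is accepted, the proof reduces to naturality of $\alpha$ combined with the monomorphism property furnished by $\mathfrak S$-rationality.
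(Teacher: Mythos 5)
Your proof is correct and follows essentially the same route as the paper: both reformulate the $A$-linearity of $\phi$ via the adjoint structure maps $\alpha_{\mathcal M}\circ \Delta_{\mathcal M}$ and $\alpha_{\mathcal N}\circ \Delta_{\mathcal N}$, invoke the naturality identity $\underline{Hom}(A,\phi)\circ \alpha_{\mathcal M}=\alpha_{\mathcal N}\circ (\phi\otimes C)$, and then cancel the monomorphism $\alpha_{\mathcal N}$ supplied by $\mathfrak S$-rationality to obtain $C$-colinearity. Your explicit remark that faithfulness is automatic is a minor presentational addition, not a mathematical difference.
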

\begin{proof}
Let $\phi:\mathcal{M} \longrightarrow \mathcal{N}$ be a morphism in ${_A}\mathfrak S$ with $\mathcal{M}, \mathcal{N} \in \mathfrak S^C$. We consider the diagram
$$\begin{tikzcd}[column sep=2cm]
\mathcal{M} \arrow{r}{\phi} \arrow[swap]{d}{\Delta_\mathcal{M}}  & \mathcal{N} \arrow{d}{\Delta_\mathcal{N}}\\
\mathcal{M} \otimes C \arrow{r}{\phi \otimes C}  \arrow[swap]{d}{\alpha_\mathcal{M}}& \mathcal{N} \otimes C \arrow{d}{\alpha_\mathcal{N}}\\
\underline{Hom}(A,\mathcal{M})  \arrow{r}{\underline{Hom}(A,\phi)}&  \underline{Hom}(A,\mathcal{N}) 
\end{tikzcd}$$
Since $\phi\in {_A}\mathfrak S(\mathcal M,\mathcal N)$, we note that
\begin{equation*}
\underline{Hom}(A,\phi)\circ \alpha_\mathcal{M} \circ \Delta_\mathcal{M}=\alpha_\mathcal{N} \circ \Delta_\mathcal{N} \circ \phi
\end{equation*}
From Definition \ref{D31v}, we observe that
\begin{equation*}
\alpha_\mathcal{N} \circ (\phi \otimes C)=\underline{Hom}(A,\phi) \circ \alpha_\mathcal{M}
\end{equation*}
Since $\alpha_\mathcal N$ is a monomorphism, we now obtain $(\phi \otimes C) \circ \Delta_\mathcal{M}=\Delta_\mathcal{N} \circ \phi$, i.e., $\phi$ is a morphism in $\mathfrak S^C$.
\end{proof}

\begin{prop}\label{P3.3qo}
The category $\mathfrak S^C$ is closed under subobjects, direct sums and quotients in ${_A}\mathfrak S$.
\end{prop}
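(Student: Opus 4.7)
The plan is to use the monomorphism $\alpha_{-}:-\otimes C\hookrightarrow \underline{Hom}(A,-)$ provided by the $\mathfrak{S}$-rationality of $\beta$ (Definition \ref{D31v}), together with its naturality, as the main tool. The three claims (subobjects, direct sums, quotients) will be handled in turn, and the subobject case is the only one involving real work.

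For subobjects, let $i:\mathcal{N}\hookrightarrow \mathcal{M}$ be an $A$-submodule of some $\mathcal{M}\in \mathfrak{S}^C$. I first produce a candidate coaction $\Delta_{\mathcal{N}}:\mathcal{N}\longrightarrow \mathcal{N}\otimes C$ as a factorization of $\Delta_{\mathcal{M}}\circ i$ through the monomorphism $i\otimes C:\mathcal{N}\otimes C\hookrightarrow \mathcal{M}\otimes C$ (which is monic because tensoring over the field $k$ is exact). This reduces to verifying that $(\pi\otimes C)\circ \Delta_{\mathcal{M}}\circ i=0$, where $\pi:\mathcal{M}\longrightarrow \mathcal{M}/\mathcal{N}$ is the canonical surjection. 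To check this I post-compose with the monomorphism $\alpha_{\mathcal{M}/\mathcal{N}}$; by naturality of $\alpha$ we have $\alpha_{\mathcal{M}/\mathcal{N}}\circ (\pi\otimes C)=\underline{Hom}(A,\pi)\circ \alpha_{\mathcal{M}}$, and by the description of the $A$-module structure on objects in ${_A}{\mathfrak{I}}^C(\mathfrak{S}^C)$ given by \eqref{str3d} one has $\alpha_{\mathcal{M}}\circ \Delta_{\mathcal{M}}=\mu'_{\mathcal{M}}$. The problem therefore reduces to showing $\underline{Hom}(A,\pi)\circ \mu'_{\mathcal{M}}\circ i=0$, which by the adjunction of Definition \ref{D2.1} is equivalent to $\pi\circ \mu_{\mathcal{M}}\circ (i\otimes A)=0$; this last identity is immediate from the $A$-linearity of $i$ (which gives $\mu_{\mathcal{M}}\circ (i\otimes A)=i\circ \mu_{\mathcal{N}}$) together with $\pi\circ i=0$. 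The counit and coassociativity axioms for $\Delta_{\mathcal{N}}$ are then verified by post-composing the two sides with the monomorphisms $i\otimes C$ and $i\otimes C\otimes C$ respectively, and invoking the corresponding axioms for $\Delta_{\mathcal{M}}$ via the defining relation $(i\otimes C)\circ \Delta_{\mathcal{N}}=\Delta_{\mathcal{M}}\circ i$.

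For direct sums, colimits in both $\mathfrak{S}^C$ and ${_A}\mathfrak{S}$ are computed in $\mathfrak{S}$, and the functor $-\otimes C$ preserves direct sums since it is left adjoint to $\underline{Hom}(C,-)$. Hence for a family $(\mathcal{M}_j)_{j\in J}$ in $\mathfrak{S}^C$ the componentwise coactions assemble into a coaction on $\bigoplus_{j}\mathcal{M}_j$ compatible with the componentwise $A$-module structure, and the comodule axioms are inherited componentwise. For quotients, if $\mathcal{M}\twoheadrightarrow \mathcal{Q}$ is a surjection in ${_A}\mathfrak{S}$ with $\mathcal{M}\in \mathfrak{S}^C$, then its kernel $\mathcal{K}$ is an $A$-submodule, which by the subobject case is a $C$-subcomodule; the coaction $\Delta_{\mathcal{M}}$ therefore descends to $\Delta_{\mathcal{Q}}:\mathcal{Q}\longrightarrow \mathcal{Q}\otimes C$, and the comodule axioms pass through since the quotient map is epi and $-\otimes C$ preserves epimorphisms.

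The main (in fact the only) obstacle is establishing the vanishing $(\pi\otimes C)\circ \Delta_{\mathcal{M}}\circ i=0$ in the subobject case. This is precisely the step where the $\mathfrak{S}$-rationality hypothesis enters, through the monicity of $\alpha_{\mathcal{M}/\mathcal{N}}$; without it there is no reason for the subobject $\mathcal{N}$ to carry a compatible $C$-coaction even though its $A$-module structure is inherited. Everything else in the argument is formal once this single factorization is in place.
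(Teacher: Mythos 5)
Your proof is correct and follows essentially the same route as the paper's: both arguments establish $(\pi\otimes C)\circ\Delta_{\mathcal M}\circ\iota=0$ by post-composing with the monomorphism $\alpha_{\mathcal M/\mathcal N}$, using the naturality of $\alpha$ together with the identity $\alpha_{\mathcal M}\circ\Delta_{\mathcal M}=\mu'_{\mathcal M}$ and the $A$-linearity of the inclusion (your detour through the adjunction to check $\underline{Hom}(A,\pi)\circ\mu'_{\mathcal M}\circ\iota=0$ is just a rephrasing of the paper's $\underline{Hom}(A,\pi)\circ\underline{Hom}(A,\iota)\circ\mu'_{\mathcal N}=0$), and then deduce quotients and direct sums formally. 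If anything, you are slightly more complete than the paper, since you explicitly verify the counit and coassociativity axioms for the induced coaction, which the paper leaves implicit.
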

\begin{proof}
Let $\mathcal M \in \mathfrak S^C$ and $\mathcal N$ be a subobject of $\mathcal M$ in ${_A}\mathfrak S$.   We will show that $\mathcal N \in \mathfrak S^C$. We consider the structure map  $\mu'_\mathcal N:\mathcal N \longrightarrow \underline{Hom}(A,\mathcal N)$. Since $k$ is a field (hence $\_\_\otimes C:\mathfrak S\longrightarrow \mathfrak S$ is exact), we now have the following commutative diagram with exact rows in $\mathfrak S$
$$\begin{tikzcd}[column sep=2cm]
0 \arrow{r} & \mathcal N \arrow{r}{\iota} & \mathcal M \arrow{r}{\pi} \arrow{d}{\Delta_\mathcal M}& \mathcal M/\mathcal N \arrow{r} & 0\\
0 \arrow{r} & \mathcal N \otimes C \arrow{r}{\iota\otimes C} \arrow{d}[swap]{\alpha_\mathcal N}& \mathcal M \otimes C \arrow{r}{\pi \otimes C} \arrow[swap]{d}{\alpha_\mathcal M}& \mathcal M/\mathcal N \otimes C \arrow{r} \arrow{d}{\alpha_{\mathcal M/\mathcal N}}& 0\\
0 \arrow{r} & \underline{Hom}(A,\mathcal N) \arrow{r}{\underline{Hom}(A,\iota)} & \underline{Hom}(A,\mathcal M) \arrow{r}{\underline{Hom}(A,\pi)} & \underline{Hom}(A,\mathcal M/\mathcal N)  
\end{tikzcd}$$
We claim that $(\pi \otimes C) \circ \Delta_\mathcal M \circ \iota=0$. 
Using the fact that $\iota:\mathcal N \longrightarrow \mathcal M$ is a morphism in ${_A}\mathfrak S$, we have
\begin{equation*}
\alpha_{\mathcal M/\mathcal N} \circ (\pi \otimes C)\circ \Delta_\mathcal M \circ \iota=\underline{Hom}(A,\pi) \circ \alpha_\mathcal M \circ \Delta_\mathcal M \circ \iota=\underline{Hom}(A,\pi)  \circ \underline{Hom}(A,\iota) \circ \mu'_\mathcal N=0
\end{equation*}
Since $\alpha_{\mathcal M/\mathcal N}$ is a monomorphism, it follows that 
$(\pi \otimes C) \circ \Delta_\mathcal M \circ \iota=0$. Therefore, $\Delta_\mathcal M \circ \iota$ factors through the kernel $\mathcal N \otimes C \xrightarrow{\iota\otimes C} \mathcal M \otimes C$ and we obtain the map $\mathcal N \longrightarrow \mathcal N \otimes C$ which makes $\mathcal N$ a $C$-comodule object in $\mathfrak S$. This shows that $\mathfrak S^C$ is closed under subobjects. From this, it also follows that $\mathfrak S^C$ is closed under quotients. It may be verified directly that  $\mathfrak S^C$ is also closed under direct sums.
\end{proof}

For $\mathcal N \in {_A}\mathfrak S$, we now define
\begin{equation}\label{srat5}
{_A}\mathfrak R^C(\mathcal N):=\sum\limits_{\mbox{\small $\phi\in {_A}\mathfrak S(\mathcal M,\mathcal N)$, $\mathcal M \in \mathfrak S^C$}} Im(\phi:\mathcal M \longrightarrow \mathcal N)
\end{equation}
Since $\mathfrak S^C$ is closed under subobjects, quotients and direct sums in ${_A}\mathfrak S$, we see that ${_A}\mathfrak R^C(\mathcal N) \in \mathfrak S^C$.  We remark that since ${_A}\mathfrak S$ is a Grothendieck category, the subobjects of $\mathcal N \in {_A}\mathfrak S$ always form a set. Accordingly, the expression in 
\eqref{srat5} may be treated as a sum over a set.

\begin{Thm}\label{Thm3.4c}
Let $\beta:C\otimes A\longrightarrow k$ be an $\mathfrak S$-rational pairing. Then, the association $\mathcal N \mapsto {_A}\mathfrak R^C(\mathcal N)$ defines a functor 
${_A}\mathfrak R^C:{_A}\mathfrak S \longrightarrow \mathfrak S^C$, and it is right adjoint to the embedding ${_A}{\mathfrak I}^C:\mathfrak S^C \longrightarrow {_A}\mathfrak S$.
\end{Thm}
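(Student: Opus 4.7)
The plan is to first establish functoriality of $\mathcal N\mapsto {_A}\mathfrak R^C(\mathcal N)$, and then produce a natural bijection on Hom sets witnessing the adjunction. The two key ingredients, already in hand, are the closure of $\mathfrak S^C$ under subobjects, quotients and direct sums in ${_A}\mathfrak S$ (Proposition \ref{P3.3qo}), which ensures that ${_A}\mathfrak R^C(\mathcal N)\in \mathfrak S^C$ is also a subobject of $\mathcal N$ in ${_A}\mathfrak S$, and the fullness of the embedding ${_A}\mathfrak I^C$ (Lemma \ref{Lfull}), which promotes ${_A}\mathfrak S$-morphisms between $C$-comodule objects into $\mathfrak S^C$-morphisms.

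For functoriality, I would take $f:\mathcal N\longrightarrow \mathcal N'$ in ${_A}\mathfrak S$, together with the canonical inclusions $\iota_{\mathcal N}:{_A}\mathfrak R^C(\mathcal N)\hookrightarrow \mathcal N$ and $\iota_{\mathcal N'}:{_A}\mathfrak R^C(\mathcal N')\hookrightarrow \mathcal N'$. The composite $f\circ \iota_{\mathcal N}:{_A}\mathfrak R^C(\mathcal N)\longrightarrow \mathcal N'$ is a morphism in ${_A}\mathfrak S$ whose domain lies in $\mathfrak S^C$, so by the very definition \eqref{srat5} of ${_A}\mathfrak R^C(\mathcal N')$ its image is contained in ${_A}\mathfrak R^C(\mathcal N')$. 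Hence $f\circ \iota_{\mathcal N}$ factors uniquely as $\iota_{\mathcal N'}\circ {_A}\mathfrak R^C(f)$ in ${_A}\mathfrak S$, and Lemma \ref{Lfull} promotes ${_A}\mathfrak R^C(f)$ to a morphism in $\mathfrak S^C$. Uniqueness of the factorization yields compatibility with composition and identities.

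For the adjunction, I would construct the bijection
\[
\mathfrak S^C(\mathcal M,{_A}\mathfrak R^C(\mathcal N)) \;\longleftrightarrow\; {_A}\mathfrak S({_A}\mathfrak I^C(\mathcal M),\mathcal N),
\]
natural in $\mathcal M\in \mathfrak S^C$ and $\mathcal N\in {_A}\mathfrak S$, by sending $\psi\mapsto \iota_{\mathcal N}\circ \psi$ in one direction. For the inverse, given $\phi:{_A}\mathfrak I^C(\mathcal M)\longrightarrow \mathcal N$, the fact that $\mathcal M\in \mathfrak S^C$ together with \eqref{srat5} forces $Im(\phi)\subseteq {_A}\mathfrak R^C(\mathcal N)$, so $\phi$ factors through $\iota_{\mathcal N}$ as $\iota_{\mathcal N}\circ \tilde\phi$ for a unique $\tilde\phi$ in ${_A}\mathfrak S$; Lemma \ref{Lfull} again lifts $\tilde\phi$ to $\mathfrak S^C$. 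These assignments are mutually inverse because $\iota_{\mathcal N}$ is a monomorphism, and naturality in both variables follows from the universal property of images together with the functoriality established above.

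The only step carrying genuine content, as opposed to formal bookkeeping, is the assertion that the image of every morphism $\phi:\mathcal M\longrightarrow \mathcal N$ with $\mathcal M\in \mathfrak S^C$ again belongs to $\mathfrak S^C$, so that it genuinely appears among the subobjects summed in \eqref{srat5}; this is precisely the content of Proposition \ref{P3.3qo} (closure under quotients), applied to the quotient $\mathcal M\twoheadrightarrow Im(\phi)$. Once this is accepted, the remainder of the argument reduces to standard diagram chasing with universal properties.
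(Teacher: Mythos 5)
Your proposal is correct and takes essentially the same route as the paper's own proof: both use the definition \eqref{srat5} to factor morphisms through ${_A}\mathfrak R^C(\mathcal N)$ (for functoriality and for the adjunction bijection) and rely on Proposition \ref{P3.3qo} and Lemma \ref{Lfull} to place ${_A}\mathfrak R^C(\mathcal N)$ in $\mathfrak S^C$ and to promote the factored morphisms to $\mathfrak S^C$. The only difference is expository, in that you spell out the fullness argument and the role of closure under quotients where the paper writes ``it may be easily verified.''
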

\begin{proof}
Let $\phi :\mathcal N\longrightarrow \mathcal N'$ be a morphism in ${_A}\mathfrak S$. Since ${_A}\mathfrak R^C(\mathcal N) \in \mathfrak S^C$, it is clear from the definition in 
\eqref{srat5} that the composition ${_A}\mathfrak R^C(\mathcal N) \hookrightarrow 
\mathcal N\xrightarrow{\phi}\mathcal N'$ factors through ${_A}\mathfrak R^C(\mathcal N')$.  As such, we have an induced map ${_A}\mathfrak R^C(\phi):{_A}\mathfrak R^C(\mathcal N) \longrightarrow {_A}\mathfrak R^C(\mathcal N')$ in $\mathfrak S^C$. We will now show that 
\begin{equation}\label{adjexp}
{_A}\mathfrak S({_A}\mathfrak I^C(\mathcal M),\mathcal N) \cong \mathfrak S^C(\mathcal M,{_A}\mathfrak R^C(\mathcal N))
\end{equation}
for any $\mathcal M \in \mathfrak S^C$ and $\mathcal N \in {_A}\mathfrak S$. Let $\psi \in {_A}\mathfrak S({_A}\mathfrak I^C(\mathcal M),\mathcal N) $. Then, it is clear from the definition in \eqref{srat5} that $\psi$ factors through ${_A}\mathfrak R^C(\mathcal N)$. On the other hand, let $\psi' \in \mathfrak S^C(\mathcal M,{_A}\mathfrak R^C(\mathcal N))$. Composing with the inclusion ${_A}\mathfrak R^C(\mathcal N) \hookrightarrow \mathcal N$ in ${_A}\mathfrak S$, we obtain a morphism  in $\mathfrak S_A$. It may be easily verified that these two associations are inverse to each other.
\end{proof}

We recall that a Grothendieck category $\mathfrak S$ is locally finitely generated (see, for instance, \cite{AR}) if it has a set of finitely generated generators. Equivalently, every object of $\mathfrak S$ may be expressed a filtered colimit of its finitely generated subobjects. We  have noted before that since $k$ is a field, the  bifunctor $\_\_\otimes \_\_: \mathfrak S\times k-Mod\longrightarrow \mathfrak S$ is exact in both variables. In particular, for any $\mathcal M\in \mathfrak S$, the functor $\mathcal M\otimes \_\_: k-Mod \longrightarrow \mathfrak S$  preserves finite limits.

\begin{thm}\label{P3.5x}
Suppose that $\mathfrak S$ is locally finitely generated. Suppose also that for any finitely generated $\mathcal M\in \mathfrak S$, the functor $\mathcal M\otimes \_\_: k-Mod \longrightarrow \mathfrak S$  preserves limits. Then, the canonical pairing $C\otimes C^\ast\longrightarrow k$ is $\mathfrak S$-rational. 
\end{thm}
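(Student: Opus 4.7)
The plan is to identify $\underline{Hom}(V,\mathcal M)$ with $\mathcal M\otimes V^*$ whenever $\mathcal M$ is finitely generated, apply this to $V=C^*$, recognize $\alpha_{\mathcal M}$ under the resulting identification as the tensor with the canonical injection $\iota:C\hookrightarrow C^{**}$ into the double dual, and finally extend to arbitrary $\mathcal M$ using the local finite generation of $\mathfrak S$ together with AB5.

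For the identification, fix a finitely generated $\mathcal M\in\mathfrak S$ and write an arbitrary $V\in k-Mod$ as a filtered colimit $V=\varinjlim_i V_i$ of its finite dimensional subspaces. The adjunction $\_\_\otimes V\dashv\underline{Hom}(V,\_\_)$ combined with Yoneda gives
\begin{equation*}
\underline{Hom}(V,\mathcal M)=\underline{Hom}(\varinjlim_i V_i,\mathcal M)\cong \varprojlim_i\underline{Hom}(V_i,\mathcal M).
\end{equation*}
By \eqref{fd}, $\underline{Hom}(V_i,\mathcal M)\cong \mathcal M\otimes V_i^*$ for each finite dimensional $V_i$, and the hypothesis that $\mathcal M\otimes\_\_$ preserves limits then yields $\underline{Hom}(V,\mathcal M)\cong\mathcal M\otimes\varprojlim_i V_i^*\cong \mathcal M\otimes V^*$, where the last identification uses $V^*=\varprojlim_i V_i^*$. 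Specializing to $V=C^*$ gives $\underline{Hom}(C^*,\mathcal M)\cong\mathcal M\otimes C^{**}$. Tracing through both adjunctions, one verifies that $\alpha_{\mathcal M}$ corresponds under this isomorphism to $\mathcal M\otimes\iota$: the iso $\mathcal M\otimes C^{**}\cong\underline{Hom}(C^*,\mathcal M)$ is adjoint to the evaluation $\mathcal M\otimes C^{**}\otimes C^*\to\mathcal M$, while $\alpha_{\mathcal M}$ is adjoint to $\mathcal M\otimes\beta$; the two composite pairings $\mathcal M\otimes C\otimes C^*\to \mathcal M$ agree because $\iota(c)(f)=f(c)=\beta(c\otimes f)$. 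Since $\iota$ is a monomorphism in $k-Mod$ and $\mathcal M\otimes\_\_$ is exact there, $\alpha_{\mathcal M}$ is a monomorphism for finitely generated $\mathcal M$.

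For general $\mathcal M\in\mathfrak S$, local finite generation gives $\mathcal M=\varinjlim_j\mathcal M_j$ as a filtered colimit of its finitely generated subobjects. By naturality of $\alpha$ and the fact that $\underline{Hom}(C^*,\_\_)$ is left exact (being a right adjoint), the composite $\mathcal M_j\otimes C\hookrightarrow \mathcal M\otimes C\xrightarrow{\alpha_{\mathcal M}}\underline{Hom}(C^*,\mathcal M)$ coincides with $\mathcal M_j\otimes C\xrightarrow{\alpha_{\mathcal M_j}}\underline{Hom}(C^*,\mathcal M_j)\hookrightarrow \underline{Hom}(C^*,\mathcal M)$ and is therefore a monomorphism. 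Setting $K=\ker(\alpha_{\mathcal M})$ and using that filtered colimits commute with finite limits in the Grothendieck category $\mathfrak S$, we obtain $K=\varinjlim_j\bigl(K\cap(\mathcal M_j\otimes C)\bigr)=\varinjlim_j 0=0$. The main obstacle is the middle step — carefully checking that under the isomorphism $\underline{Hom}(C^*,\mathcal M)\cong\mathcal M\otimes C^{**}$ the map $\alpha_{\mathcal M}$ really does become $\mathcal M\otimes\iota$; granted this naturality, the remainder is a standard AB5 and filtered colimit argument.
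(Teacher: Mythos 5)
Your proposal is correct, and its second half — extending from finitely generated objects to all of $\mathfrak S$ via naturality of $\alpha$, the fact that $\underline{Hom}(C^*,\_\_)$ preserves monomorphisms as a right adjoint, and an AB5/filtered-colimit argument — is essentially identical to the paper's. Where you genuinely differ is the finitely generated step. The paper never computes the object $\underline{Hom}(C^*,\mathcal M)$: it tests $\alpha_{\mathcal M}$ against one functional at a time, using the commutative square involving $\underline{Hom}(f^*,\mathcal M)$ to obtain $Ker(\alpha_{\mathcal M})\subseteq \mathcal M\otimes Ker(f)$ for every $f\in C^*$, and then invokes the limit-preservation hypothesis only once, for the intersection $\bigcap_{f\in C^*}\bigl(\mathcal M\otimes Ker(f)\bigr)=\mathcal M\otimes\bigl(\bigcap_{f\in C^*}Ker(f)\bigr)=0$. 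You instead compute the Hom object globally, $\underline{Hom}(C^*,\mathcal M)\cong \mathcal M\otimes C^{**}$, via the cofiltered limit over finite-dimensional subspaces $V_i\subseteq C^*$, and realize $\alpha_{\mathcal M}$ as $\mathcal M\otimes\iota$. Both arguments rest on exactly the same two pillars — the hypothesis that $\mathcal M\otimes\_\_$ preserves limits, and the fact that functionals separate points of $C$ (equivalently, that $\iota:C\longrightarrow C^{**}$ is injective) — so neither is more general; yours is more structural and yields the sharper conclusion that $\alpha_{\mathcal M}$ is the base change of the canonical embedding $\iota$, at the cost of the bookkeeping you flag yourself: one must fix the isomorphism \eqref{fd} as the adjoint of the evaluation, check its compatibility with the transition maps $V_j^*\twoheadrightarrow V_i^*$ of the inverse system, and then identify the assembled isomorphism $\mathcal M\otimes C^{**}\cong\underline{Hom}(C^*,\mathcal M)$ with the adjoint of $\mathcal M\otimes ev$ by composing with the projections $\underline{Hom}(C^*,\mathcal M)\longrightarrow\underline{Hom}(V_i,\mathcal M)$ and using that the limit cone is jointly monic; your identity $\iota(c)(f)=f(c)=\beta(c\otimes f)$ then closes the argument. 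These verifications are routine, so I would not call this a gap — the paper's one-functional-at-a-time device is precisely a way of bypassing them.
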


\begin{proof}
We consider a linear map $f:C\longrightarrow k$ in $C^*$ and its dual $f^*:k^*=k\longrightarrow C^*$. Accordingly, for any $\mathcal M\in \mathfrak S$, we have a commutative diagram
\begin{equation}\label{cd3.2f}
\begin{CD}
\mathcal M\otimes C @>\alpha_{\mathcal M}>> \underline{Hom}(C^*,\mathcal M) \\
@V{\mathcal M\otimes f}VV  @VV{\underline{Hom}(f^*,\mathcal M)}V\\
\mathcal M @>\cong>> \underline{Hom}(k^*,\mathcal M)=\mathcal M\\
\end{CD}
\end{equation} It follows from \eqref{cd3.2f} that $Ker(\alpha_{\mathcal M})\subseteq Ker(\mathcal M\otimes f)$ for each $f\in C^*$. Since $\mathcal M\otimes \_\_: k-Mod \longrightarrow \mathfrak S$ is exact, we have $Ker(\alpha_{\mathcal M})\subseteq Ker(\mathcal M\otimes f)=\mathcal M\otimes Ker(f)$. 

\smallskip
We now suppose that $\mathcal M\in \mathfrak S$ is finitely generated. By assumption, $\mathcal M\otimes \_\_: k-Mod \longrightarrow \mathfrak S$  preserves limits and we obtain
\begin{equation}\label{3.3hq}
Ker(\alpha_{\mathcal M})\subseteq \underset{f\in C^*}{\bigcap}\textrm{ }( \mathcal M\otimes Ker(f))=\mathcal M\otimes \left(\underset{f\in C^*}{\bigcap}\textrm{ }Ker(f)\right)=0
\end{equation} Hence, $\alpha_{\mathcal M}$ is a monomorphism whenever  $\mathcal M\in \mathfrak S$ is finitely generated. Then, for $\mathcal N\in \mathfrak S$ and $\mathcal M
\subseteq \mathcal N$ finitely generated, we have a monomorphism
\begin{equation}\label{3.4ft}
\mathcal M\otimes C \xrightarrow{\alpha_{\mathcal M}}\underline{Hom}(C^*,\mathcal M) \hookrightarrow \underline{Hom}(C^*,\mathcal N) 
\end{equation} We note here that $\underline{Hom}(C^*,\mathcal M) \hookrightarrow \underline{Hom}(C^*,\mathcal N) $ is a monomorphism because $\underline{Hom}(C^*,\_\_)$ is a right adjoint. Taking the filtered colimit in $\mathfrak S$ of the monomorphisms appearing in \eqref{3.4ft}, we have the monomorphism $\alpha_{\mathcal N}$ for each $\mathcal N\in \mathfrak S$. 
\end{proof}

Let  $\beta:C\otimes A\longrightarrow k$ be  a pairing that is $\mathfrak S$-rational. For $\mathcal N\in {_A}\mathfrak S$, we have defined ${_A}\mathfrak R^C(\mathcal N)$ in \eqref{srat5} to be the sum of images of $\phi\in {_A}\mathfrak S(\mathcal M,\mathcal N)$ with $\mathcal M\in \mathfrak S^C$. We conclude this section by providing another description of the functor ${_A}\mathfrak R^C$.

\begin{thm}\label{P5.1}
Let $\beta:C\otimes A\longrightarrow k$ be an $\mathfrak S$-rational pairing. Let $\mathcal N \in {_A}\mathfrak S$. Then ${_A}\mathfrak R^C(\mathcal N)$ may be expressed as the pullback
\begin{equation}\label{pull5}
\begin{CD}
{_A}\mathfrak R^C(\mathcal N) @>>> \mathcal N\\
@VVV @VV{\mu'_{\mathcal N}}V\\
\mathcal N\otimes C @>\alpha_{\mathcal N}>> \underline{Hom}(A,\mathcal N)\\
\end{CD}
\end{equation}
\end{thm}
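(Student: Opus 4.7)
Let $\mathcal P$ denote the pullback with projections $p_1:\mathcal P\to\mathcal N$ and $p_2:\mathcal P\to\mathcal N\otimes C$; since $\alpha_{\mathcal N}$ is a monomorphism, so is $p_1$. The plan is to identify $\mathcal P$ with ${_A}\mathfrak R^C(\mathcal N)$ as subobjects of $\mathcal N$. The containment ${_A}\mathfrak R^C(\mathcal N)\subseteq\mathcal P$ is formal: the inclusion $\iota:{_A}\mathfrak R^C(\mathcal N)\hookrightarrow\mathcal N$ paired with $(\iota\otimes C)\circ\Delta_{{_A}\mathfrak R^C(\mathcal N)}$ satisfies the pullback compatibility, since on an object of $\mathfrak S^C$ we have $\mu'=\alpha\circ\Delta$ (by \eqref{str3d}), and $A$-linearity of $\iota$ together with naturality of $\alpha$ converts $\mu'_{\mathcal N}\circ\iota$ into $\alpha_{\mathcal N}\circ(\iota\otimes C)\circ\Delta$.

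For the reverse containment, I will endow $\mathcal P$ with a $C$-comodule structure under which $p_1$ becomes $A$-linear; then $\mathcal P\in\mathfrak S^C$ and the image of $p_1$ must lie in ${_A}\mathfrak R^C(\mathcal N)$ by the definition of the latter. Since tensoring with $C$ is exact on $\mathfrak S$, the object $\mathcal P\otimes C$ is itself a pullback (of $\mu'_{\mathcal N}\otimes C$ and $\alpha_{\mathcal N}\otimes C$) with projections $p_1\otimes C$ and $p_2\otimes C$. Producing $\Delta_{\mathcal P}:\mathcal P\to\mathcal P\otimes C$ such that $(p_1\otimes C)\circ\Delta_{\mathcal P}=p_2$ and $(p_2\otimes C)\circ\Delta_{\mathcal P}=(\mathcal N\otimes\Delta_C)\circ p_2$ therefore reduces, by the pullback universal property, to verifying the identity
\[
(\mu'_{\mathcal N}\otimes C)\circ p_2\;=\;(\alpha_{\mathcal N}\otimes C)\circ(\mathcal N\otimes\Delta_C)\circ p_2.
\]

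I expect this identity to be the main technical obstacle. To establish it, I will post-compose both sides with the monomorphism $\alpha_{\underline{Hom}(A,\mathcal N)}:\underline{Hom}(A,\mathcal N)\otimes C\hookrightarrow\underline{Hom}(A\otimes A,\mathcal N)$ (furnished by applying Definition \ref{D31v} to $\underline{Hom}(A,\mathcal N)\in\mathfrak S$) and transpose under adjunction to obtain two maps $\mathcal P\otimes A\otimes A\to\mathcal N$. Using the pullback relation $\mu'_{\mathcal N}\circ p_1=\alpha_{\mathcal N}\circ p_2$ together with the algebra-map identity $\beta(c,a'a)=\sum\beta(c_1,a')\beta(c_2,a)$ for $\hat\beta$, both sides collapse to the $A$-action associativity $(a'a)\cdot p_1(-)=a'\cdot(a\cdot p_1(-))$ on $\mathcal N$, so the equality follows.

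Once $\Delta_{\mathcal P}$ is in hand, the counit axiom reduces via monicity of $p_1$ to $(\mathcal N\otimes\epsilon_C)\circ p_2=p_1$, which follows by post-composing the pullback relation with $\underline{Hom}(u_A,\mathcal N)$ and using $\hat\beta(1_A)=\epsilon_C$; coassociativity reduces via monicity of $p_1\otimes C\otimes C$ to a direct consequence of the two defining identities for $\Delta_{\mathcal P}$. Finally, $p_1$ is $A$-linear because $\mu'_{\mathcal N}\circ p_1=\alpha_{\mathcal N}\circ p_2=\alpha_{\mathcal N}\circ(p_1\otimes C)\circ\Delta_{\mathcal P}$ is exactly the $A$-linearity condition read through $\mu'_{\mathcal P}=\alpha_{\mathcal P}\circ\Delta_{\mathcal P}$, completing the proof.
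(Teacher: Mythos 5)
Your proof is correct, and it reaches the paper's conclusion by the same overall skeleton --- show ${_A}\mathfrak R^C(\mathcal N)\subseteq \mathcal P$ formally, then show the pullback $\mathcal P$ itself carries a $C$-comodule structure so that, by \eqref{srat5}, it is swallowed by ${_A}\mathfrak R^C(\mathcal N)$ --- but the mechanism you use to produce the coaction is genuinely different from the paper's. The paper takes the pullback $\mathcal T$ in the module category (so that $\mu'_{\mathcal T}$ is available), forms the quotient $\pi:\mathcal N\twoheadrightarrow \mathcal N/\mathcal T$, and obtains $\Delta_{\mathcal T}$ from the kernel $\mathcal T\otimes C=Ker(\pi\otimes C)$ by checking $(\pi\otimes C)\circ\iota'=0$ against the monomorphism $\alpha_{\mathcal N/\mathcal T}$; this is a three-line chase parallel to the closure-under-subobjects argument of Proposition \ref{P3.3qo}, and it invokes $\mathfrak S$-rationality of $\beta$ at the object $\mathcal N/\mathcal T$. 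You instead stay entirely in $\mathfrak S$, exploit exactness of $\_\_\otimes C$ to realize $\mathcal P\otimes C$ as a pullback, and build $\Delta_{\mathcal P}$ from its universal property, verifying the compatibility identity $(\mu'_{\mathcal N}\otimes C)\circ p_2=(\alpha_{\mathcal N}\otimes C)\circ(\mathcal N\otimes \Delta_C)\circ p_2$ against the monomorphism $\alpha_{\underline{Hom}(A,\mathcal N)}$, where both transposes collapse to $(a'a)\cdot(-)=a'\cdot(a\cdot(-))$ via multiplicativity of $\hat\beta$; your counit check via $\underline{Hom}(u_A,\mathcal N)$ and $\hat\beta(1_A)=\epsilon_C$, the coassociativity check through the mono $p_1\otimes C\otimes C$, and the closing $A$-linearity of $p_1$ are all sound. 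The trade-off: the paper's route is shorter and recycles an earlier lemma, but leaves the comodule axioms and the $A$-linearity of $\mathcal T\hookrightarrow\mathcal N$ (with respect to the $\beta$-induced action, which is what \eqref{srat5} actually requires) implicit; your route makes all of these explicit, never needs an a priori $A$-module structure on the pullback, and isolates exactly where the algebra-map property of $\hat\beta$ --- invisible in the paper's computation \eqref{eq5.3} --- genuinely enters the argument.
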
 

\begin{proof}
We begin by setting $\mathcal T\in \mathfrak S_A$ to be the pullback $\mathcal T:=lim(\mathcal N\otimes C \longrightarrow \underline{Hom}(A,\mathcal N)\longleftarrow \mathcal N)$ with induced morphisms $\iota:\mathcal T\longrightarrow \mathcal N$ and $\iota':\mathcal T\longrightarrow \mathcal N\otimes C$. Let $\mathcal M\in \mathfrak S^C$ and consider $\phi\in {_A}\mathfrak S(\mathcal M,\mathcal N)$. Then, we have commutative diagrams
\begin{equation}\label{cd5.2}
\begin{array}{c}
 \begin{CD}
 \mathcal M @>\Delta_{\mathcal M}>>  \mathcal M\otimes C @>\alpha_{\mathcal M}>>\underline{Hom}(A,\mathcal M) \\
 @. @VV{\phi\otimes C}V @VV\underline{Hom}(A,\phi)V \\
 @. \mathcal N\otimes C@>\alpha_{\mathcal N}>>\underline{Hom}(A,\mathcal N)\\
 \end{CD}\qquad\qquad\qquad\qquad \qquad \begin{CD}
\mathcal M @>\phi>> \mathcal N\\
@V{\mu'_{\mathcal M}}VV @VV{\mu'_{\mathcal N}}V \\
 \underline{Hom}(A,\mathcal M)@> \underline{Hom}(A,\phi)>>  \underline{Hom}(A,\mathcal N)\\
 \end{CD}
\end{array}
\end{equation} Since $\alpha_{\mathcal M}\circ \Delta_{\mathcal M}=\mu'_{\mathcal M}$, it follows from \eqref{cd5.2} that $\mathcal M$ admits an induced morphism to the pullback $\mathcal T$. Since $\beta:C\otimes A\longrightarrow k$ is $\mathfrak S$-rational, $\alpha_{\mathcal N}$ is a monomorphism and hence so is its pullback $\iota:\mathcal T\longrightarrow \mathcal N$. By the definition in \eqref{srat5}, it now follows that ${_A}\mathfrak R^C(\mathcal N)\subseteq \mathcal T\subseteq \mathcal N$. To complete the proof, we will show that $\mathcal T$ is itself a $C$-comodule object in $\mathfrak S$. 
For this, we consider
$$\begin{tikzcd}[column sep=2cm]
 &   & \mathcal{T}   \arrow{d}{\iota'}&  & \\
0 \arrow{r} & \mathcal{T} \otimes C \arrow{r}{\iota \otimes C} \arrow{d}[swap]{\alpha_\mathcal{T}}& \mathcal N \otimes C \arrow{r}{\pi \otimes C} \arrow[swap]{d}{\alpha_\mathcal N}& \mathcal N/\mathcal{T} \otimes C \arrow{r} \arrow{d}{\alpha_{\mathcal N/\mathcal{T}}}& 0\\
0 \arrow{r} & \underline{Hom}(A,\mathcal{T}) \arrow{r}{\underline{Hom}(A,\iota)} & \underline{Hom}(A,\mathcal N) \arrow{r}{\underline{Hom}(A,\pi)} & \underline{Hom}(A,\mathcal N/\mathcal{T})  
\end{tikzcd}$$ We have
\begin{equation}\label{eq5.3}
\alpha_{\mathcal N/\mathcal{T}}\circ (\pi\otimes C)\circ \iota'=\underline{Hom}(A,\pi)\circ \alpha_{\mathcal N}\circ \iota'=\underline{Hom}(A,\pi)\circ\mu'_{\mathcal N}\circ \iota=\underline{Hom}(A,\pi)\circ \underline{Hom}(A,\iota)\circ \mu'_{\mathcal T}=0
\end{equation} Since $\alpha_{\mathcal N/\mathcal{T}}$ is a monomorphism, we obtain $(\pi\otimes C)\circ \iota'=0$. This gives an induced morphism $\Delta_{\mathcal T}:\mathcal T\longrightarrow 
\mathcal T\otimes C$ which makes $\mathcal T$ a $C$-comodule object in $\mathfrak S$. 
\end{proof}

\section{Generators of $C$-comodule objects in $\mathfrak S$}

\smallskip We continue with $C$ being a $k$-coalgebra. In this section, we will study when $\mathfrak S^C$ is a Grothendieck category and when it is locally finitely generated.
For any object $\mathcal N\in \mathfrak S$, we note that the comultiplication on $C$ makes $\mathcal N\otimes C$ into a right $C$-comodule object in $\mathfrak S$ in a canonical manner. 

\begin{lem}\label{L4.1}
Let $(\mathcal M,\Delta_{\mathcal M})\in \mathfrak S^C$. Then, $\mathcal M$ is a subobject of $\mathcal M\otimes C$ in the category $\mathfrak S^C$.
\end{lem}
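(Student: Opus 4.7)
The plan is to exhibit $\Delta_{\mathcal M}:\mathcal M\longrightarrow \mathcal M\otimes C$ itself as the required monomorphism in $\mathfrak S^C$, where the target $\mathcal M\otimes C$ carries the canonical $C$-comodule structure given by $\mathcal M\otimes \Delta_C:\mathcal M\otimes C\longrightarrow \mathcal M\otimes C\otimes C$ (noted just before the lemma).

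First I would check that $\Delta_{\mathcal M}$ is a morphism in $\mathfrak S^C$. Unpacking what this means, I need the square
\begin{equation*}
\begin{CD}
\mathcal M @>\Delta_{\mathcal M}>> \mathcal M\otimes C\\
@V\Delta_{\mathcal M}VV @VV\mathcal M\otimes \Delta_CV\\
\mathcal M\otimes C @>\Delta_{\mathcal M}\otimes C>> \mathcal M\otimes C\otimes C\\
\end{CD}
\end{equation*}
to commute, but this is literally the coassociativity diagram from Definition \ref{D2.2}(2). So there is nothing to prove here beyond pointing to the defining axiom.

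Next I would show that $\Delta_{\mathcal M}$ is a monomorphism in $\mathfrak S^C$. The counit axiom in Definition \ref{D2.2}(1) gives a morphism $\mathcal M\otimes \epsilon:\mathcal M\otimes C\longrightarrow \mathcal M$ in $\mathfrak S$ such that $(\mathcal M\otimes \epsilon)\circ \Delta_{\mathcal M}=\mathrm{id}_{\mathcal M}$, so $\Delta_{\mathcal M}$ is already a split monomorphism in $\mathfrak S$. Since finite limits (and in particular kernels) in $\mathfrak S^C$ are computed in $\mathfrak S$ (as observed after Definition \ref{D2.2}), a morphism in $\mathfrak S^C$ is a monomorphism exactly when it is a monomorphism in $\mathfrak S$. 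Hence $\Delta_{\mathcal M}$ is monic in $\mathfrak S^C$, which is what was claimed.

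There is essentially no obstacle here; the statement is a direct unwinding of the two comodule axioms together with the compatibility between monomorphisms in $\mathfrak S^C$ and in $\mathfrak S$. The only point worth flagging is that one must use the canonical comodule structure $\mathcal M\otimes \Delta_C$ on $\mathcal M\otimes C$ (not some other one coming, say, from $\Delta_{\mathcal M}\otimes C$) for the coassociativity diagram to be the correct ``morphism in $\mathfrak S^C$'' condition.
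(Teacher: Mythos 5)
Your proof is correct and follows essentially the same route as the paper: identify $\Delta_{\mathcal M}$ as the candidate morphism, note that it is a map in $\mathfrak S^C$ (the coassociativity axiom, which the paper leaves implicit and you spell out), use the counit axiom $(\mathcal M\otimes \epsilon)\circ \Delta_{\mathcal M}=\mathrm{id}_{\mathcal M}$ to see it is a split monomorphism in $\mathfrak S$, and conclude via the fact that kernels in $\mathfrak S^C$ are computed in $\mathfrak S$. Your closing remark about using the canonical structure $\mathcal M\otimes \Delta_C$ on $\mathcal M\otimes C$ is a reasonable point of care, consistent with the paper's convention stated just before the lemma.
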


\begin{proof}
We note that $\Delta_{\mathcal M}:\mathcal M\longrightarrow \mathcal M\otimes C$ is a morphism in $\mathfrak S^C$. If $\epsilon: C\longrightarrow k$ is the counit of $C$, we know that $(\mathcal M\otimes \epsilon)\circ \Delta_{\mathcal M}$ is the identity in $\mathfrak S$. Hence, $\Delta_{\mathcal M}$ is a monomorphism in $\mathfrak S$. Since kernels of morphisms in 
$\mathfrak S^C$ are computed in $\mathfrak S$, the result follows.  

\end{proof}

\begin{lem}\label{L4.2}
The category $\mathfrak S^C$ is well-powered, i.e., the collection of subobjects of any given object forms a set.
\end{lem}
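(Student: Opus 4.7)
The plan is to transfer well-poweredness from the ambient Grothendieck category $\mathfrak S$ to $\mathfrak S^C$ via the forgetful functor. Concretely, I will exhibit an injection from the class of subobjects of any $(\mathcal M,\Delta_{\mathcal M})\in \mathfrak S^C$ into the class of subobjects of $\mathcal M$ in $\mathfrak S$, and then invoke the fact that Grothendieck categories are well-powered.

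First, I would observe that any monomorphism $\iota:\mathcal N\hookrightarrow \mathcal M$ in $\mathfrak S^C$ is also a monomorphism in $\mathfrak S$, since finite limits in $\mathfrak S^C$ are computed in $\mathfrak S$ (as noted in the paragraph following Definition~\ref{D2.2}). This gives a well-defined map of subobject classes
\[
\Phi:\mathrm{Sub}_{\mathfrak S^C}(\mathcal M)\longrightarrow \mathrm{Sub}_{\mathfrak S}(\mathcal M).
\]
The core step is to prove that $\Phi$ is injective. Suppose $\iota_1:\mathcal N_1\hookrightarrow \mathcal M$ and $\iota_2:\mathcal N_2\hookrightarrow \mathcal M$ are monomorphisms in $\mathfrak S^C$ representing the same subobject in $\mathfrak S$, so there is an isomorphism $\phi:\mathcal N_1\to \mathcal N_2$ in $\mathfrak S$ with $\iota_2\circ \phi=\iota_1$. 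I need to promote $\phi$ to an isomorphism in $\mathfrak S^C$, i.e.\ to verify $(\phi\otimes C)\circ \Delta_{\mathcal N_1}=\Delta_{\mathcal N_2}\circ \phi$.

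To do this I would post-compose both sides with $\iota_2\otimes C$. Using $\iota_1,\iota_2\in \mathfrak S^C(\mathcal N_i,\mathcal M)$ we get
\[
(\iota_2\otimes C)\circ (\phi\otimes C)\circ \Delta_{\mathcal N_1}=(\iota_1\otimes C)\circ \Delta_{\mathcal N_1}=\Delta_{\mathcal M}\circ \iota_1
\]
and
\[
(\iota_2\otimes C)\circ \Delta_{\mathcal N_2}\circ \phi=\Delta_{\mathcal M}\circ \iota_2\circ \phi=\Delta_{\mathcal M}\circ \iota_1.
\]
The two composites agree, and since $\_\_\otimes C:\mathfrak S\to \mathfrak S$ is exact (because $k$ is a field, already used repeatedly above) the map $\iota_2\otimes C$ is a monomorphism. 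Cancelling it yields $(\phi\otimes C)\circ \Delta_{\mathcal N_1}=\Delta_{\mathcal N_2}\circ \phi$, so $\phi$ is an isomorphism in $\mathfrak S^C$ and the two subobjects coincide in $\mathfrak S^C$ as well.

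With $\Phi$ shown to be injective, the conclusion is immediate: $\mathfrak S$ is a Grothendieck category and is therefore well-powered, so $\mathrm{Sub}_{\mathfrak S}(\mathcal M)$ is a set, and hence so is $\mathrm{Sub}_{\mathfrak S^C}(\mathcal M)$. The only step that requires any care is the uniqueness of the comodule structure on a sub-comodule, but this reduces to cancelling the monomorphism $\iota_2\otimes C$, which is harmless because tensoring over the field $k$ is exact.
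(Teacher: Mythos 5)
Your proof is correct and follows essentially the same route as the paper: both arguments transfer well-poweredness from $\mathfrak S$ to $\mathfrak S^C$ by observing that the comodule structure on a subobject is unique, which comes down to cancelling the monomorphism $\iota\otimes C$ (exactness of $\_\_\otimes C$ over the field $k$). Your version merely spells out, via the isomorphism-promotion step, what the paper compresses into the remark that the factorization through $\mathcal M'\otimes C$ must be unique.
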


\begin{proof}
We consider $(\mathcal M',\Delta_{\mathcal M'})\subseteq (\mathcal M,\Delta_{\mathcal M})$ in $\mathfrak S^C$. Since kernels in $\mathfrak S^C$ are computed in
$\mathfrak S$, we know that $\mathcal M'\subseteq \mathcal M$ in $\mathfrak S$. Additionally, since $\_\_\otimes C$ is exact, we know that $\mathcal M'\otimes C\subseteq 
\mathcal M\otimes C$ in $\mathfrak S$. This means that the factorization of $\mathcal M'\longrightarrow \mathcal M\longrightarrow \mathcal M\otimes C$ through $\mathcal M'\otimes C$ must be unique. Hence, the subobjects of $ (\mathcal M,\Delta_{\mathcal M})$ in $\mathfrak S^C$ correspond to a subcollection of the subobjects of $\mathcal M$ in $\mathfrak S$. Since $\mathfrak S$ is a Grothendieck category, it is well-powered. Hence, the subobjects of $\mathcal M$ in $\mathfrak S$ form a set. The result is now clear.
\end{proof}

\begin{Thm}\label{P4.2}
Let $C$ be a $k$-coalgebra. Then,  $\mathfrak S^C$ is a Grothendieck category.
\end{Thm}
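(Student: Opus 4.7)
The plan is to verify the three usual requirements for a Grothendieck category: that $\mathfrak{S}^C$ is abelian with AB5, and that it admits a set of generators. The first two are essentially formal. It was noted after Definition \ref{D2.2} that kernels, cokernels, and colimits in $\mathfrak{S}^C$ are computed in $\mathfrak{S}$, so the abelian structure and cocompleteness transfer directly. AB5 then follows because a filtered colimit of monomorphisms in $\mathfrak{S}^C$ is computed in $\mathfrak{S}$ and stays monic there, and kernels (hence the relevant exactness test) are also computed in $\mathfrak{S}$.

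For the generator, I would fix a generator $G$ of $\mathfrak{S}$, equip each $G\otimes C$ with its cofree comodule structure $G\otimes \Delta_C$, and consider the union $\mathcal{H}$ over $n\ge 0$ of the collections of subcomodules of $(G\otimes C)^n$; this is a set by Lemma \ref{L4.2}. Given $\mathcal{M}\in\mathfrak{S}^C$, I would take a surjection $q:G^{(I)}\twoheadrightarrow \mathcal{M}$ in $\mathfrak{S}$ and apply the exact functor $\_\otimes C$ to obtain a surjection $q\otimes C:(G\otimes C)^{(I)}\twoheadrightarrow \mathcal{M}\otimes C$ in $\mathfrak{S}^C$. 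Pulling back along $\Delta_{\mathcal{M}}:\mathcal{M}\hookrightarrow \mathcal{M}\otimes C$ from Lemma \ref{L4.1} then yields $\mathcal{M}^*:=\mathcal{M}\times_{\mathcal{M}\otimes C}(G\otimes C)^{(I)}\subseteq (G\otimes C)^{(I)}$ with $\mathcal{M}^*\twoheadrightarrow \mathcal{M}$, since pullbacks of epimorphisms along morphisms are epimorphisms in any abelian category. Setting $\mathcal{M}^*_F:=\mathcal{M}^*\cap (G\otimes C)^{(F)}$ for finite $F\subseteq I$, each $\mathcal{M}^*_F$ is a subcomodule of $(G\otimes C)^{|F|}$ and hence lies in $\mathcal{H}$; AB5 in $\mathfrak{S}^C$ applied to the directed family $\{(G\otimes C)^{(F)}\}_F$ whose filtered union is $(G\otimes C)^{(I)}$ gives $\mathcal{M}^*=\sum_F \mathcal{M}^*_F$, whence $\bigoplus_F \mathcal{M}^*_F \twoheadrightarrow \mathcal{M}$ in $\mathfrak{S}^C$. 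Thus $\bigoplus_{\mathcal{N}\in\mathcal{H}}\mathcal{N}$ is a single generator.

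The main obstacle I anticipate is exactly this final size-control step. It would be tempting to use only subcomodules of $G\otimes C$ itself, but that would require a distributivity law of the form $\mathcal{M}^*\cap \bigoplus_i(G\otimes C)_i=\sum_i(\mathcal{M}^*\cap (G\otimes C)_i)$, which fails in general abelian categories on account of ``diagonal'' subobjects in a coproduct. Enlarging to finite powers $(G\otimes C)^n$ is the minimal fix, and it is precisely what the AB5 reduction from the infinite index set $I$ to finite subsets $F$ consumes, while still keeping $\mathcal{H}$ within Lemma \ref{L4.2}'s set-sized bound.
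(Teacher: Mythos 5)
Your proof is correct and follows essentially the same route as the paper: pull back the cofree cover $(G\otimes C)^{(I)}\twoheadrightarrow \mathcal M\otimes C$ along the monomorphism $\Delta_{\mathcal M}$ of Lemma \ref{L4.1}, reduce to finite index sets, and conclude that the subcomodules of the finite powers $(G\otimes C)^n$ --- a set by the well-poweredness of Lemma \ref{L4.2} --- form a generating family for $\mathfrak S^C$. Your AB5 distributivity step $\mathcal M^*=\sum_F\bigl(\mathcal M^*\cap (G\otimes C)^{(F)}\bigr)$ is a mild streamlining of the paper's passage through the auxiliary pullbacks $\mathcal E_T$ and $\mathcal K_T$, but the underlying argument is the same.
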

\begin{proof} Since filtered colimits and finite limits in $\mathfrak S^C$ are computed in $\mathfrak S$, it is clear that $\mathfrak S^C$ satisfies (AB5).
Also since $\mathfrak S$ is a Grothendieck category, we can    choose a generator $\mathcal G\in \mathfrak S$. We consider some $(\mathcal M,\Delta_{\mathcal M})\in \mathfrak S^C$. We choose an epimorphism $\pi: \mathcal G^{(\Lambda)}\longrightarrow \mathcal M$ in $\mathfrak S$ from a direct sum of copies of $\mathcal G$. Accordingly, we have an epimorphism $\pi\otimes C:
 \mathcal G^{(\Lambda)}\otimes C=(\mathcal G\otimes C)^{(\Lambda)}\longrightarrow \mathcal M\otimes C$ in $\mathfrak S^C$. We now consider the following pullback diagram in
 $\mathfrak S^C$
 \begin{equation}\label{4.0cd}
 \begin{CD}
 \mathcal N @>>> \mathcal M\\
 @VVV @VV\Delta_{\mathcal M}V \\
 (\mathcal G\otimes C)^{(\Lambda)} @>\pi\otimes C>> \mathcal M\otimes C\\
 \end{CD}
 \end{equation} By Lemma \ref{L4.1}, we know that $\mathcal M\subseteq \mathcal M\otimes C$ in $\mathfrak S^C$. Accordingly, $\mathcal N\subseteq  (\mathcal G\otimes C)^{(\Lambda)} $ in $\mathfrak S^C$. 
 We now let $Fin(\Lambda)$ denote the collection of finite subsets of $\Lambda$. For any $T\in Fin(\Lambda)$, we set 
 \begin{equation}
 \mathcal E_T:=(\mathcal G\otimes C)^{(\Lambda)}\times_{(\mathcal M\otimes C)} Im((\pi\otimes C)|(\mathcal G\otimes C)^{(T)})
 \end{equation}
in $\mathfrak S^C$. 
 We now consider the following two pullback squares in $\mathfrak S^C$
 \begin{equation}\label{4.1cd}
 \begin{array}{lll}
 \begin{CD}
 \mathcal N\cap \mathcal E_T @>>> \mathcal E_T \\
 @VVV @VVV \\
 \mathcal N @>>> (\mathcal G\otimes C)^{(\Lambda)}\\
 \end{CD}
 &\qquad & 
 \begin{CD}
 \mathcal K_T @>>> Im((\pi\otimes C)|(\mathcal G\otimes C)^{(T)})\\
 @VVV @VVV \\
 \mathcal M @>>> \mathcal M\otimes C\\
 \end{CD} \\
 \end{array}
 \end{equation} The left hand side diagram in \eqref{4.1cd} consists of subobjects of $ (\mathcal G\otimes C)^{(\Lambda)}$ and the right hand side diagram consists of subobjects of 
$\mathcal M\otimes C$.  It now follows that all the squares in the following diagram are pullbacks in $\mathfrak S^C$
 \begin{equation}\label{4.4cd}
 \begin{CD}
\mathcal N\cap (\mathcal G\otimes C)^{(T)}@>>>  \mathcal N\cap \mathcal E_T @>>> \mathcal K_T \\
 @VVV @VVV @VVV \\
(\mathcal G\otimes C)^{(T)}@>>>  \mathcal E_T @>>>  Im((\pi\otimes C)|(\mathcal G\otimes C)^{(T)})\\
 \end{CD}
 \end{equation}
By definition, we know that the composition $(\mathcal G\otimes C)^{(T)}\longrightarrow   \mathcal E_T \longrightarrow    Im((\pi\otimes C)|(\mathcal G\otimes C)^{(T)})$ is an epimorphism. Since $\mathfrak S^C$ is abelian, it follows that $\pi_T:\mathcal N\cap (\mathcal G\otimes C)^{(T)}\longrightarrow \mathcal N\cap \mathcal E_T \longrightarrow   \mathcal K_T $ is also an epimorphism. 

\smallskip
Also since $\pi\otimes C:(\mathcal G\otimes C)^{(\Lambda)}\longrightarrow \mathcal M\otimes C$ is an epimorphism, applying filtered colimits to the right hand diagram in \eqref{4.1cd} over all $T\in Fin(\Lambda)$ gives us $\mathcal M=\underset{T\in Fin(\Lambda)}{\bigcup}\mathcal K_T$. Accordingly, the subobjects of $(\mathcal G\otimes C)^n$ in $\mathfrak S^C$, where $n\geq 1$, give a set of generators for $\mathfrak S^C$. By Lemma \ref{L4.2}, we know that $\mathfrak S^C$ is well-powered, and hence this collection is a set. 
 
\end{proof}

\begin{lem}\label{L4.25}
Let $\beta:C\otimes A\longrightarrow k$ be an $\mathfrak S$-rational pairing and let $\mathcal M\in \mathfrak S^C$ be finitely generated as an object of ${_A}\mathfrak S$. Then, $\mathcal M$ is also finitely generated as an object of $\mathfrak S^C$. 
\end{lem}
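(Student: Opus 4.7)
The plan is to reduce the question to the characterization of finite generation in terms of directed unions of subobjects, and then transfer such a union from $\mathfrak{S}^C$ to ${_A}\mathfrak S$ via the adjunction of Theorem \ref{Thm3.4c}.

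Concretely, I would suppose that $\mathcal M = \sum_{i\in I}\mathcal M_i$ is the directed sum of a directed family $\{\mathcal M_i\}_{i\in I}$ of subobjects in $\mathfrak S^C$, and try to show $\mathcal M=\mathcal M_{i_0}$ for some $i_0\in I$. By Proposition \ref{P3.3qo}, $\mathfrak S^C$ is closed under subobjects in ${_A}\mathfrak S$, so each inclusion $\mathcal M_i\hookrightarrow \mathcal M$ in $\mathfrak S^C$ remains a monomorphism after applying ${_A}\mathfrak I^C$; equivalently, $\mathcal M_i$ is canonically a subobject of ${_A}\mathfrak I^C(\mathcal M)$ in ${_A}\mathfrak S$. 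Since ${_A}\mathfrak I^C$ is a left adjoint by Theorem \ref{Thm3.4c}, it preserves all colimits, and in particular it preserves directed unions of subobjects: the epimorphism $\bigoplus_i \mathcal M_i\twoheadrightarrow \sum_i\mathcal M_i$ in $\mathfrak S^C$ maps to the corresponding epimorphism in ${_A}\mathfrak S$, identifying $\sum_i\mathcal M_i$ computed in $\mathfrak S^C$ with the sum computed in ${_A}\mathfrak S$.

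Thus $\mathcal M = \sum_{i\in I}\mathcal M_i$ also holds as a directed union of subobjects in ${_A}\mathfrak S$. Since $\mathcal M$ is finitely generated in ${_A}\mathfrak S$ by hypothesis, the standard characterization of finitely generated objects in a Grothendieck category forces $\mathcal M = \mathcal M_{i_0}$ for some $i_0\in I$. This exhibits $\mathcal M$ as finitely generated in $\mathfrak S^C$.

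The only subtle point is verifying that ``subobjects in $\mathfrak S^C$'' and ``subobjects in ${_A}\mathfrak S$ that happen to lie in $\mathfrak S^C$'' agree, and that the formation of the directed sum commutes with the embedding. The first follows from Proposition \ref{P3.3qo} together with Lemma \ref{Lfull}, which guarantees that ${_A}\mathfrak I^C$ is fully faithful and that its image is closed under subobjects; the second follows from the adjointness in Theorem \ref{Thm3.4c}, since left adjoints preserve epimorphisms and filtered colimits. Once these two points are in hand, the argument is short and no further computation is needed.
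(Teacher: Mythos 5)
Your proposal is correct, but it runs through a different characterization of finite generation than the paper does. The paper's own proof verifies the Hom-functor criterion directly: given a filtered system $\{\mathcal N_i\}_{i\in I}$ of monomorphisms in $\mathfrak S^C$ with colimit $\mathcal N$ (computed in $\mathfrak S$, hence equally the colimit in ${_A}\mathfrak S$), any $\phi:\mathcal M\longrightarrow \mathcal N$ in $\mathfrak S^C$ factors through some $\mathcal N_{i_0}$ in ${_A}\mathfrak S$ by hypothesis, and the factoring morphism lies in $\mathfrak S^C$ because the embedding is full (Lemma \ref{Lfull}). You instead use the lattice-of-subobjects criterion (a directed union $\mathcal M=\sum_{i}\mathcal M_i$ forces $\mathcal M=\mathcal M_{i_0}$) and transfer the directed union along ${_A}\mathfrak I^C$. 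Both inputs you need are available, with two small caveats worth making explicit. First, left-adjointness of ${_A}\mathfrak I^C$ (Theorem \ref{Thm3.4c}) by itself only gives preservation of the epimorphism $\bigoplus_i\mathcal M_i\twoheadrightarrow\sum_i\mathcal M_i$; to know the image factorization, and hence the sum of subobjects, is preserved you also need that monomorphisms are preserved, which holds not because of Proposition \ref{P3.3qo} but because kernels in both $\mathfrak S^C$ and ${_A}\mathfrak S$ are computed in $\mathfrak S$, so the embedding is exact. Second, your concluding step tacitly invokes the standard equivalence, valid in any Grothendieck category, between the subobject-stabilization criterion and the Hom criterion used elsewhere in the paper (e.g., Definition \ref{D8.16}); this is legitimate for $\mathfrak S^C$ because Theorem \ref{P4.2} is proved before the lemma, so there is no circularity, and it must likewise be applied in ${_A}\mathfrak S$ to use the hypothesis. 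What your route buys is that full faithfulness is not really needed for the main step: an inclusion in $\mathfrak S^C$ that becomes an isomorphism in ${_A}\mathfrak S$ is automatically an isomorphism in $\mathfrak S^C$, since the inverse is then a comodule morphism. What the paper's route buys is brevity and independence from the equivalence of the two finiteness criteria, arguing directly from the definition it uses throughout.
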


\begin{proof}
Let $\{\mathcal N_i\}_{i\in I}$ be a filtered system of objects in $\mathfrak S^C$ connected by monomorphisms. Let $\mathcal N=\underset{i\in I}{\varinjlim}\textrm{ }\mathcal N_i\in \mathfrak S^C$. Since filtered colimits in both $\mathfrak S^C$ and ${_A}\mathfrak S$ are computed in $\mathfrak S$,  we note that $\mathcal N$ is also the colimit of this system in ${_A}\mathfrak S$.  For $i\in I$, we let $\xi_i$ denote the inclusion $\xi_i:\mathcal N_i\longrightarrow \mathcal N$. 

\smallskip We now consider a morphism $\phi:\mathcal M\longrightarrow \mathcal N$ in $\mathfrak S^C$. Since $\mathcal M\in {_A}\mathfrak S$ is finitely generated, we can find some $i_0\in I$ such that
$\phi$ factors as $\phi=\xi_{i_0}\circ \psi$ with $\psi:\mathcal M\longrightarrow \mathcal N_{i_0}$ in ${_A}\mathfrak S$. By Lemma \ref{Lfull}, $\mathfrak S^C$ embeds as a full subcategory of ${_A}\mathfrak S$ and hence the morphism $\psi$ lies in $\mathfrak S^C$. This proves the result.
\end{proof}

\begin{thm}\label{P4.3d}
Suppose that $\mathfrak S$ is locally finitely generated. Let $\beta:C\otimes A\longrightarrow k$ be an $\mathfrak S$-rational pairing. Then, $\mathfrak S^C$ is locally finitely generated.
\end{thm}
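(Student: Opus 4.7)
The plan is to deduce local finite generation of $\mathfrak S^C$ from that of ${_A}\mathfrak S$, using the embedding ${_A}{\mathfrak I}^C:\mathfrak S^C\longrightarrow {_A}\mathfrak S$ together with Proposition \ref{P3.3qo} and Lemma \ref{L4.25}. Since Proposition \ref{P4.2} already tells us that $\mathfrak S^C$ is a Grothendieck category, it suffices to show that every $\mathcal M\in \mathfrak S^C$ is a filtered union of subobjects that are finitely generated in $\mathfrak S^C$.

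First I would verify that ${_A}\mathfrak S$ itself is locally finitely generated. Given a set $\{\mathcal G_i\}_{i\in I}$ of finitely generated generators of $\mathfrak S$, the adjunction between the free $A$-module functor $A\otimes \_\_:\mathfrak S\longrightarrow {_A}\mathfrak S$ and the forgetful functor yields ${_A}\mathfrak S(A\otimes \mathcal G_i,\_\_)\cong \mathfrak S(\mathcal G_i,\_\_)$ on underlying objects. Since filtered colimits of monomorphisms in ${_A}\mathfrak S$ are computed in $\mathfrak S$, each $A\otimes \mathcal G_i$ inherits finite generation from $\mathcal G_i$. The $A\otimes \mathcal G_i$ jointly generate ${_A}\mathfrak S$ since any epimorphism $\bigoplus \mathcal G_i^{(\Lambda_i)}\twoheadrightarrow \mathcal M$ in $\mathfrak S$ lifts by adjunction to an epimorphism in ${_A}\mathfrak S$.

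Now let $\mathcal M\in \mathfrak S^C$ and view it as an object of ${_A}\mathfrak S$ via ${_A}{\mathfrak I}^C$ (Lemma \ref{Lfull}). By the previous step, write $\mathcal M=\bigcup_\alpha \mathcal M_\alpha$ as a filtered union of subobjects in ${_A}\mathfrak S$, with each $\mathcal M_\alpha$ finitely generated in ${_A}\mathfrak S$. By Proposition \ref{P3.3qo}, $\mathfrak S^C$ is closed under subobjects in ${_A}\mathfrak S$, so each $\mathcal M_\alpha$ lies in $\mathfrak S^C$ and the inclusion $\mathcal M_\alpha\hookrightarrow \mathcal M$ is a monomorphism in $\mathfrak S^C$. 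Lemma \ref{L4.25} then yields that each $\mathcal M_\alpha$ is finitely generated in $\mathfrak S^C$. Since filtered colimits and monomorphisms in both $\mathfrak S^C$ and ${_A}\mathfrak S$ are computed in $\mathfrak S$, the filtered union $\mathcal M=\bigcup_\alpha \mathcal M_\alpha$ also holds in $\mathfrak S^C$, giving local finite generation. The real work has already been done in Proposition \ref{P3.3qo} and Lemma \ref{L4.25}; the only care needed here is to keep track of in which category each union is being formed.
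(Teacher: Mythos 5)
Your proof is correct and follows essentially the same route as the paper: reduce to ${_A}\mathfrak S$ via the full embedding ${_A}\mathfrak I^C$ (Lemma \ref{Lfull}), write $\mathcal M$ as the filtered union of its finitely generated ${_A}\mathfrak S$-subobjects, and transfer finite generation back using Proposition \ref{P3.3qo} and Lemma \ref{L4.25}, noting that filtered colimits in both categories are computed in $\mathfrak S$. The only divergence is that where the paper cites \cite[Corollary B3.17]{AZ} and \cite[Proposition B5.1]{AZ} for the local finite generation of ${_A}\mathfrak S$, you prove these facts directly from the free--forgetful adjunction, which is a sound (and self-contained) substitute.
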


\begin{proof}
Let $\{\mathcal G_i\}_{i\in I}$ be a set of finitely generated generators for $\mathfrak S$. By \cite[Corollary B3.17]{AZ}, we know that $\{A\otimes \mathcal G_i\}_{i\in I}$ is a set of generators for ${_A}\mathfrak S$ and by \cite[Proposition B5.1]{AZ}, we know that each $A\otimes \mathcal G_i$ is finitely generated in ${_A}\mathfrak S$. Accordingly, ${_A}\mathfrak S$ is locally finitely generated.

\smallskip
We now consider some $\mathcal N\in \mathfrak S^C$. Since $\beta:C\otimes A\longrightarrow k$ is an $\mathfrak S$-rational pairing, it follows from Lemma \ref{Lfull} that $\mathfrak S^C$ may be treated as a full subcategory of ${_A}\mathfrak S$.  We write $fg_A(\mathcal N)$ for the collection of finitely generated subobjects of $\mathcal N$ in ${_A}\mathfrak S$. Since ${_A}\mathfrak S$ is locally finitely generated, we may write $\mathcal N$ as the filtered colimit in ${_A}\mathfrak S$ of all $\mathcal M\in fg_A(\mathcal N)$. By Proposition \ref{P3.3qo}, $\mathfrak S^C$ is closed under subobjects in ${_A}\mathfrak S$ and hence each $\mathcal M\in fg_A(\mathcal N)$ lies in $\mathfrak S^C$. By Lemma \ref{L4.25},  each $\mathcal M\in fg_A(\mathcal N)$ is also finitely generated as an object of $\mathfrak S^C$.  Since filtered colimits  in both 
$\mathfrak S^C$ and ${_A}\mathfrak S$ are both computed in $\mathfrak S$, we now see that $\mathcal N$ is also the filtered colimit in $\mathfrak S^C$ of finitely generated subobjects. This proves the result. 
\end{proof}

\begin{thm}\label{P4.4z} Suppose that $\mathfrak S$ is locally finitely generated. Suppose also that for any finitely generated $\mathcal M\in \mathfrak S$, the functor $\mathcal M\otimes \_\_: k-Mod \longrightarrow \mathfrak S$  preserves limits.  Then, $\mathfrak S^C$ is locally finitely generated.
\end{thm}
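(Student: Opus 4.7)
The plan is to observe that Proposition \ref{P4.4z} follows immediately by chaining the two preceding results. The hypotheses of Proposition \ref{P4.4z} are exactly the hypotheses of Proposition \ref{P3.5x}, so I can apply the latter to conclude that the canonical evaluation pairing $\beta: C \otimes C^{\ast} \longrightarrow k$ is $\mathfrak S$-rational. Setting $A := C^{\ast}$, this gives an $\mathfrak S$-rational pairing in the sense of Definition \ref{D31v}.

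Next I would feed this pairing into Proposition \ref{P4.3d}. Since $\mathfrak S$ is already assumed to be locally finitely generated, and we now have an $\mathfrak S$-rational pairing $\beta: C \otimes C^{\ast} \longrightarrow k$, Proposition \ref{P4.3d} yields directly that $\mathfrak S^C$ is locally finitely generated. No further computation is needed, because all of the work has already been done: Proposition \ref{P3.5x} produces the rational pairing from the limit-preservation hypothesis on the bifunctor $\_\_ \otimes \_\_$, and Proposition \ref{P4.3d} then transfers local finite generation from ${_A}\mathfrak S$ down to $\mathfrak S^C$ via the full embedding ${_A}\mathfrak I^C$ of Lemma \ref{Lfull}, using Lemma \ref{L4.25} to match the two notions of finite generation.

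Since the proof is simply the composition of these two prior propositions, there is no serious obstacle. The only minor point to verify is that the canonical pairing really does qualify as ``a pairing'' in the sense needed by Proposition \ref{P4.3d}, i.e., that the induced map $\widehat{\beta}: C^{\ast} \longrightarrow C^{\ast}$ is a morphism of $k$-algebras. For the evaluation pairing $\beta(c \otimes f) = f(c)$, the induced map is the identity on $C^{\ast}$, which is trivially an algebra morphism. Thus the whole argument reduces to a one-line application of Propositions \ref{P3.5x} and \ref{P4.3d}, with $A = C^{\ast}$.
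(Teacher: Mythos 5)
Your proposal is correct and coincides with the paper's own proof, which likewise applies Proposition \ref{P3.5x} to obtain that the canonical pairing $C\otimes C^*\longrightarrow k$ is $\mathfrak S$-rational and then invokes Proposition \ref{P4.3d}. Your extra check that $\widehat{\beta}$ is the identity on $C^*$ (hence an algebra morphism) is a harmless verification the paper leaves implicit.
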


\begin{proof}
By applying Proposition \ref{P3.5x}, it follows that the canonical pairing $C\otimes C^*\longrightarrow k$ is $\mathfrak S$-rational. The result now follows from Proposition \ref{P4.3d}. 
\end{proof}

Since $\mathfrak S^C$ is a Grothendieck category, we know that every $C$-comodule object in $\mathfrak S$ has an injective envelope. We conclude this section  with some important properties of injectives in $\mathfrak S^C$.

\begin{lem}\label{L4.5f}
Let $\mathcal E\in \mathfrak S$ be injective. Then, $\mathcal E\otimes C$ is injective in $\mathfrak S^C$.
\end{lem}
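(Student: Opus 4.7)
The plan is to reduce this to a standard adjunction fact: the right adjoint of an exact functor preserves injective objects. Everything is already in place in the earlier part of the paper.

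First, I would invoke the adjunction established in equation (2.7), namely $\mathfrak{S}(\mathcal{P},\mathcal{Q}) \cong \mathfrak{S}^C(\mathcal{P},\mathcal{Q}\otimes C)$ for $\mathcal{P}\in \mathfrak{S}^C$ and $\mathcal{Q}\in \mathfrak{S}$, which was obtained as the special case $D=k$ of Proposition \ref{P2.3k}. This identifies the functor $(-)\otimes C:\mathfrak{S}\longrightarrow \mathfrak{S}^C$ (with the canonical $C$-comodule structure coming from $\Delta_C$) as the right adjoint of the forgetful functor $U:\mathfrak{S}^C\longrightarrow \mathfrak{S}$.

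Next, I would recall from the discussion following Definition \ref{D2.2} that the forgetful functor $U:\mathfrak{S}^C\longrightarrow \mathfrak{S}$ is exact, since kernels, cokernels and all (co)limits in $\mathfrak{S}^C$ are computed in $\mathfrak{S}$. With this in hand, the exactness of $\mathfrak{S}^C(-,\mathcal{E}\otimes C)$ follows formally: for any short exact sequence $0\longrightarrow \mathcal{P}'\longrightarrow \mathcal{P}\longrightarrow \mathcal{P}''\longrightarrow 0$ in $\mathfrak{S}^C$, the adjunction gives a natural isomorphism of sequences
\begin{equation*}
\mathfrak{S}^C(-,\mathcal{E}\otimes C)\;\cong\; \mathfrak{S}(U(-),\mathcal{E}),
\end{equation*}
and the right hand side applied to the sequence above is exact because $U$ is exact and $\mathcal{E}$ is injective in $\mathfrak{S}$.

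There is no real obstacle here; the only thing to be careful about is not to confuse the two $C$-comodule structures that could be put on $\mathcal{E}\otimes C$. The structure we must use is the one induced from $\Delta_C$ alone, which is precisely the one built into the right adjoint of $U$ in Proposition \ref{P2.3k}. Once this identification is made, the argument is the standard ``exact left adjoints preserve projectives, exact left adjoints have right adjoints preserving injectives'' fact.
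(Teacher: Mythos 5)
Your proof is correct and is exactly the paper's argument: the adjunction $\mathfrak S(\mathcal P,\mathcal E)\cong \mathfrak S^C(\mathcal P,\mathcal E\otimes C)$ from \eqref{adj2.7} identifies $\mathfrak S^C(\_\_,\mathcal E\otimes C)$ with $\mathfrak S(\_\_,\mathcal E)$, which is exact since the forgetful functor is exact and $\mathcal E$ is injective in $\mathfrak S$. Your added remark about which $C$-comodule structure on $\mathcal E\otimes C$ is being used is a sensible clarification, but the substance matches the paper's proof.
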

\begin{proof}
Since $\mathcal E\in \mathfrak S$ is injective, it follows from the adjunction in \eqref{adj2.7} that the functor $\mathfrak S^C(\_\_,\mathcal E\otimes C)=\mathfrak S(\_\_,\mathcal E)$ is exact. 
\end{proof}

We will say that a $C$-comodule object in $\mathfrak S$ is free if it is of the form $\mathcal M\otimes C$ for some $\mathcal M\in \mathfrak S$. Accordingly, if $\mathcal E\in \mathfrak S$ is injective, we will say that  $\mathcal E\otimes C$ is a free injective in $\mathfrak S^C$.

\begin{lem}\label{L4.6h}
Every $C$-comodule object in $\mathfrak S$ embeds into a free injective in $\mathfrak S^C$.
\end{lem}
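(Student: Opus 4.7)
The plan is to combine Lemma \ref{L4.1}, Lemma \ref{L4.5f}, and the fact that $\mathfrak S$ is a Grothendieck category (so it has enough injectives).

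First, given $\mathcal M \in \mathfrak S^C$ with structure map $\Delta_{\mathcal M}: \mathcal M \longrightarrow \mathcal M\otimes C$, I would invoke Lemma \ref{L4.1} to note that $\Delta_{\mathcal M}$ is a monomorphism in $\mathfrak S^C$, exhibiting $\mathcal M$ as a subobject of $\mathcal M\otimes C$. Next, since $\mathfrak S$ is a Grothendieck category, it has enough injectives, so I can choose a monomorphism $\iota: \mathcal M \hookrightarrow \mathcal E$ in $\mathfrak S$ with $\mathcal E$ injective.

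Then I would consider the induced morphism $\iota\otimes C: \mathcal M\otimes C \longrightarrow \mathcal E\otimes C$ in $\mathfrak S^C$. Since $k$ is a field (so $C$ is $k$-flat), the functor $\_\_\otimes C:\mathfrak S\longrightarrow \mathfrak S$ is exact, which forces $\iota\otimes C$ to remain a monomorphism in $\mathfrak S$; as monomorphisms in $\mathfrak S^C$ are computed in $\mathfrak S$, it is a monomorphism in $\mathfrak S^C$ as well. Composing, the morphism
\begin{equation*}
\mathcal M \xrightarrow{\Delta_{\mathcal M}} \mathcal M\otimes C \xrightarrow{\iota\otimes C} \mathcal E\otimes C
\end{equation*}
is a monomorphism in $\mathfrak S^C$, and by Lemma \ref{L4.5f}, $\mathcal E\otimes C$ is a free injective in $\mathfrak S^C$.

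There is no real obstacle here: the statement is essentially a formal consequence of the two preceding lemmas combined with the existence of injective envelopes in $\mathfrak S$. The only subtlety worth checking is that $\iota\otimes C$ is indeed a morphism in $\mathfrak S^C$ with respect to the canonical right coaction $\mathcal N\otimes \Delta_C$ on free comodules $\mathcal N\otimes C$, which is immediate from naturality in $\mathcal N$.
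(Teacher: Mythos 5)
Your proof is correct and follows essentially the same route as the paper: embed $\mathcal M$ into $\mathcal M\otimes C$ via Lemma \ref{L4.1}, choose an injective embedding $\mathcal M\hookrightarrow \mathcal E$ in $\mathfrak S$, and compose with $\iota\otimes C$ to land in the free injective $\mathcal E\otimes C$ of Lemma \ref{L4.5f}. Your extra verifications (exactness of $\_\_\otimes C$ and $\mathfrak S^C$-linearity of $\iota\otimes C$) are details the paper leaves implicit, and they check out.
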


\begin{proof}
By Lemma \ref{L4.1}, we know that any $\mathcal M\in \mathfrak S^C$ is a subobject of $\mathcal M\otimes C$ in the category $\mathfrak S^C$. Since $\mathfrak S$ is a Grothendieck category, we can choose an embedding $\mathcal M\hookrightarrow \mathcal E$ with $\mathcal E$ injective. Then, $\mathcal M\hookrightarrow \mathcal M\otimes C\hookrightarrow \mathcal E\otimes C$ gives us the embedding we need.
\end{proof}

\begin{thm}\label{P4.7u}
A $C$-comodule object in $\mathfrak S$  is injective if and only if it is a direct summand of a free injective in $\mathfrak S^C$.
\end{thm}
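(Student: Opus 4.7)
The statement is a standard characterization of injectives via a generating class of injectives, and both directions follow almost immediately from the two lemmas just established.

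For the ``if'' direction, suppose $\mathcal P\in \mathfrak S^C$ is a direct summand of a free injective $\mathcal E\otimes C$, where $\mathcal E\in \mathfrak S$ is injective. By Lemma \ref{L4.5f}, $\mathcal E\otimes C$ is injective in $\mathfrak S^C$. Since $\mathfrak S^C$ is abelian (in fact Grothendieck, by Proposition \ref{P4.2}) and any direct summand of an injective object in an abelian category is itself injective, we conclude that $\mathcal P$ is injective in $\mathfrak S^C$.

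For the ``only if'' direction, suppose $\mathcal P\in \mathfrak S^C$ is injective. By Lemma \ref{L4.6h}, there is a monomorphism $\iota:\mathcal P\hookrightarrow \mathcal E\otimes C$ in $\mathfrak S^C$ for some injective $\mathcal E\in \mathfrak S$, exhibiting $\mathcal P$ as a subobject of a free injective. Since $\mathcal P$ is injective in $\mathfrak S^C$, the identity morphism $\mathrm{id}_{\mathcal P}:\mathcal P\longrightarrow \mathcal P$ lifts along $\iota$ to a morphism $r:\mathcal E\otimes C\longrightarrow \mathcal P$ in $\mathfrak S^C$ with $r\circ \iota=\mathrm{id}_{\mathcal P}$. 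Hence $\iota$ splits and $\mathcal P$ is a direct summand of the free injective $\mathcal E\otimes C$, as required.

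There is no serious obstacle; the only subtlety to mention is that the splitting in the second direction uses the defining property of injectivity in the Grothendieck category $\mathfrak S^C$ applied to the monomorphism $\iota$ furnished by Lemma \ref{L4.6h}, together with the fact that $\mathcal E\otimes C$ genuinely lives in $\mathfrak S^C$ via the coaction induced from $\Delta_C$ so that Lemma \ref{L4.5f} applies in the correct category.
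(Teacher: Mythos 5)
Your proof is correct and follows essentially the same route as the paper: the ``if'' direction via Lemma \ref{L4.5f} together with the fact that direct summands of injectives are injective, and the ``only if'' direction by splitting the embedding into a free injective furnished by Lemma \ref{L4.6h} using injectivity of the object itself. The paper's own proof is just a more compressed version of exactly this argument.
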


\begin{proof}
If $\mathcal M\in \mathfrak S^C$ is a direct summand of a free injective, it must be injective in $\mathfrak S^C$. On the other hand, for any $\mathcal M\in 
\mathfrak S^C$, we apply Lemma \ref{L4.6h} to obtain an embedding $\mathcal M\hookrightarrow \mathcal E\otimes C$ in $\mathfrak S^C$, where
$\mathcal E\in \mathfrak S$ is injective. Additionally, if $\mathcal M\in \mathfrak S^C$ is injective, this monomorphism splits, making $\mathcal M$ a direct summand of a free injective in $\mathfrak S^C$.
\end{proof}

\begin{thm}\label{P4.8sy}
Suppose that any direct sum of injectives in $\mathfrak S$ is injective. Then, any direct sum of injective objects in $\mathfrak S^C$ is also 
injective in $\mathfrak S^C$.
\end{thm}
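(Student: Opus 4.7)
The plan is to reduce to the hypothesis on $\mathfrak S$ by passing through the free injective description of Proposition \ref{P4.7u}. Let $\{\mathcal M_i\}_{i\in I}$ be a family of injective objects in $\mathfrak S^C$. By Proposition \ref{P4.7u}, for each $i\in I$ there exists an injective $\mathcal E_i\in \mathfrak S$ such that $\mathcal M_i$ is a direct summand of the free injective $\mathcal E_i\otimes C$ in $\mathfrak S^C$.

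Next I would observe that the direct sum $\bigoplus_{i\in I}(\mathcal E_i\otimes C)$ taken in $\mathfrak S^C$ agrees with the direct sum taken in $\mathfrak S$, since (as noted after Definition \ref{D2.2}) colimits in $\mathfrak S^C$ are computed in $\mathfrak S$. Moreover, $\_\_\otimes C:\mathfrak S\longrightarrow \mathfrak S$ is a left adjoint (with right adjoint $\underline{Hom}(C,\_\_)$), so it preserves colimits; hence
\begin{equation*}
\bigoplus_{i\in I}(\mathcal E_i\otimes C)\cong \Bigl(\bigoplus_{i\in I}\mathcal E_i\Bigr)\otimes C
\end{equation*}
as objects of $\mathfrak S^C$. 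By the standing hypothesis, $\bigoplus_{i\in I}\mathcal E_i$ is injective in $\mathfrak S$, and therefore $\bigl(\bigoplus_{i\in I}\mathcal E_i\bigr)\otimes C$ is a free injective in $\mathfrak S^C$ by Lemma \ref{L4.5f}.

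Finally, since direct summands commute with direct sums, $\bigoplus_{i\in I}\mathcal M_i$ is a direct summand of $\bigoplus_{i\in I}(\mathcal E_i\otimes C)\cong \bigl(\bigoplus_{i\in I}\mathcal E_i\bigr)\otimes C$, which is a free injective in $\mathfrak S^C$. Invoking Proposition \ref{P4.7u} once more, we conclude that $\bigoplus_{i\in I}\mathcal M_i$ is injective in $\mathfrak S^C$. There is no genuine obstacle here; the only point requiring any care is the identification of the direct sum in $\mathfrak S^C$ with the direct sum in $\mathfrak S$ and the preservation of this sum by $\_\_\otimes C$, both of which follow directly from material already established in the excerpt.
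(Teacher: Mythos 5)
Your proof is correct and follows essentially the same route as the paper's: reduce via Proposition \ref{P4.7u} to direct summands of free injectives, pass the direct sum through $\_\_\otimes C$, and apply the hypothesis on $\mathfrak S$ together with Lemma \ref{L4.5f}. Your explicit justification that $\bigoplus_{i\in I}(\mathcal E_i\otimes C)\cong\bigl(\bigoplus_{i\in I}\mathcal E_i\bigr)\otimes C$ (via $\_\_\otimes C$ being a left adjoint and colimits in $\mathfrak S^C$ being computed in $\mathfrak S$) is a step the paper leaves implicit, but it is the same argument.
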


\begin{proof}
Let $\{\mathcal E_i\}_{i\in I}$ be a family of injectives in $\mathfrak S^C$. Using Proposition \ref{P4.7u}, for each $i\in I$, we choose an injective
$\mathcal E'_i\in \mathfrak S$ such that $\mathcal E_i$ is a direct summand of $\mathcal E'_i\otimes C$.  Accordingly, $\underset{i\in I}{\bigoplus}
\textrm{ }\mathcal E_i$ is a direct summand of $\left(\underset{i\in I}{\bigoplus}
\textrm{ }\mathcal E'_i\right)\otimes C$. By assumption, the direct sum $\underset{i\in I}{\bigoplus}
\textrm{ }\mathcal E'_i$ of injectives in $\mathfrak S$ is injective and hence $\left(\underset{i\in I}{\bigoplus}
\textrm{ }\mathcal E'_i\right)\otimes C$ is a free injective. The result now follows by applying Proposition \ref{P4.7u}.
\end{proof}

\begin{cor}
\label{C4.9gr} Let $\mathfrak S$ be a locally noetherian  Grothendieck category and  $C$ be a $k$-coalgebra. Then:

\smallskip
(a) Any direct sum of injective objects in $\mathfrak S^C$ is also 
injective in $\mathfrak S^C$. 

\smallskip
(b) Suppose also that for any finitely generated $\mathcal M\in \mathfrak S$, the functor $\mathcal M\otimes \_\_: k-Mod \longrightarrow \mathfrak S$  preserves limits.  Then, $\mathfrak S^C$ is locally noetherian. 
\end{cor}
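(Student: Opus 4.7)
The plan is to handle the two parts in sequence, with part (a) following directly from Proposition \ref{P4.8sy} and part (b) combining part (a) with Proposition \ref{P4.4z} via a classical characterization of locally noetherian Grothendieck categories.

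For part (a), I would invoke the classical theorem of Matlis and Gabriel which asserts that in a locally noetherian Grothendieck category arbitrary direct sums of injective objects are again injective. Since $\mathfrak S$ is locally noetherian by hypothesis, any direct sum of injectives in $\mathfrak S$ is injective. The hypothesis of Proposition \ref{P4.8sy} is therefore satisfied, so that proposition immediately gives that any direct sum of injective objects in $\mathfrak S^C$ is injective. The underlying mechanism is exactly what is already spelled out in the proof of \ref{P4.8sy}: each $\mathcal E_i\in \mathfrak S^C$ is a direct summand of $\mathcal E'_i\otimes C$ for an injective $\mathcal E'_i\in \mathfrak S$, and the direct sum $\bigoplus_i\mathcal E'_i$ remains injective in $\mathfrak S$, making $(\bigoplus_i\mathcal E'_i)\otimes C$ a free injective in $\mathfrak S^C$ with $\bigoplus_i\mathcal E_i$ as a direct summand.

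For part (b), the idea is to apply the classical characterization stating that a Grothendieck category is locally noetherian if and only if it is locally finitely generated and arbitrary direct sums of injective objects are injective (Matlis--Gabriel--Papp--Faith; see, for example, Stenstr\"om's \emph{Rings of Quotients}, Chapter V). Since $\mathfrak S$ is locally noetherian it is in particular locally finitely generated; together with the standing hypothesis that $\mathcal M\otimes\_\_: k\text{-}Mod\longrightarrow \mathfrak S$ preserves limits for finitely generated $\mathcal M\in\mathfrak S$, Proposition \ref{P4.4z} therefore shows that $\mathfrak S^C$ is locally finitely generated. Combined with part (a), which provides that direct sums of injectives in $\mathfrak S^C$ are injective, both hypotheses of the characterization are met, and the conclusion is that $\mathfrak S^C$ is locally noetherian.

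No substantial new obstacle arises for the comodule category $\mathfrak S^C$ beyond what is already packaged in the earlier propositions; the only sensitive ingredient is the invocation of the Matlis--Gabriel characterization of locally noetherian Grothendieck categories, which is a classical result that I would cite rather than reprove.
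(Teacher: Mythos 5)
Your proposal is correct and follows essentially the same route as the paper: part (a) is obtained by feeding the classical fact about direct sums of injectives in the locally noetherian $\mathfrak S$ into Proposition \ref{P4.8sy}, and part (b) combines Proposition \ref{P4.4z} (after noting that locally noetherian implies locally finitely generated) with part (a) via the Stenstr\"om characterization, which is exactly the citation \cite[V.4.3]{Sten} the paper uses.
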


\begin{proof}
We know  that the direct sum of injectives in a locally noetherian Grothendieck category is injective. The result of (a) now follows from Proposition \ref{P4.8sy}. To prove (b), we note that any locally noetherian category is also locally finitely generated. Applying Proposition \ref{P4.4z}, we see that $\mathfrak S^C$ is also locally finitely generated. By part (a), we know that the direct sum of injectives in $\mathfrak S^C$ is also injective, and it now follows from \cite[V.4.3]{Sten} that $\mathfrak S^C$ is locally noetherian.
\end{proof}

\section{Relative Hopf modules in $\mathfrak S$}

From now onwards, $H$ will denote a Hopf algebra over $k$, having multiplication $\mu_H:H\otimes H\longrightarrow H$, comultiplication $\Delta_H:
H\longrightarrow H\otimes H$, along with unit $u_H:k\longrightarrow H$ and counit $\epsilon_H:H\longrightarrow k$. For any vector spaces $V$, $W$, we will denote by $T_{V,W}$ the canonical isomorphism $V \otimes W \xrightarrow{\cong} W \otimes V$. Our aim in the rest of this paper is to develop the cohomology theory for $(A,H)$-Hopf module objects in $\mathfrak S$. 

\smallskip
If $(A,\Delta_A^H:A\longrightarrow A\otimes H)$ is a right $H$-comodule algebra and $(\mathcal M,\Delta_{\mathcal M}^H:\mathcal M
\longrightarrow \mathcal M\otimes H)$ is a right $H$-comodule object in $\mathfrak S$, then it  follows that $A \otimes \mathcal M$ is also an $H$-comodule object in $\mathfrak S$ with structure morphism
\begin{equation}\label{5.1wy}
 \Delta_{A \otimes \mathcal M}^H:A \otimes \mathcal M \xrightarrow{\Delta_A^H \otimes \Delta_\mathcal M^H} A \otimes H \otimes \mathcal M \otimes H \xrightarrow{A\otimes T_{H, \mathcal M}\otimes H} A \otimes \mathcal M \otimes H \otimes H \xrightarrow{A \otimes \mathcal M \otimes \mu_H} A \otimes \mathcal M \otimes H
\end{equation} 
 Here, $T_{H, \mathcal M}$ denotes the isomorphism $H \otimes \mathcal{M} \cong \mathcal{M} \otimes H$  in $\mathfrak S_k$. Similarly, if $(\mathcal M,\mu_{\mathcal M}^A:A\otimes \mathcal M\longrightarrow \mathcal M)\in {_A}\mathfrak S$, then $\mathcal M\otimes H$ becomes a left $A$-module object in $\mathfrak S$ via the following action
\begin{equation}\label{5.2wy}
\mu_{\mathcal M\otimes H}^A:A\otimes \mathcal M\otimes H \xrightarrow{\Delta_A^H \otimes \mathcal M\otimes H}A\otimes H\otimes \mathcal M\otimes H\xrightarrow{A\otimes T_{H, \mathcal M}\otimes H} A\otimes
\mathcal M\otimes H\otimes H \xrightarrow{\mu_\mathcal M^A\otimes \mu_H}\mathcal M\otimes H
\end{equation} We are now ready to introduce $(A,H)$-Hopf module  objects in $\mathfrak S$.

\begin{defn}\label{D5.1cs}
Let $A$  be a right $H$-comodule algebra. A left-right relative $(A,H)$-Hopf module  object in $\mathfrak S$ is a triple  $(\mathcal M,\mu_\mathcal M^A,\Delta_{\mathcal M}^H) $  such that 

\smallskip
(1) $(\mathcal M,\mu_{\mathcal M}^A)\in {_A}\mathfrak S$ and $(\mathcal M,\Delta_{\mathcal M}^H)\in \mathfrak S^H$

\smallskip
(2) $\Delta^H_{\mathcal M}\circ \mu^A_{\mathcal M}=(\mu^A_{\mathcal M}\otimes \mu_H)\circ (A\otimes T_{H, \mathcal M}\otimes H)\circ  (\Delta_A^H \otimes \Delta^H_{\mathcal M})$. 

\smallskip In other words, $\mu_\mathcal M^A:A\otimes\mathcal M\longrightarrow \mathcal M$ is a morphism in $\mathfrak S^H$. Equivalently, $\Delta^H_{\mathcal M}:
\mathcal M\longrightarrow\mathcal M\otimes H$ is a morphism in ${_A}\mathfrak S$.

\smallskip  A morphism $\phi:\mathcal M \longrightarrow \mathcal N$ of left-right relative $(A,H)$-Hopf module objects in $\mathfrak S$ is a morphism $\phi:\mathcal M
\longrightarrow \mathcal N$ of objects in $\mathfrak S$ that is compatible with the left $A$-action and the right $H$-coaction. We will denote the category of left-right relative $(A,H)$-Hopf module objects in $\mathfrak S$ by ${_A}\mathfrak S^H$.
\end{defn}

We note that ${_A}\mathfrak S^H$ is an abelian category. Further, filtered colimits and finite limits in ${_A}\mathfrak S^H$ may be computed in $\mathfrak S$.  Our first aim is to show that ${_A}\mathfrak S^H$ is a Grothendieck category. We begin with the following observation. If 
 $(\mathcal M,\mu_{\mathcal M}^A)\in {_A}\mathfrak S$, then, $\mathcal M\otimes H\in  {_A}\mathfrak S^H$ with $A$-module action as in \eqref{5.2wy} and $H$-comodule action
given by $\mathcal M\otimes\Delta_H:\mathcal M\otimes H\longrightarrow \mathcal M\otimes H\otimes H$. In fact, this determines a functor ${_A}\mathfrak S\longrightarrow {_A}\mathfrak S^H$. 

\begin{lem}\label{L5.1x}
Let $(\mathcal M,\mu_\mathcal M^A,\Delta_{\mathcal M}^H) \in  {_A}\mathfrak S^H$. Then, $\Delta_{\mathcal M}^H:\mathcal M\longrightarrow \mathcal M\otimes H$ is a monomorphism in 
$ {_A}\mathfrak S^H$.
\end{lem}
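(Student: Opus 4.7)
The plan is to exhibit $\Delta_{\mathcal M}^H$ as a split monomorphism in $\mathfrak S$ and then transfer this to ${_A}\mathfrak S^H$ via the fact that finite limits in ${_A}\mathfrak S^H$ are computed in $\mathfrak S$.

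First I would verify that $\Delta_{\mathcal M}^H$ is actually a morphism in ${_A}\mathfrak S^H$ with target $\mathcal M \otimes H$ equipped with its canonical Hopf module structure (left $A$-action given by \eqref{5.2wy} and right $H$-coaction $\mathcal M \otimes \Delta_H$). The $A$-linearity is precisely condition (2) of Definition \ref{D5.1cs}, which in its equivalent reformulation says that $\Delta_{\mathcal M}^H$ is a morphism in ${_A}\mathfrak S$. The $H$-colinearity is the coassociativity axiom $(\Delta_{\mathcal M}^H \otimes H)\circ \Delta_{\mathcal M}^H = (\mathcal M \otimes \Delta_H)\circ \Delta_{\mathcal M}^H$ from Definition \ref{D2.2}(2), which is exactly the statement that $\Delta_{\mathcal M}^H$ is a morphism of right $H$-comodule objects with the comodule structure $\mathcal M \otimes \Delta_H$ on the target.

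Next I would use the counit axiom from Definition \ref{D2.2}(1): the composition $\mathcal M \xrightarrow{\Delta_{\mathcal M}^H} \mathcal M \otimes H \xrightarrow{\mathcal M \otimes \epsilon_H} \mathcal M$ is the identity on $\mathcal M$. Thus $\Delta_{\mathcal M}^H$ is a split monomorphism in $\mathfrak S$, and in particular a monomorphism in $\mathfrak S$.

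Finally, as noted right after Definition \ref{D5.1cs}, finite limits (hence kernels) in ${_A}\mathfrak S^H$ are computed in $\mathfrak S$. Therefore a morphism in ${_A}\mathfrak S^H$ is a monomorphism if and only if its underlying morphism in $\mathfrak S$ is, and we conclude that $\Delta_{\mathcal M}^H$ is a monomorphism in ${_A}\mathfrak S^H$. There is no real obstacle here; the only subtlety is making sure one checks both the $A$-linearity and the $H$-colinearity of $\Delta_{\mathcal M}^H$ with respect to the correct Hopf module structure on $\mathcal M \otimes H$ before invoking the split-mono observation.
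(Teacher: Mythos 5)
Your proposal is correct and follows essentially the same route as the paper's own proof: you check $A$-linearity (Definition \ref{D5.1cs}(2)) and $H$-colinearity (coassociativity) so that $\Delta_{\mathcal M}^H$ is a morphism in ${_A}\mathfrak S^H$, use the counit identity $(\mathcal M\otimes \epsilon_H)\circ \Delta_{\mathcal M}^H=\mathrm{id}$ to see it is a (split) monomorphism in $\mathfrak S$, and conclude via the fact that kernels in ${_A}\mathfrak S^H$ are computed in $\mathfrak S$. Your version is, if anything, slightly more careful than the paper's in spelling out the Hopf module structure on $\mathcal M\otimes H$ before invoking these facts.
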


\begin{proof}
Since $(\mathcal M,\mu_\mathcal M^A,\Delta_{\mathcal M}^H) \in  {_A}\mathfrak S^H$, we know that $\Delta_{\mathcal M}^H$ is compatible with the $A$-action. From the coassociativity of the $H$-coaction, we know that $\Delta_{\mathcal M}^H$ is a morphism in $\mathfrak S^H$. Since $(\mathcal M\otimes \epsilon_H)\circ \Delta_{\mathcal M}^H=id$, we see that $\Delta_{\mathcal M}^H$ is a monomorphism in
$\mathfrak S$. Since kernels in ${_A}\mathfrak S^H$ may be computed in $\mathfrak S$, the result follows. 
\end{proof}

\begin{lem}\label{wellp5x}
The category ${_A}\mathfrak S^H$ is well-powered, i.e., the collection of subobjects of any given object forms a set.
\end{lem}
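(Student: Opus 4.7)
The plan is to follow the same pattern as the proof of Lemma \ref{L4.2}, simply applied to both the module structure and the comodule structure simultaneously. As noted immediately after Definition \ref{D5.1cs}, finite limits (in particular kernels) in ${_A}\mathfrak S^H$ are computed in $\mathfrak S$, so a morphism in ${_A}\mathfrak S^H$ is a monomorphism if and only if its underlying morphism in $\mathfrak S$ is a monomorphism. Consequently, every subobject $(\mathcal M',\mu^A_{\mathcal M'},\Delta^H_{\mathcal M'})\hookrightarrow (\mathcal M,\mu^A_{\mathcal M},\Delta^H_{\mathcal M})$ in ${_A}\mathfrak S^H$ has underlying morphism $\iota:\mathcal M'\hookrightarrow \mathcal M$ a genuine subobject in $\mathfrak S$.

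The key step will be to show that the $A$-action and the $H$-coaction on $\mathcal M'$ are \emph{uniquely} determined by the underlying subobject in $\mathfrak S$. For the $A$-action, compatibility of $\iota$ with the left $A$-module structures gives
\begin{equation*}
\iota\circ \mu^A_{\mathcal M'}=\mu^A_{\mathcal M}\circ (A\otimes \iota),
\end{equation*}
and since $\iota$ is monic in $\mathfrak S$, the map $\mu^A_{\mathcal M'}$ is pinned down. For the $H$-coaction, since $k$ is a field and $\_\_\otimes H:\mathfrak S\longrightarrow \mathfrak S$ is exact, the map $\iota\otimes H:\mathcal M'\otimes H\longrightarrow \mathcal M\otimes H$ is a monomorphism in $\mathfrak S$; combined with
\begin{equation*}
(\iota\otimes H)\circ \Delta^H_{\mathcal M'}=\Delta^H_{\mathcal M}\circ \iota,
\end{equation*}
this determines $\Delta^H_{\mathcal M'}$ uniquely.

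Putting these together, the assignment sending a subobject of $\mathcal M$ in ${_A}\mathfrak S^H$ to its underlying subobject in $\mathfrak S$ is injective on isomorphism classes of subobjects. Since $\mathfrak S$ is a Grothendieck category, it is well-powered, so the subobjects of $\mathcal M$ in $\mathfrak S$ form a set; therefore so do the subobjects of $\mathcal M$ in ${_A}\mathfrak S^H$. I do not anticipate any real obstacle here, as the argument is essentially a simultaneous application of the reasoning in Lemma \ref{L4.2} (for the $H$-coaction, using exactness of $\_\_\otimes H$) and the analogous observation for the $A$-module structure.
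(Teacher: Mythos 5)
Your proposal is correct and takes essentially the same approach as the paper: the paper's proof likewise passes from a monomorphism in ${_A}\mathfrak S^H$ to a monomorphism in $\mathfrak S$ (kernels being computed in $\mathfrak S$), uses exactness of $\_\_\otimes H$ to get that $\iota\otimes H$ is monic, and concludes that the factorizations defining $\mu^A_{\mathcal M'}$ and $\Delta^H_{\mathcal M'}$ are unique, so that subobjects in ${_A}\mathfrak S^H$ form a subcollection of the subobjects in $\mathfrak S$, which is a set since the Grothendieck category $\mathfrak S$ is well-powered. Your explicit spelling out of the two uniqueness equations matches the paper's argument step for step.
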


\begin{proof}
Let $\phi:\mathcal M'\hookrightarrow\mathcal M$ be a monomorphism in ${_A}\mathfrak S^H$. Since kernels in ${_A}\mathfrak S^H$ are computed in $\mathfrak S$, it follows that $\mathcal M'\longrightarrow \mathcal M$
is a monomorphism in $\mathfrak S$. Since $\_\_\otimes H$ is exact, we see that $\phi\otimes H:\mathcal M'\otimes H\longrightarrow \mathcal M\otimes H$
is also a monomorphism in $\mathfrak S$. Accordingly, the factorizations of $A\otimes \mathcal M'\xrightarrow{A\otimes \phi} A\otimes \mathcal M\xrightarrow{\mu_{\mathcal M}^A}\mathcal M$ and $\mathcal M'\xrightarrow{\phi}\mathcal M\xrightarrow{\Delta_\mathcal M^H}\mathcal M\otimes H$ through $\mathcal M'$ and $\mathcal M'\otimes H$ respectively are unique. Hence, the subobjects of $\mathcal M$ in ${_A}\mathfrak S^H$ correspond to a subcollection of subobjects of $\mathcal M$ in $\mathfrak S$. Since the Grothendieck category $\mathfrak S$ is well-powered, so is ${_A}\mathfrak S^H$. 
\end{proof}

\begin{Thm}\label{Thm5.3sq}
Let $A$ be a right $H$-comodule algebra. Then, the category ${_A}\mathfrak S^H$ of relative $(A,H)$-Hopf module objects in $\mathfrak S$ is a Grothendieck category.
\end{Thm}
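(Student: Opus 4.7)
The plan is to mirror the proof of Proposition \ref{P4.2}, replacing the role of $\mathfrak S$ by ${_A}\mathfrak S$ and the role of $\mathfrak S^C$ by ${_A}\mathfrak S^H$, and using $H$-tensoring in place of $C$-tensoring. First I would verify (AB5): since filtered colimits and finite limits in ${_A}\mathfrak S^H$ are computed in $\mathfrak S$, and since $\mathfrak S$ is Grothendieck, filtered colimits commute with finite limits in ${_A}\mathfrak S^H$.

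Next, I would choose a generator. By \cite[Proposition B2.2]{AZ}, ${_A}\mathfrak S$ is a Grothendieck category, so pick a generator $\mathcal G\in {_A}\mathfrak S$. The assignment $\mathcal N\mapsto \mathcal N\otimes H$ with $A$-action as in \eqref{5.2wy} and coaction $\mathcal N\otimes \Delta_H$ is a well-defined functor ${_A}\mathfrak S\longrightarrow {_A}\mathfrak S^H$; exactness of $\_\_\otimes H$ means it sends epimorphisms to epimorphisms. Given $\mathcal M\in {_A}\mathfrak S^H$, choose an epimorphism $\pi:\mathcal G^{(\Lambda)}\twoheadrightarrow \mathcal M$ in ${_A}\mathfrak S$, and tensor to get an epimorphism $\pi\otimes H:(\mathcal G\otimes H)^{(\Lambda)}\twoheadrightarrow \mathcal M\otimes H$ in ${_A}\mathfrak S^H$. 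By Lemma \ref{L5.1x}, $\Delta_{\mathcal M}^H:\mathcal M\hookrightarrow \mathcal M\otimes H$ is a monomorphism in ${_A}\mathfrak S^H$, so the pullback
\[
\mathcal N:=(\mathcal G\otimes H)^{(\Lambda)}\times_{\mathcal M\otimes H}\mathcal M
\]
formed in ${_A}\mathfrak S^H$ is a subobject of $(\mathcal G\otimes H)^{(\Lambda)}$.

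Now, for each $T\in Fin(\Lambda)$, I would set
\[
\mathcal E_T:=(\mathcal G\otimes H)^{(\Lambda)}\times_{\mathcal M\otimes H}Im\bigl((\pi\otimes H)|(\mathcal G\otimes H)^{(T)}\bigr),
\]
and $\mathcal K_T:=\mathcal M\times_{\mathcal M\otimes H}Im\bigl((\pi\otimes H)|(\mathcal G\otimes H)^{(T)}\bigr)$. The same diagram chase as in Proposition \ref{P4.2}, replacing $\mathcal G\otimes C$ by $\mathcal G\otimes H$ and $\mathcal M\otimes C$ by $\mathcal M\otimes H$, shows that each restriction $\pi_T:\mathcal N\cap (\mathcal G\otimes H)^{(T)}\twoheadrightarrow \mathcal K_T$ is an epimorphism in ${_A}\mathfrak S^H$. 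Taking filtered colimits over $T\in Fin(\Lambda)$ and using that $\pi\otimes H$ is an epimorphism together with (AB5) in ${_A}\mathfrak S^H$, we obtain $\mathcal M=\bigcup_{T\in Fin(\Lambda)}\mathcal K_T$. Hence every object of ${_A}\mathfrak S^H$ is a filtered colimit of quotients of subobjects of $(\mathcal G\otimes H)^{(T)}$ with $T$ finite, so the subobjects of finite powers $(\mathcal G\otimes H)^n$ give a family of generators. By Lemma \ref{wellp5x}, this family is a set, completing the proof.

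The main obstacle is not conceptual but bookkeeping: one must verify that the pullback squares and image factorizations used in the proof of Proposition \ref{P4.2} all make sense in the richer category ${_A}\mathfrak S^H$. This is ensured by the fact that ${_A}\mathfrak S^H$ is abelian with filtered colimits and finite limits computed in $\mathfrak S$, so no new compatibility has to be checked beyond what follows from the structures on the tensor products $\mathcal M\otimes H$ and $\mathcal G\otimes H$ already described in \eqref{5.2wy} and in the discussion preceding Lemma \ref{L5.1x}.
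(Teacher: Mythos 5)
Your proposal is correct and follows essentially the same route as the paper's own proof, which explicitly proceeds ``in a manner similar to Theorem \ref{P4.2}'': verify (AB5) via the computation of filtered colimits and finite limits in $\mathfrak S$, take a generator $\mathcal G$ of ${_A}\mathfrak S$, use Lemma \ref{L5.1x} to embed $\mathcal M$ into $\mathcal M\otimes H$, form the same pullbacks $\mathcal N$, $\mathcal E_T$, $\mathcal K_T$ over $T\in Fin(\Lambda)$, and conclude via Lemma \ref{wellp5x} that the subobjects of the $(\mathcal G\otimes H)^n$ form a set of generators. No gaps; your closing remark that the bookkeeping transfers because limits, colimits, and image factorizations in ${_A}\mathfrak S^H$ are computed in $\mathfrak S$ is exactly the justification the paper relies on.
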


\begin{proof}
We will prove this in a manner similar to Theorem \ref{P4.2}. Since filtered colimits and finite limits in ${_A}\mathfrak S^H$ are computed in $\mathfrak S$, it is clear that ${_A}\mathfrak S^H$ satisfies the (AB5) condition. We already know from \cite{AZ} that ${_A}\mathfrak S$ is a Grothendieck category. We choose a generator $\mathcal G\in {_A}\mathfrak S$.  If $(\mathcal M,\mu_\mathcal M^A,\Delta_{\mathcal M}^H)\in {_A}\mathfrak S^H $, we can choose an indexing set $\Lambda$ and an epimorphism $\pi:\mathcal G^{(\Lambda)}\longrightarrow 
\mathcal M$ in ${_A}\mathfrak S$. Applying $\_\_\otimes H$, we have $(\pi\otimes H):\mathcal G^{(\Lambda)}\otimes H=(\mathcal G\otimes H)^{(\Lambda)}\longrightarrow \mathcal M\otimes H$ which must be an epimorphism in ${_A}\mathfrak S^H$ since it is an epimorphism in $\mathfrak S$.  By Lemma \ref{L5.1x}, we know that $\mathcal M\subseteq \mathcal M\otimes H$ in
${_A}\mathfrak S^H$. This allows us to form the pullbacks
\begin{equation}
\mathcal N:=(\mathcal G\otimes H)^{(\Lambda)}\times_{(\mathcal M\otimes H)}\mathcal M \qquad \mathcal E_T:=(\mathcal G\otimes H)^{(\Lambda)}\times_{(\mathcal M\otimes H)} Im((\pi\otimes H)|(\mathcal G\otimes H)^{(T)})
 \end{equation}
in ${_A}\mathfrak S^H$, where $T\in Fin(\Lambda)$, the set of finite subsets of $\Lambda$. Accordingly, we can form  pullback squares
 \begin{equation}\label{5.1cdtf}
 \begin{array}{c}
 \begin{array}{lll}
 \begin{CD}
 \mathcal N\cap \mathcal E_T @>>> \mathcal E_T \\
 @VVV @VVV \\
 \mathcal N @>>> (\mathcal G\otimes H)^{(\Lambda)}\\
 \end{CD}
 &\qquad & 
 \begin{CD}
 \mathcal K_T @>>> Im((\pi\otimes H)|(\mathcal G\otimes H)^{(T)})\\
 @VVV @VVV \\
 \mathcal M @>>> \mathcal M\otimes H\\
 \end{CD} \\
 \end{array} \\ \\ \\
 \begin{CD}
\mathcal N\cap (\mathcal G\otimes H)^{(T)}@>>>  \mathcal N\cap \mathcal E_T @>>> \mathcal K_T \\
 @VVV @VVV @VVV \\
(\mathcal G\otimes H)^{(T)}@>>>  \mathcal E_T @>>>  Im((\pi\otimes H)|(\mathcal G\otimes H)^{(T)})\\
 \end{CD} \\
 \end{array}
 \end{equation} Since the composition $(\mathcal G\otimes H)^{(T)}\longrightarrow  \mathcal E_T \longrightarrow Im((\pi\otimes H)|(\mathcal G\otimes H)^{(T)})$ is an epimorphism
 ${_A}\mathfrak S^H$, so is its pullback $\mathcal N\cap (\mathcal G\otimes H)^{(T)}\longrightarrow \mathcal K_T$. Also by \eqref{5.1cdtf}, we see that $\mathcal M$ may be expressed as the filtered colimit in  ${_A}\mathfrak S^H$ of $\mathcal K_T$, as $T$ varies over $Fin(\Lambda)$. As in the proof of Theorem \ref{P4.2}, it now follows that the subobjects of
 $\{(\mathcal G\otimes H)^n\}_{n\geq 1}$ in ${_A}\mathfrak S^H$ give a family of generators for ${_A}\mathfrak S^H$. By Lemma \ref{wellp5x}, ${_A}\mathfrak S^H$ is well-powered, and it now follows that ${_A}\mathfrak S^H$ has  a set of generators. 
\end{proof}

\smallskip
In the rest of this section, we will give an equivalent description of the category ${_A}\mathfrak S^H$ , when $H$ is finite dimensional. For this, we start with  a left $H$-module algebra $B$. Then, one can consider the smash product algebra $B\#H$ which as a vector space is the same as $B \otimes H$, with multiplication given by
\begin{equation*}
(B \otimes H) \otimes (B \otimes H) \xrightarrow{B \otimes \Delta_H \otimes B \otimes H} B \otimes H \otimes H \otimes B \otimes H \xrightarrow{B \otimes H \otimes T_{H,B} \otimes H} B \otimes H \otimes B \otimes H \otimes H \xrightarrow{B \otimes \mu_B^H \otimes \mu_H} B \otimes B \otimes H \xrightarrow{\mu_B \otimes H} B \otimes H
\end{equation*} We note that the associations $b\mapsto (b\# 1)$ and $h\mapsto (1\# h)$ respectively give embeddings of $B$ and $H$ as subalgebras of 
$B\# H$ (see, for instance, \cite[$\S$ 6.1.7]{DNR}). We will now define  $(B,H)$-equivariant Hopf-module  objects in $\mathfrak S$.

\begin{defn}
Let $(B,\mu^H_B:H\otimes B\longrightarrow B)$ be a left $H$-module algebra. A $(B,H)$-equivariant module  object in $\mathfrak S$ is a triple $(\mathcal{M},\mu_\mathcal{M}^B, \mu_\mathcal{M}^H)$ such that

\smallskip
(1) $(\mathcal{M},\mu_\mathcal{M}^B) \in {_B}{\mathfrak S}$ and $(\mathcal{M},\mu_\mathcal{M}^H) \in {_H}{\mathfrak S}$ 

\smallskip
(2) $\mu_{\mathcal M}^B\circ \mu^H_{B\otimes \mathcal M}=\mu_{\mathcal M}^B\circ (\mu_B^H\otimes \mu_{\mathcal M}^H)\circ (H\otimes T_{H,B}\otimes \mathcal M)\circ(\Delta_H\otimes B\otimes \mathcal M)=\mu_\mathcal{M}^H \circ (H \otimes \mu_\mathcal{M}^B)$, where $ \mu^H_{B\otimes \mathcal M}:H\otimes B\otimes \mathcal M\longrightarrow B\otimes  \mathcal M$ is defined by the diagonal action of $H$.

\smallskip
A morphism $\phi:(\mathcal{M},\mu_\mathcal{M}^B, \mu_\mathcal{M}^H)\longrightarrow (\mathcal{N},\mu_\mathcal{N}^B, \mu_\mathcal{N}^H)$ of $(B,H)$-equivariant module  objects in $\mathfrak S$ is a morphism  $\phi:\mathcal M\longrightarrow \mathcal N$ in $\mathfrak S$ that is compatible with both the $B$-action and the $H$-action.  We will denote the corresponding category by ${_{(B,H)}}{Eq(\mathfrak S})$.
\end{defn}

\begin{thm}\label{eqj}
Let $B$ be a left $H$-module algebra. Then, the category $_{(B\#H)}{\mathfrak S}$ is equivalent to the category ${_{(B,H)}}{Eq(\mathfrak S})$. In particular, ${_{(B,H)}}{Eq(\mathfrak S})$ is a Grothendieck category.
\end{thm}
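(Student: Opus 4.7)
The strategy is to build an equivalence directly. I will construct functors $F:{_{(B\#H)}}\mathfrak S\longrightarrow{_{(B,H)}}Eq(\mathfrak S)$ and $G:{_{(B,H)}}Eq(\mathfrak S)\longrightarrow{_{(B\#H)}}\mathfrak S$, show they are mutually quasi-inverse, and then transport the Grothendieck property from ${_{(B\#H)}}\mathfrak S$, which is a Grothendieck category by \cite[Proposition B2.2]{AZ}.

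For $F$, given $(\mathcal M,\mu_{\mathcal M}^{B\#H})\in{_{(B\#H)}}\mathfrak S$, I restrict along the $k$-algebra embeddings $B\hookrightarrow B\#H$, $b\mapsto b\#1$, and $H\hookrightarrow B\#H$, $h\mapsto 1\#h$, to obtain structure maps $\mu_{\mathcal M}^B$ and $\mu_{\mathcal M}^H$. The equality of actions $((1\#h)(b\#1))\cdot m = (1\#h)\cdot((b\#1)\cdot m)$ translates, using the definition of multiplication in $B\#H$, into precisely the two equalities of condition~(2) of the definition of $(B,H)$-equivariant module object. Thus $F(\mathcal M)\in{_{(B,H)}}Eq(\mathfrak S)$; $F$ acts as the identity on morphisms.

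Conversely, for $G$, given $(\mathcal M,\mu_{\mathcal M}^B,\mu_{\mathcal M}^H)$ I set
\begin{equation*}
\mu_{\mathcal M}^{B\#H}:(B\#H)\otimes\mathcal M=B\otimes H\otimes\mathcal M\xrightarrow{B\otimes\mu_{\mathcal M}^H}B\otimes\mathcal M\xrightarrow{\mu_{\mathcal M}^B}\mathcal M.
\end{equation*}
The main step, and the step I expect to be the main obstacle, is verifying the associativity square $\mu_{\mathcal M}^{B\#H}\circ(\mu_{B\#H}\otimes\mathcal M)=\mu_{\mathcal M}^{B\#H}\circ((B\#H)\otimes\mu_{\mathcal M}^{B\#H})$. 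This is a careful diagram chase: I would expand $\mu_{B\#H}$ into its five constituent morphisms $B\otimes\Delta_H\otimes B\otimes H$, $B\otimes H\otimes T_{H,B}\otimes H$, $B\otimes\mu_B^H\otimes\mu_H$, $\mu_B\otimes H$, and then insert condition~(2) at the single place where the leftmost $H$-factor needs to cross over the interior $B$-factor to reach $\mathcal M$. The unit axioms are immediate from the counit and unit axioms for $H$ and $B$ together with the fact that $1\#1$ is the unit of $B\#H$.

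Finally, $G\circ F=\mathrm{id}$ because every $(B\#H)$-action factors through the decomposition $(b\#h)\cdot m=(b\#1)(1\#h)\cdot m=b\cdot(h\cdot m)$, and $F\circ G=\mathrm{id}$ by construction of $G$. Both $F$ and $G$ are evidently $k$-linear and identity on underlying morphisms, so they give an equivalence of categories. Since ${_{(B\#H)}}\mathfrak S$ is Grothendieck by \cite[Proposition B2.2]{AZ}, the same holds for ${_{(B,H)}}Eq(\mathfrak S)$, proving the final claim.
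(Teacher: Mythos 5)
Your proposal is correct and matches the paper's proof in all essentials: both directions use the same constructions, namely restriction along the embeddings $b\mapsto b\#1$, $h\mapsto 1\#h$ to get the pair $(\mu_{\mathcal M}^B,\mu_{\mathcal M}^H)$, and the composite $\mu_{\mathcal M}^B\circ(B\otimes\mu_{\mathcal M}^H)$ to reassemble the $B\#H$-action, with condition~(2) of the equivariance definition serving as exactly the exchange rule $(1\#h)(b\#1)=\sum(h_1\cdot b)\#h_2$ needed in the associativity check. The only cosmetic difference is which verification is written out in full --- the paper expands the forward direction (that the restricted actions satisfy condition~(2)) and leaves the converse as routine, while you sketch the converse associativity chase in detail --- and your transport of the Grothendieck property from ${_{(B\#H)}}\mathfrak S$ via \cite[Proposition B2.2]{AZ} is the same as the paper's.
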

\begin{proof}
We write $i_B:B\hookrightarrow B\#H$ and $i_H:H\hookrightarrow B\#H$ for the respective algebra inclusions.  Hence,  any   $(\mathcal{M},\mu_\mathcal{M}^{B\#H}) \in { _{(B\#H)}}{\mathfrak S}$ is equipped with the structure of both a $B$-module object and an $H$-module object in $\mathfrak S$ by setting $\mu_{\mathcal M}^B:=\mu_\mathcal{M}^{B\#H}\circ (i_B\otimes \mathcal M)$ and $\mu_{\mathcal M}^B:=\mu_\mathcal{M}^{B\#H}\circ (i_H\otimes \mathcal M)$ respectively.  For $(\mathcal{M},\mu_\mathcal{M}^{B\#H}) \in { _{(B\#H)}}{\mathfrak S}$, we now have

\begin{align*}
\mu_\mathcal{M}^H \circ (H \otimes \mu_\mathcal{M}^B)&=\mu_\mathcal{M}^{B\#H}\circ ((B\#H)\otimes \mu_\mathcal{M}^{B\#H}) \circ (i_H\otimes i_B\otimes \mathcal M)\\
&=\mu_\mathcal{M}^{B\#H}\circ (\mu_{B\#H}\otimes\mathcal M) \circ (i_H\otimes i_B\otimes \mathcal M) \\
&=\mu_\mathcal{M}^{B\#H}\circ (\mu^H_B\otimes H\otimes \mathcal M)\circ (H\otimes T_{H,B}\otimes \mathcal M)\circ (\Delta_H\otimes B\otimes \mathcal M)\\
&=\mu_\mathcal{M}^{B\#H}\circ (\mu_{B\#H}\otimes\mathcal M) \circ (i_B\otimes i_H\otimes \mathcal M)\circ (\mu^H_B\otimes H\otimes \mathcal M)\circ (H\otimes T_{H,B}\otimes \mathcal M)\circ (\Delta_H\otimes B\otimes \mathcal M)\\
&= \mu_\mathcal{M}^{B\#H}\circ ((B\#H)\otimes \mu_\mathcal{M}^{B\#H})\circ (i_B\otimes i_H\otimes \mathcal M)\circ (\mu^H_B\otimes H\otimes \mathcal M)\circ (H\otimes T_{H,B}\otimes \mathcal M)\circ (\Delta_H\otimes B\otimes \mathcal M)\\
&= \mu_\mathcal{M}^{B\#H}\circ (i_B\otimes \mathcal{M})\circ (B\otimes\mu_{\mathcal M}^H)\circ (\mu^H_B\otimes H\otimes \mathcal M)\circ (H\otimes T_{H,B}\otimes \mathcal M)\circ (\Delta_H\otimes B\otimes \mathcal M)\\
&= \mu_\mathcal{M}^{B}\circ (B\otimes\mu_{\mathcal M}^H)\circ (\mu^H_B\otimes H\otimes \mathcal M)\circ (H\otimes T_{H,B}\otimes \mathcal M)\circ (\Delta_H\otimes B\otimes \mathcal M)\\
&= \mu_{\mathcal M}^B\circ (\mu_B^H\otimes \mu_{\mathcal M}^H)\circ (H\otimes T_{H,B}\otimes \mathcal M)\circ(\Delta_H\otimes B\otimes \mathcal M)
\end{align*}
This shows that $(\mathcal{M},\mu_\mathcal{M}^{B\#H}) \in { _{(B\#H)}}{\mathfrak S}$ may be treated as an object of $ {_{(B,H)}}{Eq(\mathfrak S})$. Similarly, it may be easily verified that any $(\mathcal{N},\mu_\mathcal{N}^B, \mu_\mathcal{N}^H) \in {_{(B,H)}}{Eq(\mathfrak S})$ carries a $(B \#H)$-action given by
\begin{equation*}
(B \#H) \otimes \mathcal N \xrightarrow{B \otimes \mu_\mathcal N^H} B \otimes \mathcal{N} \xrightarrow{\mu_\mathcal{N}^B} \mathcal{N} 
\end{equation*}
\end{proof}

We now let $H$ be a finite dimensional Hopf algebra. Then, the linear dual $H^*$ is also a Hopf algebra. In particular, if $A$ is a right $H$-comodule algebra, we note that $A$ becomes a left $H^*$-module algebra and vice-versa.

\begin{Thm}
Let $H$ be a finite dimensional Hopf algebra. Let $A$ be a right $H$-comodule algebra. Then, the category ${_A}{\mathfrak S}^H$ is equivalent to the category ${ _{(A\#H^*)}}{\mathfrak S}$.
\end{Thm}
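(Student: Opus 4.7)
The plan is to exploit the standard duality between $H$-coactions and $H^*$-actions when $H$ is finite dimensional, and then reduce to Proposition \ref{eqj}. Since $H$ is a finite dimensional Hopf algebra, $H^*$ is also a Hopf algebra, and the right $H$-comodule algebra structure on $A$ is equivalent to a left $H^*$-module algebra structure on $A$. Hence the smash product $A\#H^*$ is defined, and by Proposition \ref{eqj} we already know that ${_{(A\#H^*)}}\mathfrak S\simeq {_{(A,H^*)}}Eq(\mathfrak S)$. It therefore suffices to construct an equivalence ${_A}\mathfrak S^H\simeq {_{(A,H^*)}}Eq(\mathfrak S)$ that respects the $A$-module structure.

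First I would establish a categorical equivalence $\mathfrak S^H \simeq {_{H^*}}\mathfrak S$. Since $H$ is finite dimensional, \eqref{fd} gives a natural isomorphism $\mathcal M\otimes H \cong \underline{Hom}(H^*,\mathcal M)$ for every $\mathcal M\in\mathfrak S$. Thus any morphism $\Delta^H_{\mathcal M}:\mathcal M\longrightarrow \mathcal M\otimes H$ corresponds by adjunction to a morphism $\mu^{H^*}_{\mathcal M}:H^*\otimes \mathcal M\longrightarrow \mathcal M$. I would then verify that the counit axiom for the $H$-coaction translates to the unit axiom for the $H^*$-action via the fact that the unit of $H^*$ is $\epsilon_H$, and that coassociativity of $\Delta^H_{\mathcal M}$ together with the fact that the multiplication on $H^*$ is dual to $\Delta_H$ yields associativity of $\mu^{H^*}_{\mathcal M}$. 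This assignment is evidently functorial, and is inverted by the symmetric construction that sends an action $\mu^{H^*}_{\mathcal M}:H^*\otimes\mathcal M\longrightarrow\mathcal M$ to the composite
\begin{equation*}
\mathcal M \;\cong\; \underline{Hom}(k,\mathcal M) \;\xrightarrow{\underline{Hom}(\epsilon_{H^*},\mathcal M)}\; \underline{Hom}(H^{**},\mathcal M)\;\cong\; \mathcal M\otimes H
\end{equation*}
using $H^{**}\cong H$. The two constructions are mutually inverse by a standard dual-basis argument, giving the desired equivalence $\mathfrak S^H\simeq {_{H^*}}\mathfrak S$.

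Next I would transport the $(A,H)$-Hopf compatibility condition across this equivalence. Given $(\mathcal M,\mu^A_{\mathcal M},\Delta^H_{\mathcal M})\in {_A}\mathfrak S^H$, Definition \ref{D5.1cs}(2) says that $\mu^A_{\mathcal M}$ is a morphism in $\mathfrak S^H$ where $A\otimes\mathcal M$ carries the diagonal $H$-coaction built from $\Delta^H_A$ and $\Delta^H_{\mathcal M}$ as in \eqref{5.1wy}. Under the duality, the diagonal $H$-coaction on $A\otimes\mathcal M$ corresponds exactly to the diagonal $H^*$-action used in the definition of $(A,H^*)$-equivariant module objects, because the multiplication of $H^*$ is dual to $\Delta_H$. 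Consequently $\mu^A_{\mathcal M}$ is an $H$-comodule morphism if and only if the translated $\mu^{H^*}_{\mathcal M}$ satisfies the equivariance condition $\mu^A_{\mathcal M}\circ (\mu^{H^*}_A\otimes \mu^{H^*}_{\mathcal M})\circ(H^*\otimes T_{H^*,A}\otimes \mathcal M)\circ(\Delta_{H^*}\otimes A\otimes \mathcal M)=\mu^{H^*}_{\mathcal M}\circ(H^*\otimes\mu^A_{\mathcal M})$. This gives a functor ${_A}\mathfrak S^H\longrightarrow {_{(A,H^*)}}Eq(\mathfrak S)$, and the inverse equivalence is obtained by reversing the above translation, yielding the desired equivalence of categories. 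Composing with Proposition \ref{eqj} then proves ${_A}\mathfrak S^H\simeq {_{(A\#H^*)}}\mathfrak S$.

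The main obstacle is the careful bookkeeping in the second step: one must verify that the diagonal $H$-coaction on $A\otimes\mathcal M$ from \eqref{5.1wy} actually matches the diagonal $H^*$-action used in the equivariance axiom. This comes down to tracing the isomorphism $\mathcal M\otimes H\cong \underline{Hom}(H^*,\mathcal M)$ naturally through a tensor product, together with the fact that $\mu_{H^*}$ is transpose to $\Delta_H$ and $\Delta_{H^*}$ is transpose to $\mu_H$. Apart from this translation of structures, the remainder of the argument is formal.
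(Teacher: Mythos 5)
Your proposal takes essentially the same route as the paper's proof: use the finite-dimensionality isomorphism \eqref{fd} to convert the $H$-coaction into a left $H^*$-action (the paper writes this action concretely as $(ev_H\otimes \mathcal M)\circ (H^*\otimes T_{\mathcal M,H})\circ (H^*\otimes \Delta^H_{\mathcal M})$, which is exactly your adjoint transpose), check that the relative Hopf compatibility of \eqref{5.1wy} translates into the $(A,H^*)$-equivariance condition, and conclude via Proposition \ref{eqj}. One correction: your displayed inverse construction $\mathcal M\cong \underline{Hom}(k,\mathcal M)\longrightarrow \underline{Hom}(H^{**},\mathcal M)\cong \mathcal M\otimes H$ never uses $\mu^{H^*}_{\mathcal M}$ and so cannot recover the coaction; the correct inverse, as in the paper, is the adjoint transpose $\Delta^H_{\mathcal N}:\mathcal N\longrightarrow \underline{Hom}(H^*,\mathcal N)\cong \mathcal N\otimes H$ of the action morphism $\mu^{H^*}_{\mathcal N}$.
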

\begin{proof}
Let $(\mathcal M,\mu_{\mathcal M}^A,\mu_{\mathcal M}^H)$ be an object in ${_A}{\mathfrak S}^H$. Then, we have $(\mathcal M,\mu_\mathcal M^A) \in {_A}{\mathfrak S}$ and $(\mathcal M,\Delta_\mathcal M^H) \in \mathfrak S^H$ such that
\begin{equation}\label{xx}
\Delta^H_{\mathcal M}\circ \mu^A_{\mathcal M}=(\mu^A_{\mathcal M}\otimes \mu_H)\circ (A\otimes T_{H, \mathcal M}\otimes H)\circ  (\Delta_A^H \otimes \Delta^H_{\mathcal M})=(\mu_\mathcal M^A \otimes H) \circ \Delta_{A \otimes \mathcal M}^H
\end{equation} where $ \Delta_{A \otimes \mathcal M}^H$ is as defined in \eqref{5.1wy}.
We now make $\mathcal M$ a left $H^*$-module object in $\mathfrak S$ by setting 
\begin{equation*}
\mu_{\mathcal M}^{H^*}:H^* \otimes \mathcal M \xrightarrow{H^* \otimes \Delta_\mathcal M^H} H^* \otimes \mathcal M \otimes H \xrightarrow{H^* \otimes T_{\mathcal M,H}} H^* \otimes H \otimes \mathcal M \xrightarrow{ev_H\otimes \mathcal M} \mathcal M
\end{equation*} where $ev_H$ denotes the evaluation map $ev_H:H^*\otimes H\longrightarrow k$.
 Also, using \eqref{xx}, we see that
\begin{align*}
\mu_\mathcal M^{H^*} \circ (H^* \otimes \mu_\mathcal M^A)&=(ev_H \otimes \mathcal M)(H^* \otimes T_{\mathcal M,H})(H^* \otimes \Delta_\mathcal M^H)(H^* \otimes \mu_\mathcal M^A)\\
&=(ev_H \otimes \mathcal M)(H^* \otimes T_{\mathcal M,H})(H^* \otimes (\Delta_\mathcal M^H \circ \mu_\mathcal M^A))\\
&=(ev_H \otimes \mathcal M)(H^* \otimes T_{\mathcal M,H})(H^* \otimes \mu_\mathcal M^A \otimes H)(H^* \otimes \Delta^H_{A \otimes \mathcal M})\\
&=\mu_\mathcal M^A (ev_H \otimes A \otimes \mathcal M)(H^* \otimes T_{A \otimes \mathcal M,H})(H^* \otimes \Delta^H_{A \otimes \mathcal M})\\
&=\mu_\mathcal M^A \circ \mu^{H^*}_{A \otimes \mathcal M}
\end{align*}
Combining with Proposition \ref{eqj}, this proves that $\mathcal M$ may be treated as an object  of $ { _{A\#H^*}}{\mathfrak S}$. Conversely, we consider $(\mathcal N, \mu^{A\#H^*}_{\mathcal N})\in { _{A\#H^*}}{\mathfrak S}$. Using Proposition \ref{eqj}, we see that  $(\mathcal N, \mu^{A\#H^*}_{\mathcal N})$ may be treated as an $(A,H^*)$-equivariant module object $(\mathcal N,\mu^A_{\mathcal N},\mu^{H^*}_{\mathcal N})$ in $\mathfrak S$. The structure morphism $\mu^{H^*}_\mathcal N:H^* \otimes \mathcal N \longrightarrow \mathcal N$ corresponds by adjunction to a morphism $\Delta^{H}_{\mathcal N}:\mathcal N\longrightarrow  \underline{Hom}(H^*,\mathcal N)\cong \mathcal N\otimes H$, where the last isomorphism follows from \eqref{fd} because  $H$ is finite dimensional. It may now be verified that $(\mathcal N,\mu_{\mathcal N}^A,\Delta_{\mathcal N}^H)\in {_A}\mathfrak S^H$. 
\end{proof}

\section{Cohomology of relative Hopf modules in $\mathfrak S$}

We continue with $A$ being a right $H$-comodule algebra. We denote by $Com-H$ the category of right $H$-comodules.  Since $H$ is a Hopf algebra over a field $k$, we know that $Com-H$ is a Grothendieck category.

\begin{thm}\label{P6.1ku}
Let $(\mathcal M,\mu_{\mathcal M}^A,\Delta_{\mathcal M}^H)\in {_A}\mathfrak S^H$. Then, 
$\mathcal M$ determines a functor $\mathcal M\otimes\_\_:Com-H\longrightarrow {_A}\mathfrak S^H$ that has a right adjoint ${_{{_A}\mathfrak S}}HOM(\mathcal M,\_\_):{_A}
 \mathfrak S^H\longrightarrow Com-H$. In other words, we have natural isomorphisms
 \begin{equation}\label{6.1eqx}
 {_A}\mathfrak S^H(\mathcal M\otimes N,\mathcal P)\cong Com-H(N,{_{{_A}\mathfrak S}}HOM(\mathcal M,\mathcal P))
 \end{equation} for $\mathcal P\in {_A}\mathfrak S^H$ and $N\in Com-H$.
\end{thm}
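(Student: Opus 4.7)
\smallskip
\textbf{Proof proposal.} My plan proceeds in three steps. In the first step I construct the functor $\mathcal{M}\otimes\_\_$: for $N\in Com\text{-}H$, the object $\mathcal{M}\otimes N\in \mathfrak{S}$ is endowed with the left $A$-action $\mu^A_\mathcal{M}\otimes N:A\otimes \mathcal{M}\otimes N\longrightarrow \mathcal{M}\otimes N$ and with the diagonal right $H$-coaction
\[
\Delta^H_{\mathcal{M}\otimes N}:\mathcal{M}\otimes N\xrightarrow{\Delta^H_\mathcal{M}\otimes \Delta^H_N}\mathcal{M}\otimes H\otimes N\otimes H\xrightarrow{\mathcal{M}\otimes T_{H,N}\otimes H}\mathcal{M}\otimes N\otimes H\otimes H\xrightarrow{\mathcal{M}\otimes N\otimes \mu_H}\mathcal{M}\otimes N\otimes H.
\]
The relative Hopf compatibility of Definition \ref{D5.1cs}(2) for $\mathcal{M}\otimes N$ needs to be checked; after expanding the swap $T_{H,\mathcal{M}\otimes N}$ into $T_{H,\mathcal{M}}$ and $T_{H,N}$, the condition reduces, via coassociativity of $\Delta^H_N$ and the Hopf compatibility for $\mathcal{M}$ itself, to an identity that holds tautologically. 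Functoriality in $N$ is immediate.

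In the second step I show that $\mathcal{M}\otimes\_\_:Com\text{-}H\longrightarrow {_A}\mathfrak{S}^H$ preserves all small colimits. Colimits in $Com\text{-}H$ are computed in $k\text{-}\mathrm{Mod}$ via the forgetful functor, since $H$ is $k$-flat, and filtered colimits and finite colimits in ${_A}\mathfrak{S}^H$ are computed in $\mathfrak{S}$. The functor $\mathcal{M}\otimes\_\_:k\text{-}\mathrm{Mod}\longrightarrow \mathfrak{S}$ is a left adjoint to $\mathfrak{S}(\mathcal{M},\_\_)$ (see \cite[\S B3]{AZ}), hence preserves arbitrary colimits. Combining these observations, $\mathcal{M}\otimes\_\_$ sends any small colimit diagram in $Com\text{-}H$ to a colimit in ${_A}\mathfrak{S}^H$.

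In the third step I invoke the Special Adjoint Functor Theorem. By Theorem \ref{Thm5.3sq}, ${_A}\mathfrak{S}^H$ is a Grothendieck category, and $Com\text{-}H$ is a Grothendieck category since $k$ is a field. Both are cocomplete, locally presentable, and admit small generators, so any cocontinuous functor between them has a right adjoint. I denote this right adjoint by ${_{{_A}\mathfrak{S}}}HOM(\mathcal{M},\_\_):{_A}\mathfrak{S}^H\longrightarrow Com\text{-}H$, and the natural bijection \eqref{6.1eqx} then follows from the adjunction.

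The main obstacle I anticipate is in Step 1: verifying the relative Hopf compatibility of the $A$-action and the diagonal coaction on $\mathcal{M}\otimes N$. The calculation, while essentially formal, involves interchanging several tensor factors and invoking the coassociativity and multiplicativity axioms in the correct order. An alternative to Step 3 would be to give an explicit description of the right adjoint as the largest $H$-subcomodule sitting inside the morphism space ${_A}\mathfrak{S}(\mathcal{M},\mathcal{P})$, consistent with the remark in the introduction; this would be more concrete but would shift the main effort to checking the universal property by hand.
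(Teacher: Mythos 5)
Your proposal is correct and follows essentially the same route as the paper: the paper equips $\mathcal M\otimes N$ with the identical $A$-action and diagonal $H$-coaction, observes that $\mathcal M\otimes\_\_$ preserves colimits, and obtains the right adjoint by citing \cite[Proposition 8.3.27(iii)]{KS}, which is exactly the adjoint functor theorem for Grothendieck categories that your Step 3 invokes. The only difference is cosmetic: the paper dispatches the Hopf-compatibility check of your Step 1 as a direct verification, and your remark about an explicit description of the right adjoint as the rational part of ${_A}\mathfrak S(\mathcal M,\mathcal P)$ is precisely what the paper establishes separately later (Proposition \ref{6.6i}), not as part of this proof.
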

\begin{proof} 
The object $\mathcal M\otimes N$ carries a left $A$-action given by $\mu_{\mathcal M}^A\otimes N: (A\otimes \mathcal M)\otimes N\longrightarrow  \mathcal M\otimes N$. If $\Delta_N:N\longrightarrow N\otimes H$ is the right $H$-comodule structure map of $N$, then 
\begin{equation}
\Delta^H:{\mathcal M\otimes N}:\mathcal M\otimes N \xrightarrow{\Delta_{\mathcal M}^H\otimes \Delta_N} \mathcal M\otimes H\otimes N\otimes H \xrightarrow{\mathcal M\otimes T_{H,N}\otimes H}\mathcal M\otimes N\otimes H
\otimes H\xrightarrow{\mathcal M\otimes N\otimes \mu_H} \mathcal M\otimes N\otimes H
\end{equation} makes $\mathcal M\otimes N$ a right $H$-comodule object in $\mathfrak S$. It may be directly verified that this makes $\mathcal M\otimes N$ an object of ${_A}\mathfrak S^H$. It is also clear that   $\mathcal M\otimes\_\_$ preserves colimits. By Theorem \ref{Thm5.3sq}, we know that ${_A}\mathfrak S^H$ is a Grothendieck category. We have also noted that
the category $Com-H$ of right $H$-comodules  is a Grothendieck category. Applying therefore \cite[Proposition 8.3.27(iii)]{KS}, it follows that  $\mathcal M\otimes\_\_$  has a right adjoint which we denote by ${_{{_A}\mathfrak S}}HOM(\mathcal M,\_\_):{_A}
 \mathfrak S^H\longrightarrow Com-H$. 
\end{proof}

If $(N,\Delta_N:N\longrightarrow N\otimes H)$ is a right $H$-comodule, we recall that the space of $H$-coinvariants of $N$, denoted $N^{co H}$, is given by $N^{co H}:=\{\mbox{$n\in N$ $\vert$ $\Delta_N(n)=n\otimes 1_H$}\}$. We note that the space $N^{co H}$ may be alternatively written as $N^{co H}=Com-H(k,N)$, where $k$ is treated as an $H$-comodule by means of the structure map
$k\cong k\otimes k\xrightarrow{k\otimes u_H} k\otimes H$. 

\begin{lem}\label{L6.2eb} (a) Let $\mathcal M$, $\mathcal N\in {_A}\mathfrak S^H$. Then, ${_A}\mathfrak S^H(\mathcal M,\mathcal N)={_{{_A}\mathfrak S}}HOM(\mathcal M,\mathcal N)^{co H}$. 

\smallskip
(b) The functor  ${_{{_A}\mathfrak S}}HOM(\mathcal M,\_\_):{_A}
 \mathfrak S^H\longrightarrow Com-H$ preserves injective objects.
\end{lem}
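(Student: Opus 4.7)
The plan for part (a) is to simply specialize the adjunction \eqref{6.1eqx} of Proposition \ref{P6.1ku} to the right $H$-comodule $k$, where $k$ carries the $H$-coaction induced by the unit $u_H:k\longrightarrow H$. First I would verify that $\mathcal M\otimes k\cong \mathcal M$ canonically in ${_A}\mathfrak S^H$: the $A$-action is clearly identified via $\mathcal M\otimes k\cong \mathcal M$, and the computed $H$-coaction on $\mathcal M\otimes k$ reduces to $\Delta^H_{\mathcal M}$ because $\Delta_k$ sends $1$ to $1\otimes 1_H$ and $\mu_H\circ (1_H\otimes -)$ is the identity. Combined with the standard identification $N^{coH}=Com{-}H(k,N)$ noted just before the lemma, the adjunction \eqref{6.1eqx} then gives
\begin{equation*}
{_A}\mathfrak S^H(\mathcal M,\mathcal N)\cong {_A}\mathfrak S^H(\mathcal M\otimes k,\mathcal N)\cong Com{-}H(k,{_{{_A}\mathfrak S}}HOM(\mathcal M,\mathcal N))={_{{_A}\mathfrak S}}HOM(\mathcal M,\mathcal N)^{coH},
\end{equation*}
which is the identity in (a).

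For part (b), I plan to invoke the general principle that a right adjoint to an exact functor preserves injective objects. Since ${_{{_A}\mathfrak S}}HOM(\mathcal M,\_\_)$ is the right adjoint of $\mathcal M\otimes\_\_:Com{-}H\longrightarrow {_A}\mathfrak S^H$ by Proposition \ref{P6.1ku}, it suffices to show that $\mathcal M\otimes\_\_$ is exact. Right exactness is automatic because it is a left adjoint. For left exactness, I would use that over the field $k$ the bifunctor $\_\_\otimes\_\_:\mathfrak S\times k{-}Mod\longrightarrow \mathfrak S$ is exact in both variables, as noted earlier in the paper. Kernels in $Com{-}H$ are computed in $k{-}Mod$ (since $H$ is flat over $k$) and kernels in ${_A}\mathfrak S^H$ are computed in $\mathfrak S$; hence, for any short exact sequence $0\longrightarrow N'\longrightarrow N\longrightarrow N''\longrightarrow 0$ in $Com{-}H$, applying $\mathcal M\otimes \_\_$ yields a short exact sequence in $\mathfrak S$, and after equipping the terms with the induced $A$-action and $H$-coaction described in the proof of Proposition \ref{P6.1ku}, this is again short exact in ${_A}\mathfrak S^H$.

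Neither step involves serious difficulty; the only point requiring care is the clean identification $\mathcal M\otimes k\cong \mathcal M$ in ${_A}\mathfrak S^H$ used in (a), and the verification in (b) that the structure maps (the tensor $A$-action and the tensor $H$-coaction) are preserved when passing to kernels. Both are routine once one unpacks the definitions. The conclusion in (b) then follows from the fact that if $L\dashv R$ with $L$ exact, then $R(I)$ is injective whenever $I$ is, because $Com{-}H(\_,R(I))\cong {_A}\mathfrak S^H(L(\_),I)$ is a composition of exact functors.
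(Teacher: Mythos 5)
Your proposal is correct and follows essentially the same route as the paper: part (a) is the adjunction \eqref{6.1eqx} specialized at $N=k$ together with the identification $N^{coH}=Com\text{-}H(k,N)$, and part (b) is the standard fact that the right adjoint of an exact functor preserves injectives, with exactness of $\mathcal M\otimes\_\_$ coming from exactness of $\_\_\otimes\_\_:\mathfrak S\times k\text{-}Mod\longrightarrow \mathfrak S$ over the field $k$. The extra verifications you flag (the identification $\mathcal M\otimes k\cong \mathcal M$ in ${_A}\mathfrak S^H$ and the computation of kernels in the underlying categories) are exactly the routine points the paper leaves implicit.
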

\begin{proof}
(a)  This follows immediately from \eqref{6.1eqx} by setting $N=k$ and using the expression for coinvariants of a right $H$-comodule. 

\smallskip
(b) We have noted in Section 2 that $\_\_\otimes \_\_:\mathfrak S\times k-Mod\longrightarrow \mathfrak S$ is exact in both variables. Accordingly, the left adjoint $\mathcal M\otimes\_\_:Com-H\longrightarrow {_A}\mathfrak S^H$  is exact. Hence, the right adjoint ${_{{_A}\mathfrak S}}HOM(\mathcal M,\_\_):{_A}
 \mathfrak S^H\longrightarrow Com-H$ preserves injectives.
\end{proof}

\begin{Thm}\label{T6.3spec}
Let $(\mathcal M,\mu_{\mathcal M}^A,\Delta_{\mathcal M}^H)$ be a relative $(A,H)$-Hopf module object in $\mathfrak S$. We consider the following functors
\begin{equation}
\begin{array}{c}
\mathfrak  F:={_{{_A}\mathfrak S}}HOM(\mathcal M,\_\_):{_A}\mathfrak S^H\longrightarrow Com-H \qquad \mathcal N\mapsto{_{{_A}\mathfrak S}}HOM(\mathcal M,\mathcal N)\\
\mathfrak G:=(-)^{co H}:Com-H\longrightarrow k-Mod \qquad N\mapsto N^{co H}
\end{array}
\end{equation} Then, we have a spectral sequence
\begin{equation}
E_2^{pq}=R^p(-)^{co H}(R^q{_{{_A}\mathfrak S}}HOM(\mathcal M,\_\_)(\mathcal N)) \Rightarrow (R^{p+q}{_A}\mathfrak S^H(\mathcal M,\_\_))(\mathcal N)
\end{equation}
\end{Thm}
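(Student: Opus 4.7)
The plan is to derive the claimed spectral sequence as a direct application of the Grothendieck spectral sequence for the composition of two left exact functors $\mathfrak{G}\circ\mathfrak{F}$. All of the ingredients needed to invoke this standard machinery have already been established earlier in the paper, so the proof reduces to verifying the four standard hypotheses in turn.

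First I would identify the composition: by Lemma \ref{L6.2eb}(a), functorially in $\mathcal{N}\in {_A}\mathfrak{S}^H$, one has
\[
(\mathfrak{G}\circ\mathfrak{F})(\mathcal{N})={_{{_A}\mathfrak{S}}}HOM(\mathcal{M},\mathcal{N})^{coH}={_A}\mathfrak{S}^H(\mathcal{M},\mathcal{N}),
\]
which is exactly the functor whose right derived functors appear on the abutment of the spectral sequence. Naturality in $\mathcal{N}$ follows by specializing the adjunction isomorphism \eqref{6.1eqx} at $N=k$. Next I would check left exactness: $\mathfrak{F}$ is a right adjoint by Proposition \ref{P6.1ku} and therefore preserves all limits, and $\mathfrak{G}=Com-H(k,\_\_)$ is left exact as a Hom functor. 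Enough injectives in ${_A}\mathfrak{S}^H$ and in $Com-H$ are guaranteed by the fact that both are Grothendieck categories, the first by Theorem \ref{Thm5.3sq}.

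The key acyclicity hypothesis, namely that $\mathfrak{F}$ sends injective objects to $\mathfrak{G}$-acyclic objects, is exactly the content of Lemma \ref{L6.2eb}(b), which gives the stronger statement that $\mathfrak{F}$ preserves injectivity; any injective object in $Com-H$ is visibly $\mathfrak{G}$-acyclic. With all four hypotheses in hand, the Grothendieck spectral sequence theorem yields the convergent first-quadrant cohomological spectral sequence as stated. I do not anticipate a genuine obstacle, since the substantive work has already been done in Theorem \ref{Thm5.3sq}, Proposition \ref{P6.1ku} and Lemma \ref{L6.2eb}; what remains is only the routine verification above, together with keeping track of the natural transformation identifying the composition with $_A\mathfrak{S}^H(\mathcal{M},\_\_)$.
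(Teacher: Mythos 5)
Your proposal is correct and follows essentially the same route as the paper: both identify $\mathfrak G\circ\mathfrak F$ with ${_A}\mathfrak S^H(\mathcal M,\_\_)$ via Lemma \ref{L6.2eb}(a), use Lemma \ref{L6.2eb}(b) for preservation of injectives, and invoke the Grothendieck spectral sequence for the composite. Your additional checks of left exactness (via the adjunction of Proposition \ref{P6.1ku} and writing $\mathfrak G$ as $Com\text{-}H(k,\_\_)$) are correct and merely make explicit what the paper leaves implicit.
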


\begin{proof}
The categories ${_A}\mathfrak S^H$, $Com-H$ and $k-Mod$ are all Grothendieck categories, and hence they have enough injectives. By Lemma \ref{L6.2eb}(a), we have 
$(\mathfrak G\circ \mathfrak F)(\mathcal N)={_A}\mathfrak S^H(\mathcal M,\mathcal N)$. By Lemma \ref{L6.2eb}(b), the functor $\mathfrak F$ preserves injective objects. The result is now clear from the Grothendieck spectral sequence for composite functors.
\end{proof}

For the rest of this section, we will always assume that $H$ has a bijective antipode $S:H\longrightarrow H$. 
Let $(\mathcal M,\Delta^H_\mathcal M)$ and $(\mathcal N,\Delta^H_\mathcal N)$ be  objects in $\mathfrak S^H$. For $\phi \in \mathfrak S(\mathcal M,\mathcal N)$, we have a morphism $\rho(\phi) \in \mathfrak S(\mathcal M,\mathcal N \otimes H)$ given by
\begin{equation}\label{rho}
\mathcal M \xrightarrow{\Delta^H_\mathcal M} \mathcal M \otimes H \xrightarrow{\mathcal M \otimes S^{-1}} \mathcal M\otimes H  \xrightarrow{\phi \otimes H} \mathcal N \otimes H \xrightarrow{\Delta^H_\mathcal N \otimes H} \mathcal N \otimes H \otimes H \xrightarrow{\mathcal N \otimes \mu_{H^{op}}} \mathcal N \otimes H
\end{equation} where $\mu_{H^{op}}:H\otimes H\longrightarrow H$ denotes the opposite of the multiplication on $H$. We now have the following result.

\begin{thm}\label{6.4fd}
Let $A$ be a right $H$-comodule algebra. Let $(\mathcal M, \mu^A_\mathcal M, \Delta^H_\mathcal M)$ and $(\mathcal N, \mu^A_\mathcal N, \Delta^H_\mathcal N)$ be objects in ${_A}\mathfrak S^H$ and let $\phi \in  {_A}\mathfrak S(\mathcal M,\mathcal N)$. Then, the morphism $\rho(\phi):\mathcal{M} \longrightarrow \mathcal{N} \otimes H$, as defined in \eqref{rho}, lies in
${_A}\mathfrak S(\mathcal M,\mathcal N\otimes H)$.
\end{thm}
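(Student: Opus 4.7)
The plan is to verify by direct computation that $\rho(\phi)$ is $A$-linear, where $\mathcal{N}\otimes H$ is viewed in ${_A}\mathfrak S$ via the structure inherited from Proposition \ref{P6.1ku}, i.e.\ with $A$ acting through $\mu^A_\mathcal{N}$ on the first tensorand only. I will work in Sweedler-type shorthand $\Delta^H_\mathcal{M}(m)=m^{(0)}\otimes m^{(1)}$, $\Delta^H_\mathcal{N}(n)=n^{(0)}\otimes n^{(1)}$ and $\Delta^H_A(a)=a^{(0)}\otimes a^{(1)}$; each symbolic manipulation corresponds to a commutative square in $\mathfrak S$, and a fully diagrammatic proof amounts to pasting these squares together. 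Since $\mu_{H^{op}}$ multiplies in the opposite order, the definition \eqref{rho} unpacks as
\begin{equation*}
\rho(\phi)(m)=\phi(m^{(0)})^{(0)}\otimes S^{-1}(m^{(1)})\,\phi(m^{(0)})^{(1)},
\end{equation*}
so the goal reduces to the identity $\rho(\phi)(am)=a\,\phi(m^{(0)})^{(0)}\otimes S^{-1}(m^{(1)})\,\phi(m^{(0)})^{(1)}$.

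To expand $\rho(\phi)(am)$ I would apply four ingredients in succession. First, the relative Hopf-module axiom on $\mathcal{M}$ (Definition \ref{D5.1cs}(2)) gives $\Delta^H_\mathcal{M}(am)=a^{(0)}m^{(0)}\otimes a^{(1)}m^{(1)}$. Second, the anti-multiplicativity of $S^{-1}$, a consequence of $S$ being anti-algebra, splits $S^{-1}(a^{(1)}m^{(1)})=S^{-1}(m^{(1)})\,S^{-1}(a^{(1)})$. Third, the $A$-linearity of $\phi$ replaces $\phi(a^{(0)}m^{(0)})$ by $a^{(0)}\phi(m^{(0)})$. Fourth, the Hopf-module axiom on $\mathcal{N}$ expands $\Delta^H_\mathcal{N}(a^{(0)}\phi(m^{(0)}))$ as $(a^{(0)})^{(0)}\phi(m^{(0)})^{(0)}\otimes(a^{(0)})^{(1)}\phi(m^{(0)})^{(1)}$. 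After reindexing via coassociativity of $\Delta^H_A$ in three-index Sweedler form $(a^{(0)})^{(0)}\otimes(a^{(0)})^{(1)}\otimes a^{(1)}=a^{(0)}\otimes a^{(1)}\otimes a^{(2)}$, the resulting expression for $\rho(\phi)(am)$ contains the internal factor $S^{-1}(a^{(2)})\,a^{(1)}$ wedged between $S^{-1}(m^{(1)})$ and $\phi(m^{(0)})^{(1)}$.

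The crucial input is then the antipode identity for $S^{-1}$, namely $S^{-1}(h_{(2)})h_{(1)}=\epsilon_H(h)\,1_H$, which I would derive once from the standard $S(h_{(1)})h_{(2)}=\epsilon_H(h)\,1_H$ by applying $S^{-1}$ and using anti-multiplicativity; this uses the assumed bijectivity of $S$ in an essential way. It collapses the internal factor to $\epsilon_H(a^{(1)})\,1_H$, after which the counit axiom $(A\otimes\epsilon_H)\circ\Delta^H_A=\mathrm{id}_A$ converts $\epsilon_H(a^{(1)})\,a^{(0)}$ back into $a$, yielding the required equality. The only real obstacle is bookkeeping: every one of the symbolic substitutions above hides a commutative square involving the twists $T_{H,\mathcal{N}}$, $T_{H,H}$ and a choice between $\mu_H$ and $\mu_{H^{op}}$, and care is needed to keep track of the slot in which each factor sits; no Hopf-algebraic input beyond bijectivity of $S$ and the standard antipode/counit identities is required.
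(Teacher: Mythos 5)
Your proof is correct and takes essentially the same route as the paper's: the paper's long element-free computation is exactly the pasting of the commutative squares you list — the Hopf-module axiom on $\mathcal M$, anti-multiplicativity of $S^{-1}$ (via $S^{-1}\circ\mu_H=\mu_{H^{op}}\circ(S^{-1}\otimes S^{-1})$), $A$-linearity of $\phi$, the Hopf-module axiom on $\mathcal N$, coassociativity of $\Delta_A^H$, and finally the collapse $S^{-1}(h_{(2)})h_{(1)}=\epsilon_H(h)1_H$ followed by the counit axiom — just written as composites of morphisms rather than in Sweedler shorthand. You also correctly identify the intended $A$-structure $\mu^A_{\mathcal N}\otimes H$ on $\mathcal N\otimes H$ (matching the paper's concluding identity $\rho(\phi)\circ\mu^A_{\mathcal M}=(\mu^A_{\mathcal N}\otimes H)\circ(A\otimes\rho(\phi))$) and the essential use of the bijectivity of $S$.
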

\begin{proof} We have
\begin{align*}
\rho(\phi)\circ \mu_\mathcal M^A&=(\mathcal N \otimes \mu_{H^{op}})(\Delta^H_\mathcal N \otimes H)(\phi \otimes H)(\mathcal M \otimes S^{-1}) \Delta^H_\mathcal M \mu_\mathcal M^A\\
& =(\mathcal N \otimes \mu_{H^{op}})(\Delta^H_\mathcal N \otimes H)(\phi \otimes H)(\mathcal M \otimes S^{-1})(\mu^A_{\mathcal M}\otimes \mu_H)(A\otimes T_{H, \mathcal M}\otimes H)(\Delta_A^H \otimes \Delta^H_{\mathcal M})\\
%%%%%%%%%%%%%%%%%%%%%%%%%%%%%%%%%
&=(\mathcal N \otimes \mu_{H^{op}})(\Delta^H_\mathcal N \otimes H)(\phi \otimes H)(\mathcal M \otimes S^{-1})(\mathcal M \otimes \mu_H)(\mu_\mathcal M^A \otimes H \otimes H)(A \otimes T_{H,\mathcal M} \otimes H)(\Delta^H_A \otimes \Delta_\mathcal M^H)  \\
%%%%%%%%%%%%%%%%%%%%%%%%%%%%%%%%%%%%%%%%%%%%
&=(\mathcal N \otimes \mu_{H^{op}})(\Delta^H_\mathcal N \otimes H)(\mathcal N \otimes S^{-1})(\mathcal N \otimes \mu_H)(\phi \otimes H \otimes H)(\mu_\mathcal M^A \otimes H \otimes H) (A \otimes T_{H,\mathcal M} \otimes H)(\Delta^H_A \otimes \Delta_\mathcal M^H) \\
%%%%%%%%%%%%%%%%%%%%%%%%%%%%%%%%%%%%%
&=(\mathcal N \otimes \mu_{H^{op}})(\Delta^H_\mathcal N \otimes H)(\mathcal N \otimes S^{-1})(\mathcal N \otimes \mu_H)(\mu_\mathcal N^A \otimes H \otimes H)(A \otimes \phi \otimes H \otimes H) (A \otimes T_{H,\mathcal M} \otimes H)(\Delta^H_A \otimes \Delta_\mathcal M^H) \\
%%%%%%%%%%%%%%%%%%%%%%%%%%%%%%%%%%%%%%
&=(\mathcal N \otimes \mu_{H^{op}})(\mathcal N \otimes H \otimes S^{-1})(\Delta_\mathcal N^H \otimes H)(\mu_\mathcal N^A \otimes H)(A \otimes \mathcal N  \otimes \mu_H)(A \otimes \phi \otimes H \otimes H) (A \otimes T_{H,\mathcal M} \otimes H)(\Delta^H_A \otimes \Delta_\mathcal M^H) \\
%%%%%%%%%%%%%%%%%%%%%%%%%%%%%%%%%%%%%%
&=(\mathcal N \otimes \mu_{H^{op}})(\mathcal N \otimes H \otimes S^{-1})(\mu^A_\mathcal N \otimes \mu_H \otimes H)(A \otimes T_{H,\mathcal N} \otimes H \otimes H)(\Delta^H_A \otimes \Delta^H_\mathcal N \otimes H)
  (A \otimes \mathcal N \otimes \mu_H)(A \otimes \phi \otimes H \otimes H)\\
  & \qquad(A \otimes T_{H,\mathcal M} \otimes H)(\Delta^H_A \otimes \Delta_\mathcal M^H) \\
 %%%%%%%%%%%%%%%%%%%%%%%%%%%%%%%%%%
&=(\mathcal N \otimes \mu_{H^{op}})(\mathcal N \otimes H \otimes S^{-1})(\mu^A_\mathcal N \otimes \mu_H \otimes H)(A \otimes T_{H,\mathcal N} \otimes H \otimes H)(A \otimes H \otimes \mathcal N  \otimes H \otimes \mu_H)(A \otimes H \otimes \Delta_\mathcal N^H  \otimes H \otimes H)\\
&\qquad (\Delta_A^H \otimes  \mathcal N \otimes H \otimes H)(A \otimes \phi \otimes H \otimes H)(A \otimes T_{H,\mathcal M} \otimes H)(\Delta^H_A \otimes \mathcal M \otimes H)(A \otimes \Delta^H_\mathcal M) \\
%%%%%%%%%%%%%%%%%%%%%%%%%%%%%%%%%
&=(\mathcal N \otimes \mu_{H^{op}})(\mu_\mathcal{N}^A \otimes \mu_H \otimes H)(A \otimes T_{H,\mathcal{N}} \otimes H \otimes H)(A \otimes H \otimes \mathcal{N} \otimes H \otimes S^{-1})(A \otimes H \otimes \mathcal N  \otimes H \otimes \mu_H)(A \otimes H \otimes \Delta_\mathcal N^H  \otimes H \otimes H)\\
&\qquad (\Delta_A^H \otimes  \mathcal N \otimes H \otimes H)(A \otimes \phi \otimes H \otimes H)(A \otimes T_{H,\mathcal M} \otimes H)(\Delta^H_A \otimes \mathcal M \otimes H)(A \otimes \Delta^H_\mathcal M) \\
%%%%%%%%%%%%%%%%%%%%%%%%%%%%%%%
&=(\mathcal N \otimes \mu_{H^{op}})(\mu_\mathcal{N}^A \otimes \mu_H \otimes H)(A \otimes T_{H,\mathcal{N}} \otimes H \otimes H)(A \otimes H \otimes \mathcal{N} \otimes H \otimes (S^{-1}\circ \mu_H))(A \otimes H \otimes \Delta_\mathcal N^H  \otimes H \otimes H)\\
&\qquad (\Delta_A^H \otimes  \mathcal N \otimes H \otimes H)(A \otimes \phi \otimes H \otimes H)(A \otimes T_{H,\mathcal M} \otimes H)(\Delta^H_A \otimes \mathcal M \otimes H)(A \otimes \Delta^H_\mathcal M) 
\end{align*} Using the fact that $S^{-1} \circ \mu_H=\mu_{H^{op}}\circ (S^{-1} \otimes S^{-1})=\mu_{H^{op}} \circ (S^{-1} \otimes H) \circ (H \otimes S^{-1}) $, we now have
\begin{align*}
\rho(\phi)\circ \mu_\mathcal M^A&=(\mathcal N \otimes \mu_{H^{op}})(\mu_\mathcal{N}^A \otimes \mu_H \otimes H)(A \otimes T_{H,\mathcal{N}} \otimes H \otimes H)(A \otimes H \otimes \mathcal{N} \otimes H \otimes \mu_{H^{op}})(A \otimes H \otimes \mathcal{N} \otimes H \otimes  S^{-1} \otimes H)\\
& \qquad (A \otimes H \otimes \mathcal{N} \otimes H \otimes H \otimes S^{-1})(A \otimes H \otimes \Delta_\mathcal N^H  \otimes H \otimes H)(A \otimes H \otimes T_{H,\mathcal{N}} \otimes H)(\Delta_A^H \otimes H \otimes \mathcal{N} \otimes H)(\Delta_A^H \otimes \mathcal{N} \otimes H)\\
&\qquad (A \otimes \phi \otimes H)(A \otimes \Delta^H_\mathcal M) \\
%%%%%%%%%%%%%%%%%%%%%%%%%%%%%%%%%%%%%
&=(\mathcal N \otimes \mu_{H^{op}})(\mu_\mathcal{N}^A \otimes \mu_H \otimes H)(A \otimes T_{H,\mathcal{N}} \otimes H \otimes H)(A \otimes H \otimes \mathcal{N} \otimes H \otimes \mu_{H^{op}})(A \otimes H \otimes \mathcal{N} \otimes H \otimes  S^{-1} \otimes H)\\
& \qquad (A \otimes H \otimes \mathcal{N} \otimes H \otimes H \otimes S^{-1})(A \otimes H \otimes \Delta_\mathcal N^H  \otimes H \otimes H)(A \otimes H \otimes T_{H,\mathcal{N}} \otimes H)(A \otimes \Delta_H \otimes \mathcal{N} \otimes H)(\Delta_A^H \otimes \mathcal{N} \otimes H)\\
&\qquad (A \otimes \phi \otimes H)(A \otimes \Delta^H_\mathcal M) \\
%%%%%%%%%%%%%%%%%%%%%%%%%%%%%%%%%%%
& =(\mathcal N \otimes \mu_{H^{op}})(\mu_\mathcal{N}^A \otimes \mu_H \otimes H)(A \otimes T_{H,\mathcal{N}} \otimes H \otimes H)(A \otimes H \otimes \mathcal{N} \otimes H \otimes \mu_{H^{op}})(A \otimes H \otimes \mathcal{N} \otimes H \otimes  S^{-1} \otimes H)\\
&\qquad (A \otimes H \otimes \mathcal{N} \otimes T_{H,H} \otimes H)(A \otimes H \otimes T_{H, \mathcal{N}} \otimes H \otimes H)(A \otimes \Delta_H \otimes \mathcal{N} \otimes H \otimes H)(A \otimes H \otimes \Delta_\mathcal{N}^H \otimes H)(A \otimes H \otimes \mathcal{N} \otimes S^{-1})\\
&\qquad (\Delta_A^H \otimes \mathcal{N} \otimes H)(A \otimes \phi \otimes H)(A \otimes \Delta^H_\mathcal M) \\
&=(\mu_\mathcal{N}^A\otimes H)(A \otimes \mathcal{N} \otimes \mu_H)(A \otimes \mathcal N \otimes T_{H,H})(A \otimes \Delta_\mathcal{N}^H \otimes H)\\
&\qquad (A \otimes \mathcal{N} \otimes S^{-1})(A \otimes \varepsilon_H \otimes \mathcal{N} \otimes H)(\Delta_A^H \otimes \mathcal{N} \otimes H)(A \otimes \phi \otimes H)(A \otimes \Delta^H_\mathcal M) \\
& = (\mu_\mathcal N^A\otimes H)(A \otimes \mathcal N \otimes \mu_H)(A \otimes \mathcal N \otimes T_{H,H})(A \otimes \Delta_\mathcal N^H \otimes H)(A  \otimes \phi \otimes H)(A \otimes \mathcal M \otimes S^{-1})(A \otimes \Delta^H_\mathcal M) \\ &= (\mu_\mathcal N^A\otimes H)(A\otimes\rho(\phi))
\end{align*}
This proves the result.
\end{proof}

For $(\mathcal M,\Delta^H_\mathcal M)$, $(\mathcal N,\Delta^H_\mathcal N)$ in $\mathfrak S^H$,  $\phi \in \mathfrak{S}(\mathcal M,\mathcal N)$ and $h^* \in H^*$, we  set
\begin{equation}\label{6.6yc} h^* \cdot \phi:=( \mathcal N \otimes h^*) \circ \rho(\phi) \in \mathfrak S(\mathcal M,\mathcal N)
\end{equation}   making $\mathfrak S(\mathcal M, \mathcal N)$ into a left $H^*$-module.  

\begin{cor}
Let $A$ be an $H$-comodule algebra. Let $(\mathcal M, \mu^A_\mathcal M, \Delta^H_\mathcal M)$ and $(\mathcal N, \mu^A_\mathcal N, \Delta^H_\mathcal N)$ be objects in ${_A}\mathfrak S^H$. Then, the $k$-space ${_A}\mathfrak S(\mathcal M,\mathcal N) \subseteq \mathfrak S(\mathcal M, \mathcal N)$ is an $H^*$-module.
\end{cor}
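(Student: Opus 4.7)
The plan is to verify that the $H^*$-action already defined on $\mathfrak S(\mathcal M,\mathcal N)$ restricts to the subspace ${_A}\mathfrak S(\mathcal M,\mathcal N)$. Since $\mathfrak S(\mathcal M,\mathcal N)$ is already known to be an $H^*$-module via the formula $h^*\cdot\phi=(\mathcal N\otimes h^*)\circ\rho(\phi)$ from \eqref{6.6yc}, the only thing to check is that whenever $\phi$ is $A$-linear, so is $h^*\cdot\phi$. The whole argument is then a quick post-composition, and the heavy lifting has already been done in Proposition \ref{6.4fd}.

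The key input is the identity obtained at the end of the proof of Proposition \ref{6.4fd}, namely
\begin{equation*}
\rho(\phi)\circ\mu_\mathcal M^A=(\mu_\mathcal N^A\otimes H)\circ (A\otimes\rho(\phi)).
\end{equation*}
The crucial subtle point, which I would flag carefully, is that the $A$-action on $\mathcal N\otimes H$ appearing on the right-hand side is the \emph{simple} action $\mu_\mathcal N^A\otimes H$ coming from $\mathcal N$ alone, and not the diagonal action $\mu_{\mathcal N\otimes H}^A$ of \eqref{5.2wy} that makes $\mathcal N\otimes H$ an object of ${_A}\mathfrak S^H$. It is exactly this feature that makes post-composition with $\mathcal N\otimes h^*:\mathcal N\otimes H\longrightarrow\mathcal N$ automatically $A$-linear, since one has the trivial identity $(\mathcal N\otimes h^*)\circ(\mu_\mathcal N^A\otimes H)=\mu_\mathcal N^A\circ(A\otimes \mathcal N\otimes h^*)$.

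Assembling these two observations, I compute
\begin{equation*}
(h^*\cdot\phi)\circ\mu_\mathcal M^A=(\mathcal N\otimes h^*)\circ\rho(\phi)\circ\mu_\mathcal M^A=(\mathcal N\otimes h^*)\circ(\mu_\mathcal N^A\otimes H)\circ(A\otimes\rho(\phi))=\mu_\mathcal N^A\circ(A\otimes(h^*\cdot\phi)),
\end{equation*}
which is exactly the $A$-linearity of $h^*\cdot\phi$. Hence ${_A}\mathfrak S(\mathcal M,\mathcal N)$ is closed under the $H^*$-action, and therefore inherits the structure of a left $H^*$-module. I do not expect any genuine obstacle: the only care required is in distinguishing between the two natural $A$-module structures on $\mathcal N\otimes H$, and in this respect the statement of Proposition \ref{6.4fd} must be read through the specific identity its proof produces, rather than through its more ambiguous phrasing.
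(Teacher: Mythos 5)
Your proof is correct and takes essentially the same route as the paper's, which likewise invokes Proposition \ref{6.4fd} for the $A$-linearity of $\rho(\phi)$ and concludes by post-composing with $\mathcal N\otimes h^*$. Your added observation — that the identity $\rho(\phi)\circ\mu^A_{\mathcal M}=(\mu^A_{\mathcal N}\otimes H)\circ(A\otimes\rho(\phi))$ produced in that proof refers to the simple action $\mu^A_{\mathcal N}\otimes H$ rather than the diagonal action of \eqref{5.2wy}, and that this is precisely what makes post-composition with $\mathcal N\otimes h^*$ automatically $A$-linear — is an accurate clarification of a point the paper's one-line proof leaves implicit.
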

\begin{proof}
Let $h^* \in H^*$ and $\phi \in {_A}{\mathfrak S}(\mathcal M, \mathcal N)$. We know by Proposition \ref{6.4fd} that $\rho(\phi) \in \mathfrak S(\mathcal M,\mathcal N \otimes H)$ is $A$-linear  and therefore the composition $h^*\cdot \phi=( \mathcal N \otimes h^*) \circ \rho(\phi)$ is also $A$-linear. The result follows.
\end{proof}

We know that the inclusion of the full subcategory $Com-H\hookrightarrow H^*-Mod$ has a right adjoint which we denote by ${_{H^*}}R^H$. Then, for  any left $H^*$-module $M$,  the rational part ${_{H^*}}R^H(M)$ is the largest right $H$-comodule contained in $M$. For   $(\mathcal M, \mu^A_\mathcal M, \Delta^H_\mathcal M)$ and $(\mathcal N, \mu^A_\mathcal N, \Delta^H_\mathcal N)$  in ${_A}\mathfrak S^H$, we will now show that ${_{{_A}{\mathfrak S}}}H
OM(\mathcal M,\mathcal N)$ is the rational part of the $H^*$-module ${_A}\mathfrak S(\mathcal M,\mathcal N)$.

\begin{thm}\label{6.6i}
Let $A$ be a right $H$-comodule algebra, and let $(\mathcal M, \mu^A_\mathcal M, \Delta^H_\mathcal M)$ and $(\mathcal N, \mu^A_\mathcal N, \Delta^H_\mathcal N)$ be objects in ${_A}\mathfrak S^H$. Then,
\begin{equation}
{_{H^*}}R^H\left({_A}\mathfrak S(\mathcal M,\mathcal N)\right) \cong {_{{_A}{\mathfrak S}}}H
OM(\mathcal M,\mathcal N) \hspace{.5cm} \in\textrm{ } Com-H
\end{equation}
\end{thm}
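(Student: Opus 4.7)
The strategy is to identify $G(\mathcal N):={_{H^*}}R^H({_A}\mathfrak S(\mathcal M,\mathcal N))$, equipped with its rational right $H$-comodule structure, as a right adjoint to $\mathcal M\otimes\_\_:Com-H\longrightarrow{_A}\mathfrak S^H$. By uniqueness of right adjoints and Proposition \ref{P6.1ku}, this will force an isomorphism $G(\mathcal N)\cong{_{{_A}\mathfrak S}}HOM(\mathcal M,\mathcal N)$ in $Com-H$, natural in $\mathcal N$. Functoriality of $G$ is automatic: for any morphism $f:\mathcal N\to\mathcal N'$ in ${_A}\mathfrak S^H$, the post-composition map $f_*:{_A}\mathfrak S(\mathcal M,\mathcal N)\to{_A}\mathfrak S(\mathcal M,\mathcal N')$ is $H^*$-linear with respect to \eqref{6.6yc} --- a one-line check using the $H$-colinearity of $f$ together with the formula \eqref{rho} --- and hence restricts to a morphism in $Com-H$ between rational parts.

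To produce the adjunction bijection $\alpha_{N,\mathcal N}:Com-H(N,G(\mathcal N))\xrightarrow{\cong}{_A}\mathfrak S^H(\mathcal M\otimes N,\mathcal N)$, I begin with the standard tensor-Hom adjunction $\mathcal M\otimes\_\_\dashv{_A}\mathfrak S(\mathcal M,\_\_)$ between $k-Mod$ and ${_A}\mathfrak S$, which bijects $k$-linear maps $\psi:N\to{_A}\mathfrak S(\mathcal M,\mathcal N)$ with $A$-linear maps $\hat\psi:\mathcal M\otimes N\to\mathcal N$. The content of the proof is then to verify two things. First, if $\psi$ is $H$-colinear with image in $G(\mathcal N)$, then $\hat\psi$ is $H$-colinear, i.e.\ lies in ${_A}\mathfrak S^H(\mathcal M\otimes N,\mathcal N)$. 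Second, if $\hat\psi$ is in ${_A}\mathfrak S^H$, then the corresponding $\psi$ takes values in $G(\mathcal N)$ and is colinear. For the second assertion, rationality of each $\psi(n)$ follows from the fundamental theorem of comodules: the $H$-subcomodule $N_0\subseteq N$ generated by $n$ is finite dimensional, so choosing a $k$-basis of $N_0$ and unpacking the $H$-colinearity of $\hat\psi|_{\mathcal M\otimes N_0}$ produces an explicit finite sum in ${_A}\mathfrak S(\mathcal M,\mathcal N)\otimes H$ matching $\rho(\psi(n))$; the colinearity of $\psi$ drops out of the same computation. Mutual inverseness and naturality of $\alpha$ are then inherited from the underlying tensor-Hom adjunction.

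The main obstacle is the first assertion. What is needed is that the $H^*$-action \eqref{6.6yc} on ${_A}\mathfrak S(\mathcal M,\mathcal N)$ is compatible, under the tensor-Hom adjunction, with the tensor $H$-coaction on $\mathcal M\otimes N$ constructed in the proof of Proposition \ref{P6.1ku}, so that $H^*$-equivariance of $\psi$ translates precisely into the colinearity identity $\Delta^H_\mathcal N\circ\hat\psi=(\hat\psi\otimes H)\circ\Delta^H_{\mathcal M\otimes N}$. Establishing this dictionary is where the bijective antipode hypothesis is genuinely needed: the $S^{-1}$ appearing in \eqref{rho} is calibrated precisely so as to transport the right $H$-coaction on $\mathcal M$ through $\hat\psi$ into the right $H$-coaction on $\mathcal N$. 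The verification is a diagram chase essentially parallel to the one carried out in the proof of Proposition \ref{6.4fd}, invoking the counit and antipode axioms to collapse the long composition into the required colinearity identity. Once this dictionary is in place, the rest of the proof is a formal manipulation.
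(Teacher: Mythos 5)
Your proposal is correct and follows essentially the same route as the paper's proof: both reduce the statement, via the adjunction of Proposition \ref{P6.1ku} together with a Yoneda/uniqueness-of-right-adjoints argument, to a bijection between ${_A}\mathfrak S^H(\mathcal M\otimes N,\mathcal N)$ and $H^*$-linear maps $N\longrightarrow {_A}\mathfrak S(\mathcal M,\mathcal N)$, realized by the very same transposes ($\varphi\mapsto\tilde\varphi$ with $\tilde{\varphi}(n)=\varphi\circ(\mathcal M\otimes t_n)$, and $g\mapsto\bar g=ev\circ(\mathcal M\otimes g)$), with the computational core being exactly the dictionary between the $H^*$-action \eqref{6.6yc} built from $\rho$ (where $S^{-1}$ enters) and the colinearity of the transpose, which the paper verifies by the long diagram chase parallel to Proposition \ref{6.4fd}. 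The only cosmetic difference is that the paper absorbs the rationality of the values of $\tilde\varphi$ into the adjunction $Com-H(N,{_{H^*}}R^H(\_\_))\cong H^*-Mod(N,\_\_)$, whereas you re-verify that fact by hand using local finiteness of comodules.
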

\begin{proof}
By adjunction, we have the following  isomorphisms 
\begin{equation}
Com-H\left(N, {_{{_A}{\mathfrak S}}}HOM(\mathcal M,\mathcal N) \right) \cong {_A}\mathfrak S^H(\mathcal M \otimes N, \mathcal N)\qquad 
Com-H\left(N, {_{H^*}}R^H\left({_A}\mathfrak S(\mathcal M,\mathcal N)\right) \right) \cong  H^*-Mod(N, {_A}\mathfrak S(\mathcal M,\mathcal N))
\end{equation}
for any $N \in Com-H$. Accordingly, it suffices to show that
\begin{equation}
{_A}\mathfrak S^H(\mathcal M \otimes N, \mathcal N) \cong H^*-Mod(N, {_A}\mathfrak S(\mathcal M,\mathcal N))
\end{equation} 
for any $N\in Com-H$. We start with $\varphi: \mathcal M \otimes N \longrightarrow \mathcal N$ in ${_A}\mathfrak S^H$. We define $\tilde{\varphi}:N \longrightarrow {_A}\mathfrak S(\mathcal M,\mathcal N)$ given by $n \mapsto \tilde{\varphi}(n)$,  where $\tilde{\varphi}(n)$ is defined as 
\begin{equation*}
\tilde{\varphi}(n):\mathcal M \cong \mathcal M \otimes k \xrightarrow{\mathcal M \otimes t_n} \mathcal M \otimes N \xrightarrow{\varphi} \mathcal N
\end{equation*}
Here, $t_n:k \longrightarrow N$ denotes the map that associates $r\in k$ to $rn\in N$. Using the $A$-linearity of $\varphi$, it is starightforward to check that $\tilde{\varphi}(n)$ indeed lies in ${_A}\mathfrak S(\mathcal M,\mathcal N)$. Next, we check that $\tilde{\varphi}:N \longrightarrow {_A}\mathfrak S(\mathcal M,\mathcal N)$ is $H^*$-linear. For any $h^* \in H^*$ and $n \in N$, we have
\begin{align*}
&\big(\mu_{ {_A}\mathfrak S(\mathcal M,\mathcal N)}^{H^*} \circ (H^* \otimes \tilde{\varphi})\big)(h^* \otimes n)=\mu_{ {_A}\mathfrak S(\mathcal M,\mathcal N)}^{H^*}(h^* \otimes \tilde{\varphi}(n))=h^* \cdot \tilde{\varphi}(n)=h^* \cdot \big(\varphi \circ (\mathcal M \otimes t_n)\big)\\
%%%%%%%%%%%%%%%%%%%%%%%%%%%%%%%
&\quad =(\mathcal N \otimes h^*)(\mathcal N \otimes \mu_{H^{op}})(\Delta^H_\mathcal N \otimes H)\big(\left(\varphi \circ (\mathcal M \otimes t_n)\right) \otimes H\big)(\mathcal M \otimes S^{-1})\Delta_\mathcal M^H\\
%%%%%%%%%%%%%%%%%%%%%%%%%%%%
&\quad =(\mathcal N \otimes h^*)(\mathcal N \otimes \mu_{H^{op}})(\Delta^H_\mathcal N \otimes H)(\varphi \otimes H)(\mathcal M \otimes t_n \otimes H)(\mathcal M \otimes S^{-1})\Delta_\mathcal M^H\\
%%%%%%%%%%%%%%%%%%%%%%%%%%%%%%%
&\quad =(\mathcal N \otimes h^*)(\mathcal N \otimes \mu_{H^{op}})(\varphi \otimes H \otimes H)(\Delta^H_{\mathcal M \otimes N} \otimes H)(\mathcal M \otimes t_n \otimes H)(\mathcal M \otimes S^{-1})\Delta_\mathcal M^H \\
%%%%%%%%%%%%%%%%%%%%%%%%%%%%
&\quad =(\mathcal N \otimes h^*)(\mathcal N \otimes \mu_{H^{op}})(\varphi \otimes H \otimes H)(\mathcal M \otimes N \otimes \mu_H \otimes H)(\mathcal M \otimes T_{H,N} \otimes H \otimes H)(\Delta_\mathcal M^H \otimes \Delta^H_N \otimes H)(\mathcal M \otimes t_n \otimes H)(\mathcal M \otimes S^{-1})\Delta_\mathcal M^H\\\
%%%%%%%%%%%%%%%%%%%%%%%%%%%%%
&\quad =(\mathcal N \otimes h^*)(\mathcal N \otimes \mu_{H^{op}})(\varphi \otimes H \otimes H)(\mathcal M \otimes N \otimes \mu_H \otimes H)(\mathcal M \otimes T_{H,N} \otimes H \otimes H)(\mathcal M \otimes H \otimes \Delta_N^H \otimes H)(\Delta_\mathcal M^H \otimes N \otimes H) \\
& \qquad (\mathcal M \otimes t_n \otimes H)(\mathcal M \otimes S^{-1})\Delta_\mathcal M^H\\
%%%%%%%%%%%%%%%%%%%%%%%%%
&\quad =(\mathcal N \otimes h^*)(\mathcal N \otimes \mu_{H^{op}})(\varphi \otimes H \otimes H)(\mathcal M \otimes N \otimes \mu_H \otimes H)(\mathcal M \otimes T_{H,N} \otimes H \otimes H)(\mathcal M \otimes H \otimes \Delta_N^H \otimes H)(\mathcal M \otimes H \otimes t_n \otimes H)\\
& \qquad (\mathcal M \otimes H \otimes S^{-1})(\Delta_\mathcal M^H \otimes H)\Delta_\mathcal M^H\\
%%%%%%%%%%%%%%%%%%%%%%%%%
&\quad =(\mathcal N \otimes h^*)(\mathcal N \otimes \mu_{H^{op}})(\varphi \otimes H \otimes H)(\mathcal M \otimes N \otimes \mu_H \otimes H)(\mathcal M \otimes T_{H,N} \otimes H \otimes H)(\mathcal M \otimes H \otimes \Delta_N^H \otimes H)(\mathcal M \otimes H \otimes t_n \otimes H)\\
& \qquad (\mathcal M \otimes H \otimes S^{-1})(\mathcal M \otimes \Delta_H)\Delta_\mathcal M^H \\
%%%%%%%%%%%%%%%%%%%%%%%%%%
& \quad =\varphi(\mathcal M \otimes N \otimes h^*)(\mathcal M \otimes N \otimes \mu_{H^{op}})(\mathcal M \otimes N \otimes \mu_H \otimes H)(\mathcal M \otimes T_{H,N} \otimes H \otimes H)(\mathcal M \otimes H \otimes \Delta_N^H \otimes H)(\mathcal M \otimes H \otimes N \otimes S^{-1})\\
&\qquad (\mathcal M \otimes H \otimes t_n \otimes H )(\mathcal M \otimes \Delta_H)\Delta_\mathcal M^H\\
%%%%%%%%%%%%%%%%%%%%%%%%%%%%%%
& \quad =\varphi(\mathcal M \otimes N \otimes h^*)(\mathcal M \otimes N \otimes \mu_{H^{op}})(\mathcal M \otimes N \otimes \mu_H \otimes H)(\mathcal M \otimes T_{H,N} \otimes H \otimes H)(\mathcal M \otimes H \otimes N \otimes H \otimes S^{-1}) (\mathcal M \otimes H \otimes \Delta_N^H \otimes H)\\
&\qquad(\mathcal M \otimes H \otimes T_{H,N}) (\mathcal M \otimes \Delta_H \otimes N)(\Delta_\mathcal M^H \otimes N)(\mathcal M \otimes t_n)\\
%%%%%%%%%%%%%%%%%%%%%%%%%%%%%%
& =\varphi(\mathcal M \otimes N \otimes h^*)(\mathcal M \otimes N\otimes \mu_{H^{op}})(\mathcal M \otimes N \otimes \mu_H \otimes H)(\mathcal M \otimes N \otimes H \otimes H \otimes S^{-1})(\mathcal M \otimes T_{H,N} \otimes H \otimes H)(\mathcal M \otimes H \otimes \Delta_N^H \otimes H)\\
&\qquad (\mathcal M \otimes H \otimes T_{H,N}) (\mathcal M \otimes \Delta_H \otimes N)(\Delta_\mathcal M^H \otimes N)(\mathcal M \otimes t_n)\\
%%%%%%%%%%%%%%%%%%%%%%%%%%%%%%
& =\varphi(\mathcal M \otimes N \otimes h^*)(\mathcal M \otimes N \otimes \mu_{H^{op}})(\mathcal M \otimes N \otimes H \otimes S^{-1})(\mathcal M \otimes N \otimes \mu_H \otimes H)(\mathcal M \otimes T_{H,N} \otimes H \otimes H)(\mathcal M \otimes H \otimes \Delta_N^H \otimes H)\\
&\qquad (\mathcal M \otimes H \otimes T_{H,N}) (\mathcal M \otimes \Delta_H \otimes N)(\Delta_\mathcal M^H \otimes N)(\mathcal M \otimes t_n)
\end{align*}
\begin{align*}
%%%%%%%%%%%%%%%%%%%%%%%%%%%%%%
& =\varphi(\mathcal M \otimes N \otimes h^*)(\mathcal M \otimes N \otimes \mu_{H^{op}})(\mathcal M \otimes N \otimes H \otimes S^{-1})(\mathcal M \otimes N \otimes \mu_H \otimes H)(\mathcal M \otimes N \otimes T_{H,H} \otimes H)(\mathcal M \otimes N \otimes H \otimes \Delta_H)\\
& \qquad (\mathcal M \otimes \Delta_N^H \otimes H)(\mathcal M \otimes T_{H,N})(\Delta_\mathcal M^H \otimes N)(\mathcal M \otimes t_n)\\
%%%%%%%%%%%%%%%%%%%%%%%%%%%
& =\varphi(\mathcal M \otimes N \otimes h^*)(\mathcal M \otimes N \otimes H \otimes \varepsilon_H)(\mathcal M \otimes \Delta_N^H \otimes H)(\mathcal M \otimes T_{H,N})(\Delta_\mathcal M^H \otimes N)(\mathcal M \otimes t_n)
\end{align*}
where the last equality follows from the fact that $\mu_{H^{op}}( H \otimes S^{-1})( \mu_H \otimes H)( T_{H,H} \otimes H)( H \otimes \Delta_H)=H \otimes \varepsilon_H$. We now have
\begin{align*}
%%%%%%%%%%%%%%%%%%%%%%%%%%
\big(\mu_{ {_A}\mathfrak S(\mathcal M,\mathcal N)}^{H^*} \circ (H^* \otimes \tilde{\varphi})\big)(h^* \otimes n)& =\varphi(\mathcal M \otimes N \otimes h^*)(\mathcal M \otimes \Delta_N^H)(\mathcal{M} \otimes \varepsilon_H \otimes N )(\Delta_\mathcal M^H \otimes N)(\mathcal M \otimes t_n)\\
%%%%%%%%%%%%%%%%%%%%%%%%%%%%%
&=\varphi(\mathcal M \otimes N \otimes h^*)(\mathcal M \otimes \Delta_N^H)(\mathcal M \otimes t_n)=(\tilde{\varphi} \circ \mu^{H^*}_N)(h^* \otimes n)
\end{align*}
 Conversely, suppose that $g:N \longrightarrow {_A}\mathfrak S(\mathcal M,\mathcal N)$ is an $H^*$-linear morphism. We define $\bar{g}:\mathcal M \otimes N \longrightarrow \mathcal N$ to be the following composition
\begin{equation*}
\mathcal M \otimes N \xrightarrow{\mathcal M \otimes g} \mathcal M \otimes ({_A}\mathfrak S(\mathcal M,\mathcal N)) \xrightarrow{ev} \mathcal N
\end{equation*}
where $ev$ denotes the evaluation map. It may be verified directly that $\bar{g}\in {_A}\mathfrak S^H(\mathcal M \otimes N, \mathcal N)$ and these two associations are inverse to each other.
\end{proof}

\begin{thm}
Let $(\mathcal M, \mu^A_\mathcal M, \Delta^H_\mathcal M)$ and $(\mathcal N, \mu^A_\mathcal N, \Delta^H_\mathcal N)$ be objects in ${_A}\mathfrak S^H$. Suppose that $(\mathcal M, \mu^A_\mathcal M)$ is finitely generated in ${_A}\mathfrak S$. Then, $ {_{{_A}{\mathfrak S}}}HOM(\mathcal M,\mathcal N) \cong  {_A}\mathfrak S(\mathcal M,\mathcal N)$ as $H$-comodules.
\end{thm}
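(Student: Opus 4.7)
The plan is to apply Proposition \ref{6.6i}, which identifies ${_{{_A}\mathfrak S}}HOM(\mathcal M,\mathcal N)$ with the rational part ${_{H^*}}R^H({_A}\mathfrak S(\mathcal M,\mathcal N))$ of the $H^*$-module ${_A}\mathfrak S(\mathcal M,\mathcal N)$. Thus it suffices to exhibit a right $H$-coaction on ${_A}\mathfrak S(\mathcal M,\mathcal N)$ that recovers the $H^*$-action described in \eqref{6.6yc}; once such a coaction is constructed, ${_A}\mathfrak S(\mathcal M,\mathcal N)$ is itself rational and coincides with its rational part, whence the stated isomorphism.

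The first step is to establish a natural isomorphism
\begin{equation*}
\Theta_{\mathcal N}:{_A}\mathfrak S(\mathcal M,\mathcal N)\otimes H \xrightarrow{\cong} {_A}\mathfrak S(\mathcal M,\mathcal N\otimes H)
\end{equation*}
whenever $\mathcal M$ is finitely generated in ${_A}\mathfrak S$. Writing $H=\bigcup_\alpha V_\alpha$ as the filtered union of its finite-dimensional subspaces, the exactness of $\_\_\otimes\_\_:\mathfrak S\times k\text{-}Mod\to \mathfrak S$ in both variables yields a filtered union $\mathcal N\otimes H=\bigcup_\alpha \mathcal N\otimes V_\alpha$ of subobjects in $\mathfrak S$, and the same holds in ${_A}\mathfrak S$ since monomorphisms there are computed in $\mathfrak S$. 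Because $\mathcal M$ is finitely generated in ${_A}\mathfrak S$, the functor ${_A}\mathfrak S(\mathcal M,\_\_)$ preserves this filtered union. For each finite-dimensional $V_\alpha$, the additivity of ${_A}\mathfrak S(\mathcal M,\_\_)$ together with a choice of basis gives ${_A}\mathfrak S(\mathcal M,\mathcal N\otimes V_\alpha)\cong {_A}\mathfrak S(\mathcal M,\mathcal N)\otimes V_\alpha$. Passing to the colimit over $\alpha$ then yields $\Theta_\mathcal N$.

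Next, I would define the candidate coaction as the composition
\begin{equation*}
\Delta:{_A}\mathfrak S(\mathcal M,\mathcal N)\xrightarrow{\rho}{_A}\mathfrak S(\mathcal M,\mathcal N\otimes H)\xrightarrow{\Theta_\mathcal N^{-1}}{_A}\mathfrak S(\mathcal M,\mathcal N)\otimes H,
\end{equation*}
where $\rho$ is well-defined in ${_A}\mathfrak S$ by Proposition \ref{6.4fd}. The counit axiom for $\Delta$ amounts to the identity $(\mathcal N\otimes \epsilon_H)\circ \rho(\phi)=\phi$, which follows from the antipode relation $\mu_H\circ(S^{-1}\otimes H)\circ \Delta_H=u_H\epsilon_H$ combined with $(\mathcal N\otimes \epsilon_H)\circ \Delta_\mathcal N^H=\mathrm{id}_\mathcal N$. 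Coassociativity of $\Delta$ unwinds, via naturality of $\Theta$ applied to the map $\mathcal N\otimes \Delta_H:\mathcal N\otimes H\to \mathcal N\otimes H\otimes H$, to a compatibility of $\rho$ with itself; this is a routine Sweedler-type verification resting on the coassociativity of $\Delta_H$, the right $H$-comodule axioms for $\mathcal M$ and $\mathcal N$, and the antipode identity.

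Finally, I would verify that the $H^*$-action on ${_A}\mathfrak S(\mathcal M,\mathcal N)$ induced by $\Delta$ via $h^*\cdot\phi = ({_A}\mathfrak S(\mathcal M,\mathcal N)\otimes h^*)\circ \Delta(\phi)$ coincides with the one in \eqref{6.6yc}; by construction of $\Theta_\mathcal N$ this reduces to tracing $(\mathcal N\otimes h^*)\circ \rho(\phi)$ back through $\Theta_\mathcal N$, which is immediate from the definition of the natural map ${_A}\mathfrak S(\mathcal M,\mathcal N)\otimes H\to {_A}\mathfrak S(\mathcal M,\mathcal N\otimes H)$ given by $\phi\otimes h\mapsto (\mathcal N\otimes t_h)\circ \phi$. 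The main technical obstacle is the construction of $\Theta_\mathcal N$: it is precisely here that the finite generation of $\mathcal M$ is essential, for without it the canonical map from ${_A}\mathfrak S(\mathcal M,\mathcal N)\otimes H$ to ${_A}\mathfrak S(\mathcal M,\mathcal N\otimes H)$ need only be injective. The remaining verifications are formal consequences of the Hopf algebra axioms and the naturality of $\Theta_\mathcal N$.
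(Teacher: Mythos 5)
Your proposal is correct and follows essentially the same route as the paper: both identify ${_{{_A}{\mathfrak S}}}HOM(\mathcal M,\mathcal N)$ with the rational part via Proposition \ref{6.6i}, use the finite generation of $\mathcal M$ in ${_A}\mathfrak S$ to obtain the isomorphism ${_A}\mathfrak S(\mathcal M,\mathcal N\otimes H)\cong {_A}\mathfrak S(\mathcal M,\mathcal N)\otimes H$ (which the paper asserts tersely and you justify by the filtered union $H=\bigcup_\alpha V_\alpha$), and use $\rho$ from Proposition \ref{6.4fd} to conclude that the $H^*$-module ${_A}\mathfrak S(\mathcal M,\mathcal N)$ is rational — the paper simply observes that the structure map ${_A}\mathfrak S(\mathcal M,\mathcal N)\longrightarrow Hom_k(H^*,{_A}\mathfrak S(\mathcal M,\mathcal N))$ factors through ${_A}\mathfrak S(\mathcal M,\mathcal N)\otimes H$, whereas you verify the comodule axioms for $\Theta_{\mathcal N}^{-1}\circ\rho$ by hand, a purely presentational difference. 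One point worth making explicit in your construction of $\Theta_{\mathcal N}$: the $A$-action on $\mathcal N\otimes H$ must be the untwisted one $\mu^A_{\mathcal N}\otimes H$ (which is precisely the structure for which the proof of Proposition \ref{6.4fd} establishes $A$-linearity of $\rho(\phi)$), since the subspaces $\mathcal N\otimes V_\alpha$ are generally not $A$-stable for the twisted action of \eqref{5.2wy}.
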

\begin{proof}
 For any $\phi \in {_A}\mathfrak S(\mathcal M,\mathcal N)$, we know from Proposition \ref{6.4fd} that the morphism $\rho(\phi)$ as defined in \eqref{rho} lies in ${_A}\mathfrak S(\mathcal M, \mathcal N \otimes H)$. Since $\mathcal M$ is finitely generated in ${_A}\mathfrak S$ and the $A$-action on $\mathcal N \otimes H$ is determined by the $A$-action on $\mathcal N$, we have  the induced linear map that we continue to denote by $\rho$:
\begin{equation*}
\rho:{_A}\mathfrak S(\mathcal M, \mathcal N) \longrightarrow {_A}\mathfrak S(\mathcal M, \mathcal N \otimes H) \cong  {_A}\mathfrak S(\mathcal M, \mathcal N) \otimes H \qquad \phi \mapsto \rho(\phi)
\end{equation*} 
From the left $H^*$-module action on  ${_A}\mathfrak S(\mathcal M, \mathcal N)$ described in \eqref{6.6yc}, we see that the structure map
${_A}\mathfrak S(\mathcal M, \mathcal N)  \longrightarrow Hom_k\left(H^*, {_A}\mathfrak S(\mathcal M, \mathcal N)\right)$
factors through ${_A}\mathfrak S(\mathcal M, \mathcal N) \otimes H$. Therefore, ${_A}\mathfrak S(\mathcal M, \mathcal N)$ is a rational $H^*$-module and hence, using Proposition \ref{6.6i} we have
\begin{equation*}
{_{{_A}{\mathfrak S}}}HOM(\mathcal M,\mathcal N) \cong  {_{H^*}}R^H({_{{_A}{\mathfrak S}}}(\mathcal M,\mathcal N)) =   {_A}\mathfrak S(\mathcal M,\mathcal N)
\end{equation*}
This proves the result.
\end{proof}

\section{Coinduction functor and injective envelopes}

We recall that a left integral on a Hopf algebra $H$ is a left $H$-comodule map $\phi: H\longrightarrow k$ which means that $\phi$ satisfies $h_1\phi(h_2)=\phi(h)1_H$ for all $h \in H$. Similarly, a right integral is a right $H$-comodule map $\phi: H\longrightarrow k$, i.e. $\phi(h_1)h_2=\phi(h)1_H$.
A Hopf algebra $H$ is said to be cosemisimple if $H$ is equipped with a left integral $\phi$ satisfying $\phi(1_H)=1$. Moreover, in a cosemisimple Hopf algebra, a left integral is also a right integral and vice-versa (see, for instance \cite[p. 226]{DNR}). In this section, we will always assume $H$ to be cosemisimple.

\smallskip
For an object $(\mathcal N,\Delta_{\mathcal N}^H)\in \mathfrak S^H$, we now consider the following equalizer in $\mathfrak S$
\begin{equation}\label{7.1eq}
\mathfrak C^H(\mathcal N):=Eq\left( \mathcal N\doublerightarrow{\qquad(\mathcal N\otimes u_H)\qquad }{ \Delta^H_{\mathcal N}}\mathcal N\otimes H\right)
\end{equation} From \eqref{7.1eq} it is immediate that the composition $\mathfrak C^H(\mathcal N)\hookrightarrow  \mathcal N\xrightarrow{\Delta^H_{\mathcal N}}\mathcal N
\otimes H$ factors through $\mathfrak C^H(\mathcal N)\otimes H$. This makes $\mathfrak C^H(\mathcal N)$ into an $H$-comodule subobject 
of $\mathcal N$ in $\mathfrak S$.  We now define the following two full subcategories of $\mathfrak S^H$:
\begin{equation}\label{tor7}
 \mathcal T(\mathfrak S^H):=\{\mbox{$\mathcal N\in \mathfrak S^H$ $\vert$ $\mathfrak C^H(\mathcal N)=\mathcal N$ }\} \qquad \mathcal F(\mathfrak S^H):=\{\mbox{$\mathcal N\in \mathfrak S^H$ $\vert$ $\mathfrak C^H(\mathcal N)=0$ }\} 
\end{equation}
\begin{lem}\label{L7.01}
The subcategory $ \mathcal T(\mathfrak S^H)$ is closed under subobjects, quotients and direct sums. The subcategory  $ \mathcal F(\mathfrak S^H)$ is closed under subobjects and direct sums.
\end{lem}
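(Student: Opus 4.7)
The plan is to base the entire lemma on two structural observations about the functor $\mathfrak C^H \colon \mathfrak S^H \longrightarrow \mathfrak S$: (i) it is left exact, because it is a finite limit (an equalizer), so it takes monomorphisms to monomorphisms; (ii) it commutes with arbitrary direct sums. Granting these two facts, the six closure assertions then follow quickly, essentially by diagram chasing.

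First I would verify the direct sum compatibility. Writing $\mathfrak C^H(\mathcal N) = Ker(\Delta^H_{\mathcal N} - \mathcal N \otimes u_H)$ and using that $\_\_\otimes H$ is exact (hence commutes with direct sums), the map $\Delta^H_{\bigoplus \mathcal N_i} - (\bigoplus \mathcal N_i)\otimes u_H$ is identified with $\bigoplus_i (\Delta^H_{\mathcal N_i} - \mathcal N_i \otimes u_H)$ under the canonical isomorphism $(\bigoplus \mathcal N_i)\otimes H \cong \bigoplus(\mathcal N_i \otimes H)$. Since $\mathfrak S$ is Grothendieck (in particular AB4), arbitrary coproducts are exact, so kernels commute with direct sums, giving $\mathfrak C^H(\bigoplus_i \mathcal N_i) \cong \bigoplus_i \mathfrak C^H(\mathcal N_i)$. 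This immediately yields direct-sum closure for both $\mathcal T(\mathfrak S^H)$ and $\mathcal F(\mathfrak S^H)$.

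Next, for closure of $\mathcal F(\mathfrak S^H)$ under subobjects, given $\iota \colon \mathcal M \hookrightarrow \mathcal N$ in $\mathfrak S^H$, the comodule compatibility $(\iota \otimes H)\circ \Delta^H_{\mathcal M} = \Delta^H_{\mathcal N}\circ \iota$ and $(\iota \otimes H)\circ(\mathcal M \otimes u_H) = (\mathcal N \otimes u_H)\circ \iota$ induce a morphism $\mathfrak C^H(\iota)\colon \mathfrak C^H(\mathcal M)\longrightarrow \mathfrak C^H(\mathcal N)$, which is a monomorphism by left exactness of the equalizer (equivalently, by a short diagram chase using that $\iota$ and $\iota \otimes H$ are monomorphisms). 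Hence $\mathfrak C^H(\mathcal N)=0$ forces $\mathfrak C^H(\mathcal M)=0$.

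For closure of $\mathcal T(\mathfrak S^H)$ under subobjects and quotients, I would argue directly from the defining equality $\Delta^H_{\mathcal N} = \mathcal N \otimes u_H$. Given $\iota\colon \mathcal M \hookrightarrow \mathcal N$ with $\mathcal N \in \mathcal T(\mathfrak S^H)$, composing the comodule compatibility with the fact that $\iota \otimes H$ is a monomorphism gives $\Delta^H_{\mathcal M} = \mathcal M \otimes u_H$, so $\mathfrak C^H(\mathcal M) = \mathcal M$. Dually, given an epimorphism $\pi\colon \mathcal N \twoheadrightarrow \mathcal Q$ in $\mathfrak S^H$, the identities
\begin{equation*}
\Delta^H_{\mathcal Q}\circ \pi = (\pi\otimes H)\circ \Delta^H_{\mathcal N} = (\pi\otimes H)\circ(\mathcal N \otimes u_H) = (\mathcal Q \otimes u_H)\circ \pi
\end{equation*}
together with $\pi$ being an epimorphism give $\Delta^H_{\mathcal Q} = \mathcal Q \otimes u_H$, i.e.\ $\mathfrak C^H(\mathcal Q) = \mathcal Q$. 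The only mildly delicate point is the direct sum step, which hinges on the AB4 property of $\mathfrak S$; everything else is a routine manipulation using that $\_\_\otimes H$ preserves monomorphisms and coproducts.
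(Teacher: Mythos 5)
Your proposal is correct and follows essentially the same route as the paper: both arguments rest on writing $\mathfrak C^H(\mathcal N)$ as the kernel of $\zeta(\mathcal N)=(\mathcal N\otimes u_H)-\Delta^H_{\mathcal N}$, deducing the $\mathcal T$-cases from $\zeta(\mathcal N)=0$ via restriction to subobjects (using that $\iota\otimes H$ is monic) and descent along epimorphisms, the $\mathcal F$-subobject case from left exactness of the equalizer, and the direct-sum cases from exactness of $\_\_\otimes H$ and of coproducts in the Grothendieck category $\mathfrak S$. Your write-up merely makes explicit the AB4 step and the identification $\zeta(\bigoplus_i\mathcal N_i)\cong\bigoplus_i\zeta(\mathcal N_i)$ that the paper dismisses as immediate.
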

\begin{proof}
By \eqref{7.1eq}, we know that $\mathfrak C^H(\mathcal N)=Ker(\zeta(\mathcal N))$ for any $\mathcal N\in \mathfrak S^H$, where $\zeta(\mathcal N)=(\mathcal N\otimes u_H)-\Delta^H_{\mathcal N}$. From this, it is immediate that $ \mathcal T(\mathfrak S^H)$ and $ \mathcal F(\mathfrak S^H)$ are closed under direct sums. We consider $\mathcal N'\subseteq \mathcal N$ in $\mathfrak S^H$.  If $\mathcal N\in \mathcal T(\mathfrak S^H)$,  then $\zeta(\mathcal N)=0$ and hence so is its restriction $\zeta(\mathcal N')$.  If $\mathcal N\in \mathcal F(\mathfrak S^H)$, we get $\mathfrak C^H(\mathcal N')\hookrightarrow \mathfrak C^H(\mathcal N)=0$.

\smallskip
On the other hand, let $\mathcal N\twoheadrightarrow \mathcal N''$ be a quotient in $\mathfrak S^H$.    If $\mathcal N\in \mathcal T(\mathfrak S^H)$,  then $\zeta(\mathcal N)=0$ and hence so is   $\zeta(\mathcal N'')=0$ on the quotient $\mathcal N''$. 
\end{proof}
 For any $\mathcal N\in \mathfrak S^H$, we now consider the following morphism
\begin{equation}\label{7.2eq}
\pi_{\mathcal N}^\phi: \mathcal N \xrightarrow{\Delta_{\mathcal N}^H}\mathcal N\otimes H \xrightarrow{\mathcal N\otimes \phi}\mathcal N
\end{equation} Since $\phi:H\longrightarrow k$ is a right integral, i.e., $\phi$ is a morphism of right $H$-comodules, we note that 
\eqref{7.2eq} becomes a morphism in $\mathfrak S^H$. We now set $\mathfrak K^H(\mathcal N):=Ker(\pi^{\phi}_{\mathcal N})$ in 
$\mathfrak S^H$.
\begin{lem}\label{L7.1t} Let $ (\mathcal N,\Delta_{\mathcal N}^H)\in \mathfrak S^H$. Then, the image of $\pi^\phi_{\mathcal N}$ lies in $\mathfrak C^H(\mathcal N)$. 
\end{lem}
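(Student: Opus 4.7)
The plan is to show that the morphism $\pi^\phi_{\mathcal N}: \mathcal N \longrightarrow \mathcal N$ factors through the equalizer $\mathfrak C^H(\mathcal N) \hookrightarrow \mathcal N$ defined in \eqref{7.1eq}. By the universal property of the equalizer, this reduces to verifying the single identity $\Delta^H_{\mathcal N} \circ \pi^\phi_{\mathcal N} = (\mathcal N \otimes u_H) \circ \pi^\phi_{\mathcal N}$ of morphisms $\mathcal N \longrightarrow \mathcal N \otimes H$, where on the right hand side we implicitly use the unitor $\mathcal N \cong \mathcal N \otimes k$.

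First, I would unfold $\pi^\phi_{\mathcal N} = (\mathcal N \otimes \phi) \circ \Delta^H_{\mathcal N}$ on the left and push $\Delta^H_{\mathcal N}$ past $\mathcal N \otimes \phi$ by means of the bifunctorial identity $\Delta^H_{\mathcal N} \circ (\mathcal N \otimes \phi) = (\mathcal N \otimes H \otimes \phi) \circ (\Delta^H_{\mathcal N} \otimes H)$, which holds because applying $\Delta^H_{\mathcal N}$ to the first factor and $\phi$ to the last factor commute as operations on $\mathcal N \otimes H$. Next, coassociativity of the $H$-coaction, $(\Delta^H_{\mathcal N} \otimes H) \circ \Delta^H_{\mathcal N} = (\mathcal N \otimes \Delta_H) \circ \Delta^H_{\mathcal N}$, rewrites the left hand side as $(\mathcal N \otimes ((H \otimes \phi) \circ \Delta_H)) \circ \Delta^H_{\mathcal N}$.

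The crucial input is now that $H$ is cosemisimple, so by the discussion preceding the lemma the right integral $\phi$ is simultaneously a left integral; in categorical form this gives $(H \otimes \phi) \circ \Delta_H = u_H \circ \phi$ as morphisms $H \longrightarrow H$. Substituting this transforms the left hand side into $(\mathcal N \otimes u_H) \circ (\mathcal N \otimes \phi) \circ \Delta^H_{\mathcal N} = (\mathcal N \otimes u_H) \circ \pi^\phi_{\mathcal N}$, which is exactly the right hand side. The required factorization of $\pi^\phi_{\mathcal N}$ through $\mathfrak C^H(\mathcal N)$ then follows from the universal property of the equalizer.

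The main obstacle is not conceptual but purely bookkeeping: one has to track the various unitor isomorphisms $\mathcal N \otimes k \cong \mathcal N$ and make sure the bifunctorial rewriting of $\Delta^H_{\mathcal N} \circ (\mathcal N \otimes \phi)$ is applied correctly. The whole argument is the abstract categorical shadow of the Sweedler-style calculation $n_{(0)} \otimes n_{(1,1)}\phi(n_{(1,2)}) = n_{(0)}\phi(n_{(1)}) \otimes 1_H$, obtained by combining coassociativity with the defining property of a left integral.
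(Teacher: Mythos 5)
Your proposal is correct and is essentially the paper's own proof: both reduce the claim to the identity $\Delta^H_{\mathcal N}\circ \pi^\phi_{\mathcal N}=(\mathcal N\otimes u_H)\circ \pi^\phi_{\mathcal N}$ via the universal property of the equalizer \eqref{7.1eq}, and verify it by the same three-step computation (bifunctoriality of $\otimes$ to move $\mathcal N\otimes\phi$ past $\Delta^H_{\mathcal N}$, coassociativity of the coaction, and the left-integral identity $(H\otimes\phi)\circ\Delta_H=u_H\circ\phi$). The only cosmetic difference is that the paper takes $\phi$ to be a left integral from the outset (cosemisimplicity making it also a right integral, which is used earlier to see $\pi^\phi_{\mathcal N}$ is a morphism in $\mathfrak S^H$), whereas you phrase the same fact in the opposite direction; this does not affect correctness.
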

\begin{proof}
Since $\phi:H\longrightarrow k$ is a left integral, we know that $(H\otimes \phi)\circ \Delta_H=u_H\circ \phi$. Accordingly, we have
\begin{equation*}
\begin{array}{ll}
\Delta^H_{\mathcal N}\circ \pi_{\mathcal N}^\phi&= \Delta^H_{\mathcal N}\circ (\mathcal N\otimes \phi)\circ \Delta_{\mathcal N}^H\\
&= (\mathcal N\otimes H\otimes \phi)\circ (\Delta^H_{\mathcal N}\otimes H)\circ \Delta^H_{\mathcal N}\\
&= (\mathcal N\otimes H\otimes \phi)\circ (\mathcal N\otimes \Delta_H)\circ \Delta^H_{\mathcal N} \\
& = (\mathcal N\otimes u_H)\circ (\mathcal N\otimes \phi)\circ \Delta^H_{\mathcal N}=(\mathcal N\otimes u_H)\circ \pi^\phi_{\mathcal N}\\
\end{array}
\end{equation*} From the equalizer in \eqref{7.1eq}, it is now clear that $\pi^\phi_{\mathcal N}$ factors through $\mathfrak C^H(\mathcal N)$. 
\end{proof}

By abuse of notation, we continue to denote by $\pi^\phi_{\mathcal N}:\mathcal N\longrightarrow \mathfrak C^H(\mathcal N)$ the canonical morphism induced by  $\pi^\phi_{\mathcal N}$.

\begin{thm}\label{P7.2}
 Let $ (\mathcal N,\Delta_{\mathcal N}^H)\in \mathfrak S^H$. Then, $\pi^\phi_{\mathcal N}:\mathcal N\longrightarrow \mathfrak C^H(\mathcal N)$ is a split epimorphism which leads to a decomposition $\mathcal N=\mathfrak C^H(\mathcal N)\oplus \mathfrak K^H(\mathcal N)$  in $\mathfrak S^H$. 
\end{thm}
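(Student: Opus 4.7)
The plan is to show that the inclusion $\iota:\mathfrak C^H(\mathcal N)\hookrightarrow\mathcal N$, viewed as a morphism in $\mathfrak S^H$ via the $H$-comodule structure that $\mathfrak C^H(\mathcal N)$ inherits from the equalizer \eqref{7.1eq}, is a section of $\pi^\phi_{\mathcal N}$. The bulk of the argument reduces to a one-line computation: by the defining equalizer, $\iota$ satisfies $\Delta^H_{\mathcal N}\circ\iota=(\mathcal N\otimes u_H)\circ\iota$, where $(\mathcal N\otimes u_H)$ is understood as the composite $\mathcal N\cong\mathcal N\otimes k\xrightarrow{\mathcal N\otimes u_H}\mathcal N\otimes H$. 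Composing with $\mathcal N\otimes\phi$ and using the normalization $\phi\circ u_H=\phi(1_H)\cdot\mathrm{id}_k=\mathrm{id}_k$ coming from cosemisimplicity, I obtain
\[
\pi^\phi_{\mathcal N}\circ\iota=(\mathcal N\otimes\phi)\circ\Delta^H_{\mathcal N}\circ\iota=(\mathcal N\otimes(\phi\circ u_H))\circ\iota=\iota.
\]

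Writing $\bar\pi:\mathcal N\to\mathfrak C^H(\mathcal N)$ for the factorization of $\pi^\phi_{\mathcal N}$ supplied by Lemma \ref{L7.1t}, so that $\iota\circ\bar\pi=\pi^\phi_{\mathcal N}$ in $\mathfrak S^H$, the displayed equality rewrites as $\iota\circ(\bar\pi\circ\iota)=\iota$. Since $\iota$ is a monomorphism, this cancels to $\bar\pi\circ\iota=\mathrm{id}_{\mathfrak C^H(\mathcal N)}$, exhibiting $\bar\pi$ as a split epimorphism in $\mathfrak S^H$ with section $\iota$.

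For the direct sum decomposition I would invoke the standard fact that, in any abelian category, a split epimorphism $p$ induces $\mathrm{dom}(p)\cong\mathrm{cod}(p)\oplus\ker(p)$ (via the idempotent $\iota\circ\bar\pi$, which splits in the abelian category $\mathfrak S^H$—Grothendieck by Proposition \ref{P4.2}). Applying this to $\bar\pi$ gives $\mathcal N\cong\mathfrak C^H(\mathcal N)\oplus\ker(\bar\pi)$, and $\ker(\bar\pi)$ coincides with $\ker(\pi^\phi_{\mathcal N})=\mathfrak K^H(\mathcal N)$ since post-composing with the monomorphism $\iota$ does not alter the kernel. I do not anticipate any real obstacle: the argument is essentially formal. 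The only care required is in bookkeeping, namely confirming that $\iota$ (by construction of the coaction on $\mathfrak C^H(\mathcal N)$) and $\pi^\phi_{\mathcal N}$ (noted in the text immediately after \eqref{7.2eq}, using that $\phi$ is a right integral) are morphisms in $\mathfrak S^H$, so that the decomposition takes place in $\mathfrak S^H$ rather than merely in $\mathfrak S$.
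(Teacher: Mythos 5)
Your proposal is correct and follows essentially the same route as the paper: the paper's proof likewise computes $\pi^\phi_{\mathcal N}\circ\iota_{\mathcal N}=\mathrm{id}_{\mathfrak C^H(\mathcal N)}$ from the equalizer identity $\Delta^H_{\mathcal N}\circ\iota_{\mathcal N}=(\mathcal N\otimes u_H)\circ\iota_{\mathcal N}$ together with $\phi(1_H)=1$, and then reads off the splitting. Your additional bookkeeping (mono-cancellation to pass from $\pi^\phi_{\mathcal N}\circ\iota=\iota$ to $\bar\pi\circ\iota=\mathrm{id}$, the identification $\ker(\bar\pi)=\ker(\pi^\phi_{\mathcal N})=\mathfrak K^H(\mathcal N)$, and the check that everything lives in $\mathfrak S^H$) just makes explicit what the paper leaves implicit.
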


\begin{proof}
We denote by $\iota_{\mathcal N}:\mathfrak C^H(\mathcal N)\hookrightarrow \mathcal N$ the canonical inclusion. Using the fact that $\phi(1_H)=1$, it is easy to see that the composition
$\pi^\phi_{\mathcal N}\circ \iota_{\mathcal N}=id_{\mathfrak C^H(\mathcal N)}$. Accordingly, $\pi^\phi_{\mathcal N}:\mathcal N\longrightarrow 
\mathfrak C^H(\mathcal N)$ is a split epimorphism.
\begin{comment} and we denote its kernel by $\mathcal K\in \mathfrak S^H$.  By definition, we see that $\mathcal K\cap \mathcal N^{co H}=0$. From \eqref{7.1eq}, it follows that $\mathcal K^{co H}\subseteq
\mathcal N^{co H}$, which shows that $\mathcal K^{co H}=0$. From the definition of $\mathcal N_{co H}$, we now see that $\mathcal K\subseteq \mathcal N_{co H}$. 

\smallskip On the other hand, let $\mathcal M\subseteq \mathcal N$ in $\mathfrak S^H$ be such that $\mathcal M^{co H}=0$. We consider the canonical epimorphism $\mathcal M\oplus \mathcal K
\longrightarrow \mathcal M+\mathcal K$. Considering the equalizer in \eqref{7.1eq} and that pullbacks preserve epimorphisms in the abelian category $\mathfrak S$, we obtain an epimorphism
$\mathcal M^{co H}\oplus \mathcal K^{co H}\longrightarrow (\mathcal M+\mathcal K)^{co H}$. Since $\mathcal M^{co H}=0$ and $\mathcal K^{co H}=0$, we get $ (\mathcal M+\mathcal K)^{co H}=0$. Now if $\mathcal M\not\subseteq \mathcal K$, we must have $\mathcal K_0=(\mathcal M+\mathcal K)\cap \mathcal N^{co H}\ne 0$. Since $\mathcal K_0\subseteq \mathcal N^{co H}$, it is clear from \eqref{7.1eq} that $\mathcal K_0^{co H}=\mathcal K_0$. However, we have $\mathcal K_0^{co H}\subseteq  (\mathcal M+\mathcal K)^{co H}=0$, which is a contradiction. This gives $\mathcal K=\mathcal N_{co H}$. 
\end{comment}
\end{proof}

\begin{comment}
Let $\psi:\mathcal M\longrightarrow \mathcal N$ be a morphism in $\mathfrak S^H$. By Lemma \ref{L7.01}, both  $ \mathcal T(\mathfrak S^H)$ and $ \mathcal F(\mathfrak S^H)$ are closed under quotients, from which it follows that $\psi(\mathcal M^{co H})\subseteq \mathcal N^{co H}$ and $\psi(\mathcal M_{co H})\subseteq \mathcal N_{co H}$.  
\end{comment}

\begin{thm}\label{P7.5f}
The subcategories $(\mathcal T(\mathfrak S^H),\mathcal F(\mathfrak S^H))$ detemine a torsion theory on $\mathfrak S^H$. 
\end{thm}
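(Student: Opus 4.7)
The plan is to verify the two defining conditions of a torsion theory (in the sense of Dickson): (i) $\mathfrak S^H(\mathcal M,\mathcal N)=0$ for all $\mathcal M \in \mathcal T(\mathfrak S^H)$ and $\mathcal N \in \mathcal F(\mathfrak S^H)$, and (ii) every object $\mathcal N\in \mathfrak S^H$ fits into a short exact sequence with torsion subobject and torsion-free quotient. Given these two conditions, the maximality properties $\mathcal T(\mathfrak S^H)=\{\mathcal M\mid \mathfrak S^H(\mathcal M,\mathcal F(\mathfrak S^H))=0\}$ and dually for $\mathcal F(\mathfrak S^H)$ follow by the usual argument.

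For (ii), the canonical decomposition $\mathcal N=\mathfrak C^H(\mathcal N)\oplus \mathfrak K^H(\mathcal N)$ supplied by Proposition \ref{P7.2} provides the desired short exact sequence, provided that the two summands lie in $\mathcal T(\mathfrak S^H)$ and $\mathcal F(\mathfrak S^H)$ respectively. That $\mathfrak C^H(\mathcal N)\in \mathcal T(\mathfrak S^H)$ is immediate from the equalizer \eqref{7.1eq}: the inclusion into $\mathcal N$ already equalizes $\mathcal N\otimes u_H$ and $\Delta_{\mathcal N}^H$, so the coaction restricted to $\mathfrak C^H(\mathcal N)$ is trivial, and hence $\mathfrak C^H(\mathfrak C^H(\mathcal N))=\mathfrak C^H(\mathcal N)$. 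For $\mathfrak K^H(\mathcal N)\in \mathcal F(\mathfrak S^H)$, I would exploit the left exactness of the equalizer-defined functor $\mathfrak C^H$ (which relies on the exactness of $\_\_\otimes H$ over the field $k$): the inclusion $\mathfrak K^H(\mathcal N)\hookrightarrow \mathcal N$ induces a monomorphism $\mathfrak C^H(\mathfrak K^H(\mathcal N))\hookrightarrow \mathfrak C^H(\mathcal N)$, while tautologically $\mathfrak C^H(\mathfrak K^H(\mathcal N))\hookrightarrow \mathfrak K^H(\mathcal N)$, and the direct sum decomposition forces $\mathfrak C^H(\mathcal N)\cap \mathfrak K^H(\mathcal N)=0$.

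For (i), let $\psi:\mathcal M\longrightarrow \mathcal N$ with $\mathcal M\in \mathcal T(\mathfrak S^H)$ and $\mathcal N\in \mathcal F(\mathfrak S^H)$. By Lemma \ref{L7.01}, $\mathcal T(\mathfrak S^H)$ is closed under quotients and $\mathcal F(\mathfrak S^H)$ is closed under subobjects, so $Im(\psi)$ lies in both classes; but any object in $\mathcal T(\mathfrak S^H)\cap \mathcal F(\mathfrak S^H)$ satisfies $\mathfrak C^H(-)=-$ and $\mathfrak C^H(-)=0$ simultaneously and therefore vanishes, forcing $\psi=0$. I do not foresee a serious obstacle here: the cosemisimplicity of $H$, through the normalized integral $\phi$, has already done the heavy lifting by producing the splitting $\pi^\phi_{\mathcal N}$ in Proposition \ref{P7.2}, and the torsion-theoretic statement now reduces to the formal closure verifications above together with the left-exactness of $\mathfrak C^H$.
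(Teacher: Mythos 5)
Your proof is correct and follows essentially the same route as the paper: both rest on the split exact sequence $0\longrightarrow \mathfrak C^H(\mathcal N)\longrightarrow \mathcal N\longrightarrow \mathfrak K^H(\mathcal N)\longrightarrow 0$ supplied by Proposition \ref{P7.2}, on the observation that $\mathfrak C^H(\mathcal N)\in \mathcal T(\mathfrak S^H)$, and on the closure properties of Lemma \ref{L7.01} to get $\mathfrak S^H(\mathcal T(\mathfrak S^H),\mathcal F(\mathfrak S^H))=0$ via $Im(\psi)\in \mathcal T(\mathfrak S^H)\cap \mathcal F(\mathfrak S^H)=0$. The only (harmless) variation is in verifying $\mathfrak K^H(\mathcal N)\in \mathcal F(\mathfrak S^H)$: the paper shows by an explicit diagram computation with the normalized integral $\phi$ that any trivially-coacted subobject $\iota'':\mathcal M\hookrightarrow \mathfrak K^H(\mathcal N)$ satisfies $\iota'_{\mathcal N}\circ\iota''=\pi^\phi_{\mathcal N}\circ\iota'_{\mathcal N}\circ\iota''=0$, whereas you deduce the same fact formally from the factorization $\mathfrak C^H(\mathfrak K^H(\mathcal N))\subseteq \mathfrak C^H(\mathcal N)\cap \mathfrak K^H(\mathcal N)=0$ forced by the splitting --- an equivalent repackaging, since the splitting itself encodes the integral computation.
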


\begin{proof}
By Proposition \ref{P7.2}, every $\mathcal N\in \mathfrak S^H$ fits into a (split) short exact sequence $0\longrightarrow \mathfrak C^H(\mathcal N)\longrightarrow 
\mathcal N\longrightarrow \mathfrak K^H(\mathcal N)\longrightarrow 0$. From the definitions in \eqref{7.1eq} and \eqref{tor7}, it is clear that
$\mathfrak C^H(\mathcal N)\in \mathcal T(\mathfrak S^H)$. We denote by $\iota'_{\mathcal N}$ the canonical inclusion $\iota'_{\mathcal N}:
\mathfrak K^H(\mathcal N)=Ker(\pi_{\mathcal N}^\phi)\hookrightarrow \mathcal N$. 
We claim that $\mathfrak K^H(\mathcal N)\in \mathcal F(\mathfrak S^H)$. For this, we suppose there is a subobject $\iota'':\mathcal M\hookrightarrow
\mathfrak K^H(\mathcal N)$ such that $\Delta_{\mathfrak K^H(\mathcal N)}\circ \iota''=(\mathfrak K^H(\mathcal N)\otimes u_H)\circ \iota''=(\iota''\otimes H)\circ (\mathcal M\otimes u_H)$. Since $\phi(1_H)=1$, we now have
\begin{equation}
\begin{array}{ll}
\iota'_{\mathcal N}\circ \iota''&= (\mathcal N\otimes \phi)\circ (\iota'_{\mathcal N}\otimes H)\circ (\iota''\otimes H)\circ (\mathcal M\otimes u_H)\\
&= (\mathcal N\otimes \phi)\circ (\iota'_{\mathcal N}\otimes H)\circ ((\mathfrak K^H(\mathcal N)\otimes u_H))\circ\iota''\\
&= (\mathcal N\otimes \phi)\circ (\iota'_{\mathcal N}\otimes H)\circ \Delta_{\mathfrak K^H(\mathcal N)}\circ \iota''\\
&= (\mathcal N\otimes \phi)\circ \Delta_{\mathcal N}^H\circ \iota'_{\mathcal N}\circ \iota''=0\\
\end{array}
\end{equation} which gives $\mathcal M=0$. 

\smallskip Now suppose that $\psi:\mathcal N'\longrightarrow \mathcal N''$ is a morphism in $\mathfrak S^H$ with $\mathcal N'\in \mathcal T(\mathfrak S^H)$ and $\mathcal N''\in \mathcal F(\mathfrak S^H)$. By Lemma \ref{L7.01}, the quotient $Im(\psi)$ of $\mathcal N'$ lies in $\mathcal T(\mathfrak S^H)$. Also, $Im(\psi)\subseteq \mathcal N''\in \mathcal F(\mathfrak S^H)$ lies in 
$\mathcal F(\mathfrak S^H)$. From \eqref{tor7}, it is evident that $\mathcal T(\mathfrak S^H)\cap \mathcal F(\mathfrak S^H)=0$ and hence $\psi=0$. It now follows that $(\mathcal T(\mathfrak S^H),\mathcal F(\mathfrak S^H))$ detemines a torsion theory on the abelian category $\mathfrak S^H$ (see, for instance, \cite[$\S$ I.1]{BelR}). 
\end{proof}

\begin{lem}\label{quo7}
The subcategory $ \mathcal F(\mathfrak S^H)$ is closed under quotients.
\end{lem}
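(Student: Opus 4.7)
The plan is to exploit the natural split epimorphism $\pi^\phi_{\mathcal N} : \mathcal N \to \mathfrak C^H(\mathcal N)$ produced in Proposition \ref{P7.2}. Given an epimorphism $\pi : \mathcal N \twoheadrightarrow \mathcal N''$ in $\mathfrak S^H$ with $\mathcal N \in \mathcal F(\mathfrak S^H)$, I aim to show $\mathfrak C^H(\mathcal N'') = 0$.

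The first step is to check that $\pi^\phi$ is natural: for any morphism $\psi : \mathcal N \to \mathcal N''$ in $\mathfrak S^H$, one has $\pi^\phi_{\mathcal N''} \circ \psi = \psi \circ \pi^\phi_{\mathcal N}$, where both sides are read as morphisms $\mathcal N \to \mathcal N''$ via the canonical inclusions $\iota_{\mathcal N} : \mathfrak C^H(\mathcal N) \hookrightarrow \mathcal N$ and $\iota_{\mathcal N''}$. This is immediate from the defining formula $\pi^\phi = (\_\_ \otimes \phi) \circ \Delta^H$ together with the fact that $\psi$ respects the $H$-coaction; no computation beyond shuffling $\psi$ past $\Delta^H$ is needed.

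Next, since $\mathfrak C^H(\mathcal N) = 0$ by hypothesis and Lemma \ref{L7.1t} factors $\pi^\phi_{\mathcal N}$ through $\mathfrak C^H(\mathcal N)$, we obtain $\pi^\phi_{\mathcal N} = 0$. Naturality applied to $\psi = \pi$ then yields $\iota_{\mathcal N''} \circ \pi^\phi_{\mathcal N''} \circ \pi = 0$. Cancelling the epimorphism $\pi$ on the right and using that $\iota_{\mathcal N''}$ is monic, I conclude that $\pi^\phi_{\mathcal N''} = 0$ as a morphism $\mathcal N'' \to \mathfrak C^H(\mathcal N'')$.

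Finally, Proposition \ref{P7.2} asserts that $\pi^\phi_{\mathcal N''}$ is a retraction of $\iota_{\mathcal N''}$, so its vanishing forces $\mathrm{id}_{\mathfrak C^H(\mathcal N'')} = \pi^\phi_{\mathcal N''} \circ \iota_{\mathcal N''} = 0$, whence $\mathfrak C^H(\mathcal N'') = 0$ and $\mathcal N'' \in \mathcal F(\mathfrak S^H)$. I foresee no genuine obstacle: the only substantive input is cosemisimplicity, which underwrites the natural retraction $\pi^\phi$; everything else is diagram chasing, and the only bookkeeping subtlety is the twofold reading of $\pi^\phi_{\mathcal N}$ (as endomorphism of $\mathcal N$ versus as arrow into $\mathfrak C^H(\mathcal N)$), which is harmless because $\iota_{\mathcal N}$ is monic.
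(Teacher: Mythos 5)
Your proof is correct and follows essentially the same route as the paper: both use the naturality $\pi^\phi_{\mathcal N''}\circ\xi=\xi\circ\pi^\phi_{\mathcal N}$ coming from the $H$-colinearity of $\xi$, the vanishing $\pi^\phi_{\mathcal N}=0$ forced by $\mathfrak C^H(\mathcal N)=0$, and cancellation of the epimorphism to get $\pi^\phi_{\mathcal N''}=0$. Your final step via the retraction identity $\pi^\phi_{\mathcal N''}\circ\iota_{\mathcal N''}=\mathrm{id}$ from Proposition \ref{P7.2} just makes explicit what the paper leaves implicit (that $Im(\pi^\phi_{\mathcal N''})=\mathfrak C^H(\mathcal N'')$), so the two arguments coincide in substance.
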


\begin{proof}
Let $\xi:\mathcal N\twoheadrightarrow \mathcal N''$ be a quotient in $\mathfrak S^H$.  Suppose that $\mathcal N\in \mathcal F(\mathfrak S^H)$, i.e.,
$\mathfrak C^H(\mathcal N)=Im(\pi^{\phi}_{\mathcal N})=0$, or $\pi_{\mathcal N}^\phi=0$.  Since $\pi^\phi_{\mathcal N''}\circ \xi=
\xi\circ \pi^\phi_{\mathcal N}=0$ and $\xi$ is an epimorphism, we get $\pi^\phi_{\mathcal N''}=0$. Hence, $\mathcal N''\in \mathcal F(\mathfrak S^H)$.
\end{proof}

\begin{cor}\label{quocor7}
The subcategories $(\mathcal F(\mathfrak S^H),\mathcal T(\mathfrak S^H))$ detemine a torsion theory on $\mathfrak S^H$. 
\end{cor}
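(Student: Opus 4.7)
The plan is to verify the two axioms of a torsion theory for the pair $(\mathcal F(\mathfrak S^H),\mathcal T(\mathfrak S^H))$ by assembling the ingredients already in hand: the direct sum decomposition from Proposition \ref{P7.2}, the closure properties of Lemma \ref{L7.01}, and the newly established Lemma \ref{quo7}.

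First, I would verify the Hom-vanishing axiom. Take a morphism $\psi:\mathcal N'\longrightarrow \mathcal N''$ in $\mathfrak S^H$ with $\mathcal N'\in \mathcal F(\mathfrak S^H)$ (the candidate torsion class) and $\mathcal N''\in \mathcal T(\mathfrak S^H)$ (the candidate torsion-free class). By Lemma \ref{quo7}, the quotient $Im(\psi)$ of $\mathcal N'$ lies in $\mathcal F(\mathfrak S^H)$. On the other hand, $Im(\psi)$ is a subobject of $\mathcal N''\in \mathcal T(\mathfrak S^H)$, and Lemma \ref{L7.01} tells us that $\mathcal T(\mathfrak S^H)$ is closed under subobjects, so $Im(\psi)\in \mathcal T(\mathfrak S^H)$. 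Since $\mathcal T(\mathfrak S^H)\cap \mathcal F(\mathfrak S^H)=0$, we conclude $Im(\psi)=0$, hence $\psi=0$.

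Second, I would verify the existence of a torsion-torsionfree short exact sequence for every object. This is where Proposition \ref{P7.2} does all the work: any $\mathcal N\in \mathfrak S^H$ admits the split decomposition $\mathcal N=\mathfrak C^H(\mathcal N)\oplus \mathfrak K^H(\mathcal N)$, yielding a (split) short exact sequence
\begin{equation*}
0\longrightarrow \mathfrak K^H(\mathcal N)\longrightarrow \mathcal N\longrightarrow \mathfrak C^H(\mathcal N)\longrightarrow 0
\end{equation*}
in $\mathfrak S^H$. From the proof of Proposition \ref{P7.5f} we already know $\mathfrak K^H(\mathcal N)\in \mathcal F(\mathfrak S^H)$ and $\mathfrak C^H(\mathcal N)\in \mathcal T(\mathfrak S^H)$, and this sequence exhibits $\mathcal N$ as an extension of an object in $\mathcal T(\mathfrak S^H)$ by one in $\mathcal F(\mathfrak S^H)$, as required for the pair $(\mathcal F(\mathfrak S^H),\mathcal T(\mathfrak S^H))$ to be a torsion theory.

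Combining these two steps and invoking the standard characterization of torsion theories in an abelian category (see, \textit{e.g.}, \cite[$\S$ I.1]{BelR}) yields the claim. There is no real obstacle here, as all the technical content is already encapsulated in Proposition \ref{P7.2}, Lemma \ref{L7.01}, and Lemma \ref{quo7}; the role of this corollary is simply to observe that cosemisimplicity of $H$ forces the torsion theory to be symmetric in the sense that swapping torsion and torsion-free classes again yields a torsion theory, a phenomenon tied to the splitting given by the integral $\phi$.
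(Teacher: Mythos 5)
Your proof is correct and follows essentially the same route as the paper's: both verify the Hom-vanishing condition via Lemma \ref{quo7}, the subobject closure of $\mathcal T(\mathfrak S^H)$ from Lemma \ref{L7.01}, and $\mathcal T(\mathfrak S^H)\cap \mathcal F(\mathfrak S^H)=0$, and both obtain the required short exact sequence $0\longrightarrow \mathfrak K^H(\mathcal N)\longrightarrow \mathcal N\longrightarrow \mathfrak C^H(\mathcal N)\longrightarrow 0$ from the split decomposition of Proposition \ref{P7.2}, with the memberships $\mathfrak K^H(\mathcal N)\in \mathcal F(\mathfrak S^H)$ and $\mathfrak C^H(\mathcal N)\in \mathcal T(\mathfrak S^H)$ imported from the proof of Proposition \ref{P7.5f}. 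No gaps; your closing observation that the splitting furnished by the integral $\phi$ is what makes the torsion theory symmetric is an accurate gloss on why the corollary holds.
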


\begin{proof}
For every $\mathcal N\in \mathfrak S^H$, the direct sum decomposition in Proposition \ref{P7.2} also gives a short exact sequence $0\longrightarrow \mathfrak K^H(\mathcal N)\longrightarrow 
\mathcal N\longrightarrow \mathfrak C^H(\mathcal N)\longrightarrow 0$. From the proof of Proposition \ref{P7.5f}, we already know that
$\mathfrak C^H(\mathcal N)\in \mathcal T(\mathfrak S^H)$ and $\mathfrak K^H(\mathcal N)\in \mathcal F(\mathfrak S^H)$. 

\smallskip Now let  $\psi:\mathcal N'\longrightarrow \mathcal N''$ be a morphism in $\mathfrak S^H$ with $\mathcal N'\in \mathcal F(\mathfrak S^H)$ and $\mathcal N''\in \mathcal T(\mathfrak S^H)$. By Lemma \ref{quo7}, the quotient $Im(\psi)$ of $\mathcal N'$ lies in $\mathcal F(\mathfrak S^H)$. By Lemma 
\ref{L7.01}, $Im(\psi)\subseteq \mathcal N''\in \mathcal T(\mathfrak S^H)$ lies in 
$\mathcal T(\mathfrak S^H)$. Again since $\mathcal T(\mathfrak S^H)\cap \mathcal F(\mathfrak S^H)=0$, we get $\psi=0$. 
\end{proof}

\begin{lem}
\label{L7.4b} The functor $\mathcal N\mapsto \mathfrak C^H(\mathcal N)$ is exact, and preserves all colimits.
\end{lem}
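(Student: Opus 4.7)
The plan is to promote the idempotent $\pi^\phi_{\mathcal N}$ constructed in \eqref{7.2eq} to an idempotent natural endomorphism of the identity functor on $\mathfrak S^H$, and then split the identity functor as $\mathrm{id}_{\mathfrak S^H}\cong \mathfrak C^H\oplus \mathfrak K^H$. Exactness and preservation of colimits will then be immediate from the corresponding properties of the identity functor.

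First I would check that $\pi^\phi:\mathrm{id}_{\mathfrak S^H}\to \mathrm{id}_{\mathfrak S^H}$ is a natural transformation. Given $f:\mathcal M\to \mathcal N$ in $\mathfrak S^H$, we have $\Delta^H_{\mathcal N}\circ f=(f\otimes H)\circ \Delta^H_{\mathcal M}$, so
\begin{equation*}
\pi^\phi_{\mathcal N}\circ f=(\mathcal N\otimes \phi)\circ \Delta^H_{\mathcal N}\circ f=(\mathcal N\otimes \phi)\circ (f\otimes H)\circ \Delta^H_{\mathcal M}=f\circ \pi^\phi_{\mathcal M}.
\end{equation*}
Similarly, the inclusion $\iota_{\mathcal N}:\mathfrak C^H(\mathcal N)\hookrightarrow \mathcal N$ is natural in $\mathcal N$ because $\mathfrak C^H(\mathcal N)$ is the kernel of the natural morphism $\zeta(\mathcal N)=(\mathcal N\otimes u_H)-\Delta^H_{\mathcal N}$, and $\mathfrak K^H=\mathrm{Ker}(\pi^\phi)$ is also functorial. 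Proposition \ref{P7.2} then says that $\iota\circ \pi^\phi$ is a natural idempotent on $\mathrm{id}_{\mathfrak S^H}$ whose image functor is $\mathfrak C^H$ and whose kernel functor is $\mathfrak K^H$, yielding a direct sum decomposition of endofunctors
\begin{equation*}
\mathrm{id}_{\mathfrak S^H}\cong \mathfrak C^H\oplus \mathfrak K^H.
\end{equation*}

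For exactness, consider a short exact sequence $0\to \mathcal M\to \mathcal N\to \mathcal P\to 0$ in $\mathfrak S^H$. Applying the functorial decomposition componentwise gives a commutative diagram in which the middle row is exact and each column splits as a direct sum; since direct summands of short exact sequences in an abelian category are again short exact, we obtain exact sequences $0\to \mathfrak C^H(\mathcal M)\to \mathfrak C^H(\mathcal N)\to \mathfrak C^H(\mathcal P)\to 0$ and $0\to \mathfrak K^H(\mathcal M)\to \mathfrak K^H(\mathcal N)\to \mathfrak K^H(\mathcal P)\to 0$. For colimits, given any diagram $\{\mathcal N_i\}$ in $\mathfrak S^H$, the identity preserves its colimit, so
\begin{equation*}
\underset{i}{\mathrm{colim}}\,\mathcal N_i\cong \underset{i}{\mathrm{colim}}\bigl(\mathfrak C^H(\mathcal N_i)\oplus \mathfrak K^H(\mathcal N_i)\bigr)\cong \bigl(\underset{i}{\mathrm{colim}}\,\mathfrak C^H(\mathcal N_i)\bigr)\oplus \bigl(\underset{i}{\mathrm{colim}}\,\mathfrak K^H(\mathcal N_i)\bigr),
\end{equation*}
and this decomposition coincides with $\mathfrak C^H(\mathrm{colim}\,\mathcal N_i)\oplus \mathfrak K^H(\mathrm{colim}\,\mathcal N_i)$ under the functorial splitting, so the two summands must match up under the canonical comparison morphism.

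The step that needs the most care is simply making sure the decomposition is truly natural: both $\iota_{\mathcal N}$ and $\pi^\phi_{\mathcal N}$ must be natural in $\mathcal N$, and the kernel $\mathfrak K^H(\mathcal N)=\mathrm{Ker}(\pi^\phi_{\mathcal N})$ inherits a functorial structure as the kernel of a natural transformation. Everything else is formal: any retract of an exact, colimit-preserving functor is itself exact and colimit-preserving, because the inclusion and projection intertwine exact sequences and colimit cocones with those of the identity.
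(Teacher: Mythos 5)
Your proof is correct, and it takes a cleaner, more formal route than the paper. The paper works inside the torsion-theoretic framework: it invokes the two torsion theories $(\mathcal T(\mathfrak S^H),\mathcal F(\mathfrak S^H))$ and $(\mathcal F(\mathfrak S^H),\mathcal T(\mathfrak S^H))$ (Proposition \ref{P7.5f} and Corollary \ref{quocor7}) to see that every morphism $\psi$ is diagonal with respect to the splitting $\mathcal N=\mathfrak C^H(\mathcal N)\oplus\mathfrak K^H(\mathcal N)$ of Proposition \ref{P7.2}, then computes $Ker(\psi)$ and $Coker(\psi)$ componentwise and uses the closure properties of Lemma \ref{L7.01} and Lemma \ref{quo7} to identify the components, obtaining exactness; colimit preservation is then argued separately, via AB5 applied to the equalizer \eqref{7.1eq} for direct sums, plus the decomposition of general colimits into direct sums and cokernels. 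You instead verify directly — a one-line computation with the interchange law for $\otimes$ — that $\pi^\phi$ is a natural endomorphism of $\mathrm{id}_{\mathfrak S^H}$, so that Proposition \ref{P7.2} upgrades to a natural idempotent splitting $\mathrm{id}_{\mathfrak S^H}\cong \mathfrak C^H\oplus \mathfrak K^H$ of endofunctors, after which both conclusions are formal: a direct summand of an exact (resp.\ colimit-preserving) functor is exact (resp.\ colimit-preserving), since homology is additive and a direct sum of comparison morphisms is an isomorphism iff each summand is. Your mechanism for functoriality of the splitting (direct naturality of $\pi^\phi$) replaces the paper's mechanism (Hom-vanishing between the torsion and torsion-free classes in both directions), and this buys you independence from Lemmas \ref{L7.01} and \ref{quo7} and the torsion-theory propositions entirely — only Lemma \ref{L7.1t}, Proposition \ref{P7.2} and cosemisimplicity of $H$ are used — as well as a uniform treatment of all colimits in one stroke; what the paper's route buys is that it stays within machinery it must develop anyway for the later sections, where the torsion pair $(\mathcal T({_A}\mathfrak S^H),\mathcal F({_A}\mathfrak S^H))$ genuinely does not split and the formal shortcut is unavailable.
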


\begin{proof}
 Since $(\mathcal T(\mathfrak S^H),\mathcal F(\mathfrak S^H))$ is a torsion theory, we can split a morphism $\psi:\mathcal M\longrightarrow \mathcal N$   in $\mathfrak S^H$ into its components $\mathfrak C^H(\psi):\mathfrak C^H(\mathcal M)\longrightarrow \mathfrak C^H(\mathcal N)$ and 
$\mathfrak K^H(\psi):\mathfrak K^H(\mathcal M)\longrightarrow \mathfrak K^H(\mathcal N)$. Then, we have $Ker(\psi)=Ker(\mathfrak C^H(\psi))\oplus Ker(\mathfrak K^H(\psi))$ and $Coker(\psi)=Coker(\mathfrak C^H(\psi))\oplus Coker(\mathfrak K^H(\psi))$. By Lemma \ref{L7.01} and Lemma \ref{quo7}, we see that $Ker(\mathfrak C^H(\psi))$, $Coker(\mathfrak C^H(\psi))\in  \mathcal T(\mathfrak S^H)$ and $Ker(\mathfrak K^H(\psi))$, $Coker(\mathfrak K^H(\psi))\in  \mathcal F(\mathfrak S^H)$. Accordingly, we have $\mathfrak C^H(Coker(\psi))=Coker(\mathfrak C^H(\psi))$ and $\mathfrak C^H(Ker(\psi))=Ker(\mathfrak C^H(\psi))$. This shows that the functor $\mathcal N\mapsto \mathfrak C^H(\mathcal N)$ for $\mathcal N\in \mathfrak S^H$ is exact. 

\smallskip
Since filtered colimits commute with finite limits in $\mathfrak S^H$, it follows from \eqref{7.1eq} that the functor $\mathcal N\mapsto \mathfrak C^H(\mathcal N)$ preserves direct sums. Since every colimit may be expressed as a combination of cokernels and direct sums, we see that  $\mathcal N\mapsto \mathfrak C^H(\mathcal N)$ preserves all colimits.
\end{proof}

We now let $(A,\Delta_A^H:A\longrightarrow A\otimes H)$ be a  right $H$-comodule algebra as in Section 4 and consider the category ${_A}\mathfrak S^H$ of relative $(A,H)$-Hopf module objects in $\mathfrak S$. We also consider the subalgebra of coinvariants
\begin{equation}\label{coina}
B:=\{\mbox{$a\in A$ $\vert$ $\Delta_A^H(a)=a\otimes 1_H$}\}
\end{equation} and denote by $i:B\longrightarrow A$ the canonical inclusion.

\begin{lem}\label{L7.7vq}
Let $(\mathcal M,\mu_{\mathcal M}^A,\Delta_{\mathcal M}^H)\in {_A}\mathfrak S^H$. Then, $\mathfrak C^H(\mathcal M)$ is canonically equipped with the structure of a left $B$-module object
in $\mathfrak S$.
\end{lem}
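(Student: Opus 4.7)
The plan is to define the $B$-action on $\mathfrak C^H(\mathcal M)$ as the unique factorization, through the equalizer inclusion $\iota_{\mathcal M}:\mathfrak C^H(\mathcal M)\hookrightarrow \mathcal M$, of the composition
\begin{equation*}
\tilde\mu: B\otimes \mathfrak C^H(\mathcal M)\xrightarrow{i\otimes \iota_{\mathcal M}} A\otimes \mathcal M \xrightarrow{\mu^A_{\mathcal M}} \mathcal M
\end{equation*}
Once this factorization is established, the $B$-module axioms for the resulting map $\mu^B_{\mathfrak C^H(\mathcal M)}$ will follow routinely from the corresponding axioms for $\mu^A_{\mathcal M}$ together with the facts that $i:B\hookrightarrow A$ is a morphism of $k$-algebras and $\iota_{\mathcal M}$ is a monomorphism.

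The key step is to verify that $\tilde\mu$ equalizes $\Delta^H_{\mathcal M}$ and $\mathcal M\otimes u_H$. From the definition of $B$ in \eqref{coina}, together with the definition of $\mathfrak C^H(\mathcal M)$ as the equalizer in \eqref{7.1eq}, I would record the two basic identities
\begin{equation*}
\Delta^H_A\circ i=(i\otimes u_H)\qquad \text{and}\qquad \Delta^H_{\mathcal M}\circ \iota_{\mathcal M}=(\iota_{\mathcal M}\otimes u_H)
\end{equation*}
where I tacitly use the canonical isomorphisms $B\cong B\otimes k$ and $\mathfrak C^H(\mathcal M)\cong \mathfrak C^H(\mathcal M)\otimes k$. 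Now I would invoke the relative Hopf module compatibility from Definition \ref{D5.1cs}(2):
\begin{equation*}
\Delta^H_{\mathcal M}\circ \mu^A_{\mathcal M}=(\mu^A_{\mathcal M}\otimes \mu_H)\circ (A\otimes T_{H,\mathcal M}\otimes H)\circ (\Delta^H_A\otimes \Delta^H_{\mathcal M})
\end{equation*}
Pre-composing with $i\otimes \iota_{\mathcal M}$ and using the two identities above, each tensor slot carrying $H$ is forced to be $u_H$; since $T_{H,\mathcal M}(1_H\otimes m)=m\otimes 1_H$ and $\mu_H(1_H\otimes 1_H)=1_H$, the right-hand side collapses to $(\mu^A_{\mathcal M}\circ (i\otimes \iota_{\mathcal M}))\otimes u_H=(\mathcal M\otimes u_H)\circ \tilde\mu$, which is the required equalizing property.

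The universal property of the equalizer then gives a unique $\mu^B_{\mathfrak C^H(\mathcal M)}:B\otimes \mathfrak C^H(\mathcal M)\longrightarrow \mathfrak C^H(\mathcal M)$ with $\iota_{\mathcal M}\circ \mu^B_{\mathfrak C^H(\mathcal M)}=\tilde\mu$. To finish, I would verify the unit and associativity axioms of Definition \ref{D2.1}: post-composing each side with the monomorphism $\iota_{\mathcal M}$ reduces them to the corresponding $A$-module identities on $\mathcal M$ restricted along $i\otimes \iota_{\mathcal M}$, which hold automatically because $i$ is a unital algebra map. I don't anticipate a serious obstacle here; the only point requiring care is the bookkeeping of the unit isomorphisms in the identifications $B\cong B\otimes k$ and $\mathfrak C^H(\mathcal M)\cong \mathfrak C^H(\mathcal M)\otimes k$, which must be carried through so that the two identities above are stated correctly as morphisms in $\mathfrak S_k$.
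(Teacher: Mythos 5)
Your proposal is correct and follows essentially the same route as the paper: the paper likewise defines the action by showing that $\mu^A_{\mathcal M}\circ(i\otimes\iota_{\mathcal M})$ equalizes $\Delta^H_{\mathcal M}$ and $\mathcal M\otimes u_H$, using exactly the identities $\Delta^H_A\circ i=i\otimes u_H$ and $\Delta^H_{\mathcal M}\circ\iota_{\mathcal M}=\iota_{\mathcal M}\otimes u_H$ together with the compatibility in Definition \ref{D5.1cs}(2), and then concludes via the universal property of the equalizer \eqref{7.1eq}. Your additional remarks on verifying the unit and associativity axioms by composing with the monomorphism $\iota_{\mathcal M}$ are routine details the paper leaves implicit.
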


\begin{proof}
Given the canonical inclusions,  $\iota_{\mathcal M}:\mathfrak C^H(\mathcal M)\hookrightarrow \mathcal M$ and $i:B\hookrightarrow A$, we will construct a structure map for $\mathfrak C^H(\mathcal M)\in {_B}\mathfrak S$ by showing that the composition $B\otimes  \mathfrak C^H(\mathcal M)\xrightarrow{i\otimes \iota_{\mathcal M}}A\otimes\mathcal M\xrightarrow{\mu_{\mathcal M}^A}\mathcal M$ factors through
the equalizer $\mathfrak C^H(\mathcal M)$. Using the notation of Definition \ref{D5.1cs}, we now note that
\begin{equation*}
\begin{array}{ll}
\Delta_{\mathcal M}^H\circ \mu_{\mathcal M}^A\circ (i\otimes \iota_{\mathcal M}) & =(\mu^A_{\mathcal M}\otimes \mu_H)\circ(A\otimes T_{H,\mathcal M}\otimes H)\circ (\Delta_A^H \otimes \Delta^H_{\mathcal M})\circ (i\otimes \iota_{\mathcal M}) \\
& =(\mu^A_{\mathcal M}\otimes \mu_H)\circ(A\otimes T_{H,\mathcal M}\otimes H)\circ (A\otimes u_H\otimes \mathcal M\otimes u_H)\circ (i\otimes \iota_{\mathcal M}) \\
&= (\mathcal M\otimes u_H)\circ \mu_{\mathcal M}^A\circ (i\otimes \iota_{\mathcal M}) \\
\end{array}
\end{equation*} From the expression for $\mathfrak C^H(\mathcal M)$ in \eqref{7.1eq}, the result is now clear.
\end{proof}

\begin{thm}
\label{P7.8vk} Let $A$ be a right $H$-comodule algebra and let $B\subseteq A$ the subalgebra of coinvariants of $A$. Then, the functor ${_A}\mathfrak S^H\longrightarrow 
{_B}\mathfrak S$ given by $\mathcal M\mapsto \mathfrak C^H(\mathcal M)$ has a right adjoint ${_B}HOM(A,\_\_):{_B}\mathfrak S\longrightarrow {_A}\mathfrak S^H$. In other words, we have natural isomorphisms
\begin{equation}
{_B}\mathfrak S(\mathfrak C^H(\mathcal M),\mathcal N)\cong {_A}\mathfrak S^H(\mathcal M, {_B}HOM(A,\mathcal N))
\end{equation} for $\mathcal M\in {_A}\mathfrak S^H$ and $\mathcal N\in {_B}\mathfrak S$. 
\end{thm}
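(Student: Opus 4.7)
The strategy is to invoke a general adjoint functor theorem, exactly as in the proof of Proposition \ref{P6.1ku}. By Lemma \ref{L7.7vq}, the assignment $\mathcal{M}\mapsto\mathfrak{C}^H(\mathcal{M})$ indeed lands in ${_B}\mathfrak{S}$, and functoriality is immediate from the universal property of the equalizer defining $\mathfrak{C}^H(\mathcal{M})$: any morphism $\psi:\mathcal{M}\to\mathcal{M}'$ in ${_A}\mathfrak{S}^H$ intertwines both $\Delta^H$ and $(-\otimes u_H)$, hence restricts uniquely to a morphism $\mathfrak{C}^H(\mathcal{M})\to\mathfrak{C}^H(\mathcal{M}')$, and $B$-linearity follows by restricting the $A$-linearity of $\psi$ along the inclusion $B\hookrightarrow A$.

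The key technical observation is that $\mathfrak{C}^H:{_A}\mathfrak{S}^H\to{_B}\mathfrak{S}$ preserves all small colimits. Lemma \ref{L7.4b} establishes this for the functor $\mathfrak{C}^H:\mathfrak{S}^H\to\mathfrak{S}$, so it remains to transfer the statement along the forgetful functors. I would argue that colimits in both ${_A}\mathfrak{S}^H$ and in ${_B}\mathfrak{S}$ are computed at the level of the underlying objects in $\mathfrak{S}$: filtered colimits by the remark following Definition \ref{D5.1cs}, direct sums by equipping the underlying direct sum with the componentwise $A$-action and $H$-coaction, and cokernels by exploiting the exactness of $A\otimes\_\_$ and $\_\_\otimes H$ in $\mathfrak{S}$. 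Since the forgetful functors ${_A}\mathfrak{S}^H\to\mathfrak{S}^H$ and ${_B}\mathfrak{S}\to\mathfrak{S}$ both reflect and create these colimits, the colimit-preservation property of Lemma \ref{L7.4b} descends to the functor between ${_A}\mathfrak{S}^H$ and ${_B}\mathfrak{S}$.

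With this in hand, one invokes the adjoint functor theorem: both ${_A}\mathfrak{S}^H$ (by Theorem \ref{Thm5.3sq}) and ${_B}\mathfrak{S}$ (by \cite[Proposition B2.2]{AZ}) are Grothendieck categories, and a colimit-preserving functor between Grothendieck categories admits a right adjoint by \cite[Proposition 8.3.27(iii)]{KS} — the same criterion already exploited in Proposition \ref{P6.1ku}. We designate this right adjoint by ${_B}HOM(A,\_\_):{_B}\mathfrak{S}\to{_A}\mathfrak{S}^H$, and the required natural isomorphism is then the defining adjunction. I expect the main obstacle to be the bookkeeping in the second step, specifically verifying that colimits in ${_A}\mathfrak{S}^H$ are inherited from $\mathfrak{S}$ for arbitrary diagrams rather than only for filtered ones; once this is pinned down, the adjoint functor theorem delivers the conclusion without any explicit construction of ${_B}HOM(A,\mathcal{N})$ itself.
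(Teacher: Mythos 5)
Your proposal is correct and takes essentially the same route as the paper's own proof: colimits in ${_A}\mathfrak S^H$ and ${_B}\mathfrak S$ are computed in $\mathfrak S$, Lemma \ref{L7.4b} therefore gives that $\mathfrak C^H$ preserves colimits, and since both categories are Grothendieck (Theorem \ref{Thm5.3sq}), the right adjoint ${_B}HOM(A,\_\_)$ is supplied by \cite[Proposition 8.3.27]{KS}, exactly as in Proposition \ref{P6.1ku}. The extra bookkeeping you supply (functoriality via the equalizer and Lemma \ref{L7.7vq}, creation of arbitrary colimits by the forgetful functors) is sound and merely makes explicit what the paper leaves implicit.
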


\begin{proof}
We know that colimits in ${_A}\mathfrak S^H$ and ${_B}\mathfrak S$ are computed in $\mathfrak S$. Accordingly, it follows from Lemma \ref{L7.4b} that the functor ${_A}\mathfrak S^H\longrightarrow 
{_B}\mathfrak S$ given by $\mathcal M\mapsto \mathfrak C^H(\mathcal M)$ preserves colimits. By Theorem \ref{Thm5.3sq}, we know that ${_A}\mathfrak S^H$ is a Grothendieck category
and so is ${_B}\mathfrak S$. It now follows from \cite[Proposition 8.3.27]{KS} that the colimit preserving functor given by $\mathcal M\mapsto 
\mathfrak C^H(\mathcal M)$  has a right adjoint ${_B}HOM(A,\_\_):{_B}\mathfrak S\longrightarrow {_A}\mathfrak S^H$. 
\end{proof}

\begin{cor}\label{C7.9} The functor ${_B}HOM(A,\_\_):{_B}\mathfrak S\longrightarrow {_A}\mathfrak S^H$ preserves injectives.

\end{cor}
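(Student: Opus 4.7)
The plan is to invoke the standard homological algebra fact that the right adjoint of an exact functor between abelian categories with enough injectives preserves injective objects. By Proposition \ref{P7.8vk}, the functor ${_B}HOM(A,\_\_)$ is the right adjoint of $\mathfrak C^H:{_A}\mathfrak S^H\longrightarrow {_B}\mathfrak S$, so it will suffice to verify that $\mathfrak C^H$ itself is exact as a functor between these module-object categories.

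First I would note that $\mathfrak C^H:{_A}\mathfrak S^H\longrightarrow {_B}\mathfrak S$ is obtained from the functor $\mathfrak C^H:\mathfrak S^H\longrightarrow \mathfrak S$ of \eqref{7.1eq} by equipping the output with its canonical $B$-action from Lemma \ref{L7.7vq}. Since kernels and cokernels in ${_A}\mathfrak S^H$ (resp.\ ${_B}\mathfrak S$) are computed in the underlying category $\mathfrak S^H$ (resp.\ $\mathfrak S$), a short exact sequence in ${_A}\mathfrak S^H$ gives a short exact sequence in $\mathfrak S^H$, to which Lemma \ref{L7.4b} applies to yield a short exact sequence in $\mathfrak S$; this short exact sequence automatically promotes to one in ${_B}\mathfrak S$ because the $B$-structures on the terms are inherited. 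Hence $\mathfrak C^H:{_A}\mathfrak S^H\longrightarrow {_B}\mathfrak S$ is exact.

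Having established exactness of $\mathfrak C^H$, let $\mathcal I\in {_B}\mathfrak S$ be injective. By the adjunction of Proposition \ref{P7.8vk}, we have a natural isomorphism
\begin{equation}
{_A}\mathfrak S^H(\_\_,{_B}HOM(A,\mathcal I))\cong {_B}\mathfrak S(\mathfrak C^H(\_\_),\mathcal I).
\end{equation}
The right-hand side is the composition of the exact functor $\mathfrak C^H$ with the exact functor ${_B}\mathfrak S(\_\_,\mathcal I)$, hence it is exact. Therefore ${_A}\mathfrak S^H(\_\_,{_B}HOM(A,\mathcal I))$ is exact, which is precisely the statement that ${_B}HOM(A,\mathcal I)$ is injective in ${_A}\mathfrak S^H$.

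There is no serious obstacle; the only point requiring a small verification is the exactness transfer of $\mathfrak C^H$ from $\mathfrak S^H\to\mathfrak S$ up to ${_A}\mathfrak S^H\to{_B}\mathfrak S$, but this is routine given that the forgetful functors reflect and create finite limits and colimits.
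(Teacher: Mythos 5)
Your proposal is correct and follows essentially the same route as the paper: the paper's proof likewise cites the exactness of the left adjoint $\mathfrak C^H$ (from Lemma \ref{L7.4b}) and concludes that its right adjoint ${_B}HOM(A,\_\_)$ preserves injectives. The extra details you supply --- the transfer of exactness from $\mathfrak S^H\longrightarrow \mathfrak S$ up to ${_A}\mathfrak S^H\longrightarrow {_B}\mathfrak S$ via kernels and cokernels being computed in the underlying category, and the explicit adjunction argument ${_A}\mathfrak S^H(\_\_,{_B}HOM(A,\mathcal I))\cong {_B}\mathfrak S(\mathfrak C^H(\_\_),\mathcal I)$ --- are exactly the routine verifications the paper leaves implicit.
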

\begin{proof}   From Lemma \ref{L7.4b} it is clear that the left adjoint  ${_A}\mathfrak S^H\longrightarrow 
{_B}\mathfrak S$ given by $\mathcal M\mapsto \mathfrak C^H(\mathcal M)$ is exact. Hence, the right adjoint ${_B}HOM(A,\_\_):{_B}\mathfrak S\longrightarrow {_A}\mathfrak S^H$ preserves injectives.

\end{proof}

Since $A$ is an $H$-comodule algebra, it is easily verified that given $\mathcal M\in {_B}\mathfrak S$, we must have $A\otimes_B\mathcal M\in {_A}\mathfrak S^H$ with $H$-coaction extended from $A$.

\begin{lem}\label{L7.9cj}
Let $H$ be a cosemisimple Hopf algebra. Then, for any $\mathcal M\in {_B}\mathfrak S$,  we have $\mathfrak C^H(A\otimes_B\mathcal M)\cong \mathcal M$
as objects of ${_B}\mathfrak S$.
\end{lem}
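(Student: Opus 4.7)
The plan is to construct explicit mutually inverse $B$-linear morphisms between $\mathcal M$ and $\mathfrak C^H(A\otimes_B\mathcal M)$. Since $B=A^{coH}$, the coaction $\Delta_A^H:A\longrightarrow A\otimes H$ is left $B$-linear (both sides carry the $B$-action on the $A$-factor), so it descends along the coequalizer defining $\otimes_B$ to make $A\otimes_B\mathcal M$ into an object of ${_A}\mathfrak S^H$ with $H$-coaction $\Delta_A^H\otimes_B\mathcal M:A\otimes_B\mathcal M\longrightarrow (A\otimes_B\mathcal M)\otimes H$.

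The canonical morphism $\iota:\mathcal M=B\otimes_B\mathcal M\xrightarrow{i\otimes_B\mathcal M}A\otimes_B\mathcal M$ satisfies $(\Delta_A^H\otimes_B\mathcal M)\circ \iota=(\iota\otimes u_H)$, since $\Delta_A^H\circ i=i\otimes u_H$ by the definition \eqref{coina} of $B$. Hence $\iota$ equalizes the pair in \eqref{7.1eq} and factors through a morphism $\iota':\mathcal M\longrightarrow \mathfrak C^H(A\otimes_B\mathcal M)$, which is $B$-linear because the $B$-structure on $\mathfrak C^H(A\otimes_B\mathcal M)$ provided by Lemma \ref{L7.7vq} is the restriction of the $A$-action via $i$. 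For the candidate inverse I use the integral $\phi$ and set $E:=(A\otimes\phi)\circ \Delta_A^H:A\longrightarrow A$. A short diagrammatic manipulation that combines coassociativity $(\Delta_A^H\otimes H)\circ \Delta_A^H=(A\otimes \Delta_H)\circ \Delta_A^H$ with the left integral identity $(H\otimes\phi)\circ \Delta_H=u_H\circ \phi$ gives $\Delta_A^H\circ E=(A\otimes u_H)\circ E$; thus $E$ factors uniquely as $E=i\circ \bar E$ for some $\bar E:A\longrightarrow B$. Moreover, using $\Delta_A^H\circ i=i\otimes u_H$ and $\phi(1_H)=1$, one obtains $\bar E\circ i=\id_B$. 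Base-changing over $B$ produces $\tilde E:=\bar E\otimes_B\mathcal M:A\otimes_B\mathcal M\longrightarrow \mathcal M$.

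By construction, $\tilde E\circ \iota=(\bar E\circ i)\otimes_B\mathcal M=\id_{\mathcal M}$ and $\iota\circ \tilde E=E\otimes_B\mathcal M=\pi^\phi_{A\otimes_B\mathcal M}$ in the notation of \eqref{7.2eq}. Precomposing with the inclusion $\iota_{A\otimes_B\mathcal M}:\mathfrak C^H(A\otimes_B\mathcal M)\hookrightarrow A\otimes_B\mathcal M$ and invoking Proposition \ref{P7.2}, which gives $\pi^\phi\circ \iota_{A\otimes_B\mathcal M}=\id_{\mathfrak C^H(A\otimes_B\mathcal M)}$, I obtain $\iota'\circ(\tilde E\circ \iota_{A\otimes_B\mathcal M})=\id_{\mathfrak C^H(A\otimes_B\mathcal M)}$; on the other side $(\tilde E\circ \iota_{A\otimes_B\mathcal M})\circ \iota'=\tilde E\circ \iota=\id_{\mathcal M}$. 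This establishes that $\iota'$ is the desired $B$-linear isomorphism. The only genuinely computational step is the identity $\Delta_A^H\circ E=(A\otimes u_H)\circ E$ that witnesses $E$ landing in the coinvariants; everything else is formal manipulation with the base change over $B$ and with the splitting already provided by Proposition \ref{P7.2}.
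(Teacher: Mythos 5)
Your proof is correct and takes essentially the same route as the paper: both rest on the integral-induced map $E=(A\otimes\phi)\circ\Delta_A^H$ (the paper's $i\circ\pi^\phi_A$), its factorization through $B$ splitting the inclusion $i:B\hookrightarrow A$ as $B$-bimodules, and the identification $\pi^\phi_{A\otimes_B\mathcal M}=E\otimes_B\mathcal M$ combined with Proposition \ref{P7.2}. The only difference is cosmetic: the paper identifies $\mathfrak C^H(A\otimes_B\mathcal M)$ as the image of this split map, while you exhibit the mutually inverse $B$-linear morphisms explicitly.
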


\begin{proof}
Since $H$ is cosemisimple, using the left integral $\phi:H\longrightarrow k$, we consider  the morphism
$\pi_A^\phi:A\longrightarrow A^{co H}=B$ given by $a\mapsto \sum \phi(a_1)a_0$ where $\Delta_A^H(a)=\sum a_0\otimes a_1$ for any $a\in A$.  We note that this map $\pi^\phi_A:A\longrightarrow A^{co H}$ is a morphism of $B$-bimodules and that the composition with the inclusion $B\hookrightarrow A$ gives the identity on $B$. Accordingly, we have a  split $A\cong B\oplus B'$ as   $B$-bimodules. Then for $\mathcal M\in {_B}\mathfrak S$,  we have  
$(A\otimes_B\mathcal M)\cong (B\otimes_B\mathcal M)\oplus (B'\otimes_B\mathcal M)$, which shows that there is an inclusion $i\otimes_B\mathcal M:\mathcal M=(B\otimes_B\mathcal M)\hookrightarrow (A\otimes_B\mathcal M)$ in ${_B}\mathfrak S$. 

\smallskip
We now consider the composition $A\xrightarrow{\pi_A^\phi}B\xrightarrow{i}A$, where $i:B\hookrightarrow A$ is the canonical inclusion. Since the $H$-coaction on $(A\otimes_B\mathcal M)$ is induced from the $H$-coaction on $A$, we note that 
\begin{equation}
\begin{CD}
\pi_{(A\otimes_B\mathcal M)}^\phi=((i\circ \pi^\phi_A)\otimes_B\mathcal M): (A\otimes_B\mathcal M)@>\Delta_{(A\otimes_B\mathcal M)}^H>> (A\otimes_B\mathcal M)\otimes H @>(A\otimes_B\mathcal M)\otimes \phi>> (A\otimes_B\mathcal M)
\end{CD}
\end{equation} in the notation of \eqref{7.2eq}. By Proposition \ref{P7.2}, we know that $\mathfrak C^H(A\otimes_B\mathcal M)$ is the image of the morphism $\pi_{(A\otimes_B\mathcal M)}^\phi=((i\circ \pi^\phi_A)\otimes_B\mathcal M)$. Now since $\pi^\phi_A:A\longrightarrow B$ is an epimorphism, so is $\pi^\phi_A\otimes_B\mathcal M:(A\otimes_B\mathcal M)\longrightarrow (B\otimes_B\mathcal M)=\mathcal M$. We have already shown that $i\otimes_B\mathcal M:\mathcal M\longrightarrow (A\otimes_B\mathcal M)$ is a monomorphism. It now follows that $\mathcal M$ is the image of $\pi_{(A\otimes_B\mathcal M)}^\phi$, which gives $\mathfrak C^H(A\otimes_B\mathcal M)=\mathcal M$.
\end{proof}

\begin{thm}\label{P7.10gs}
Let $A$ be a right $H$-comodule algebra and $B=A^{co H}$. Then, we have natural isomorphisms
\begin{equation}\label{7.7yt}
{_A}\mathfrak S^H(A\otimes_B\mathcal M,\mathcal N)\cong {_B}\mathfrak S(\mathcal M,\mathfrak C^H(\mathcal N)) 
\end{equation}
for $\mathcal M\in {_B}\mathfrak S$ and $\mathcal N\in {_A}\mathfrak S^H$.
\end{thm}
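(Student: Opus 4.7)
The plan is to construct an explicit natural bijection by producing mutually inverse maps $\Phi$ and $\Psi$, relying on Lemma \ref{L7.9cj} which identifies $\mathcal M$ with $\mathfrak C^H(A\otimes_B\mathcal M)$ via the canonical map $\eta_\mathcal M:\mathcal M\to A\otimes_B\mathcal M$, $m\mapsto 1_A\otimes m$. Given $\phi\in {_A}\mathfrak S^H(A\otimes_B\mathcal M,\mathcal N)$, the functoriality of $\mathfrak C^H$ (Lemma \ref{L7.4b}) gives $\mathfrak C^H(\phi):\mathfrak C^H(A\otimes_B\mathcal M)\longrightarrow \mathfrak C^H(\mathcal N)$ in ${_B}\mathfrak S$, and I set $\Phi(\phi):=\mathfrak C^H(\phi)\circ \eta_\mathcal M$. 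Conversely, given $\psi\in {_B}\mathfrak S(\mathcal M,\mathfrak C^H(\mathcal N))$, let $\iota_\mathcal N:\mathfrak C^H(\mathcal N)\hookrightarrow \mathcal N$ be the canonical inclusion; since $\iota_\mathcal N\circ \psi$ is $B$-linear, extension of scalars followed by the $A$-action on $\mathcal N$ yields
\[
\Psi(\psi):A\otimes_B\mathcal M\xrightarrow{A\otimes_B(\iota_\mathcal N\circ\psi)} A\otimes_B\mathcal N\xrightarrow{\mu^A_\mathcal N}\mathcal N,
\]
which is tautologically $A$-linear.

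The main step is to verify that $\Psi(\psi)$ is also $H$-colinear, hence lies in ${_A}\mathfrak S^H$. The $H$-coaction on $A\otimes_B\mathcal M$ is induced from the coaction on $A$, so on a simple tensor it reads $(a\otimes_B m)\mapsto \sum a_{(0)}\otimes_B m\otimes a_{(1)}$. Because the image of $\iota_\mathcal N\circ\psi$ lands in $\mathfrak C^H(\mathcal N)$, we have $\Delta_\mathcal N^H\circ\iota_\mathcal N\circ\psi=(\iota_\mathcal N\circ\psi)\otimes u_H$, and the relative Hopf axiom of Definition \ref{D5.1cs} applied to $\mathcal N$ collapses to $\Delta_\mathcal N^H(a\cdot\iota_\mathcal N(\psi(m)))=\sum a_{(0)}\cdot\iota_\mathcal N(\psi(m))\otimes a_{(1)}$. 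This is precisely the identity $\Delta_\mathcal N^H\circ\Psi(\psi)=(\Psi(\psi)\otimes H)\circ\Delta^H_{A\otimes_B\mathcal M}$ that we need. This $H$-colinearity check, unwinding the relative Hopf axiom on the coinvariant image, is the only nontrivial computation in the proof.

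Finally, I will check that $\Phi$ and $\Psi$ are mutual inverses and that the bijection is natural. For $\psi\in {_B}\mathfrak S(\mathcal M,\mathfrak C^H(\mathcal N))$, restricting $\Psi(\psi)$ along $\eta_\mathcal M$ recovers $\iota_\mathcal N\circ\psi$; applying $\mathfrak C^H$ and using that $\mathfrak C^H(\iota_\mathcal N)$ is the identity on $\mathfrak C^H(\mathcal N)$ then recovers $\psi$, so $\Phi(\Psi(\psi))=\psi$. Conversely, for $\phi\in{_A}\mathfrak S^H(A\otimes_B\mathcal M,\mathcal N)$, both $\phi$ and $\Psi(\Phi(\phi))$ are $A$-linear and agree after pre-composition with $\eta_\mathcal M$ (by construction of $\Phi$), and since $A\otimes_B\mathcal M$ is generated as a left $A$-module object by the image of $\eta_\mathcal M$, the two maps coincide. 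Naturality in $\mathcal M$ and $\mathcal N$ is immediate from the functoriality of $\mathfrak C^H$, of $A\otimes_B-$, and of the action $\mu^A_\mathcal N$. The upshot is that $A\otimes_B-$ is left adjoint to $\mathfrak C^H$, completing the adjoint triple in \eqref{triple4c} together with Proposition \ref{P7.8vk}.
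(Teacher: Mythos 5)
Your proof is correct and follows essentially the same route as the paper: the paper's proof restricts a morphism $\psi\in{_A}\mathfrak S^H(A\otimes_B\mathcal M,\mathcal N)$ along the unit $u_A\otimes\mathcal M$ and uses $H$-colinearity together with $\Delta_A^H\circ u_A=u_A\otimes u_H$ to show the restriction factors through the equalizer $\mathfrak C^H(\mathcal N)$, declaring the reverse direction (your $\Psi$, extension by $\mu^A_{\mathcal N}$ with colinearity from the relative Hopf axiom on the coinvariant image) to be ``easily reversed.'' The only cosmetic difference is that your forward map $\Phi$ invokes Lemma \ref{L7.9cj} (hence cosemisimplicity of $H$) to identify $\mathcal M$ with $\mathfrak C^H(A\otimes_B\mathcal M)$, whereas the paper's direct equalizer argument needs only the elementary fact that the unit lands in coinvariants, so the isomorphism holds without that standing hypothesis.
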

\begin{proof}
Let $\psi\in {_A}\mathfrak S^H(A\otimes_B\mathcal M,\mathcal N)$. Then, $\psi\in {_A}\mathfrak S^H(A\otimes_B\mathcal M,\mathcal N)$ corresponds to 
$\psi'\in {_B}\mathfrak S(\mathcal M,\mathcal N)$ given by $\psi':\mathcal M\xrightarrow{(u_A\otimes \mathcal M)}A\otimes_B\mathcal M
\xrightarrow{\psi}\mathcal N$. We know that $\Delta_A^H\circ u_A=u_A\otimes u_H$. From the $H$-colinearity of $\psi$ and the fact that
$\psi'=\psi \circ (u_A\otimes \mathcal M)$, we now have
\begin{equation}
\Delta_{\mathcal N}^H\circ \psi'=(\psi'\otimes H)\circ (\mathcal M\otimes u_H)= (\mathcal N\otimes u_H)\circ \psi' 
\end{equation}  From the equalizer in \eqref{7.1eq}, we now see that $\psi'$ factors through $\mathfrak C^H(\mathcal N)$.  These arguments can be easily reversed, thus showing the isomorphism in \eqref{7.7yt}.
\end{proof}

\begin{thm}\label{P7.11sgs}
Let $A$ be a right $H$-comodule algebra and $B\subseteq A$ the subalgebra of coinvariants of $A$. Then, for any $\mathcal M\in {_B}\mathfrak S$, we have a canonical isomorphism
\begin{equation}\label{iso7.9}
\mathfrak C^H({_B}HOM(A,\mathcal M))\cong \mathcal M
\end{equation}
\end{thm}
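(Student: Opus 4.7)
The plan is to deduce this isomorphism formally from the adjunctions already established, together with Lemma \ref{L7.9cj}. The key observation is that we have an adjoint triple $(A\otimes_B\_\_,\ \mathfrak C^H,\ {_B}HOM(A,\_\_))$ between ${_B}\mathfrak S$ and ${_A}\mathfrak S^H$: the left adjunction $A\otimes_B\_\_ \dashv \mathfrak C^H$ is Proposition \ref{P7.10gs}, and the right adjunction $\mathfrak C^H \dashv {_B}HOM(A,\_\_)$ is Proposition \ref{P7.8vk}.

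First, for any $\mathcal N \in {_B}\mathfrak S$, I would chain together the natural isomorphisms
\begin{equation*}
{_B}\mathfrak S\bigl(\mathcal N,\ \mathfrak C^H({_B}HOM(A,\mathcal M))\bigr) \cong {_A}\mathfrak S^H\bigl(A\otimes_B\mathcal N,\ {_B}HOM(A,\mathcal M)\bigr) \cong {_B}\mathfrak S\bigl(\mathfrak C^H(A\otimes_B\mathcal N),\ \mathcal M\bigr),
\end{equation*}
where the first isomorphism uses Proposition \ref{P7.10gs} with $\mathcal P := {_B}HOM(A,\mathcal M) \in {_A}\mathfrak S^H$, and the second uses Proposition \ref{P7.8vk}. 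Now applying the computation $\mathfrak C^H(A\otimes_B\mathcal N) \cong \mathcal N$ of Lemma \ref{L7.9cj} (which is natural in $\mathcal N$, as one can check from the explicit splitting $A \cong B \oplus B'$ of $B$-bimodules produced there), I obtain a natural isomorphism
\begin{equation*}
{_B}\mathfrak S\bigl(\mathcal N,\ \mathfrak C^H({_B}HOM(A,\mathcal M))\bigr) \cong {_B}\mathfrak S(\mathcal N, \mathcal M)
\end{equation*}
of functors in $\mathcal N \in {_B}\mathfrak S$. The Yoneda lemma then yields the desired isomorphism $\mathfrak C^H({_B}HOM(A,\mathcal M)) \cong \mathcal M$ in ${_B}\mathfrak S$.

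The only potentially delicate point is the naturality in $\mathcal N$ of Lemma \ref{L7.9cj}, since its proof proceeds by identifying $\mathcal M$ with the image of $\pi^\phi_{(A\otimes_B\mathcal M)} = (i\circ \pi^\phi_A)\otimes_B\mathcal M$. Tracking this identification, the isomorphism $\mathcal N \xrightarrow{\cong} \mathfrak C^H(A\otimes_B\mathcal N)$ is given by the restriction of $i\otimes_B\mathcal N : \mathcal N \hookrightarrow A\otimes_B\mathcal N$, which is manifestly natural in $\mathcal N$. This resolves the only real obstacle, so the argument is complete. Alternatively, one may construct the isomorphism explicitly by producing unit and counit morphisms: the counit $\mathfrak C^H({_B}HOM(A,\mathcal M)) \to \mathcal M$ is obtained from the identity of ${_B}HOM(A,\mathcal M)$ through the adjunction of Proposition \ref{P7.8vk}, and one then verifies it is an isomorphism by checking the Yoneda identification above.
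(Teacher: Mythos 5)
Your proposal is correct and takes essentially the same route as the paper's proof: both chain the adjunction of Proposition \ref{P7.10gs} (applied to ${_B}HOM(A,\mathcal M)$), the adjunction of Proposition \ref{P7.8vk}, and the identification $\mathfrak C^H(A\otimes_B\mathcal N)\cong \mathcal N$ of Lemma \ref{L7.9cj} to obtain a natural isomorphism ${_B}\mathfrak S(\mathcal N,\mathfrak C^H({_B}HOM(A,\mathcal M)))\cong {_B}\mathfrak S(\mathcal N,\mathcal M)$ and conclude by Yoneda. Your explicit check that the isomorphism of Lemma \ref{L7.9cj} is natural in $\mathcal N$ (via the inclusion $i\otimes_B\mathcal N$) addresses a point the paper leaves implicit, but it does not change the argument.
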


\begin{proof}
We consider some $\mathcal M'\in {_B}\mathfrak S$. Applying Proposition \ref{P7.8vk}, Lemma \ref{L7.9cj} and Proposition \ref{P7.10gs}, we have isomorphisms
\begin{equation}
\begin{array}{ll}
{_B}\mathfrak S(\mathcal M',\mathfrak C^H({_B}HOM(A,\mathcal M)))&\cong {_A}\mathfrak S^H(A\otimes_B\mathcal M',{_B}HOM(A,\mathcal M))\\
&\cong {_B}\mathfrak S(\mathfrak C^H(A\otimes_B\mathcal M'),\mathcal M)\\
&\cong {_B}\mathfrak S(\mathcal M',\mathcal M)\\
\end{array}
\end{equation} This proves the result.
\end{proof}

\begin{lem}\label{L7.12aq} (a) Let $\mathcal N\in {_B}\mathfrak S$ and let $\mathcal M\subseteq {_B}HOM(A,\mathcal N)$ be a subobject in ${_A}\mathfrak S^H$. Then, $\mathfrak C^H(\mathcal M)=0$ implies $\mathcal M=0$.

\smallskip
(b)  The functor ${_B}HOM(A,\_\_):{_B}\mathfrak S\longrightarrow {_A}\mathfrak S^H$ preserves essential monomorphisms. 

\end{lem}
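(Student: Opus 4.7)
The strategy is to use the adjunction $\mathfrak C^H\dashv {_B}HOM(A,\_\_)$ from Proposition \ref{P7.8vk} together with the counit isomorphism $\mathfrak C^H({_B}HOM(A,\mathcal N))\cong \mathcal N$ from Proposition \ref{P7.11sgs} and the exactness of $\mathfrak C^H$ from Lemma \ref{L7.4b}. Part (a) is immediate from the adjunction alone, and part (b) will be reduced to part (a) by an intersection argument.

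For part (a), I would consider the inclusion $\iota:\mathcal M\hookrightarrow {_B}HOM(A,\mathcal N)$ in ${_A}\mathfrak S^H$. By Proposition \ref{P7.8vk},
\begin{equation*}
{_A}\mathfrak S^H(\mathcal M,{_B}HOM(A,\mathcal N))\cong {_B}\mathfrak S(\mathfrak C^H(\mathcal M),\mathcal N).
\end{equation*}
If $\mathfrak C^H(\mathcal M)=0$, the right hand side vanishes, so $\iota=0$; since $\iota$ is a monomorphism, this forces $\mathcal M=0$.

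For part (b), let $i:\mathcal N'\hookrightarrow \mathcal N$ be essential in ${_B}\mathfrak S$. The morphism ${_B}HOM(A,i)$ is a monomorphism because ${_B}HOM(A,\_\_)$ is a right adjoint (Proposition \ref{P7.8vk}) and hence preserves limits. To verify essentiality, I would take any nonzero subobject $\mathcal M\subseteq {_B}HOM(A,\mathcal N)$ in ${_A}\mathfrak S^H$ and aim to show that $\mathcal M\cap {_B}HOM(A,\mathcal N')\ne 0$. By part (a), $\mathfrak C^H(\mathcal M)\ne 0$. Applying the exact functor $\mathfrak C^H$ to $\iota$ and using the naturality of the isomorphism in Proposition \ref{P7.11sgs}, one obtains a monomorphism $\mathfrak C^H(\mathcal M)\hookrightarrow \mathfrak C^H({_B}HOM(A,\mathcal N))\cong \mathcal N$, under which $\mathfrak C^H({_B}HOM(A,\mathcal N'))$ identifies with $\mathcal N'\subseteq \mathcal N$. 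Since $\mathcal N'\subseteq \mathcal N$ is essential in ${_B}\mathfrak S$, this yields $\mathfrak C^H(\mathcal M)\cap \mathcal N'\ne 0$.

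To close the loop, I use that $\mathfrak C^H$ is exact, hence preserves finite limits and in particular intersections, to write
\begin{equation*}
\mathfrak C^H\bigl(\mathcal M\cap {_B}HOM(A,\mathcal N')\bigr)\cong \mathfrak C^H(\mathcal M)\cap \mathfrak C^H({_B}HOM(A,\mathcal N'))\cong \mathfrak C^H(\mathcal M)\cap \mathcal N'\ne 0.
\end{equation*}
A second appeal to part (a) (applied to $\mathcal M\cap {_B}HOM(A,\mathcal N')$ viewed as a subobject of ${_B}HOM(A,\mathcal N)$) then gives $\mathcal M\cap {_B}HOM(A,\mathcal N')\ne 0$, completing the argument. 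The main technical point to verify carefully is the naturality of the identification in Proposition \ref{P7.11sgs}, so that the subobjects $\mathfrak C^H(\mathcal M)$ and $\mathfrak C^H({_B}HOM(A,\mathcal N'))$ may be meaningfully intersected inside $\mathcal N=\mathfrak C^H({_B}HOM(A,\mathcal N))$; everything else is a formal consequence of the adjunction and the exactness of $\mathfrak C^H$.
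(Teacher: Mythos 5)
Your proof is correct and follows essentially the same route as the paper's: part (a) is the identical adjunction argument, and part (b) uses the same three ingredients (part (a), preservation of intersections by $\mathfrak C^H$, and the identification $\mathfrak C^H({_B}HOM(A,\_\_))\cong \mathrm{id}$ from Proposition \ref{P7.11sgs}), merely arranged in direct rather than contrapositive form, with the paper citing the fact that $\mathfrak C^H$ is a right adjoint (Proposition \ref{P7.10gs}) where you invoke its exactness (Lemma \ref{L7.4b}) -- both of which suffice to commute $\mathfrak C^H$ with intersections.
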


\begin{proof} (a) If $\mathfrak C^H(\mathcal M)=0$,  we have by Proposition \ref{P7.8vk} that $0={_B}\mathfrak S(\mathfrak C^H(\mathcal M),\mathcal N)\cong {_A}\mathfrak S^H(\mathcal M, {_B}HOM(A,\mathcal N))$.  Since $\mathcal M\subseteq {_B}HOM(A,\mathcal N)$, this  is a contradiction unless
$\mathcal M=0$.

\smallskip
(b) Let $\mathcal N'\hookrightarrow\mathcal N$ be an essential monomorphism in ${_B}\mathfrak S$. Because ${_B}HOM(A,\_\_):{_B}\mathfrak S\longrightarrow {_A}\mathfrak S^H$ is a right adjoint, it preserves monomorphisms. We now consider $\mathcal M\subseteq {_B}HOM(A,\mathcal N)$ such that 
$\mathcal M\cap {_B}HOM(A,\mathcal N')=0$. From Proposition \ref{P7.10gs}, we know that $\mathfrak C^H$ is also a right adjoint and preserves limits. Accordingly,
we get $\mathfrak C^H(\mathcal M)\cap  \mathfrak C^H({_B}HOM(A,\mathcal N'))=0$ in $ \mathfrak C^H({_B}HOM(A,\mathcal N))$. By Proposition \ref{P7.11sgs}, we have 
$\mathfrak C^H({_B}HOM(A,\mathcal  N'))\cong \mathcal N'$ and $\mathfrak C^H({_B}HOM(A,\mathcal N))\cong \mathcal N$. Since $\mathcal N'\hookrightarrow\mathcal N$ is essential, we must now have $\mathfrak C^H(\mathcal M)=0$. From part (a), we now get $\mathcal M=0$. 
\end{proof}

We know that both ${_A}\mathfrak S^H$ and ${_B}\mathfrak S$ are Grothendieck categories. For an object $\mathcal M\in {_A}\mathfrak S^H$, we denote by
${_A}\mathcal E^H(\mathcal M)$ the injective envelope of $\mathcal M$. Similarly, for $\mathcal N\in {_B}\mathfrak S$, we denote by 
${_B}\mathcal E(\mathcal N)$ the injective envelope of $\mathcal N$. We have now reached the main result of this section.

\begin{Thm}\label{T713}
Let $\mathcal N\in {_B}\mathfrak S$. Then, we have
\begin{equation}\label{iso711}
{_A}\mathcal E^H({_B}HOM(A,\mathcal N))\cong {_B}HOM(A,{_B}\mathcal E(\mathcal N))\qquad \mathfrak C^H({_A}\mathcal E^H({_B}HOM(A,\mathcal N)))\cong {_B}\mathcal E(\mathcal N)
\end{equation}
\end{Thm}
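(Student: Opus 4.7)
The plan is to deduce both isomorphisms directly from the properties of ${_B}HOM(A,\_\_)$ established earlier, together with the uniqueness (up to isomorphism) of injective envelopes in a Grothendieck category. The key observation is that ${_B}HOM(A,\_\_)$ converts the data defining an injective envelope in ${_B}\mathfrak{S}$ into the data defining an injective envelope in ${_A}\mathfrak{S}^H$.

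First I would focus on the left-hand isomorphism. Start with the essential monomorphism $\mathcal{N} \hookrightarrow {_B}\mathcal{E}(\mathcal{N})$ in ${_B}\mathfrak{S}$. Applying the functor ${_B}HOM(A,\_\_)$ and invoking Lemma \ref{L7.12aq}(b), I obtain an essential monomorphism
\begin{equation}
{_B}HOM(A,\mathcal{N}) \hookrightarrow {_B}HOM(A,{_B}\mathcal{E}(\mathcal{N}))
\end{equation}
in ${_A}\mathfrak{S}^H$. Since ${_B}\mathcal{E}(\mathcal{N})$ is injective in ${_B}\mathfrak{S}$, Corollary \ref{C7.9} tells us that ${_B}HOM(A,{_B}\mathcal{E}(\mathcal{N}))$ is injective in ${_A}\mathfrak{S}^H$. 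An injective object equipped with an essential monomorphism from a given object is, by definition, an injective envelope, so uniqueness of injective envelopes in the Grothendieck category ${_A}\mathfrak{S}^H$ gives
\begin{equation}
{_A}\mathcal{E}^H({_B}HOM(A,\mathcal{N})) \cong {_B}HOM(A,{_B}\mathcal{E}(\mathcal{N}))
\end{equation}

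For the second isomorphism I would simply apply the functor $\mathfrak{C}^H$ to both sides of the isomorphism just established and invoke Proposition \ref{P7.11sgs}, which identifies $\mathfrak{C}^H({_B}HOM(A,\mathcal{M})) \cong \mathcal{M}$ naturally for any $\mathcal{M} \in {_B}\mathfrak{S}$. Taking $\mathcal{M} = {_B}\mathcal{E}(\mathcal{N})$ yields
\begin{equation}
\mathfrak{C}^H({_A}\mathcal{E}^H({_B}HOM(A,\mathcal{N}))) \cong \mathfrak{C}^H({_B}HOM(A,{_B}\mathcal{E}(\mathcal{N}))) \cong {_B}\mathcal{E}(\mathcal{N})
\end{equation}
as required.

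There is essentially no hard step here once the preceding lemmas are in place; all the real work has been absorbed into Lemma \ref{L7.12aq}(b) (preservation of essential monomorphisms by ${_B}HOM(A,\_\_)$), Corollary \ref{C7.9} (preservation of injectives), and Proposition \ref{P7.11sgs} (the counit-type identification $\mathfrak{C}^H \circ {_B}HOM(A,-) \cong \mathrm{id}$). The closest thing to a subtlety is remembering that an injective envelope in a Grothendieck category is characterized up to isomorphism by being an injective object together with an essential monomorphism from the given object, which is exactly how the first isomorphism is extracted.
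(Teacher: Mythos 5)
Your proposal is correct and follows essentially the same route as the paper's own proof: both deduce the first isomorphism by combining Corollary \ref{C7.9} (preservation of injectives) with Lemma \ref{L7.12aq}(b) (preservation of essential monomorphisms) and the uniqueness of injective envelopes, and both obtain the second isomorphism by applying $\mathfrak{C}^H$ together with Proposition \ref{P7.11sgs}. Your write-up merely makes explicit the characterization of the injective envelope that the paper leaves implicit.
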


\begin{proof}
By Corollary \ref{C7.9}, we know that the functor ${_B}HOM(A,\_\_):{_B}\mathfrak S\longrightarrow {_A}\mathfrak S^H$ preserves  injectives and by Lemma \ref{L7.12aq} we know that it preserves essential monomorphisms. This gives ${_A}\mathcal E^H({_B}HOM(A,\mathcal N))\cong {_B}HOM(A,{_B}\mathcal E(\mathcal N))$. The second isomorphism in \eqref{iso711} now follows by applying Proposition \ref{P7.11sgs}. 
\end{proof}

\section{Elementary objects, injective envelopes and the coinduction functor}

We continute with $H$ being a cosemisimple Hopf algebra, $A$ a right $H$-comodule algebra and $B$ being the subalgebra of $H$-coinvariants of $A$.  By Proposition \ref{P7.8vk}, we have an adjunction 
\begin{equation}
{_B}\mathfrak S(\mathfrak C^H(\mathcal M),\mathcal N)\cong {_A}\mathfrak S^H(\mathcal M, {_B}HOM(A,\mathcal N))
\end{equation} for $\mathcal M\in {_A}\mathfrak S^H$ and $\mathcal N\in {_B}\mathfrak S$.  In Theorem \ref{T713}, we showed that $\mathfrak C^H ({_A}\mathcal E^H({_B}Hom(A,\mathcal N)))\cong {_B}\mathcal E(\mathcal N)$ for $\mathcal N\in {_B}\mathfrak S$.  The first aim of this section is to relate the injective envelope ${_A}\mathcal E^H(\mathcal M)$ of $\mathcal M\in {_A}\mathfrak S^H$ to ${_B}HOM(A,{_B}\mathcal E(\mathfrak C^H(\mathcal M))$.  As in \eqref{7.2eq}, we consider the morphism
\begin{equation}\label{7.2eqx}
\pi_{\mathcal P}^\phi: \mathcal P\xrightarrow{\Delta_{\mathcal P}^H}\mathcal P\otimes H \xrightarrow{\mathcal P\otimes \phi}\mathcal P
\end{equation} for any $(\mathcal P,\Delta_{\mathcal P}^H)\in \mathfrak S^H$ and recall from the proof of Proposition \ref{P7.2} that $\mathfrak K^H(\mathcal P)=Ker(\pi_{\mathcal P}^\phi)$. In particular, if $\mathcal M\in  {_A}\mathfrak S^H$, we note that $\mathfrak K^H(\mathcal M)=Ker(\pi_{\mathcal M}^\phi)
\in \mathfrak S^H$. For  $\mathcal M\in  {_A}\mathfrak S^H$, we now let ${_A}\mathfrak K^H(\mathcal M)$ denote the sum of all subobjects 
$\mathcal M'\subseteq \mathcal M$ in $  {_A}\mathfrak S^H$ such that  $\mathcal M'\subseteq \mathfrak K^H(\mathcal M)$. We now consider the following pair of full subcategories of ${_A}\mathfrak S^H$:
\begin{equation}\label{tor8}
 \mathcal T({_A}\mathfrak S^H):=\{\mbox{$\mathcal M\in {_A}\mathfrak S^H$ $\vert$ $\mathfrak C^H(\mathcal M)=0$ }\} \qquad \mathcal F({_A}\mathfrak S^H):=\{\mbox{$\mathcal M\in {_A}\mathfrak S^H$ $\vert$ ${_A}\mathfrak K^H(\mathcal M)=0$ }\}
\end{equation} In other words,  $\mathcal M\in   {_A}\mathfrak S^H$ lies in  the full subcategory $ \mathcal F({_A}\mathfrak S^H)$ if for any
$\mathcal M'\subseteq \mathcal M\in  {_A}\mathfrak S^H$ such that $\mathcal M'\subseteq Ker(\pi_{\mathcal M}^\phi)$ in $\mathfrak S^H$, we must have
$\mathcal M'=0$.

\begin{lem}\label{L8.01p}
$ \mathcal T({_A}\mathfrak S^H)$ is closed under quotients and $ \mathcal F({_A}\mathfrak S^H)$ is closed under subobjects. If $\mathcal M
\in  \mathcal T({_A}\mathfrak S^H)$ and $\mathcal N\in  \mathcal F({_A}\mathfrak S^H)$, we have ${_A}\mathfrak S^H(\mathcal M,\mathcal N)=0$.
\end{lem}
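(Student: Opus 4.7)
The plan is to establish the three claims in turn, exploiting the fact that $\mathfrak C^H$ and $\mathfrak K^H$ were defined purely at the level of $\mathfrak S^H$, so the results of Section 7 apply verbatim to the underlying right $H$-comodule object of any relative $(A,H)$-Hopf module.

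For closure of $\mathcal T({_A}\mathfrak S^H)$ under quotients, I would take an epimorphism $\pi:\mathcal M\twoheadrightarrow \mathcal M''$ in ${_A}\mathfrak S^H$ with $\mathcal M\in \mathcal T({_A}\mathfrak S^H)$. Since colimits in both ${_A}\mathfrak S^H$ and $\mathfrak S^H$ are computed in $\mathfrak S$, the map $\pi$ is also an epimorphism in $\mathfrak S^H$. The exactness of $\mathfrak C^H$ established in Lemma \ref{L7.4b} then yields that $\mathfrak C^H(\pi)$ is an epimorphism, and so $\mathfrak C^H(\mathcal M)=0$ forces $\mathfrak C^H(\mathcal M'')=0$.

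For closure of $\mathcal F({_A}\mathfrak S^H)$ under subobjects, consider $\mathcal N'\hookrightarrow \mathcal N$ in ${_A}\mathfrak S^H$ with $\mathcal N\in \mathcal F({_A}\mathfrak S^H)$. Since the $H$-coaction on $\mathcal N'$ is merely the restriction of that on $\mathcal N$, the morphism $\pi_{\mathcal N'}^\phi$ in \eqref{7.2eqx} is the restriction of $\pi_{\mathcal N}^\phi$, so $\mathfrak K^H(\mathcal N')=\mathcal N'\cap \mathfrak K^H(\mathcal N)$ in $\mathfrak S^H$. If $\mathcal M'\subseteq \mathcal N'$ is a subobject in ${_A}\mathfrak S^H$ contained in $\mathfrak K^H(\mathcal N')$, then, viewed as a subobject of $\mathcal N$ in ${_A}\mathfrak S^H$, it lies inside $\mathfrak K^H(\mathcal N)$, whence $\mathcal M'\subseteq {_A}\mathfrak K^H(\mathcal N)=0$. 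Therefore ${_A}\mathfrak K^H(\mathcal N')=0$.

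For the Hom vanishing, I would apply the standard image argument: given $\psi\in {_A}\mathfrak S^H(\mathcal M,\mathcal N)$, the image $\mathrm{Im}(\psi)$ is simultaneously a quotient of $\mathcal M$ and a subobject of $\mathcal N$ in ${_A}\mathfrak S^H$, hence by the first two parts belongs to $\mathcal T({_A}\mathfrak S^H)\cap \mathcal F({_A}\mathfrak S^H)$. It therefore suffices to show this intersection is zero. Given $\mathcal P$ in the intersection, Proposition \ref{P7.2} provides the splitting $\mathcal P=\mathfrak C^H(\mathcal P)\oplus \mathfrak K^H(\mathcal P)=\mathfrak K^H(\mathcal P)$ in $\mathfrak S^H$, so $\mathcal P$ itself is a subobject of $\mathcal P$ in ${_A}\mathfrak S^H$ contained in $\mathfrak K^H(\mathcal P)$, forcing $\mathcal P\subseteq {_A}\mathfrak K^H(\mathcal P)=0$. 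I do not anticipate any serious obstacle here; the only subtlety is disciplined bookkeeping of which subobject relations live in $\mathfrak S^H$ versus ${_A}\mathfrak S^H$, and in particular the fact that $\mathfrak K^H(\mathcal M)$ need not itself be $A$-stable even when $\mathcal M\in {_A}\mathfrak S^H$.
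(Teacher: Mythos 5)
Your proposal is correct and follows essentially the same route as the paper: quotient-closure of $\mathcal T({_A}\mathfrak S^H)$ is checked at the level of $\mathfrak S^H$, subobject-closure of $\mathcal F({_A}\mathfrak S^H)$ uses that $\pi^\phi_{\mathcal N'}$ restricts $\pi^\phi_{\mathcal N}$ so that an $A$-stable subobject of $\mathcal N'$ inside $\mathfrak K^H(\mathcal N')$ lands in ${_A}\mathfrak K^H(\mathcal N)=0$, and the Hom-vanishing is the standard image argument after showing $\mathcal T\cap\mathcal F=0$ via Proposition \ref{P7.2}. The only cosmetic difference is that for quotients you invoke exactness of $\mathfrak C^H$ from Lemma \ref{L7.4b}, where the paper reruns the naturality argument of Lemma \ref{quo7} ($\pi^\phi_{\mathcal M''}\circ\xi=\xi\circ\pi^\phi_{\mathcal M}=0$ with $\xi$ epi); your closing remark that $\mathfrak K^H(\mathcal M)$ need not be $A$-stable correctly identifies why ${_A}\mathfrak K^H$ is needed at all.
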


\begin{proof}
Let $\mathcal M\twoheadrightarrow\mathcal M''$ be an epimorphism in ${_A}\mathfrak S^H$ with $\mathcal M\in \mathcal T({_A}\mathfrak S^H)$. Then, $\mathcal M\longrightarrow \mathcal M''$ is also an epimorphism in $\mathfrak S^H$. As in the proof of Lemma \ref{quo7}, it follows that $\mathcal C^H(\mathcal 
M'')=0$, which shows that $\mathcal M''\in \mathcal T({_A}\mathfrak S^H)$. On the other hand, let $\mathcal N'\hookrightarrow\mathcal N$ be a monomorphism in ${_A}\mathfrak S^H$ with $\mathcal N\in \mathcal F({_A}\mathfrak S^H)$.  If we take $\mathcal N''\subseteq \mathcal N'$ in ${_A}\mathfrak S^H$ such that
$\mathcal N''\subseteq Ker(\pi^\phi_{\mathcal N'})$, then we have $\mathcal N''\subseteq Ker(\pi^\phi_{\mathcal N'})\subseteq Ker(\pi^\phi_{\mathcal N})$. Since $\mathcal N\in \mathcal F({_A}\mathfrak S^H)$, we now get $\mathcal N''=0$ and hence $\mathcal N''\in \mathcal F({_A}\mathfrak S^H)$.

\smallskip
We now claim that $\mathcal T({_A}\mathfrak S^H)\cap \mathcal F({_A}\mathfrak S^H)=0$. Indeed, if $\mathcal P\in \mathcal T({_A}\mathfrak S^H)
\cap \mathcal F({_A}\mathfrak S^H)$, we have $\mathfrak C^H(\mathcal P)=0$. Then, $\mathfrak K^H(\mathcal P)=Ker(\pi_{\mathcal P}^\phi)=\mathcal P$. Since
$\mathcal P\in  \mathcal F({_A}\mathfrak S^H)$, it now follows that $\mathcal P=0$. Hence, if   we have $\mathcal M
\in  \mathcal T({_A}\mathfrak S^H)$, $\mathcal N\in  \mathcal F({_A}\mathfrak S^H)$, and $\psi\in {_A}\mathfrak S^H(\mathcal M,\mathcal N)$, it follows from the above that
$Im(\psi)\in  \mathcal T({_A}\mathfrak S^H)
\cap \mathcal F({_A}\mathfrak S^H)$, i.e., $Im(\psi)=0$.
\end{proof} 

The next result gives us another description of ${_A}\mathfrak K^H(\mathcal U)$.

\begin{lem}\label{L8.02p} Let $\mathcal M\in {_A}\mathfrak S^H$. Then, ${_A}\mathfrak K^H(\mathcal M)$ is the kernel of the canonical morphism
$\mathcal M\longrightarrow {_B}HOM(A,\mathfrak C^H(\mathcal M))$. 
\end{lem}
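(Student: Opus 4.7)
The plan is to identify the canonical morphism $\eta_{\mathcal M}:\mathcal M\longrightarrow {_B}HOM(A,\mathfrak C^H(\mathcal M))$ with the unit of the adjunction $(\mathfrak C^H,{_B}HOM(A,\_\_))$ supplied by Proposition \ref{P7.8vk}. Set $K:=\mathrm{Ker}(\eta_{\mathcal M})$ in ${_A}\mathfrak S^H$. I would prove $K={_A}\mathfrak K^H(\mathcal M)$ by showing the two inclusions separately, relying on: (i) the exactness of $\mathfrak C^H$ (Lemma \ref{L7.4b}); (ii) the fact that $\mathfrak C^H({_B}HOM(A,\_\_))\cong\mathrm{id}$ (Proposition \ref{P7.11sgs}), so the counit of the adjunction is an isomorphism; and (iii) the naturality of $\pi^\phi_{(-)}$ and of $\eta$.

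For $K\subseteq {_A}\mathfrak K^H(\mathcal M)$: by the triangle identity for the adjunction, $\epsilon_{\mathfrak C^H(\mathcal M)}\circ \mathfrak C^H(\eta_{\mathcal M})=\mathrm{id}_{\mathfrak C^H(\mathcal M)}$, and since Proposition \ref{P7.11sgs} makes $\epsilon$ an isomorphism, $\mathfrak C^H(\eta_{\mathcal M})$ is an isomorphism. Applying the exact functor $\mathfrak C^H$ to $0\to K\to\mathcal M\xrightarrow{\eta_{\mathcal M}}{_B}HOM(A,\mathfrak C^H(\mathcal M))$ yields $\mathfrak C^H(K)=\mathrm{Ker}(\mathfrak C^H(\eta_{\mathcal M}))=0$. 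By the splitting $K=\mathfrak C^H(K)\oplus\mathfrak K^H(K)$ of Proposition \ref{P7.2}, this forces $K=\mathfrak K^H(K)$, i.e., $\pi^\phi_K=0$. Then by naturality of $\pi^\phi$ with respect to the inclusion $i:K\hookrightarrow\mathcal M$ in $\mathfrak S^H$, we have $\pi^\phi_{\mathcal M}\circ i=i\circ \pi^\phi_K=0$, so $K\subseteq\mathfrak K^H(\mathcal M)$ in $\mathfrak S^H$. Since $K$ is already a subobject of $\mathcal M$ in ${_A}\mathfrak S^H$, the definition of ${_A}\mathfrak K^H(\mathcal M)$ gives $K\subseteq {_A}\mathfrak K^H(\mathcal M)$.

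For ${_A}\mathfrak K^H(\mathcal M)\subseteq K$: let $i:\mathcal M'\hookrightarrow\mathcal M$ be any subobject in ${_A}\mathfrak S^H$ with $\mathcal M'\subseteq\mathfrak K^H(\mathcal M)=\mathrm{Ker}(\pi^\phi_{\mathcal M})$. Naturality gives $i\circ \pi^\phi_{\mathcal M'}=\pi^\phi_{\mathcal M}\circ i=0$, and since $i$ is a monomorphism, $\pi^\phi_{\mathcal M'}=0$; hence $\mathfrak C^H(\mathcal M')=\mathrm{Im}(\pi^\phi_{\mathcal M'})=0$. By naturality of $\eta$, the composition $\eta_{\mathcal M}\circ i$ equals ${_B}HOM(A,\mathfrak C^H(i))\circ \eta_{\mathcal M'}$, but $\mathfrak C^H(i):0\longrightarrow \mathfrak C^H(\mathcal M)$ is the zero morphism, and ${_B}HOM(A,\_\_)$ sends the zero morphism to zero. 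Therefore $\eta_{\mathcal M}\circ i=0$, i.e., $\mathcal M'\subseteq K$. Taking the sum over all such $\mathcal M'$ yields ${_A}\mathfrak K^H(\mathcal M)\subseteq K$, completing the proof.

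The only delicate point is making sure the canonical morphism of the statement is indeed the adjunction unit (so that Proposition \ref{P7.11sgs} applies through the triangle identity). Beyond that, the argument is a standard unit-of-adjunction computation; no serious obstacle is anticipated.
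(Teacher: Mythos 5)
Your proof is correct and takes essentially the same route as the paper: both identify the canonical morphism with the unit $\eta_{\mathcal M}$ of the adjunction of Proposition \ref{P7.8vk} and reduce membership of a subobject $\mathcal M'\subseteq \mathcal M$ in the kernel to the vanishing $\mathfrak C^H(\mathcal M')=0$, which via the splitting of Proposition \ref{P7.2} is equivalent to $\mathcal M'\subseteq \mathfrak K^H(\mathcal M)$ (the paper handles both inclusions at once through the adjunction bijection, which sends $\eta_{\mathcal M}\circ i$ to $\mathfrak C^H(i)$, whereas you split into two inclusions). One small simplification: you never need the counit to be an isomorphism, nor to verify that the canonical isomorphism of Proposition \ref{P7.11sgs} agrees with the counit --- the triangle identity alone already makes $\mathfrak C^H(\eta_{\mathcal M})$ a split monomorphism, which is all that your computation $\mathfrak C^H(K)=\mathrm{Ker}(\mathfrak C^H(\eta_{\mathcal M}))=0$ requires.
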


\begin{proof}
By definition, the kernel of the canonical morphism $\mathcal M\longrightarrow {_B}HOM(A,\mathfrak C^H(\mathcal M))$ is the sum of all subobjects 
$\mathcal M'\subseteq \mathcal M$ in ${_A}\mathfrak S^H$ such that the composition $\mathcal M'\longrightarrow \mathcal M\longrightarrow {_B}HOM(A,\mathfrak C^H(\mathcal M))$ is zero. Using the adjunction in Proposition \ref{P7.8vk}, this is equivalent to the composition $\mathfrak C^H(\mathcal M')\longrightarrow \mathfrak C^H(\mathcal M)
\xrightarrow{id}\mathfrak C^H(\mathcal M)$ being zero, i.e., $\mathfrak C^H(\mathcal M')=0$. But $\mathfrak C^H(\mathcal M')=0$ for a subobject $\mathcal M'\subseteq \mathcal M$ in ${_A}\mathfrak S^H$ if and only if $\mathcal M'\subseteq \mathfrak K^H(\mathcal M)=Ker(\pi^\phi_{\mathcal M})$. The result is now clear from the definition
of ${_A}\mathfrak K^H(\mathcal M)$.
\end{proof}

\begin{thm}\label{P8.1dc}
Let $A$ be a right $H$-comodule algebra. Then, the pair $( \mathcal T({_A}\mathfrak S^H), \mathcal F({_A}\mathfrak S^H))$ gives a torsion theory on ${_A}\mathfrak S^H$.
\end{thm}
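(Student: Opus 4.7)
My plan is to exhibit, for every $\mathcal M \in {_A}\mathfrak S^H$, a short exact sequence
\begin{equation*}
0 \longrightarrow {_A}\mathfrak K^H(\mathcal M) \longrightarrow \mathcal M \longrightarrow \mathcal M/{_A}\mathfrak K^H(\mathcal M) \longrightarrow 0
\end{equation*}
in which the first term lies in $\mathcal T({_A}\mathfrak S^H)$ and the quotient in $\mathcal F({_A}\mathfrak S^H)$. Together with the Hom-vanishing ${_A}\mathfrak S^H(\mathcal T({_A}\mathfrak S^H),\mathcal F({_A}\mathfrak S^H))=0$ already established in Lemma \ref{L8.01p}, this furnishes the torsion theory in the sense of \cite[$\S$ I.1]{BelR}.

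For the torsion containment ${_A}\mathfrak K^H(\mathcal M) \in \mathcal T({_A}\mathfrak S^H)$, I would use directly that ${_A}\mathfrak K^H(\mathcal M)$ is by definition a subobject of $\mathcal M$ in ${_A}\mathfrak S^H$ sitting inside $\mathfrak K^H(\mathcal M) = \mathrm{Ker}(\pi^\phi_{\mathcal M})$. Since its $H$-coaction is the restriction of that of $\mathcal M$, the morphism $\pi^\phi_{{_A}\mathfrak K^H(\mathcal M)}$ of \eqref{7.2eqx} is simply the restriction of $\pi^\phi_{\mathcal M}$, and so vanishes. By the identification in Proposition \ref{P7.2} of $\mathfrak C^H(-)$ with the image of $\pi^\phi_{(-)}$, this yields $\mathfrak C^H({_A}\mathfrak K^H(\mathcal M))=0$.

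For the torsion-free quotient $\bar{\mathcal M} := \mathcal M/{_A}\mathfrak K^H(\mathcal M)$, I first observe that any short exact sequence in ${_A}\mathfrak S^H$ is exact in $\mathfrak S^H$ (both categories compute kernels and cokernels in $\mathfrak S$), so the exactness of $\mathfrak C^H$ from Lemma \ref{L7.4b} applies and produces an isomorphism $\mathfrak C^H(q) : \mathfrak C^H(\mathcal M) \xrightarrow{\cong} \mathfrak C^H(\bar{\mathcal M})$ where $q : \mathcal M \twoheadrightarrow \bar{\mathcal M}$ is the quotient. By Lemma \ref{L8.02p}, ${_A}\mathfrak K^H(\bar{\mathcal M})$ is the kernel of the unit $\eta_{\bar{\mathcal M}} : \bar{\mathcal M} \to {_B}HOM(A, \mathfrak C^H(\bar{\mathcal M}))$ of the adjunction of Proposition \ref{P7.8vk}. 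Naturality of $\eta$ with respect to $q$ gives a commutative square identifying ${_B}HOM(A, \mathfrak C^H(q)) \circ \eta_{\mathcal M}$ with $\eta_{\bar{\mathcal M}} \circ q$; the right vertical map is an isomorphism, and by Lemma \ref{L8.02p} again $\mathrm{Ker}(\eta_{\mathcal M}) = {_A}\mathfrak K^H(\mathcal M)$, which is precisely what $q$ kills. A small diagram chase then forces $\eta_{\bar{\mathcal M}}$ to be a monomorphism, so ${_A}\mathfrak K^H(\bar{\mathcal M})=0$.

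The main obstacle I anticipate is the second step: correctly identifying the induced monomorphism $\bar{\mathcal M} \hookrightarrow {_B}HOM(A, \mathfrak C^H(\mathcal M))$ (obtained by factoring $\eta_{\mathcal M}$ through $q$) with the unit $\eta_{\bar{\mathcal M}}$ via the isomorphism ${_B}HOM(A, \mathfrak C^H(q))$. Once this naturality identification is handled cleanly, the conclusion is immediate, and combining with Lemma \ref{L8.01p} completes the argument.
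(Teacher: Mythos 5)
Your proposal is correct and takes essentially the same route as the paper's proof: the same short exact sequence with torsion part ${_A}\mathfrak K^H(\mathcal M)$ (on which $\pi^\phi$ restricts to zero, so $\mathfrak C^H$ vanishes) and torsion-free quotient detected via Lemma \ref{L8.02p} together with the isomorphism $\mathfrak C^H(\mathcal M)\cong \mathfrak C^H(\mathcal M/{_A}\mathfrak K^H(\mathcal M))$ from exactness of $\mathfrak C^H$, combined with the Hom-vanishing of Lemma \ref{L8.01p}. The naturality identification you flag as the main obstacle is precisely the step the paper leaves implicit in its second application of Lemma \ref{L8.02p}, and your diagram chase handles it correctly.
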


\begin{proof}
We consider $\mathcal M\in {_A}\mathfrak S^H$. We claim that ${_A}\mathfrak K^H(\mathcal M)\in  \mathcal T({_A}\mathfrak S^H)$. Since ${_A}\mathfrak K^H(\mathcal M)\subseteq Ker(\pi^\phi_{\mathcal M})$, the restriction of $\pi^\phi_{\mathcal M}$ to ${_A}\mathfrak K^H(\mathcal M)\subseteq \mathcal M$ is $0$, i.e.,
$\pi^\phi_{{_A}\mathfrak K^H(\mathcal M)}=0$. We know that $\mathfrak C^H({_A}\mathfrak K^H(\mathcal M))=Im(\pi^\phi_{{_A}\mathfrak K^H(\mathcal M)})$, which shows that
$ \mathfrak C^H({_A}\mathfrak K^H(\mathcal M))=0$, i.e., ${_A}\mathfrak K^H(\mathcal M)\in  \mathcal T({_A}\mathfrak S^H)$. 

\smallskip
Also since $\mathfrak C^H({_A}\mathfrak K^H(\mathcal M))=0$, and $\mathfrak C^H$ is exact, we see that $\mathfrak C^H(\mathcal M)=\mathfrak C^H(\mathcal M/{_A}\mathfrak K^H(\mathcal M))$. From Lemma \ref{L8.02p}, we now have a monomorphism
\begin{equation}\label{8.4rq}
\mathcal M/{_A}\mathfrak K^H(\mathcal M)\hookrightarrow {_B}HOM(A,\mathfrak C^H(\mathcal M))= {_B}HOM(A,\mathfrak C^H(\mathcal M/{_A}\mathfrak K^H(\mathcal M))
\end{equation} Applying Lemma \ref{L8.02p} again, it follows that ${_A}\mathfrak K^H(\mathcal M/{_A}\mathfrak K^H(\mathcal M))=0$, which shows that $\mathcal M/{_A}\mathfrak K^H(\mathcal M)\in \mathcal F({_A}\mathfrak S^H)$. Additionally, we know from Lemma \ref{L8.01p} that  ${_A}\mathfrak S^H(\mathcal M,\mathcal N)=0$ for any $\mathcal M\in  \mathcal T({_A}\mathfrak S^H)$ and $\mathcal N\in  \mathcal F({_A}\mathfrak S^H)$. This proves the result.
\end{proof}

\begin{lem}\label{L8.04p} Let $\mathcal M\in  {_A}\mathfrak S^H$ with ${_A} \mathfrak K^H(\mathcal M)=0$.  Then, the canonical morphism $\mathcal M\longrightarrow {_B}HOM(A, \mathfrak C^H(\mathcal M))$ is an essential monomorphism in ${_A}\mathfrak S^H$.

\end{lem}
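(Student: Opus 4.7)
The plan is to split the claim into injectivity and essentiality of the unit map $\eta_{\mathcal M}:\mathcal M\longrightarrow {_B}HOM(A,\mathfrak C^H(\mathcal M))$. Injectivity is immediate from Lemma \ref{L8.02p}: the kernel of $\eta_{\mathcal M}$ is ${_A}\mathfrak K^H(\mathcal M)$, which vanishes by hypothesis. Essentiality will be reduced to a computation on the $B$-module side by exploiting the fact that $\mathfrak C^H$ sits in two adjunctions simultaneously: it is a left adjoint to ${_B}HOM(A,\_\_)$ (Proposition \ref{P7.8vk}) and a right adjoint to $A\otimes_B\_\_$ (Proposition \ref{P7.10gs}), and therefore preserves every limit and every colimit; in particular it is exact and preserves intersections.

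For essentiality, the plan is to take an arbitrary nonzero subobject $\mathcal N\subseteq {_B}HOM(A,\mathfrak C^H(\mathcal M))$ in ${_A}\mathfrak S^H$ and show that $\mathcal N\cap \mathcal M\ne 0$ inside ${_B}HOM(A,\mathfrak C^H(\mathcal M))$. The first step is Lemma \ref{L7.12aq}(a), which applied to $\mathcal N\hookrightarrow {_B}HOM(A,\mathfrak C^H(\mathcal M))$ yields $\mathfrak C^H(\mathcal N)\ne 0$. Then I would apply $\mathfrak C^H$ to the intersection and use limit preservation to write
\begin{equation*}
\mathfrak C^H(\mathcal N\cap \mathcal M)=\mathfrak C^H(\mathcal N)\cap \mathfrak C^H(\mathcal M)
\end{equation*}
as subobjects of $\mathfrak C^H({_B}HOM(A,\mathfrak C^H(\mathcal M)))\cong \mathfrak C^H(\mathcal M)$, where the last isomorphism is Proposition \ref{P7.11sgs}.

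The crux is to identify $\mathfrak C^H(\mathcal M)$ with the whole ambient object $\mathfrak C^H({_B}HOM(A,\mathfrak C^H(\mathcal M)))$ under the isomorphism of Proposition \ref{P7.11sgs}. This is exactly the triangle identity for the adjunction $(\mathfrak C^H,{_B}HOM(A,\_\_))$: the composition $\mathfrak C^H(\mathcal M)\xrightarrow{\mathfrak C^H(\eta_{\mathcal M})}\mathfrak C^H({_B}HOM(A,\mathfrak C^H(\mathcal M)))\cong \mathfrak C^H(\mathcal M)$ is the identity, because the isomorphism produced by Proposition \ref{P7.11sgs} is (by construction in its proof via adjoint Yoneda) the counit of this very adjunction. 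Consequently $\mathfrak C^H(\mathcal M)$ exhausts the ambient, the displayed intersection collapses to $\mathfrak C^H(\mathcal N)$, and hence $\mathfrak C^H(\mathcal N\cap \mathcal M)=\mathfrak C^H(\mathcal N)\ne 0$. In particular $\mathcal N\cap \mathcal M\ne 0$, as required.

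The main obstacle I anticipate is this last identification: namely tracing through the adjunction in Proposition \ref{P7.8vk} carefully enough to be sure that the isomorphism $\mathfrak C^H({_B}HOM(A,\mathcal N))\cong \mathcal N$ of Proposition \ref{P7.11sgs} is the counit, so that $\mathfrak C^H(\eta_{\mathcal M})$ really is the identity rather than some other automorphism. Everything else (injectivity via Lemma \ref{L8.02p}, the vanishing criterion of Lemma \ref{L7.12aq}(a), and preservation of intersections by the doubly adjoint functor $\mathfrak C^H$) is formal.
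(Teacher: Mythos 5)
Your proposal is correct and follows essentially the same route as the paper's own proof: monomorphy via Lemma \ref{L8.02p}, then for $0\ne\mathcal N\subseteq {_B}HOM(A,\mathfrak C^H(\mathcal M))$ one applies Lemma \ref{L7.12aq}(a) together with the exactness (limit-preservation) of $\mathfrak C^H$ and Proposition \ref{P7.11sgs} to get $\mathfrak C^H(\mathcal N\cap\mathcal M)=\mathfrak C^H(\mathcal N)\cap\mathfrak C^H(\mathcal M)=\mathfrak C^H(\mathcal N)\ne 0$. The only difference is that you verify explicitly, via the triangle identity, that $\mathfrak C^H(\eta_{\mathcal M})$ identifies $\mathfrak C^H(\mathcal M)$ with the whole ambient object $\mathfrak C^H({_B}HOM(A,\mathfrak C^H(\mathcal M)))$, a point the paper leaves implicit when it writes this as an equality.
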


\begin{proof}
Since ${_A}\mathfrak K^H(\mathcal M)=0$, it follows by Lemma \ref{L8.02p} that  the canonical morphism $\mathcal M\longrightarrow {_B}HOM(A,\mathfrak C^H(\mathcal M))$ is a  monomorphism ${_A}\mathfrak S^H$. We now consider some $0\ne \mathcal N\subseteq {_B}HOM(A,\mathfrak C^H(\mathcal M))$ in ${_A}\mathfrak S^H$. By Lemma 
\ref{L7.12aq}(a), we have $\mathfrak C^H(\mathcal N)\ne 0$. Since $\mathfrak C^H$ is exact, we now have
$ \mathfrak C^H(\mathcal N)\subseteq \mathfrak C^H( {_B}HOM(A,\mathfrak C^H(\mathcal M)))=\mathfrak C^H(\mathcal M)$ where the latter equality follows from Proposition \ref{P7.11sgs}. Since $\mathfrak C^H$ is exact, we also see that
\begin{equation}
\mathfrak C^H(\mathcal N\cap \mathcal M)=\mathfrak C^H(\mathcal N)\cap \mathfrak C^H(\mathcal M)=\mathfrak C^H(\mathcal N)\ne 0
\end{equation} which shows that $\mathcal N\cap \mathcal M\ne 0$. 
\end{proof}

\begin{Thm}\label{T8.5gv}  Let $\mathcal M\in  {_A}\mathfrak S^H$ be torsion free, i.e., $\mathcal M\in \mathcal F({_A}\mathfrak S^H)$.  Then,  we have
${_A}\mathcal E^H(\mathcal M)\cong {_B}HOM(A,{_B}\mathcal E(\mathfrak C^H(\mathcal M)))$.

\end{Thm}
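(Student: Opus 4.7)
The plan is to exhibit $\mathcal M$ as essentially embedded in ${_B}HOM(A,{_B}\mathcal E(\mathfrak C^H(\mathcal M)))$ and then invoke the uniqueness of the injective envelope. The torsion freeness hypothesis and Lemma \ref{L8.04p} directly give one essential monomorphism, while Lemma \ref{L7.12aq} and Corollary \ref{C7.9} provide the other ingredients needed to land inside an injective in ${_A}\mathfrak S^H$.

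More precisely, I would proceed in three steps. First, since $\mathcal M\in \mathcal F({_A}\mathfrak S^H)$, we have ${_A}\mathfrak K^H(\mathcal M)=0$, so Lemma \ref{L8.04p} supplies an essential monomorphism
\begin{equation}
\eta_{\mathcal M}:\mathcal M \hookrightarrow {_B}HOM(A,\mathfrak C^H(\mathcal M))
\end{equation}
in ${_A}\mathfrak S^H$. Second, the canonical inclusion $\mathfrak C^H(\mathcal M)\hookrightarrow {_B}\mathcal E(\mathfrak C^H(\mathcal M))$ is an essential monomorphism in ${_B}\mathfrak S$; applying Lemma \ref{L7.12aq}(b), we get an essential monomorphism
\begin{equation}
{_B}HOM(A,\mathfrak C^H(\mathcal M))\hookrightarrow {_B}HOM(A,{_B}\mathcal E(\mathfrak C^H(\mathcal M)))
\end{equation}
in ${_A}\mathfrak S^H$. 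Third, composing these two (essential monomorphisms compose to essential monomorphisms in any Grothendieck category) yields an essential embedding $\mathcal M \hookrightarrow {_B}HOM(A,{_B}\mathcal E(\mathfrak C^H(\mathcal M)))$.

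To conclude, I would note that by Corollary \ref{C7.9} the functor ${_B}HOM(A,\_\_)$ preserves injectives, so ${_B}HOM(A,{_B}\mathcal E(\mathfrak C^H(\mathcal M)))$ is injective in ${_A}\mathfrak S^H$. Thus $\mathcal M$ is essentially embedded into an injective, and uniqueness of injective envelopes in the Grothendieck category ${_A}\mathfrak S^H$ (Theorem \ref{Thm5.3sq}) forces the claimed isomorphism ${_A}\mathcal E^H(\mathcal M)\cong {_B}HOM(A,{_B}\mathcal E(\mathfrak C^H(\mathcal M)))$.

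There is no real obstacle here; the only point that requires minor care is verifying that the essential monomorphism of Lemma \ref{L8.04p} is genuinely natural in the sense that it is the counit/unit of the $(\mathfrak C^H,{_B}HOM(A,\_\_))$-adjunction evaluated at $\mathcal M$, so that the composition with ${_B}HOM(A,\eta)$ for the envelope inclusion $\eta:\mathfrak C^H(\mathcal M)\hookrightarrow {_B}\mathcal E(\mathfrak C^H(\mathcal M))$ agrees with the canonical map into ${_B}HOM(A,{_B}\mathcal E(\mathfrak C^H(\mathcal M)))$. This is a formal adjunction check and should be routine given the setup of Proposition \ref{P7.8vk} and the proof of Lemma \ref{L8.04p}.
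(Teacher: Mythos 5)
Your proposal is correct and follows essentially the same route as the paper's own proof: it invokes Lemma \ref{L8.04p} for the essential monomorphism $\mathcal M\hookrightarrow {_B}HOM(A,\mathfrak C^H(\mathcal M))$, Lemma \ref{L7.12aq}(b) to push the envelope inclusion through ${_B}HOM(A,\_\_)$, composes the two essential monomorphisms, and concludes via Corollary \ref{C7.9} and uniqueness of injective envelopes. Your closing remark about the naturality of the canonical map is a reasonable point of care, but it is immediate from the adjunction of Proposition \ref{P7.8vk} and does not change the argument.
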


\begin{proof}
Since ${_A}\mathfrak K^H(\mathcal M)=0$, it follows by Lemma \ref{L8.04p} that  the canonical morphism $\mathcal M\longrightarrow {_B}HOM(A,\mathfrak C^H(\mathcal M))$ is an essential  monomorphism ${_A}\mathfrak S^H$. We know that $\mathfrak C^H(\mathcal M)\hookrightarrow {_B}\mathcal E(\mathfrak C^H(\mathcal M))$ is an essential monomorphism, and from Lemma \ref{L7.12aq}(b) that ${_B}HOM(A,\_\_):{_B}\mathfrak S\longrightarrow {_A}\mathfrak S^H$ preserves essential monomorphisms. Accordingly, we have an essential monomorphism
\begin{equation}
\mathcal M\longrightarrow {_B}HOM(A,\mathfrak C^H(\mathcal M))\longrightarrow {_B}HOM(A,{_B}\mathcal E(\mathfrak C^H(\mathcal M)))
\end{equation} By Corollary \ref{C7.9}, the functor ${_B}HOM(A,\_\_):{_B}\mathfrak S\longrightarrow {_A}\mathfrak S^H$ preserves injectives. The result is now clear.
\end{proof}

The condition in Theorem \ref{T8.5gv} leads us to look at conditions under which all objects in ${_A}\mathfrak S^H$ are torsion free, i.e., $\mathcal F({_A}\mathfrak S^H)={_A}\mathfrak S^H$. We recall that an object $E$ in an abelian category $\mathfrak A$ is said to be a cogenerator if it satisfies the following condition: given any epimorphism $X\twoheadrightarrow Y$ in 
$\mathfrak A$ that is not an isomorphism, there is a morphism $X\longrightarrow E$ that does not factor through $Y$. 
We now need the following general result.

\begin{lem}\label{L8.55tc}
Let $L:\mathfrak A\longrightarrow \mathfrak B$ be a functor between abelian categories, which has a right adjoint $R:\mathfrak B\longrightarrow \mathfrak A$. Suppose that $L$ has the property that if $p:X\twoheadrightarrow Y$ is an epimorphism in $\mathfrak A$ which is not an isomorphism, then $L(p):L(X)\longrightarrow L(Y)$ is not an isomorphism.
Then, if $E$ is a cogenerator in $\mathfrak B$, its image $R(E)$ is a cogenerator in $\mathfrak A$.
\end{lem}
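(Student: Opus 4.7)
The plan is to use the adjunction to transfer the cogenerator property from $\mathfrak B$ to $\mathfrak A$ via $R$. Given a non-isomorphic epimorphism $p:X\twoheadrightarrow Y$ in $\mathfrak A$, we must produce a morphism $X\longrightarrow R(E)$ that does not factor through $p$. Since such a factorization, if it existed, would be uniquely determined on the image of $p$, and since $p$ is epic, it suffices to produce some morphism not factoring through $Y$ at all.

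First I would observe that $L$, being a left adjoint, preserves all colimits and in particular epimorphisms, so $L(p):L(X)\twoheadrightarrow L(Y)$ is an epimorphism in $\mathfrak B$. By the hypothesis on $L$, this epimorphism fails to be an isomorphism. Then, since $E$ is a cogenerator in $\mathfrak B$, there exists a morphism $\alpha:L(X)\longrightarrow E$ that does not factor through $L(p)$, i.e., there is no $\gamma:L(Y)\longrightarrow E$ with $\alpha=\gamma\circ L(p)$. Via the adjunction bijection
\[
\Phi_{X,E}:\mathfrak B(L(X),E)\xrightarrow{\ \cong\ }\mathfrak A(X,R(E)),
\]
let $\tilde\alpha:=\Phi_{X,E}(\alpha):X\longrightarrow R(E)$.

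The key step is to show that $\tilde\alpha$ does not factor through $p$, and the main (and only) technical point is the naturality of $\Phi$. Suppose for contradiction that $\tilde\alpha=\beta\circ p$ for some $\beta:Y\longrightarrow R(E)$. Set $\gamma:=\Phi_{Y,E}^{-1}(\beta)\in\mathfrak B(L(Y),E)$. By the naturality of $\Phi$ in the first argument with respect to $p:X\longrightarrow Y$, the square
\[
\begin{CD}
\mathfrak B(L(Y),E) @>\Phi_{Y,E}>> \mathfrak A(Y,R(E))\\
@V{L(p)^*}VV @VV{p^*}V\\
\mathfrak B(L(X),E) @>\Phi_{X,E}>> \mathfrak A(X,R(E))
\end{CD}
\]
commutes, giving $\Phi_{X,E}(\gamma\circ L(p))=\beta\circ p=\tilde\alpha=\Phi_{X,E}(\alpha)$. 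Since $\Phi_{X,E}$ is a bijection, $\alpha=\gamma\circ L(p)$, contradicting the choice of $\alpha$. Hence $\tilde\alpha$ cannot factor through $p$, and $R(E)$ is a cogenerator in $\mathfrak A$.

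I do not anticipate a real obstacle: the proof is essentially bookkeeping with the unit/counit of the adjunction and the naturality square. The only hypothesis that must be used, beyond formal adjunction, is the assumption that $L$ detects non-isomorphic epimorphisms, which is exactly what is needed to invoke the cogenerator property of $E$.
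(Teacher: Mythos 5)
Your proof is correct and follows essentially the same route as the paper's: both preserve the epimorphism via the left adjoint, invoke the hypothesis to get a non-isomorphic epimorphism $L(p)$, choose a morphism to $E$ not factoring through $L(Y)$, and transfer it across the adjunction, deriving a contradiction from any factorization through $Y$. The only difference is cosmetic --- you make the naturality square of the adjunction bijection explicit, where the paper leaves it implicit in the phrase ``where $h'$ corresponds to $h$ using the adjunction.''
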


\begin{proof} Suppose that $R(E)$ is not a cogenerator in $\mathfrak A$. Then, there exists an 
 epimorphism  $p:X\twoheadrightarrow Y$ in $\mathfrak A$ which is not an isomorphism, but any morphism $X\longrightarrow R(E)$ must factor through
$Y$. 

\smallskip Since $L$ is a left adjoint, it preserves epimorphisms.  From the assumption on $L$, it now follows that $L(p):L(X)\longrightarrow L(Y)$ is an epimorphism in $\mathfrak B$ that is not an isomorphism. Since $E$ is a cogenerator in $\mathfrak B$, we can now find a morphism
$f:L(X)\longrightarrow E$ which does not factor through $L(Y)$.  By the adjunction, we now have $g:X\longrightarrow R(E)$ corresponding to $f$.  Then, there exists $h:Y\longrightarrow R(E)$ such that $g=h\circ p$. But then we must have $f=h'\circ L(p)$, where $h':L(Y)\longrightarrow E$ corresponds to $h:Y\longrightarrow R(E)$ using the adjunction.  This contradicts the assumption that $f$ does not factor through $L(Y)$.
\end{proof}

For the remainder of this paper,  we suppose that $\mathfrak S$ has an injective cogenerator $\mathcal I$.  
We also note that for any $\mathcal M\in \mathfrak S$, the multiplication $\mu_A:A\otimes A\longrightarrow A$ induces a canonical morphism in $\mathfrak S$
\begin{equation}
\underline{Hom}(A,\mathcal M)\longrightarrow \underline{Hom}(A\otimes A,\mathcal M)= \underline{Hom}(A,\underline{Hom}(A,\mathcal M))
\end{equation} which corresponds by adjointness to $A\otimes \underline{Hom}(A,\mathcal M)\longrightarrow \underline{Hom}(A,\mathcal M)$. It may now be verified that $\underline{Hom}(A,\mathcal M)\in {_A}\mathfrak S$. 

\begin{lem}\label{L8.6vq}
Suppose that $\mathfrak S$ has an injective cogenerator $\mathcal I$.  Then, $\underline{Hom}(A,\mathcal I)$ is an injective cogenerator for the category ${_A}\mathfrak S$.
\end{lem}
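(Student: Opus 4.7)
The plan is to combine two ingredients: (i) the forgetful functor $U:{_A}\mathfrak S\longrightarrow\mathfrak S$ is exact with right adjoint $\underline{Hom}(A,\_\_)$, so it immediately preserves injectivity on the right, and (ii) Lemma \ref{L8.55tc} transports cogenerators along such an adjunction as soon as $U$ detects isomorphisms among epimorphisms. I will identify $U$ with the functor $\_\_\otimes_A A$ (since $\mathcal N\otimes_A A\cong \mathcal N$ for any $\mathcal N\in {_A}\mathfrak S$), and use the adjunction of Section 2 applied to the right $A$-module $V=A$ to conclude that $U$ has right adjoint $\underline{Hom}(A,\_\_)$.

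First I would check injectivity. Because kernels and cokernels in ${_A}\mathfrak S$ are computed in $\mathfrak S$, the forgetful functor $U$ is exact, hence in particular preserves monomorphisms. Its right adjoint $\underline{Hom}(A,\_\_)$ therefore preserves injective objects. Since $\mathcal I\in\mathfrak S$ is injective, $\underline{Hom}(A,\mathcal I)$ is injective in ${_A}\mathfrak S$.

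Next I would apply Lemma \ref{L8.55tc} with $\mathfrak A={_A}\mathfrak S$, $\mathfrak B=\mathfrak S$, $L=U$, $R=\underline{Hom}(A,\_\_)$, and $E=\mathcal I$. The hypothesis to verify is: if $p:X\twoheadrightarrow Y$ is an epimorphism in ${_A}\mathfrak S$ that is not an isomorphism, then $U(p)$ is not an isomorphism in $\mathfrak S$. This is automatic since $\ker(p)$ and $\mathrm{coker}(p)$ in ${_A}\mathfrak S$ coincide with those of $U(p)$ in $\mathfrak S$; thus $p$ is an isomorphism in ${_A}\mathfrak S$ iff $U(p)$ is an isomorphism in $\mathfrak S$. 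Applying Lemma \ref{L8.55tc} then shows that $R(\mathcal I)=\underline{Hom}(A,\mathcal I)$ is a cogenerator in ${_A}\mathfrak S$.

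The argument is therefore entirely formal, and there is really no hard step — the only point that deserves explicit mention is the identification of the forgetful functor $U$ with $\_\_\otimes_A A$, so that its right adjoint, produced by \cite[Proposition B3.6]{AZ}, is indeed $\underline{Hom}(A,\_\_)$ with the left $A$-module structure discussed just before the lemma.
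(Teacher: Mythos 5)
Your proof is correct and follows essentially the same route as the paper: both rest on the adjunction $\mathfrak S(\mathcal N,\mathcal M)\cong {_A}\mathfrak S(\mathcal N,\underline{Hom}(A,\mathcal M))$, deduce injectivity of $\underline{Hom}(A,\mathcal I)$ from exactness of the forgetful functor, and obtain the cogenerator property from Lemma \ref{L8.55tc} after checking that the forgetful functor reflects isomorphisms among epimorphisms. The only cosmetic difference is that you justify the adjunction by identifying the forgetful functor with $\_\_\otimes_A A$ and invoking \cite[Proposition B3.6]{AZ}, whereas the paper simply asserts that the isomorphism may be verified directly.
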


\begin{proof} We may verify directly that for any $\mathcal N\in {_A}\mathfrak S$ and $\mathcal M\in \mathfrak S$, we have
\begin{equation}\label{e8.8uh}
 \mathfrak S(\mathcal N,\mathcal M)\cong {_A}\mathfrak S(\mathcal N,\underline{Hom}(A,\mathcal M))
\end{equation} Since $\mathcal I\in \mathfrak S$ is injective, $\mathfrak S(\_\_,\mathcal I)$ is exact. From \eqref{e8.8uh}, it is now clear that ${_A}\mathfrak S(\_\_,\underline{Hom}(A,\mathcal I))$ is exact, i.e., $\underline{Hom}(A,\mathcal I)$ is an injective in ${_A}\mathfrak S$.  It is evident that the forgetful functor 
${_A}\mathfrak S\longrightarrow \mathfrak S$ satisfies the condition in Lemma \ref{L8.55tc}. Accordingly, since $\mathcal I$ is a cogenerator in
$\mathfrak S$, the object $\underline{Hom}(A,\mathcal I)$ is a cogenerator in ${_A}\mathfrak S$.

\end{proof}

\begin{lem}\label{L8.65rt} There are natural isomorphisms
\begin{equation}\label{865rt}
 {_A}\mathfrak S(\mathcal N,\mathcal M)\cong  {_A}\mathfrak S^H(\mathcal N,\mathcal M\otimes H)
\end{equation}
for $(\mathcal N,\mu_{\mathcal N}^A,\Delta_{\mathcal N}^H)\in {_A}\mathfrak S^H$ and $(\mathcal M,\mu_{\mathcal M}^A)\in {_A}\mathfrak S$, giving an adjunction
of functors.
\end{lem}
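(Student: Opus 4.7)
The plan is to exhibit the adjunction explicitly by constructing natural maps in both directions. Recall from the discussion immediately following Lemma \ref{L5.1x} that for any $(\mathcal M,\mu^A_{\mathcal M})\in {_A}\mathfrak S$, the object $\mathcal M\otimes H$ is naturally an object of ${_A}\mathfrak S^H$ with $A$-action $\mu^A_{\mathcal M\otimes H}$ as in \eqref{5.2wy} and $H$-coaction $\mathcal M\otimes \Delta_H$. Given $\phi\in {_A}\mathfrak S(\mathcal N,\mathcal M)$, I would set
$$\Phi(\phi):=(\phi\otimes H)\circ \Delta_{\mathcal N}^H:\mathcal N\longrightarrow \mathcal M\otimes H,$$
and conversely, for $\psi\in {_A}\mathfrak S^H(\mathcal N,\mathcal M\otimes H)$, set
$$\Psi(\psi):=(\mathcal M\otimes \epsilon_H)\circ \psi:\mathcal N\longrightarrow \mathcal M.$$

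The first task is to verify these constructions land in the right hom-sets. The $H$-colinearity of $\Phi(\phi)$ is a direct consequence of the coassociativity of $\Delta^H_{\mathcal N}$; its $A$-linearity follows by expanding $\Delta^H_{\mathcal N}\circ \mu^A_{\mathcal N}$ via the compatibility condition of Definition \ref{D5.1cs}, then sliding $\phi$ past using its $A$-linearity and comparing with the explicit formula \eqref{5.2wy} for $\mu^A_{\mathcal M\otimes H}$. For $\Psi(\psi)$, the $A$-linearity follows from the $A$-linearity of $\psi$ combined with the identities $\epsilon_H\circ \mu_H=\epsilon_H\otimes \epsilon_H$ and $(A\otimes \epsilon_H)\circ \Delta_A^H=\mathrm{id}_A$ (the latter expressing that $A$ is an $H$-comodule).

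Next I would verify $\Phi$ and $\Psi$ are mutual inverses. The identity $\Psi(\Phi(\phi))=\phi$ is immediate from the counit axiom $(\mathcal N\otimes \epsilon_H)\circ \Delta^H_{\mathcal N}=\mathrm{id}_{\mathcal N}$. The identity $\Phi(\Psi(\psi))=\psi$ uses $H$-colinearity of $\psi$ to rewrite $(\psi\otimes H)\circ \Delta^H_{\mathcal N}=(\mathcal M\otimes \Delta_H)\circ \psi$, followed by the counit axiom $(\epsilon_H\otimes H)\circ \Delta_H=\mathrm{id}_H$. Finally, naturality of the bijection in both $\mathcal N$ and $\mathcal M$ is immediate from the functorial naturality of $\Delta^H_{\mathcal N}$ and $\epsilon_H$, so the bijection assembles into the claimed adjunction between the forgetful functor ${_A}\mathfrak S^H\longrightarrow {_A}\mathfrak S$ and $\_\_\otimes H:{_A}\mathfrak S\longrightarrow {_A}\mathfrak S^H$.

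No part of this is deep; the only step that requires real care is the $A$-linearity of $\Phi(\phi)$, which is essentially a transcription of the calculation already carried out in Proposition \ref{6.4fd} (with $\rho(\phi)$ there playing a role analogous to $\Phi(\phi)$ here), so I would either refer to that computation or write out the analogous string equalities.
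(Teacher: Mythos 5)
Your proposal is correct and takes essentially the same route as the paper: the paper's proof defines exactly the same pair of maps $\phi\mapsto (\phi\otimes H)\circ \Delta^H_{\mathcal N}$ and $\psi\mapsto (\mathcal M\otimes \epsilon_H)\circ \psi$, and verifies they are mutually inverse using the counit axiom together with the $H$-colinearity identity $(\psi\otimes H)\circ \Delta^H_{\mathcal N}=(\mathcal M\otimes \Delta_H)\circ \psi$. If anything you are slightly more thorough, since the paper leaves the $A$-linearity and $H$-colinearity checks for the two maps implicit, which you correctly identify as routine consequences of Definition \ref{D5.1cs}, \eqref{5.2wy}, and the computation in Proposition \ref{6.4fd}.
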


\begin{proof}
We consider $\phi\in {_A}\mathfrak S(\mathcal N,\mathcal M)$, which induces $\phi':=(\phi\otimes H)\circ \Delta_{\mathcal N}^H\in  {_A}\mathfrak S^H(\mathcal N,\mathcal M\otimes H)$. On the other hand, given $\psi\in  {_A}\mathfrak S^H(\mathcal N,\mathcal M\otimes H)$, we have
$\psi':=(\mathcal M\otimes \epsilon_H)\circ \psi\in {_A}\mathfrak S(\mathcal N,\mathcal M)$.  We now verify that
\begin{equation}
(\mathcal M\otimes \epsilon_H)\circ (\phi\otimes H)\circ \Delta_{\mathcal N}^H=\phi\circ (\mathcal N\otimes \epsilon_H)\circ \Delta_{\mathcal N}^H=\phi
\end{equation} We also have
\begin{equation}
(\mathcal M\otimes \epsilon_H\otimes H)\circ (\psi\otimes H)\circ \Delta_{\mathcal N}^H=(\mathcal M\otimes \epsilon_H\otimes H)\circ (\mathcal M
\otimes \Delta_H)\circ \psi=\psi
\end{equation}
This proves the result.
\end{proof}

\begin{thm}\label{P8.7yb}
Suppose that $\mathfrak S$ has an injective cogenerator $\mathcal I$.  Then, $\underline{Hom}(A,\mathcal I)\otimes H$ is an injective cogenerator for the category ${_A}\mathfrak S^H$.
\end{thm}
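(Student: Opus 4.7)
The plan is to combine Lemma \ref{L8.6vq}, Lemma \ref{L8.65rt}, and Lemma \ref{L8.55tc} in a single application. By Lemma \ref{L8.65rt}, the functor $\_\_\otimes H:{_A}\mathfrak S\longrightarrow {_A}\mathfrak S^H$ is right adjoint to the forgetful functor $U:{_A}\mathfrak S^H\longrightarrow {_A}\mathfrak S$, and by Lemma \ref{L8.6vq}, $\underline{Hom}(A,\mathcal I)$ is an injective cogenerator of ${_A}\mathfrak S$. So the goal splits into showing (i) $\underline{Hom}(A,\mathcal I)\otimes H$ is injective in ${_A}\mathfrak S^H$, and (ii) it is a cogenerator.

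For (i), I would use the adjunction isomorphism of Lemma \ref{L8.65rt} to write
\begin{equation*}
{_A}\mathfrak S^H(\_\_,\underline{Hom}(A,\mathcal I)\otimes H)\cong {_A}\mathfrak S(U(\_\_),\underline{Hom}(A,\mathcal I)).
\end{equation*}
Since finite limits and colimits in ${_A}\mathfrak S^H$ are computed in $\mathfrak S$ (as noted in Section~4), the forgetful functor $U$ is exact; combined with the injectivity of $\underline{Hom}(A,\mathcal I)$ in ${_A}\mathfrak S$ (Lemma \ref{L8.6vq}), this composite is exact, proving injectivity.

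For (ii), I would apply Lemma \ref{L8.55tc} with $\mathfrak A={_A}\mathfrak S^H$, $\mathfrak B={_A}\mathfrak S$, $L=U$, $R=\_\_\otimes H$, and $E=\underline{Hom}(A,\mathcal I)$. The hypothesis of Lemma \ref{L8.55tc} to verify is that $U$ is conservative on epimorphisms: if $p:\mathcal X\twoheadrightarrow \mathcal Y$ is an epimorphism in ${_A}\mathfrak S^H$ with $U(p)$ an isomorphism in ${_A}\mathfrak S$, then $p$ is an isomorphism in ${_A}\mathfrak S^H$. This is immediate because the kernel and cokernel of $p$ are computed in $\mathfrak S$ (hence in ${_A}\mathfrak S$), so $U(p)$ being iso forces $\operatorname{Ker}(p)=0$ and $\operatorname{Coker}(p)=0$ in ${_A}\mathfrak S^H$. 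Lemma \ref{L8.55tc} then yields that $R(E)=\underline{Hom}(A,\mathcal I)\otimes H$ is a cogenerator of ${_A}\mathfrak S^H$.

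The argument is largely formal once the right adjunction is in place; there is no serious obstacle. The only mildly delicate point is the conservativity of $U$, but as explained above it follows immediately from the fact that kernels and cokernels in ${_A}\mathfrak S^H$ are computed at the level of $\mathfrak S$.
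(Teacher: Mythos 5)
Your proof is correct and follows essentially the same route as the paper's: injectivity of $\underline{Hom}(A,\mathcal I)\otimes H$ via the adjunction of Lemma \ref{L8.65rt} and exactness of the forgetful functor, and the cogenerator property via Lemma \ref{L8.55tc} applied to the same adjoint pair. The only difference is that you explicitly verify the hypothesis of Lemma \ref{L8.55tc} (using that kernels and cokernels in ${_A}\mathfrak S^H$ are computed in $\mathfrak S$), a point the paper dismisses as clear.
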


\begin{proof}
We know from Lemma 
\ref{L8.6vq} that $\underline{Hom}(A,\mathcal I)$ is an injective cogenerator for the category ${_A}\mathfrak S$. Since the forgetful functor ${_A}\mathfrak S^H\longrightarrow {_A}\mathfrak S$ is exact, its right adjoint preserves injectives. By the adjunction in Lemma \ref{L8.65rt}, it now follows that $\underline{Hom}(A,\mathcal I)\otimes H$ is an injective object in ${_A}\mathfrak S^H$. It is also clear that the forgetful functor 
${_A}\mathfrak S^H\longrightarrow {_A}\mathfrak S$ satisfies the condition in Lemma \ref{L8.55tc}. Accordingly, since $\underline{Hom}(A,\mathcal I)$ is a cogenerator in
${_A}\mathfrak S$, the object $\underline{Hom}(A,\mathcal I)\otimes H$ is a cogenerator in ${_A}\mathfrak S^H$.
\end{proof}

\begin{Thm}\label{alpha}
Suppose that $\underline{Hom}(A,\mathcal I)\otimes H\in \mathcal F({_A}\mathfrak S^H)$. Then, $\mathcal F({_A}\mathfrak S^H)={_A}\mathfrak S^H$, i.e., every object $\mathcal M\in {_A}\mathfrak S^H$ satisfies ${_A}\mathfrak K^H(\mathcal M)=0$.
\end{Thm}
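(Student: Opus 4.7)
The plan is to exploit the cogenerator property established in Proposition \ref{P8.7yb}. Set $\mathcal E := \underline{Hom}(A,\mathcal I) \otimes H$; by Proposition \ref{P8.7yb} this is an injective cogenerator for ${_A}\mathfrak S^H$, and by hypothesis it lies in $\mathcal F({_A}\mathfrak S^H)$. The strategy is to embed an arbitrary $\mathcal M \in {_A}\mathfrak S^H$ into a product of copies of $\mathcal E$ and then invoke closure of $\mathcal F({_A}\mathfrak S^H)$ under subobjects and products.

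First, I would produce a monomorphism $\mathcal M \hookrightarrow \mathcal E^I$ for $I := {_A}\mathfrak S^H(\mathcal M, \mathcal E)$. Using the canonical evaluation map $\iota:\mathcal M \to \mathcal E^I$, if $\iota$ failed to be a monomorphism then the induced epimorphism $\mathcal M \twoheadrightarrow \mathrm{Im}(\iota)$ would not be an isomorphism, yet every morphism $\mathcal M \to \mathcal E$ would factor through $\mathrm{Im}(\iota)$ by construction, contradicting the cogenerator property exactly as formulated in the excerpt.

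Second, I would verify that $\mathcal E^I \in \mathcal F({_A}\mathfrak S^H)$. Let $\mathcal N \subseteq \mathcal E^I$ be a subobject with $\mathcal N \in \mathcal T({_A}\mathfrak S^H)$. For each $i \in I$, the image of the composition $\mathcal N \hookrightarrow \mathcal E^I \xrightarrow{p_i} \mathcal E$ is a quotient of an object in $\mathcal T({_A}\mathfrak S^H)$, hence lies in $\mathcal T({_A}\mathfrak S^H)$ by Lemma \ref{L8.01p}; it is also a subobject of $\mathcal E \in \mathcal F({_A}\mathfrak S^H)$, hence lies in $\mathcal F({_A}\mathfrak S^H)$, again by Lemma \ref{L8.01p}. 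But $\mathcal T({_A}\mathfrak S^H) \cap \mathcal F({_A}\mathfrak S^H) = 0$, since applying the vanishing of ${_A}\mathfrak S^H(\mathcal P, \mathcal P)$ from Lemma \ref{L8.01p} to the identity forces any such $\mathcal P$ to be zero. Thus every projection of $\mathcal N$ vanishes and the universal property of the product yields $\mathcal N = 0$.

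Finally, combining the two steps: $\mathcal M$ is a subobject of $\mathcal E^I \in \mathcal F({_A}\mathfrak S^H)$, and $\mathcal F({_A}\mathfrak S^H)$ is closed under subobjects by Lemma \ref{L8.01p}, so $\mathcal M \in \mathcal F({_A}\mathfrak S^H)$ as desired. I do not foresee a serious obstacle; the only subtlety worth flagging is the passage from the paper's formulation of "cogenerator" (in terms of non-isomorphism epimorphisms) to the standard consequence that every object embeds into a product of copies of the cogenerator, which is precisely the content of the first step.
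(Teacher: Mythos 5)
Your proposal is correct and takes essentially the same route as the paper: the paper also invokes Proposition \ref{P8.7yb} to embed an arbitrary $\mathcal M$ into a direct product of copies of $\underline{Hom}(A,\mathcal I)\otimes H$ and then concludes from the closure of $\mathcal F({_A}\mathfrak S^H)$ under products and subobjects, citing the torsion theory of Proposition \ref{P8.1dc} where you instead verify product-closure by hand from Lemma \ref{L8.01p}. The two steps you flag as subtle (the embedding into a power of the cogenerator, and $\mathcal T\cap\mathcal F=0$) are exactly the details the paper leaves implicit, and your treatment of them is sound.
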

\begin{proof}
By Proposition \ref{P8.7yb}, we know that $\underline{Hom}(A,\mathcal I)\otimes H$ is an injective cogenerator for the category ${_A}\mathfrak S^H$. In other words, every object $\mathcal M\in {_A}\mathfrak S^H$ is a subobject of a direct product of copies of $\underline{Hom}(A,\mathcal I)\otimes H$. By Proposition \ref{P8.1dc}, we know that $( \mathcal T({_A}\mathfrak S^H),  \mathcal F({_A}\mathfrak S^H))$ is a torsion theory on ${_A}\mathfrak S^H$. Hence, $\mathcal F({_A}\mathfrak S^H)$ is closed under products and subobjects. Since $\underline{Hom}(A,\mathcal I)\otimes H\in \mathcal F({_A}\mathfrak S^H)$, the result follows.
\end{proof}

\begin{cor}\label{C8.75s}
Suppose that $\underline{Hom}(A,\mathcal I)\otimes H\in \mathcal F({_A}\mathfrak S^H)$. Then, if $\mathcal E\in {_A}\mathfrak S^H$ is an injective object, 
$\mathfrak C^H(\mathcal E)$ is an injective object in ${_B}\mathfrak S$.
\end{cor}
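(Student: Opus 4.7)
The plan is to combine Theorem \ref{alpha} with the identification of injective envelopes provided by Theorem \ref{T8.5gv} and the coinvariance computation in Proposition \ref{P7.11sgs}. First, I would invoke Theorem \ref{alpha} to deduce from the hypothesis $\underline{Hom}(A,\mathcal I)\otimes H\in \mathcal F({_A}\mathfrak S^H)$ that $\mathcal F({_A}\mathfrak S^H)={_A}\mathfrak S^H$. In particular, the given injective object $\mathcal E\in {_A}\mathfrak S^H$ is torsion free, i.e., ${_A}\mathfrak K^H(\mathcal E)=0$.

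Next, since $\mathcal E$ is injective in the Grothendieck category ${_A}\mathfrak S^H$, the identity map $\mathcal E\longrightarrow \mathcal E$ is an injective envelope, so ${_A}\mathcal E^H(\mathcal E)\cong \mathcal E$. Applying Theorem \ref{T8.5gv} to the torsion free object $\mathcal E$, I obtain the isomorphism
\begin{equation}
\mathcal E\;\cong\;{_A}\mathcal E^H(\mathcal E)\;\cong\;{_B}HOM\bigl(A,{_B}\mathcal E(\mathfrak C^H(\mathcal E))\bigr)
\end{equation}
in ${_A}\mathfrak S^H$.

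Finally, I would apply the functor $\mathfrak C^H$ to both sides and use the canonical isomorphism of Proposition \ref{P7.11sgs}, namely $\mathfrak C^H({_B}HOM(A,\mathcal N))\cong \mathcal N$ for any $\mathcal N\in {_B}\mathfrak S$, with $\mathcal N={_B}\mathcal E(\mathfrak C^H(\mathcal E))$. This yields
\begin{equation}
\mathfrak C^H(\mathcal E)\;\cong\;\mathfrak C^H\bigl({_B}HOM(A,{_B}\mathcal E(\mathfrak C^H(\mathcal E)))\bigr)\;\cong\;{_B}\mathcal E(\mathfrak C^H(\mathcal E)).
\end{equation}
Since the right hand side is an injective envelope in ${_B}\mathfrak S$, and hence injective in ${_B}\mathfrak S$, the result follows. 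There is essentially no obstacle here, as the argument is a direct assembly of previously established facts; the only point to double-check is the triviality that an injective object equals its own injective envelope, which is standard in any Grothendieck category.
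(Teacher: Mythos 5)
Your proof is correct and takes essentially the same route as the paper: the paper's own proof of this corollary is in effect an inline re-derivation of Theorem \ref{T8.5gv} applied to $\mathcal E$ (it builds the essential monomorphism $\mathcal E\hookrightarrow {_B}HOM(A,\mathfrak C^H(\mathcal E))\hookrightarrow {_B}HOM(A,{_B}\mathcal E(\mathfrak C^H(\mathcal E)))$ via Lemma \ref{L8.04p} and Lemma \ref{L7.12aq}(b), observes that injectivity of $\mathcal E$ forces it to be an isomorphism, and then applies Proposition \ref{P7.11sgs}), which is exactly your argument. Your packaging via Theorem \ref{T8.5gv} together with the standard fact that an injective object is its own injective envelope is a harmless streamlining of the same reasoning.
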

\begin{proof}
We know that the inclusion $\mathfrak C^H(\mathcal E)\hookrightarrow {_B}\mathcal E(\mathfrak C^H(\mathcal E))$ into the injective envelope of $\mathfrak C^H(\mathcal E)$ is an essential monomorphism in ${_B}\mathfrak S$. Applying Lemma \ref{L7.12aq}(b), ${_B}HOM(A,\mathfrak C^H(\mathcal E))\hookrightarrow {_B}HOM(A,
{_B}\mathcal E(\mathfrak C^H(\mathcal E)))$ is an essential monomorphism in ${_A}\mathcal S^H$.  Since $\mathcal F({_A}\mathfrak S^H)={_A}\mathfrak S^H$, we know that  ${_A}\mathfrak K^H(\mathcal E)=0$. Combining with Lemma \ref{L8.04p}, we now have the essential monomorphism
\begin{equation}\label{8.12eqk}
\mathcal E\hookrightarrow {_B}HOM(A,\mathfrak C^H(\mathcal E))\hookrightarrow {_B}HOM(A,
{_B}\mathcal E(\mathfrak C^H(\mathcal E)))
\end{equation} But since $\mathcal E\in {_A}\mathfrak S^H$ is injective, it follows that the essential monomorphisms in \eqref{8.12eqk} are all isomorphisms. Using Proposition \ref{P7.11sgs}, we now have
\begin{equation}
\mathfrak C^H(\mathcal E)\cong \mathfrak C^H({_B}HOM(A,
{_B}\mathcal E(\mathfrak C^H(\mathcal E))))\cong {_B}\mathcal E(\mathfrak C^H(\mathcal E))
\end{equation}  This proves the result.
\end{proof}

We will also use Theorem \ref{alpha} and Corollary \ref{C8.75s} to promote the isomorphism in Proposition \ref{P7.10gs} to the level of derived functors. 

\begin{thm}\label{P8.76d}
Let $A$ be a right $H$-comodule algebra and $B=A^{co H}$. Suppose that $\underline{Hom}(A,\mathcal I)\otimes H\in \mathcal F({_A}\mathfrak S^H)$.  Then, we have natural isomorphisms
\begin{equation}\label{7.7ytr}
Ext_{{_A}\mathfrak S^H}^{\bullet}(A\otimes_B\mathcal N,\mathcal M)\cong  Ext_{{_B}\mathfrak S}^\bullet(\mathcal N,\mathfrak C^H(\mathcal M)) 
\end{equation}
for $\mathcal N\in {_B}\mathfrak S$ and $\mathcal M\in {_A}\mathfrak S^H$.
\end{thm}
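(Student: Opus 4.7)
The plan is to produce these natural isomorphisms by computing both sides of \eqref{7.7ytr} from a single injective resolution in ${_A}\mathfrak S^H$, exploiting the adjunction in Proposition \ref{P7.10gs} together with the fact that under the standing hypothesis $\underline{Hom}(A,\mathcal I)\otimes H\in \mathcal F({_A}\mathfrak S^H)$, the coinvariants functor $\mathfrak C^H$ carries injectives of ${_A}\mathfrak S^H$ to injectives of ${_B}\mathfrak S$ (Corollary \ref{C8.75s}).

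First, since ${_A}\mathfrak S^H$ is a Grothendieck category, I choose an injective resolution $\mathcal M\longrightarrow \mathcal E^\bullet$ in ${_A}\mathfrak S^H$. Applying $\mathfrak C^H:{_A}\mathfrak S^H\longrightarrow {_B}\mathfrak S$ gives a complex $\mathfrak C^H(\mathcal M)\longrightarrow \mathfrak C^H(\mathcal E^\bullet)$ in ${_B}\mathfrak S$. By Lemma \ref{L7.4b}, the functor $\mathfrak C^H$ is exact, so this is indeed a resolution of $\mathfrak C^H(\mathcal M)$. By Corollary \ref{C8.75s}, each $\mathfrak C^H(\mathcal E^i)$ is injective in ${_B}\mathfrak S$, so $\mathfrak C^H(\mathcal E^\bullet)$ is actually an injective resolution of $\mathfrak C^H(\mathcal M)$ in ${_B}\mathfrak S$.

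Next, I apply the natural isomorphism of Proposition \ref{P7.10gs} termwise to obtain an isomorphism of complexes
\begin{equation*}
{_A}\mathfrak S^H(A\otimes_B\mathcal N,\mathcal E^\bullet)\cong {_B}\mathfrak S(\mathcal N,\mathfrak C^H(\mathcal E^\bullet)).
\end{equation*}
Taking cohomology, the left-hand side computes $Ext_{{_A}\mathfrak S^H}^\bullet(A\otimes_B\mathcal N,\mathcal M)$ because $\mathcal E^\bullet$ is an injective resolution of $\mathcal M$ in ${_A}\mathfrak S^H$, while the right-hand side computes $Ext_{{_B}\mathfrak S}^\bullet(\mathcal N,\mathfrak C^H(\mathcal M))$ by the previous paragraph. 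The naturality of the adjunction isomorphism ensures that these identifications are functorial in $\mathcal N$ and $\mathcal M$.

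The only potentially subtle step is the preservation of injectives by $\mathfrak C^H$, and this is exactly where the torsion-freeness hypothesis $\underline{Hom}(A,\mathcal I)\otimes H\in \mathcal F({_A}\mathfrak S^H)$ is used via Corollary \ref{C8.75s}; the remainder of the argument is the standard Grothendieck-style computation of $Ext$ from any resolution by acyclic (in fact injective) objects for the right-adjoint functor, and no further calculation is required.
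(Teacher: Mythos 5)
Your proof is correct and takes essentially the same route as the paper's own argument: both compute the two $Ext$ groups from a single injective resolution $\mathcal M\longrightarrow \mathcal E^\bullet$ in ${_A}\mathfrak S^H$, applying the adjunction of Proposition \ref{P7.10gs} termwise and using the exactness of $\mathfrak C^H$ together with Corollary \ref{C8.75s} to conclude that $\mathfrak C^H(\mathcal E^\bullet)$ is an injective resolution of $\mathfrak C^H(\mathcal M)$ in ${_B}\mathfrak S$. Your write-up merely makes explicit the steps (exactness via Lemma \ref{L7.4b}, naturality of the adjunction) that the paper leaves implicit.
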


\begin{proof}
Let $\mathcal M\longrightarrow \mathcal E^\bullet$ be an injective resolution in ${_A}\mathfrak S^H$. Accordingly, the derived functors $Ext_{{_A}\mathfrak S^H}^{\bullet}(A\otimes_B\mathcal N,\mathcal M)$ may be computed by taking cohomologies of the complex ${_A}\mathfrak S^H(A\otimes_B\mathcal N,\mathcal E^\bullet)\cong {_B}\mathfrak S(\mathcal N,\mathfrak C^H(\mathcal E^\bullet))$. Since $\underline{Hom}(A,\mathcal I)\otimes H\in \mathcal F({_A}\mathfrak S^H)$,  we know from Corollary \ref{C8.75s} that $\mathfrak C^H$ preserves injectives. Since $\mathfrak C^H$ is exact, it now follows that $\mathfrak C^H(\mathcal E^\bullet)$ is an injective resolution of $\mathfrak C^H(\mathcal M)$. The result is now clear. 
\end{proof}

\begin{lem}\label{L8.8fj}
Any injective object in ${_A}\mathfrak S^H$ can be expressed as a direct summand of an injective of the form $\mathcal E'\otimes H$, where $\mathcal E'\in {_A}\mathfrak S$ is an injective object.
\end{lem}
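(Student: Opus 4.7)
The plan is to mimic the structure of the proof of Proposition \ref{P4.7u}, replacing the coalgebra $C$ there by $H$ here and working in the relative Hopf module setting. The strategy has three steps: first, show that tensoring an injective of ${_A}\mathfrak S$ with $H$ produces an injective of ${_A}\mathfrak S^H$; second, embed an arbitrary $\mathcal M\in{_A}\mathfrak S^H$ into such an object; third, conclude by injectivity of $\mathcal M$ that this embedding splits.

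For the first step, I would take $\mathcal E'\in{_A}\mathfrak S$ injective and consider $\mathcal E'\otimes H\in{_A}\mathfrak S^H$, where the $A$-action is given by \eqref{5.2wy} and the $H$-coaction by $\mathcal E'\otimes\Delta_H$. The adjunction of Lemma \ref{L8.65rt} gives a natural isomorphism
\begin{equation}
{_A}\mathfrak S^H(\_\_,\mathcal E'\otimes H)\cong {_A}\mathfrak S(\_\_,\mathcal E')
\end{equation}
where on the right the forgetful functor ${_A}\mathfrak S^H\longrightarrow{_A}\mathfrak S$ is implicit. Since finite limits and colimits in ${_A}\mathfrak S^H$ are computed in $\mathfrak S$, this forgetful functor is exact. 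Combined with the injectivity of $\mathcal E'$ in ${_A}\mathfrak S$, the composite functor is exact, showing that $\mathcal E'\otimes H$ is injective in ${_A}\mathfrak S^H$.

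For the second step, I would start with an arbitrary $\mathcal M\in{_A}\mathfrak S^H$. By Lemma \ref{L5.1x}, the coaction $\Delta_{\mathcal M}^H:\mathcal M\longrightarrow\mathcal M\otimes H$ is a monomorphism in ${_A}\mathfrak S^H$. Since ${_A}\mathfrak S$ is a Grothendieck category, I can choose a monomorphism $\mathcal M\hookrightarrow\mathcal E'$ in ${_A}\mathfrak S$ with $\mathcal E'$ injective. Applying the exact functor $\_\_\otimes H$ yields a monomorphism $\mathcal M\otimes H\hookrightarrow\mathcal E'\otimes H$ in $\mathfrak S$, which is readily seen to lie in ${_A}\mathfrak S^H$, and hence is a monomorphism there since kernels are computed in $\mathfrak S$. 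Composing gives a monomorphism $\mathcal M\hookrightarrow\mathcal E'\otimes H$ in ${_A}\mathfrak S^H$.

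Finally, if $\mathcal M$ is assumed injective in ${_A}\mathfrak S^H$, this monomorphism into the injective object $\mathcal E'\otimes H$ must split, exhibiting $\mathcal M$ as a direct summand of $\mathcal E'\otimes H$. No step here presents a serious obstacle; the only point requiring care is verifying that the various constructed maps are simultaneously $A$-linear and $H$-colinear, but this is immediate from the naturality of \eqref{5.2wy} and the definition of the coaction on $\mathcal E'\otimes H$.
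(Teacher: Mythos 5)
Your proposal is correct and follows essentially the same route as the paper's own proof: the paper likewise embeds an injective $\mathcal E\in{_A}\mathfrak S^H$ via the monomorphism $\Delta^H_{\mathcal E}:\mathcal E\hookrightarrow\mathcal E\otimes H$, then into $\mathcal E'\otimes H$ for an injective $\mathcal E'\in{_A}\mathfrak S$ containing $\mathcal E$, splits this by injectivity of $\mathcal E$, and establishes injectivity of $\mathcal E'\otimes H$ from the exactness of the forgetful functor ${_A}\mathfrak S^H\longrightarrow{_A}\mathfrak S$ together with the adjunction of Lemma \ref{L8.65rt}. The only cosmetic difference is that you invoke Lemma \ref{L5.1x} for the monomorphism $\Delta^H_{\mathcal M}$, a fact the paper re-verifies inline.
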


\begin{proof} Let $(\mathcal E,\mu_{\mathcal E}^A,\Delta_{\mathcal E}^H)\in {_A}\mathfrak S^H$ be an injective object. We note that the morphism $\Delta_{\mathcal E}^H:\mathcal E\longrightarrow \mathcal E\otimes H$ is a morphism in ${_A}\mathfrak S^H$. We also note that $(\mathcal E\otimes \epsilon_H)\circ \Delta^H_{\mathcal E}=id$ in ${_A}\mathfrak S$, which makes $\Delta^H_{\mathcal E}$ a monomorphism in ${_A}\mathfrak S$. Since kernels of morphisms in ${_A}\mathfrak S^H$ are computed in $\mathfrak S$, it follows that $ \Delta_{\mathcal E}^H:\mathcal E\longrightarrow \mathcal E\otimes H$ is a monomorphism in ${_A}\mathfrak S^H$.  

\smallskip
Since ${_A}\mathfrak S$ is a Grothendieck category, we may  choose an injective $\mathcal E'\in {_A}\mathfrak S$ along with a monomorphism
$\mathcal E\hookrightarrow \mathcal E'$ in ${_A}\mathfrak S$. This induces a monomorphism $\mathcal E\otimes H\hookrightarrow \mathcal E'\otimes H$ in ${_A}\mathfrak S^H$. Since $\mathcal E\in {_A}\mathfrak S^H$ is injective, the inclusion $\mathcal E\overset{ \Delta_{\mathcal E}^H}{\hookrightarrow}\mathcal E\otimes H\hookrightarrow \mathcal E'\otimes H$ in ${_A}\mathfrak S^H$ makes $\mathcal E$ a direct summand of $ \mathcal E'\otimes H$. From the exactness of the forgetful functor 
${_A}\mathfrak S^H\longrightarrow {_A}\mathfrak S$ and the adjunction in Lemma \ref{L8.65rt}, we know that its right adjoint preserves injectives. Since $\mathcal E'\in {_A}\mathfrak S$ is injective, it now follows that $\mathcal E'\otimes H\in {_A}\mathfrak S^H$ is injective. This proves the result.
\end{proof}

By adapting the terminology of \cite{AZ}, we will now say that the category $\mathfrak S$ is strongly locally left notherian  if ${_R}\mathfrak S$ is locally noetherian for any left noetherian $k$-algebra $R$. 

\begin{thm}
\label{P8.9wt} Suppose that $\mathfrak S$ is strongly locally left noetherian. Let $A$ be a left noetherian $k$-algebra that is also a right $H$-comodule algebra. Then, the direct sum of injectives in
${_A}\mathfrak S^H$ is injective.
\end{thm}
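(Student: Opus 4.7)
The plan is to reduce the claim to the corresponding fact in ${_A}\mathfrak S$ by means of Lemma \ref{L8.8fj} and the adjunction in Lemma \ref{L8.65rt}.

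First, let $\{\mathcal E_i\}_{i\in I}$ be a family of injective objects in ${_A}\mathfrak S^H$. By Lemma \ref{L8.8fj}, for each $i\in I$ we may choose an injective object $\mathcal E'_i\in {_A}\mathfrak S$ such that $\mathcal E_i$ is a direct summand of $\mathcal E'_i\otimes H$ in ${_A}\mathfrak S^H$. Taking direct sums, $\bigoplus_{i\in I}\mathcal E_i$ becomes a direct summand of $\bigl(\bigoplus_{i\in I}\mathcal E'_i\bigr)\otimes H$ in ${_A}\mathfrak S^H$.

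Next, since $\mathfrak S$ is strongly locally left noetherian and $A$ is a left noetherian $k$-algebra, by definition ${_A}\mathfrak S$ is locally noetherian. In a locally noetherian Grothendieck category, any direct sum of injectives is injective, so $\bigoplus_{i\in I}\mathcal E'_i$ is injective in ${_A}\mathfrak S$.

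Finally, I would use Lemma \ref{L8.65rt}, which exhibits $\_\_\otimes H:{_A}\mathfrak S\longrightarrow {_A}\mathfrak S^H$ as a right adjoint to the forgetful functor ${_A}\mathfrak S^H\longrightarrow {_A}\mathfrak S$. The forgetful functor is exact (kernels and cokernels in ${_A}\mathfrak S^H$ are computed in $\mathfrak S$, hence also in ${_A}\mathfrak S$), so its right adjoint preserves injectives. Consequently $\bigl(\bigoplus_{i\in I}\mathcal E'_i\bigr)\otimes H$ is injective in ${_A}\mathfrak S^H$, and therefore its direct summand $\bigoplus_{i\in I}\mathcal E_i$ is injective as well. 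There is no real obstacle here; the bulk of the work has already been done in Lemma \ref{L8.8fj}, and the remaining step just assembles the adjunction, the strong local noetherianity hypothesis, and closure of injectives under direct summands.
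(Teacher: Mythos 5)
Your proposal is correct and coincides with the paper's own proof: both decompose each $\mathcal E_i$ as a direct summand of $\mathcal E'_i\otimes H$ via Lemma \ref{L8.8fj}, invoke local noetherianity of ${_A}\mathfrak S$ to make $\bigoplus_{i\in I}\mathcal E'_i$ injective, and then use the adjunction of Lemma \ref{L8.65rt} (with the exact forgetful functor) to conclude that $\bigl(\bigoplus_{i\in I}\mathcal E'_i\bigr)\otimes H$, and hence its direct summand $\bigoplus_{i\in I}\mathcal E_i$, is injective in ${_A}\mathfrak S^H$. No gaps; the argument matches the paper step for step.
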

\begin{proof} Let $\{\mathcal E_i\}_{i\in I}$ be a family of injectives in ${_A}\mathfrak S^H$. Applying Lemma \ref{L8.8fj}, we choose for each $i\in I$ an injective $ \mathcal E_i'\in {_A}\mathfrak S$ such that $\mathcal E_i$ is a direct summand of $\mathcal E_i'\otimes H$ in ${_A}\mathfrak S^H$. Then, $\bigoplus_{i\in I}
\mathcal E_i$ is a direct summand of $\bigoplus_{i\in I}\mathcal E_i'\otimes H$ in ${_A}\mathfrak S^H$.

\smallskip
From the assumptions, it follows that ${_A}\mathfrak S$ is a locally noetherian category. Hence, the direct sum  $\bigoplus_{i\in I}\mathcal E_i'$ of injective objects is injective in ${_A}\mathfrak S$. Again, the right adjoint in Lemma \ref{L8.65rt} now makes $(\bigoplus_{i\in I}\mathcal E_i')\otimes H$ an injective in ${_A}\mathfrak S^H$. Accordingly, the direct summand $\bigoplus_{i\in I}
\mathcal E_i$ of $(\bigoplus_{i\in I}\mathcal E_i')\otimes H$ is injective in ${_A}\mathfrak S^H$.
\end{proof}

We now set $\hat{\mathcal I}:=\underline{Hom}(A,\mathcal I)\otimes H\in {_A}\mathfrak S^H$. For the remainder of this section, we will  make the following assumptions:

\smallskip
(1) $A$ is a  left noetherian $k$-algebra and a right $H$-comodule algebra

\smallskip
(2) $\mathfrak S$ is strongly locally left noetherian.

\smallskip
(3) $\mathfrak S$ has an injective cogenerator $\mathcal I$ which satisfies 
 $\hat{\mathcal I}=\underline{Hom}(A,\mathcal I)\otimes H\in  \mathcal F({_A}\mathfrak S^H)$.  By Theorem \ref{alpha}, it  follows therefore that every object $\mathcal M\in {_A}\mathfrak S^H$ satisfies ${_A}\mathfrak K^H(\mathcal M)=0$.

\begin{lem}\label{L8.10db}
Let $\{\mathcal E_i\}_{i\in I}$ be a family of injectives in ${_B}\mathfrak S$. Then, we have an isomorphism
\begin{equation}\label{bsiso}
\underset{i\in I}{\bigoplus}{_B}HOM(A,\mathcal E_i)\cong {_B}HOM\left(A,\underset{i\in I}{\bigoplus}\mathcal E_i\right)
\end{equation}
\end{lem}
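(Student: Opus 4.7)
The plan is to construct the canonical comparison morphism
\[
\alpha: \bigoplus_{i\in I} {_B}HOM(A, \mathcal E_i) \longrightarrow {_B}HOM\left(A, \bigoplus_{i\in I} \mathcal E_i\right)
\]
in ${_A}\mathfrak S^H$ from the inclusions $\iota_i : \mathcal E_i \hookrightarrow \bigoplus_{j\in I}\mathcal E_j$ (applying the functor ${_B}HOM(A,-)$ to each and then using the universal property of the coproduct), and then to prove that $\alpha$ is an isomorphism by applying $\mathfrak C^H$ and showing that, under the standing hypotheses of this section, $\mathfrak C^H$ reflects isomorphisms.

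First, I would apply $\mathfrak C^H$ to $\alpha$. By Lemma \ref{L7.4b} the functor $\mathfrak C^H$ is exact and preserves all colimits, while by Proposition \ref{P7.11sgs} there is a natural isomorphism $\epsilon : \mathfrak C^H \circ {_B}HOM(A, -) \xrightarrow{\cong} id_{{_B}\mathfrak S}$, namely the counit of the adjunction in Proposition \ref{P7.8vk}. Assembling these yields a commutative diagram
\[
\begin{CD}
\mathfrak C^H(\bigoplus_i {_B}HOM(A, \mathcal E_i)) @>\mathfrak C^H(\alpha)>> \mathfrak C^H({_B}HOM(A, \bigoplus_i \mathcal E_i)) \\
@V\cong VV @VV\cong V \\
\bigoplus_i \mathcal E_i @= \bigoplus_i \mathcal E_i
\end{CD}
\]
whose bottom row is the identity by naturality of $\epsilon$ applied to the family $\iota_i$. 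Hence $\mathfrak C^H(\alpha)$ is an isomorphism in ${_B}\mathfrak S$.

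Next, I would invoke the torsion theory of Proposition \ref{P8.1dc}. The standing assumption $\hat{\mathcal I} \in \mathcal F({_A}\mathfrak S^H)$ combined with Theorem \ref{alpha} gives $\mathcal F({_A}\mathfrak S^H) = {_A}\mathfrak S^H$, and since $\mathcal T({_A}\mathfrak S^H) \cap \mathcal F({_A}\mathfrak S^H) = 0$ in any torsion theory, we obtain $\mathcal T({_A}\mathfrak S^H) = 0$. In other words, $\mathfrak C^H(\mathcal M)=0$ forces $\mathcal M = 0$. Combined with the exactness of $\mathfrak C^H$ from Lemma \ref{L7.4b}, a short argument on the kernel and the cokernel of $\alpha$ then shows that $\mathfrak C^H$ reflects isomorphisms, and applying this to $\alpha$ completes the proof.

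The only point requiring care is the commutativity of the square above, which amounts to naturality of the counit $\epsilon$ of $\mathfrak C^H \dashv {_B}HOM(A,-)$ along the coproduct injections $\iota_i$; this is a formal consequence of the construction of the coproduct and the adjunction, and I do not anticipate any genuine obstacle beyond it.
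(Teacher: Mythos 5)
Your proof is correct, but it takes a genuinely different route from the paper's. The paper argues through injectivity: each ${_B}HOM(A,\mathcal E_i)$ is injective in ${_A}\mathfrak S^H$ (Corollary \ref{C7.9}), the direct sum $\mathcal E=\bigoplus_{i\in I}{_B}HOM(A,\mathcal E_i)$ is then injective by Proposition \ref{P8.9wt} --- this is where the standing noetherian hypotheses on $A$ and $\mathfrak S$ enter --- and since ${_A}\mathfrak K^H(\mathcal E)=0$, Lemma \ref{L8.04p} gives an essential monomorphism $\mathcal E\hookrightarrow {_B}HOM(A,\mathfrak C^H(\mathcal E))$ which injectivity forces to be an isomorphism; computing $\mathfrak C^H(\mathcal E)\cong \bigoplus_{i\in I}\mathcal E_i$ via Proposition \ref{P7.11sgs} then finishes. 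You instead build the canonical comparison map $\alpha$, check that $\mathfrak C^H(\alpha)$ is an isomorphism (note $\mathfrak C^H$ preserves coproducts automatically, being a left adjoint by Proposition \ref{P7.8vk}), and conclude by conservativity of $\mathfrak C^H$: under the standing assumption $\mathcal F({_A}\mathfrak S^H)={_A}\mathfrak S^H$ from Theorem \ref{alpha} one has $\mathcal T({_A}\mathfrak S^H)=\mathcal T({_A}\mathfrak S^H)\cap\mathcal F({_A}\mathfrak S^H)=0$, so exactness of $\mathfrak C^H$ annihilates the kernel and cokernel of $\alpha$. Your argument is both shorter and strictly more general: it never uses injectivity of the $\mathcal E_i$ nor the noetherian hypotheses (1)--(2), only hypothesis (3), so it in fact shows that ${_B}HOM(A,\_\_)$ preserves \emph{arbitrary} direct sums --- a nontrivial cocontinuity statement for a right adjoint. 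What the paper's proof buys in exchange is the auxiliary facts established en route ($\mathcal E$ is injective and of the coinduced form ${_B}HOM(A,\mathfrak C^H(\mathcal E))$), which fit the machinery reused later in the section. One small simplification to your write-up: you do not need to identify the isomorphism of Proposition \ref{P7.11sgs} with the counit of the adjunction; any natural isomorphism $\mathfrak C^H\circ {_B}HOM(A,\_\_)\cong id$ makes your square commute along the coproduct injections, and the isomorphism of Proposition \ref{P7.11sgs} is natural since it is assembled from the natural adjunction isomorphisms via Yoneda.
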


\begin{proof} We set $\mathcal E:=\underset{i\in I}{\bigoplus}{_B}HOM(A,\mathcal E_i)$. Since ${_B}HOM(A,\_\_)$ preserves injectives, each $ {_B}HOM(A,\mathcal E_i)\in {_A}\mathfrak S^H$ is injective. Using Proposition \ref{P8.9wt}, the direct sum $\mathcal E=\underset{i\in I}{\bigoplus}{_B}HOM(A,\mathcal E_i)$ is injective in ${_A}\mathfrak S^H$. Since
$ {_A}\mathfrak S^H=\mathcal F({_A}\mathfrak S^H)$, we know that ${_A}\mathfrak K^H(\mathcal E)=0$. By Lemma \ref{L8.04p}, the canonical morphism $\mathcal E\longrightarrow {_B}HOM(A,\mathfrak C^H(\mathcal E))$ is an essential monomorphism in ${_A}\mathfrak S^H$. Since $\mathcal E\in {_A}\mathfrak S^H$ is an injective object, it now follows that $\mathcal E\cong  {_B}HOM(A,\mathfrak C^H(\mathcal E))$. Using Proposition \ref{P7.11sgs} and the fact that $\mathfrak C^H$ preserves direct sums, we now have
\begin{equation}
\mathfrak C^H(\mathcal E)=\underset{i\in I}{\bigoplus}\textrm{ }\mathfrak C^H\left({_B}HOM(A,\mathcal E_i)\right)\cong \underset{i\in I}{\bigoplus} \mathcal E_i
\end{equation} The result now follows by applying ${_B}HOM(A,\_\_)$.

\end{proof}

\begin{lem}\label{L8.16cop}  Let $\mathcal M\longrightarrow {_B}HOM(A,\mathcal N)$ be an essential monomorphism with $\mathcal M\in {_A}\mathfrak S^H$ and $\mathcal 
N\in {_B}\mathfrak S$. Then, $\mathfrak C^H(\mathcal M)\longrightarrow \mathfrak C^H({_B}HOM(A,\mathcal N))=\mathcal N$ is an essential monomorphism in ${_B}\mathfrak S$.

\end{lem}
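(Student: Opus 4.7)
The plan is to verify the two requirements for an essential monomorphism: that $\mathfrak C^H(\mathcal M)\hookrightarrow \mathcal N$ is a monomorphism, and that it meets every nonzero subobject of $\mathcal N$ nontrivially. The first is immediate from the exactness of $\mathfrak C^H$ established in Lemma \ref{L7.4b}, together with the identification $\mathfrak C^H({_B}HOM(A,\mathcal N))\cong \mathcal N$ of Proposition \ref{P7.11sgs}.

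For the essentiality, I would start from an arbitrary nonzero subobject $\mathcal N'\subseteq \mathcal N$ in ${_B}\mathfrak S$ and pull it up to a nonzero subobject of ${_B}HOM(A,\mathcal N)$ in ${_A}\mathfrak S^H$. Since ${_B}HOM(A,\_\_)$ is a right adjoint (Proposition \ref{P7.8vk}), it preserves monomorphisms, giving a mono ${_B}HOM(A,\mathcal N')\hookrightarrow {_B}HOM(A,\mathcal N)$ in ${_A}\mathfrak S^H$; it is nonzero because Proposition \ref{P7.11sgs} gives $\mathfrak C^H({_B}HOM(A,\mathcal N'))\cong \mathcal N'\neq 0$. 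Using the hypothesis that $\mathcal M\hookrightarrow {_B}HOM(A,\mathcal N)$ is essential in ${_A}\mathfrak S^H$, we deduce that $\mathcal L:=\mathcal M\cap {_B}HOM(A,\mathcal N')\neq 0$.

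Now I would transport this nonzero intersection back to ${_B}\mathfrak S$ using two facts. First, since $\mathcal L$ is a nonzero subobject of ${_B}HOM(A,\mathcal N')$ in ${_A}\mathfrak S^H$, Lemma \ref{L7.12aq}(a) forces $\mathfrak C^H(\mathcal L)\neq 0$. Second, the functor $\mathfrak C^H$ preserves intersections: it is a right adjoint to $A\otimes_B\_\_$ by Proposition \ref{P7.10gs}, hence it commutes with finite limits, and pullbacks of subobjects are exactly the intersections. Consequently,
\begin{equation*}
\mathfrak C^H(\mathcal L)=\mathfrak C^H(\mathcal M)\cap \mathfrak C^H({_B}HOM(A,\mathcal N'))=\mathfrak C^H(\mathcal M)\cap \mathcal N'\neq 0,
\end{equation*}
which is precisely the essentiality condition.

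There is no serious obstacle; the only point worth checking carefully is the commutation of $\mathfrak C^H$ with the intersection, which is formal once one remembers that $\mathfrak C^H$ is both exact and a right adjoint, so it preserves the pullback square defining $\mathcal M\cap {_B}HOM(A,\mathcal N')$ inside ${_B}HOM(A,\mathcal N)$.
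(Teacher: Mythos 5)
Your proof is correct, but it runs in the opposite direction from the paper's, and the comparison is instructive. The paper argues by contradiction: assuming $0\ne\mathcal M'\subseteq\mathcal N$ meets $\mathfrak C^H(\mathcal M)$ trivially, it applies the right adjoint ${_B}HOM(A,\_\_)$ to obtain ${_B}HOM(A,\mathfrak C^H(\mathcal M))\cap {_B}HOM(A,\mathcal M')=0$ inside ${_B}HOM(A,\mathcal N)$, then invokes the standing torsion-freeness hypothesis ${_A}\mathfrak K^H(\mathcal M)=0$ (through Lemma \ref{L8.02p}) to embed $\mathcal M\subseteq {_B}HOM(A,\mathfrak C^H(\mathcal M))$, so that essentiality of $\mathcal M$ upstairs forces ${_B}HOM(A,\mathcal M')=0$, whence $\mathcal M'=\mathfrak C^H({_B}HOM(A,\mathcal M'))=0$ by Proposition \ref{P7.11sgs}. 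You instead transport downward: you intersect $\mathcal M$ with the nonzero subobject ${_B}HOM(A,\mathcal N')$ in ${_A}\mathfrak S^H$, use essentiality there to get $\mathcal L\ne 0$, then apply $\mathfrak C^H$ --- which preserves intersections since it is a right adjoint by Proposition \ref{P7.10gs} (exactness alone would also suffice, as an exact additive functor preserves finite limits) --- together with Lemma \ref{L7.12aq}(a) to conclude $\mathfrak C^H(\mathcal M)\cap\mathcal N'=\mathfrak C^H(\mathcal L)\ne 0$. A genuine payoff of your route is that it nowhere uses the standing assumption $\hat{\mathcal I}=\underline{Hom}(A,\mathcal I)\otimes H\in\mathcal F({_A}\mathfrak S^H)$ (equivalently, that every object satisfies ${_A}\mathfrak K^H(\mathcal M)=0$), on which the paper's proof depends: your argument needs only the cosemisimplicity of $H$ and the coinduction toolkit of Propositions \ref{P7.8vk}, \ref{P7.10gs}, \ref{P7.11sgs} and Lemma \ref{L7.12aq}, so it establishes the lemma in slightly greater generality. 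The one point you rightly flag --- commuting $\mathfrak C^H$ past the intersection --- is indeed formal, and the identification $\mathfrak C^H({_B}HOM(A,\mathcal N'))\cong\mathcal N'$ is natural in $\mathcal N'$, hence compatible with the inclusion $\mathcal N'\subseteq\mathcal N$, exactly as your final computation requires.
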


\begin{proof} Using Proposition \ref{P7.11sgs} and the fact that $\mathfrak C^H$ is exact, we have an inclusion $\mathfrak C^H(\mathcal M)\hookrightarrow \mathfrak C^H({_B}HOM(A,\mathcal N))=\mathcal N$. We suppose there exists $0\ne \mathcal M'\subseteq \mathcal N$ such that $\mathcal M'\cap \mathfrak C^H(\mathcal M)=0$. Since ${_B}HOM(A,\_\_)$ is a right adjoint, this gives us
\begin{equation}\label{817qew}
0={_B}HOM(A,\mathfrak C^H(\mathcal M))\cap {_B}HOM(A,\mathcal M')\subseteq {_B}HOM(A,\mathcal N)
\end{equation} Since $ {_A}\mathfrak K^H(\mathcal M)=0$, we know that $\mathcal M\subseteq {_B}HOM(A,\mathfrak C^H(\mathcal M))$ and it follows from \eqref{817qew} that $\mathcal M
\cap  {_B}HOM(A,\mathcal M')=0$. We are given that $\mathcal M\hookrightarrow {_B}HOM(A,\mathcal N)$ is an essential monomorphism, which now shows that 
${_B}HOM(A,\mathcal M')=0$. Applying  Proposition \ref{P7.11sgs} again, we have $\mathcal M'=\mathfrak C^H({_B}HOM(A,\mathcal M'))=0$.

\end{proof}

For the final result of this paper, we will need to recall some concepts from our work in \cite{BanKr}. We note that if $T$ is a commutative noetherian $k$-algebra and $\mathcal M$ is an object of ${_T}\mathfrak S$, we may consider the annihilator
\begin{equation}
Ann_{{_T}\mathfrak S}(\mathcal M):=\{\mbox{$a\in R$ $\vert$ $a_{\mathcal M}=0:\mathcal M\xrightarrow{\quad a\cdot \quad}\mathcal M$}\}
\end{equation} It is clear that $Ann_{{_T}\mathfrak S}(\mathcal M)$ is an ideal in $T$. 

\begin{defn}\label{D8.16} (see \cite[Definition 2.1]{BanKr})
Let $\mathfrak S$ be a strongly locally noetherian $k$-linear category and let $T$ be a commutative noetherian $k$-algebra. We will say that an object $0\ne \mathcal L\in {_T}\mathfrak S$ is $T$-elementary if it satisfies the following conditions:

\smallskip
(a) $\mathcal L$ is finitely generated as an object of ${_T}\mathfrak S$, i.e., the functor ${_T}\mathfrak S(\mathcal L,\_\_)$ preserves filtered colimits of monomorphisms.

\smallskip
(b) There exists a prime ideal $\mathfrak p\subseteq T$ such that any non-zero subobject $\mathcal  L'\subseteq \mathcal L$ satisfies $Ann_{{_T}\mathfrak S}(\mathcal L')=\mathfrak p$. 
\end{defn}

Suppose that $\mathcal M\in {_T}\mathfrak S$ and we have a $T$-elementary object $\mathcal L\subseteq \mathcal M$ such that $Ann_{{_T}\mathfrak S}(\mathcal L)=\mathfrak p$. Then, we will say that $\mathfrak p$ is an associated prime of $\mathcal M$. The set of associated primes of $\mathcal M$ is denoted by $Ass_{{_T}\mathfrak S}(\mathcal M)$. In this paper, we will need a few basic properties of the theory of  associated primes for objects in ${_T}\mathfrak S$ that we have developed in \cite{BanKr}.

\smallskip
(1) For any $0\ne \mathcal M\in {_T}\mathfrak S$, we have $Ass_{{_T}\mathfrak S}(\mathcal M)\ne \phi$.

\smallskip
(2) If $0\longrightarrow \mathcal M'\longrightarrow \mathcal M\longrightarrow \mathcal M''\longrightarrow 0$ is a short exact sequence in ${_T}\mathfrak S$, we have 
\begin{equation*} Ass_{{_T}\mathfrak S}(\mathcal M')\subseteq Ass_{{_T}\mathfrak S}(\mathcal M)\subseteq Ass_{{_T}\mathfrak S}(\mathcal M')\cup Ass_{{_T}\mathfrak S}(\mathcal M'')
\end{equation*}
(3) If $\mathcal M'\subseteq \mathcal M$ is an essential subobject, then $Ass_{{_T}\mathfrak S}(\mathcal M')=Ass_{{_T}\mathfrak S}(\mathcal M)$.

\smallskip
(4) Every injective in ${_T}\mathfrak S$ can be expressed as a direct sum of injective envelopes of $T$-elementary objects in $\mathfrak S$.

\smallskip
For our purposes in this paper, we will need to refine the result of (4).

\begin{lem}\label{L8.17fx}
Let $\mathfrak S$ be a strongly locally noetherian $k$-linear category and let $T$ be a commutative noetherian $k$-algebra. Let $0\ne \mathcal E\in {_T}\mathfrak S$ be an injective object and $\mathfrak p\in Ass_{{_T}\mathfrak S}(\mathcal E)$ be a prime ideal. Then, there exists an injective $\mathcal E(\mathfrak p)\subseteq \mathcal E$ such that:

\smallskip
(1) $\mathcal E(\mathfrak p)$ is a direct sum of injective envelopes of $T$-elementary objects

\smallskip
(2)  $Ass_{{_T}\mathfrak S}(\mathcal E(\mathfrak p))=\{\mathfrak p\}$ and $\mathcal E(\mathfrak p)$ is maximal with respect to the  collection of subobjects $\mathcal N\subseteq \mathcal E$ such that $Ass_{{_T}\mathfrak S}(\mathcal N)=\{\mathfrak p\}$.
\end{lem}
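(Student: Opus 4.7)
The plan is to use the decomposition recalled as property (4): write $\mathcal E = \bigoplus_{i\in I}\mathcal E(\mathcal L_i)$ where each $\mathcal L_i \in {_T}\mathfrak S$ is $T$-elementary with $Ann_{{_T}\mathfrak S}(\mathcal L_i)=\mathfrak p_i$. Since $\mathcal L_i \hookrightarrow \mathcal E(\mathcal L_i)$ is an essential subobject, property (3) gives $Ass_{{_T}\mathfrak S}(\mathcal E(\mathcal L_i)) = Ass_{{_T}\mathfrak S}(\mathcal L_i) = \{\mathfrak p_i\}$. Partition $I$ according to the associated prime, writing $I_{\mathfrak p}:=\{i\in I : \mathfrak p_i=\mathfrak p\}$, and set
\begin{equation*}
\mathcal E(\mathfrak p) := \bigoplus_{i\in I_{\mathfrak p}}\mathcal E(\mathcal L_i)\qquad \mathcal E^c(\mathfrak p) := \bigoplus_{i\notin I_{\mathfrak p}}\mathcal E(\mathcal L_i)
\end{equation*}
so that $\mathcal E = \mathcal E(\mathfrak p)\oplus \mathcal E^c(\mathfrak p)$. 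Because $\mathfrak S$ is strongly locally noetherian, ${_T}\mathfrak S$ is locally noetherian, hence direct sums of injectives are injective; therefore $\mathcal E(\mathfrak p)$ is injective and part (1) holds by construction. Since $\mathfrak p \in Ass_{{_T}\mathfrak S}(\mathcal E)$, at least one $\mathfrak p_i$ equals $\mathfrak p$, so $I_{\mathfrak p}\ne \emptyset$.

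Next I would verify that $Ass_{{_T}\mathfrak S}(\mathcal E(\mathfrak p))=\{\mathfrak p\}$. Given any $T$-elementary subobject $\mathcal L\hookrightarrow \mathcal E(\mathfrak p)$, the finite generation clause in Definition \ref{D8.16}(a) allows us to factor $\mathcal L$ through $\bigoplus_{i\in F}\mathcal E(\mathcal L_i)$ for some finite $F \subseteq I_{\mathfrak p}$ (expressing $\mathcal E(\mathfrak p)$ as the filtered colimit along monomorphisms of its finite subsums). Iterating property (2) on this finite direct sum yields $Ass_{{_T}\mathfrak S}(\mathcal L)\subseteq \{\mathfrak p_i : i\in F\}= \{\mathfrak p\}$, whence $Ass_{{_T}\mathfrak S}(\mathcal E(\mathfrak p))=\{\mathfrak p\}$. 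The very same argument applied to $\mathcal E^c(\mathfrak p)$ shows that $\mathfrak p\notin Ass_{{_T}\mathfrak S}(\mathcal E^c(\mathfrak p))$.

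For the maximality clause, suppose $\mathcal E(\mathfrak p)\subseteq \mathcal N\subseteq \mathcal E$ with $Ass_{{_T}\mathfrak S}(\mathcal N)=\{\mathfrak p\}$. The injectivity of $\mathcal E(\mathfrak p)$ splits the inclusion $\mathcal E(\mathfrak p) \hookrightarrow \mathcal N$, so that $\mathcal N = \mathcal E(\mathfrak p)\oplus \mathcal N''$, and the restriction of the projection $\pi:\mathcal E\twoheadrightarrow \mathcal E^c(\mathfrak p)$ identifies $\mathcal N''$ with a subobject of $\mathcal E^c(\mathfrak p)$. By property (2) of $Ass$, we have both $Ass_{{_T}\mathfrak S}(\mathcal N'')\subseteq Ass_{{_T}\mathfrak S}(\mathcal N)=\{\mathfrak p\}$ and $Ass_{{_T}\mathfrak S}(\mathcal N'')\subseteq Ass_{{_T}\mathfrak S}(\mathcal E^c(\mathfrak p))$; since the second set does not contain $\mathfrak p$, we conclude $Ass_{{_T}\mathfrak S}(\mathcal N'')=\emptyset$, and property (1) then forces $\mathcal N''=0$ and $\mathcal N = \mathcal E(\mathfrak p)$.

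The main technical point is the passage from finite to infinite direct sums when computing associated primes; this relies on the finite generation built into Definition \ref{D8.16}(a), which guarantees that every $T$-elementary subobject of a direct sum factors through finitely many summands. Everything else is a bookkeeping application of the properties of $Ass_{{_T}\mathfrak S}$ recalled from \cite{BanKr} together with the splitting of monomorphisms out of injective subobjects.
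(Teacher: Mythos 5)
Your proposal is correct, but it reaches the lemma by a genuinely different route than the paper. The paper never invokes the global decomposition (4) in its proof: it builds $\mathcal E(\mathfrak p)$ from scratch by applying Zorn's lemma to families of subobjects of $\mathcal E$, each an injective envelope of a $T$-elementary object with annihilator $\mathfrak p$, whose sum is direct; it then proves maximality by contradiction, splitting a strictly larger $\mathcal N$ as $\mathcal E(\mathfrak p)\oplus \mathcal N'$, extracting an elementary $\mathcal L'\subseteq \mathcal N'$ with $Ann_{{_T}\mathfrak S}(\mathcal L')=\mathfrak p$, and adjoining an injective envelope $\mathcal E(\mathcal L')$ to the Zorn-maximal family. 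You instead take the decomposition of property (4) as given, group the summands by their associated prime, and thereby obtain not only $\mathcal E(\mathfrak p)$ but also a complement $\mathcal E^c(\mathfrak p)$ with $\mathfrak p\notin Ass_{{_T}\mathfrak S}(\mathcal E^c(\mathfrak p))$; maximality then follows directly, with no contradiction argument and no fresh appeal to Zorn (it is absorbed into (4), proved in \cite{BanKr}). Both proofs pivot on the same two technical facts, used identically: finite generation of elementary objects (Definition \ref{D8.16}(a)) to reduce the computation of $Ass$ on an infinite direct sum to finite subsums, and local noetherianity of ${_T}\mathfrak S$ for injectivity of direct sums. What your version buys is a cleaner maximality step and, in effect, most of Proposition \ref{P8.18xk} en route, since your partition already exhibits $\mathcal E\cong \underset{\mathfrak p}{\bigoplus}\,\mathcal E(\mathfrak p)$; what the paper's version buys is independence from the full strength of (4), needing only properties (1)--(3) and the existence of injective envelopes. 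Two small elisions in your write-up are worth making explicit, though neither is a gap: the claim $I_{\mathfrak p}\ne \emptyset$ needs the same finite-subsum argument you use later (an elementary $\mathcal L\subseteq \mathcal E$ with $Ann_{{_T}\mathfrak S}(\mathcal L)=\mathfrak p$ embeds in a finite subsum, whence $\mathfrak p\in \{\mathfrak p_i : i\in F\}$), and the restriction $\pi|_{\mathcal N''}$ is monic because $\mathcal N''\cap Ker(\pi)=\mathcal N''\cap \mathcal E(\mathfrak p)=0$ by the internal splitting $\mathcal N=\mathcal E(\mathfrak p)\oplus \mathcal N''$.
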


\begin{proof} We consider families $\{\mathcal E_i\}_{i\in I}$ of subobjects of $\mathcal E$ satisfying the following conditions:

\smallskip
(a) Each $\mathcal E_i$ is the injective envelope of a $T$-elementary object $\mathcal L$ such that $Ann_{{_T}\mathfrak S}(\mathcal L)=\{\mathfrak p\}$. 

\smallskip
(b) The sum $\sum_{i\in I}\mathcal E_i\subseteq \mathcal E$ is direct.

\smallskip
Since ${_T}\mathfrak S$ is a Grothendieck category, the condition (b) is equivalent to the assumption that $0=\mathcal E_i\cap (\sum_{j\in J}\mathcal E_j)$ for every $i\in I$ and every finite subset $J\subseteq I\backslash \{i\}$. By Zorn's lemma, it follows that there exists a maximal such family $\{\mathcal E_i\}_{i\in I_0}$. We now set $\mathcal 
E(\mathfrak p):=\underset{i\in I_0}{\bigoplus}\mathcal E_i$. 
Since ${_T}\mathfrak S$ is locally noetherian, we see that the direct sum $\mathcal 
E(\mathfrak p):=\underset{i\in I_0}{\bigoplus}\mathcal E_i$ is injective. 

\smallskip
We now claim that $Ass_{{_T}\mathfrak S}(\mathcal E(\mathfrak p))=\{\mathfrak p\}$. Indeed, from properties (2) and (3) above, it is clear that $Ass_{{_T}\mathfrak S}\left(\underset{j'\in J'}{\bigoplus}\mathcal E_{j'}\right)=\{\mathfrak p\}$ for any finite direct sum with $J'\subseteq I_0$. Since $T$-elementary objects are finitely generated, it follows that any $T$-elementary object $\mathcal L\subseteq \mathcal E(\mathfrak p)$ is contained in a finite direct sum of objects in $I_0$. Hence, $Ann_{{_T}\mathfrak S}(\mathcal L)=\mathfrak p$, which shows that $Ass_{{_T}\mathfrak S}(\mathcal E(\mathfrak p))=\{\mathfrak p\}$.

\smallskip 
Finally, suppose that we have $\mathcal E(\mathfrak p)\subsetneq\mathcal N\subseteq \mathcal E$ with 
$Ass_{{_T}\mathfrak S}(\mathcal N)=\{\mathfrak p\}$. Since $\mathcal E(\mathfrak p)$ is injective, we can write $\mathcal N=\mathcal E(\mathfrak p)\oplus\mathcal N'$, with $\mathcal N'\ne 0$. Then, $\phi\ne Ass_{{_T}\mathfrak S}(\mathcal N')\subseteq Ass_{{_T}\mathfrak S}(\mathcal N)=\{\mathfrak p\}$, which gives $Ass_{{_T}\mathfrak S}(\mathcal N')=\{\mathfrak p\}$. Hence, there is a $T$-elementary object $\mathcal L'\subseteq \mathcal N'$ satisfying $Ann_{{_T}\mathfrak S}(\mathcal L')=\{\mathfrak p\}$. 

\smallskip
Since $\mathcal E$ is injective, we now consider an injective envelope $\mathcal E(\mathcal L')\subseteq \mathcal E$ of $\mathcal L'$. Since $\mathcal L'\subseteq \mathcal N'$, we must have
$\mathcal E(\mathfrak p)\cap \mathcal L'=0$. Since $\mathcal L'\subseteq \mathcal E(\mathcal L')$ is essential, it now follows that $\mathcal E(\mathfrak p)\cap \mathcal E(\mathcal L')=0$. Then, the family $\{\mathcal E_i\}_{i\in I_0}\cup \{\mathcal E(\mathcal L')\}$ also satisfies the conditions (a) and (b), which contradicts the maximalilty of the family $\{\mathcal E_i\}_{i\in I_0}$. 

\end{proof}

\begin{thm}\label{P8.18xk}  Let $\mathfrak S$ be a strongly locally noetherian $k$-linear category and let $T$ be a commutative noetherian $k$-algebra. Let $\mathcal E\in {_T}\mathfrak S$ be an injective object. For each prime ideal $\mathfrak p\in Ass_{{_T}\mathfrak S}(\mathcal E)$, let $\mathcal E(\mathfrak p)\subseteq \mathcal E$ be an injective that is maximal   with respect to the  collection of subobjects $\mathcal N\subseteq \mathcal E$ such that $Ass_{{_T}\mathfrak S}(\mathcal N)=\{\mathfrak p\}$. Then, we can write
\begin{equation}
\mathcal E\cong \underset{\mathfrak p\in Ass_{{_T}\mathfrak S}(\mathcal E)}{\bigoplus}\textrm{ }\mathcal E(\mathfrak p)
\end{equation}
\end{thm}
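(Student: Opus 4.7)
The plan is to prove the decomposition in two parts: first, that the family $\{\mathcal E(\mathfrak p)\}_{\mathfrak p \in Ass_{{_T}\mathfrak S}(\mathcal E)}$ sits in direct-sum position inside $\mathcal E$; second, that its sum exhausts $\mathcal E$. A preliminary lemma I would establish upfront, by iterating property (2) of associated primes listed after Definition \ref{D8.16}, is that for any finite direct sum in ${_T}\mathfrak S$ one has $Ass_{{_T}\mathfrak S}(\mathcal M_1 \oplus \cdots \oplus \mathcal M_n) = Ass_{{_T}\mathfrak S}(\mathcal M_1) \cup \cdots \cup Ass_{{_T}\mathfrak S}(\mathcal M_n)$.

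For directness, I would induct on the size $n$ of a finite subfamily $\{\mathfrak p_1,\dots,\mathfrak p_n\} \subseteq Ass_{{_T}\mathfrak S}(\mathcal E)$. Assuming inductively that $\mathcal E(\mathfrak p_1) + \cdots + \mathcal E(\mathfrak p_{n-1}) = \mathcal E(\mathfrak p_1) \oplus \cdots \oplus \mathcal E(\mathfrak p_{n-1})$, suppose the intersection $\mathcal N := \mathcal E(\mathfrak p_n) \cap (\mathcal E(\mathfrak p_1) \oplus \cdots \oplus \mathcal E(\mathfrak p_{n-1}))$ is nonzero. By property (1), $Ass_{{_T}\mathfrak S}(\mathcal N)$ would be a nonempty subset of both $Ass_{{_T}\mathfrak S}(\mathcal E(\mathfrak p_n)) = \{\mathfrak p_n\}$ and $\{\mathfrak p_1,\dots,\mathfrak p_{n-1}\}$ (using the preliminary lemma), contradicting $\mathfrak p_n \notin \{\mathfrak p_1,\dots,\mathfrak p_{n-1}\}$. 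Hence $\mathcal N = 0$ and the total sum $\mathcal F := \sum_{\mathfrak p} \mathcal E(\mathfrak p) \subseteq \mathcal E$ is direct.

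For exhaustiveness, I use that $\mathfrak S$ being strongly locally noetherian forces ${_T}\mathfrak S$ to be locally noetherian, so direct sums of injectives in ${_T}\mathfrak S$ are injective. Thus $\mathcal F$ is injective and splits off as $\mathcal E = \mathcal F \oplus \mathcal E'$. If $\mathcal E' \ne 0$, property (1) produces a prime $\mathfrak q \in Ass_{{_T}\mathfrak S}(\mathcal E') \subseteq Ass_{{_T}\mathfrak S}(\mathcal E)$ together with a $T$-elementary subobject $\mathcal L \subseteq \mathcal E'$ satisfying $Ann_{{_T}\mathfrak S}(\mathcal L) = \mathfrak q$. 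Since $\mathcal E'$ is injective, I may choose an injective envelope $\mathcal E(\mathcal L) \subseteq \mathcal E'$ of $\mathcal L$; by property (3) on essential subobjects, $Ass_{{_T}\mathfrak S}(\mathcal E(\mathcal L)) = \{\mathfrak q\}$. The subobjects $\mathcal E(\mathcal L) \subseteq \mathcal E'$ and $\mathcal E(\mathfrak q) \subseteq \mathcal F$ lie in complementary summands of $\mathcal E$, so their sum is direct; by the preliminary lemma, $Ass_{{_T}\mathfrak S}(\mathcal E(\mathfrak q) \oplus \mathcal E(\mathcal L)) = \{\mathfrak q\}$, which contradicts the maximality of $\mathcal E(\mathfrak q)$ among subobjects of $\mathcal E$ with associated-prime set $\{\mathfrak q\}$. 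Therefore $\mathcal E' = 0$ and the decomposition holds.

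The argument is essentially a Zorn/maximality-style proof powered by the associated-prime formalism from \cite{BanKr}, so I do not anticipate a serious obstacle. The only minor technical point requiring care is the preliminary identity for associated primes of finite direct sums, which is not stated explicitly among the listed properties but follows routinely from property (2) applied to the short exact sequences $0 \to \mathcal M_i \to \mathcal M_1 \oplus \cdots \oplus \mathcal M_n \to \bigoplus_{j \ne i}\mathcal M_j \to 0$.
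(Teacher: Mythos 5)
Your proposal is correct and follows essentially the same route as the paper's proof: the same induction on finite subfamilies to get directness of $\sum_{\mathfrak p}\mathcal E(\mathfrak p)$, the same use of local noetherianity of ${_T}\mathfrak S$ to split $\mathcal E=\mathcal F\oplus\mathcal E'$, and the same contradiction via a $T$-elementary $\mathcal L\subseteq\mathcal E'$ and its injective envelope. You are in fact slightly more explicit than the paper at two points it leaves terse --- the finite-direct-sum identity for $Ass_{{_T}\mathfrak S}$ and the witness $\mathcal E(\mathfrak q)\oplus\mathcal E(\mathcal L)$ that violates maximality --- but these are elaborations, not a different argument.
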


\begin{proof}
We will first show that the sum $\underset{\mathfrak p\in Ass_{{_T}\mathfrak S}(\mathcal E)}{\sum}\textrm{ }\mathcal E(\mathfrak p)\subseteq \mathcal E$ is direct. Since ${_T}\mathfrak S$ is a Grothendieck category, it suffices to show that $\mathcal E(\mathfrak p_1)+ ... + \mathcal E(\mathfrak p_k)\subseteq \mathcal E$ is direct for any finite collection $\{\mathfrak p_1,...,
\mathfrak p_k\}$ of prime ideals in $Ass_{{_T}\mathfrak S}(\mathcal E)$. This is obviously true for $k=1$. Proceeding by induction, if $k\geq 2$ and we set $\mathcal N'=\mathcal E(\mathfrak p_k)\cap (\mathcal E(\mathfrak p_1)\oplus ...\oplus\mathcal E(\mathfrak p_{k-1}))$, we have
\begin{equation}\label{819fy}
\begin{array}{c}
Ass_{{_T}\mathfrak S}(\mathcal N')\subseteq Ass_{{_T}\mathfrak S}(\mathcal E(\mathfrak p_k))=\{\mathfrak p_k\} \\
Ass_{{_T}\mathfrak S}(\mathcal N')\subseteq Ass_{{_T}\mathfrak S}(\mathcal E(\mathfrak p_1)\oplus ...\oplus\mathcal E(\mathfrak p_{k-1}))\subseteq \{\mathfrak p_1,...,\mathfrak p_{k-1}\}
\end{array}
\end{equation} From \eqref{819fy}, it follows  that $Ass_{{_T}\mathfrak S}(\mathcal N')=\phi$, which gives $\mathcal N'=0$. Hence, the sum $\mathcal E(\mathfrak p_1)+ ... + \mathcal E(\mathfrak p_k)\subseteq \mathcal E$ is direct. We now set $\mathcal E':=\underset{\mathfrak p\in Ass_{{_T}\mathfrak S}(\mathcal E)}{\bigoplus}\textrm{ }\mathcal E(\mathfrak p)$. Because  ${_T}\mathfrak S$ is locally noetherian,  the direct sum $\mathcal E'$ of injectives is injective. We now write 
$\mathcal E=\mathcal E'\oplus \mathcal E''$. 

\smallskip
Suppose that $\mathcal E''\ne 0$. Then, we can find a $T$-elementary object $\mathcal L\subseteq \mathcal E''$. Since $\mathcal E''$ is injective, we have $\mathcal E(\mathcal L)
\subseteq \mathcal E''$ for an injective envelope $\mathcal E(\mathcal L)$ of $\mathcal L$. But if $Ann_{{_T}\mathfrak S}(\mathcal L)=\{\mathfrak q\}$, this contradicts the maximality of
$\mathcal E(\mathfrak q)$. Hence, $\mathcal E''=0$. 
\end{proof}

We now return to the context of $(A,H)$-Hopf module objects in $\mathfrak S$, along with the assumption that $A$ is left noetherian, $\mathfrak S$ is strongly locally left noetherian and that the injective cogenerator $\hat{\mathcal I}=\underline{Hom}(A,\mathcal I)\otimes H\in {_A}\mathfrak S^H$, which gives $ {_A}\mathfrak S^H=\mathcal F({_A}\mathfrak S^H)$. We take
$\mathcal M\in {_A}\mathfrak S^H$. By Theorem \ref{T8.5gv} and Proposition \ref{P7.11sgs}, we know that
\begin{equation}\label{iso821}
\mathfrak C^H({_A}\mathcal E^H(\mathcal M))= \mathfrak C^H({_B}HOM(A,{_B}\mathcal E(\mathfrak C^H(\mathcal M))))={_B}\mathcal E(\mathfrak C^H(\mathcal M))
\end{equation} Here, ${_A}\mathcal E^H(\mathcal M)$ is the injective envelope of $\mathcal M\in {_A}\mathfrak S^H$ and ${_B}\mathcal E(\mathfrak C^H(\mathcal M))$ is the injective envelope
of $\mathfrak C^H(\mathcal M)\in {_B}\mathfrak S$. Our final aim is to obtain a direct sum decomposition for all the terms appearing in a minimal injective resolution of $\mathcal M\in {_A}\mathfrak S^H$
\begin{equation}\label{res822}
\mathcal M\xrightarrow{\psi^0} {_A}\mathcal E^{H0}(\mathcal M)\xrightarrow{\psi^1} {_A}\mathcal E^{H1}(\mathcal M)\xrightarrow{\psi^2} {_A}\mathcal E^{H2}(\mathcal M)\xrightarrow{\psi^3} \dots
\end{equation} Because \eqref{res822} is a minimal resolution, we note that each ${_A}\mathcal E^{H\bullet }(\mathcal M)/Im(\psi^\bullet)\hookrightarrow {_A}\mathcal E^{H\bullet+1}(\mathcal M)$ is an essential monomorphism.

\begin{lem}\label{L820g}
Let $\mathcal M\in {_A}\mathfrak S^H$. Then, $\mathfrak C^H({_A}\mathcal E^{H\bullet}(\mathcal M))$ is a minimal injective resolution of $\mathfrak C^H(\mathcal M)$ in ${_B}\mathfrak S$, i.e., 
$\mathfrak C^H({_A}\mathcal E^{H\bullet}(\mathcal M))={_B}\mathcal E^\bullet(\mathfrak C^H(\mathcal M))$. 
\end{lem}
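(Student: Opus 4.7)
The plan is to argue inductively along the minimal resolution \eqref{res822}, using at each stage three ingredients already established: $\mathfrak C^H$ is exact (Lemma~\ref{L7.4b}); $\mathfrak C^H$ sends injectives in ${_A}\mathfrak S^H$ to injectives in ${_B}\mathfrak S$ (Corollary~\ref{C8.75s}, which applies since our standing hypothesis forces $\mathcal F({_A}\mathfrak S^H)={_A}\mathfrak S^H$); and for any $\mathcal N\in{_A}\mathfrak S^H$ we have ${_A}\mathcal E^H(\mathcal N)\cong{_B}HOM(A,{_B}\mathcal E(\mathfrak C^H(\mathcal N)))$ (Theorem~\ref{T8.5gv}, again using ${_A}\mathfrak K^H(\mathcal N)=0$), so that Proposition~\ref{P7.11sgs} gives
\begin{equation*}
\mathfrak C^H({_A}\mathcal E^H(\mathcal N))\cong\mathfrak C^H({_B}HOM(A,{_B}\mathcal E(\mathfrak C^H(\mathcal N))))\cong{_B}\mathcal E(\mathfrak C^H(\mathcal N)).
\end{equation*}

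First I would apply $\mathfrak C^H$ term by term to \eqref{res822}. By exactness we obtain an exact sequence
\begin{equation*}
0\to\mathfrak C^H(\mathcal M)\xrightarrow{\mathfrak C^H(\psi^0)}\mathfrak C^H({_A}\mathcal E^{H0}(\mathcal M))\xrightarrow{\mathfrak C^H(\psi^1)}\mathfrak C^H({_A}\mathcal E^{H1}(\mathcal M))\to\cdots
\end{equation*}
in ${_B}\mathfrak S$, and every term $\mathfrak C^H({_A}\mathcal E^{Hn}(\mathcal M))$ is injective in ${_B}\mathfrak S$ by Corollary~\ref{C8.75s}. So it only remains to check that this is the \emph{minimal} injective resolution, i.e.\ that each $\mathfrak C^H({_A}\mathcal E^{Hn}(\mathcal M))$ is the injective envelope of the $n$-cocycles.

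For the base case $n=0$, the displayed computation above, applied to $\mathcal N=\mathcal M$, yields $\mathfrak C^H({_A}\mathcal E^{H0}(\mathcal M))\cong{_B}\mathcal E(\mathfrak C^H(\mathcal M))$, which is exactly the injective envelope of $\mathfrak C^H(\mathcal M)$. For the inductive step, suppose we have established that the first $n$ terms give $\mathfrak C^H({_A}\mathcal E^{Hi}(\mathcal M))={_B}\mathcal E^i(\mathfrak C^H(\mathcal M))$ for $0\le i\le n$. Set $\mathcal C_n:={_A}\mathcal E^{Hn}(\mathcal M)/\mathrm{Im}(\psi^n)\in{_A}\mathfrak S^H$, so that ${_A}\mathcal E^{H,n+1}(\mathcal M)={_A}\mathcal E^H(\mathcal C_n)$ by minimality. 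Applying exactness of $\mathfrak C^H$ gives $\mathfrak C^H(\mathcal C_n)\cong\mathfrak C^H({_A}\mathcal E^{Hn}(\mathcal M))/\mathrm{Im}(\mathfrak C^H(\psi^n))$, which is the $n$-th cocycle of the resolution in ${_B}\mathfrak S$. Applying the displayed computation to $\mathcal N=\mathcal C_n$ yields
\begin{equation*}
\mathfrak C^H({_A}\mathcal E^{H,n+1}(\mathcal M))=\mathfrak C^H({_A}\mathcal E^H(\mathcal C_n))\cong{_B}\mathcal E(\mathfrak C^H(\mathcal C_n)),
\end{equation*}
which is the injective envelope of the $n$-th cocycle. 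This closes the induction and identifies $\mathfrak C^H({_A}\mathcal E^{H\bullet}(\mathcal M))$ with ${_B}\mathcal E^\bullet(\mathfrak C^H(\mathcal M))$.

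The main obstacle is really just to be sure that the identifications at each stage are compatible under the connecting maps; since they arise from the same universal property (injective envelope of a cokernel) applied to the exact image of the original resolution under $\mathfrak C^H$, this is automatic once the exactness and injective-preservation of $\mathfrak C^H$ are in hand. One could alternatively phrase the essential extension step directly via Lemma~\ref{L8.16cop}, but invoking the identification ${_A}\mathcal E^H(\mathcal N)\cong{_B}HOM(A,{_B}\mathcal E(\mathfrak C^H(\mathcal N)))$ from Theorem~\ref{T8.5gv} is more efficient.
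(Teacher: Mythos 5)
Your argument is correct, but at the crucial minimality step it takes a genuinely different route from the paper. The paper does not induct: it writes each ${_A}\mathcal E^{H\bullet+1}(\mathcal M)={_B}HOM(A,\mathfrak C^H({_A}\mathcal E^{H\bullet+1}(\mathcal M)))$ (via Theorem \ref{T8.5gv} and Corollary \ref{C8.75s}) and then applies Lemma \ref{L8.16cop} to the essential monomorphism ${_A}\mathcal E^{H\bullet}(\mathcal M)/Im(\psi^\bullet)\hookrightarrow {_A}\mathcal E^{H\bullet+1}(\mathcal M)$, which transports essentiality through $\mathfrak C^H$ \emph{at the level of the actual maps of the complex}. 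You instead apply Theorem \ref{T8.5gv} to the (non-injective) cokernels $\mathcal C_n$ themselves --- legitimate, since Theorem \ref{alpha} makes every object of ${_A}\mathfrak S^H$ torsion free under the standing hypotheses --- and identify $\mathfrak C^H({_A}\mathcal E^{H,n+1}(\mathcal M))$ abstractly with ${_B}\mathcal E(\mathfrak C^H(\mathcal C_n))$ via Proposition \ref{P7.11sgs}. What your route glosses, and what Lemma \ref{L8.16cop} is tailored to dispose of, is precisely the point you defer to the final paragraph: an abstract isomorphism of the term with an injective envelope does not by itself show that the \emph{induced} map $\mathfrak C^H(\mathcal C_n)\longrightarrow \mathfrak C^H({_A}\mathcal E^{H,n+1}(\mathcal M))$ is the essential embedding. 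This is indeed automatic, but it deserves a line: the monomorphism of Theorem \ref{T8.5gv} is the composite $\mathcal C_n\rightarrow {_B}HOM(A,\mathfrak C^H(\mathcal C_n))\rightarrow {_B}HOM(A,{_B}\mathcal E(\mathfrak C^H(\mathcal C_n)))$, and applying $\mathfrak C^H$, the triangle identity for the adjunction of Proposition \ref{P7.8vk} together with the naturality of the isomorphism in Proposition \ref{P7.11sgs} shows the result is exactly the envelope inclusion $\mathfrak C^H(\mathcal C_n)\hookrightarrow {_B}\mathcal E(\mathfrak C^H(\mathcal C_n))$. With that line added your induction is complete; the trade-off is that your version makes the identification of each term with a named injective envelope explicit (which is then reused in Theorem \ref{Tfin6} anyway), while the paper's use of Lemma \ref{L8.16cop} is shorter because it never needs the compatibility check at all.
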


\begin{proof}
Since $\mathfrak C^H$ is exact, we know that the complex $\mathfrak C^H(\mathcal M)\longrightarrow \mathfrak C^H({_A}\mathcal E^{H\bullet}(\mathcal M))$ is exact. Since each ${_A}\mathcal E^{H\bullet}(\mathcal M)\in  {_A}\mathfrak S^H$ is injective, it follows by Corollary \ref{C8.75s} that each $\mathfrak C^H({_A}\mathcal E^{H\bullet}(\mathcal M))\in {_B}\mathfrak S$ is an injective object. We are given that each of the following is an essential monomorphism
\begin{equation}\label{ess823}
{_A}\mathcal E^{H\bullet }(\mathcal M)/Im(\psi^\bullet)\hookrightarrow {_A}\mathcal E^{H\bullet+1}(\mathcal M)={_B}HOM(A, \mathfrak C^H({_A}\mathcal E^{H\bullet+1}(\mathcal M)))
\end{equation} where the equality in \eqref{ess823} follows from Theorem \ref{T8.5gv}  and Corollary \ref{C8.75s}. Applying Lemma \ref{L8.16cop} and using the fact that $\mathfrak C^H$ is exact, we now see that each 
\begin{equation}
\mathfrak C^H({_A}\mathcal E^{H\bullet }(\mathcal M))/Im(\mathfrak C^H(\psi^\bullet))\hookrightarrow \mathfrak C^H({_A}\mathcal E^{H\bullet+1}(\mathcal M))
\end{equation}  is an essential monomorphism. This proves the result.
\end{proof}

\begin{Thm}\label{Tfin6}
Suppose that

\smallskip
(1) $A$ is a  left noetherian $k$-algebra and a right $H$-comodule algebra

\smallskip
(2) $B=A^{coH}$ is a noetherian and commutative $k$-algebra

\smallskip
(3) $\mathfrak S$ is strongly locally left noetherian

\smallskip
(4)  $\mathfrak S$ has an injective cogenerator $\mathcal I$ which satisfies 
 $\hat{\mathcal I}=\underline{Hom}(A,\mathcal I)\otimes H\in  \mathcal F({_A}\mathfrak S^H)$. 
 
 \smallskip
 Then for $\mathcal M\in {_A}\mathfrak S^H$, we have a direct sum decomposition
\begin{equation}\label{827qz}
{_A}\mathcal E^{H\bullet}(\mathcal M)= \underset{\mathfrak p\in Ass_{{_B}\mathfrak S}({_B}\mathcal E^\bullet(\mathfrak C^H(\mathcal M)))}{\bigoplus}\textrm{ } {_B}HOM(A,{_B}\mathcal E^\bullet(\mathfrak C^H(\mathcal M))(\mathfrak p))
\end{equation}
\end{Thm}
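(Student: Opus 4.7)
The plan is to reduce the decomposition of each term ${_A}\mathcal E^{Hn}(\mathcal M)$ in the minimal injective resolution to three ingredients already established: the identification of $\mathfrak C^H({_A}\mathcal E^{H\bullet}(\mathcal M))$ as a minimal injective resolution in ${_B}\mathfrak S$, the coinduction description of injective envelopes under Theorem \ref{T8.5gv}, and the associated-prime decomposition of injectives in ${_B}\mathfrak S$ coming from Proposition \ref{P8.18xk}. The key structural fact driving the argument is that our standing hypothesis $\hat{\mathcal I}\in\mathcal F({_A}\mathfrak S^H)$ forces $\mathcal F({_A}\mathfrak S^H)={_A}\mathfrak S^H$ by Theorem \ref{alpha}, so every object of ${_A}\mathfrak S^H$ is torsion free and Theorem \ref{T8.5gv} is available at every step.

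First, I would fix $n\geq 0$ and write $\mathcal M_0:=\mathcal M$ and $\mathcal M_n := {_A}\mathcal E^{H,n-1}(\mathcal M)/\mathrm{Im}(\psi^{n-1})$ for $n\geq 1$, so that ${_A}\mathcal E^{Hn}(\mathcal M)={_A}\mathcal E^H(\mathcal M_n)$ in the minimal resolution \eqref{res822}. Since $\mathcal M_n\in \mathcal F({_A}\mathfrak S^H)$, Theorem \ref{T8.5gv} yields an isomorphism ${_A}\mathcal E^{Hn}(\mathcal M)\cong {_B}HOM(A,{_B}\mathcal E(\mathfrak C^H(\mathcal M_n)))$. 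On the other hand, Lemma \ref{L820g} identifies $\mathfrak C^H({_A}\mathcal E^{H\bullet}(\mathcal M))$ with the minimal injective resolution ${_B}\mathcal E^\bullet(\mathfrak C^H(\mathcal M))$ in ${_B}\mathfrak S$, and since $\mathfrak C^H$ is exact (Lemma \ref{L7.4b}) one has ${_B}\mathcal E(\mathfrak C^H(\mathcal M_n))={_B}\mathcal E^n(\mathfrak C^H(\mathcal M))$. Combining gives
\begin{equation*}
{_A}\mathcal E^{Hn}(\mathcal M)\cong {_B}HOM\bigl(A,\,{_B}\mathcal E^n(\mathfrak C^H(\mathcal M))\bigr).
\end{equation*}

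Next, since $B$ is commutative and noetherian and $\mathfrak S$ is strongly locally noetherian, the category ${_B}\mathfrak S$ is locally noetherian, so Proposition \ref{P8.18xk} applies (with $T=B$) to the injective object ${_B}\mathcal E^n(\mathfrak C^H(\mathcal M))\in {_B}\mathfrak S$ and gives the decomposition
\begin{equation*}
{_B}\mathcal E^n(\mathfrak C^H(\mathcal M))\;\cong\;\bigoplus_{\mathfrak p\in Ass_{{_B}\mathfrak S}({_B}\mathcal E^n(\mathfrak C^H(\mathcal M)))}{_B}\mathcal E^n(\mathfrak C^H(\mathcal M))(\mathfrak p),
\end{equation*}
where by Lemma \ref{L8.17fx} each summand ${_B}\mathcal E^n(\mathfrak C^H(\mathcal M))(\mathfrak p)$ is itself an injective object in ${_B}\mathfrak S$. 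Applying ${_B}HOM(A,\_\_)$ and invoking Lemma \ref{L8.10db}, which guarantees that this coinduction functor commutes with direct sums of injectives (exactly the case we are in), we obtain \eqref{827qz}.

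I expect no serious obstacle: the argument is essentially a three-step composition of existing results. The only point requiring care is the compatibility check in the first paragraph, namely that Theorem \ref{T8.5gv} applied to the $n$-th cokernel $\mathcal M_n$ and Lemma \ref{L820g} applied to the whole resolution together produce the same ${_B}$-injective envelope; this is immediate from the uniqueness of minimal injective resolutions and the exactness of $\mathfrak C^H$, but it is the step where one has to be careful that the minimal resolution upstairs matches the minimal resolution downstairs rather than merely some injective resolution. Once this matching is verified, the application of Proposition \ref{P8.18xk} and the commutation via Lemma \ref{L8.10db} are formal.
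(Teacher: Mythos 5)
Your proof is correct and takes essentially the same route as the paper's: Lemma \ref{L820g} to identify $\mathfrak C^H({_A}\mathcal E^{H\bullet}(\mathcal M))$ with the minimal resolution ${_B}\mathcal E^\bullet(\mathfrak C^H(\mathcal M))$, Proposition \ref{P8.18xk} (with $T=B$) for the associated-prime decomposition, and Lemma \ref{L8.10db} to pull the direct sum out of ${_B}HOM(A,\_\_)$. The only cosmetic difference is that you apply Theorem \ref{T8.5gv} to the cosyzygies $\mathcal M_n$, whereas the paper applies Theorem \ref{T8.5gv} together with Corollary \ref{C8.75s} directly to the injective terms to get ${_A}\mathcal E^{H\bullet}(\mathcal M)={_B}HOM(A,\mathfrak C^H({_A}\mathcal E^{H\bullet}(\mathcal M)))$; both framings resolve your ``matching'' concern via Lemma \ref{L820g}.
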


\begin{proof}
By Lemma \ref{L820g}, we have $\mathfrak C^H({_A}\mathcal E^{H\bullet}(\mathcal M))={_B}\mathcal E^\bullet(\mathfrak C^H(\mathcal M))$. Since $B$ is  a noetherian and commutative $k$-algebra, it now follows from Proposition \ref{P8.18xk} that
\begin{equation}\label{825qz}
\mathfrak C^H({_A}\mathcal E^{H\bullet}(\mathcal M))={_B}\mathcal E^\bullet(\mathfrak C^H(\mathcal M))= \underset{\mathfrak p\in Ass_{{_B}\mathfrak S}({_B}\mathcal E^\bullet(\mathfrak C^H(\mathcal M)))}{\bigoplus}\textrm{ }{_B}\mathcal E^\bullet(\mathfrak C^H(\mathcal M))(\mathfrak p)
\end{equation} Since ${_A}\mathcal E^{H\bullet}(\mathcal M)\in {_A}\mathfrak S^H$ is injective, it follows by Theorem \ref{T8.5gv} and Corollary \ref{C8.75s} that 
\begin{equation}\label{826qz}
{_A}\mathcal E^{H\bullet}(\mathcal M)={_B}HOM(A, \mathfrak C^H({_A}\mathcal E^{H\bullet}(\mathcal M)))={_B}HOM\left(A, \underset{\mathfrak p\in Ass_{{_B}\mathfrak S}({_B}\mathcal E^\bullet(\mathfrak C^H(\mathcal M)))}{\bigoplus}\textrm{ }{_B}\mathcal E^\bullet(\mathfrak C^H(\mathcal M))(\mathfrak p) \right)
\end{equation} Applying Lemma \ref{L8.10db}, we now have the direct sum decomposition in \eqref{827qz}. 
\end{proof}

\small

\begin{bibdiv}
	\begin{biblist}
	
	\bib{AR}{book}{
   author={Ad\'{a}mek, J.},
   author={Rosick\'{y}, J.},
   title={Locally presentable and accessible categories},
   series={London Mathematical Society Lecture Note Series},
   volume={189},
   publisher={Cambridge University Press, Cambridge},
   date={1994},
   pages={xiv+316},
}

\bib{AZ0x}{article}{
   author={Artin, M.},
   author={Zhang, J. J.},
   title={Noncommutative projective schemes},
   journal={Adv. Math.},
   volume={109},
   date={1994},
   number={2},
   pages={228--287},
}

\bib{AZ1x}{article}{
   author={Artin, M.},
   author={Small, L. W.},
   author={Zhang, J. J.},
   title={Generic flatness for strongly Noetherian algebras},
   journal={J. Algebra},
   volume={221},
   date={1999},
   number={2},
   pages={579--610},
}

	\bib{AZ}{article}{
   author={Artin, M.},
   author={Zhang, J. J.},
   title={Abstract Hilbert schemes},
   journal={Algebr. Represent. Theory},
   volume={4},
   date={2001},
   number={4},
   pages={305--394},
}

\bib{BBR1}{article}{
   author={Balodi, M.},
   author={Banerjee, A.},
   author={Ray, S.},
   title={Cohomology of modules over $H$-categories and co-$H$-categories},
   journal={Canad. J. Math.},
   volume={72},
   date={2020},
   number={5},
   pages={1352--1385},
}

\bib{BBR2}{article}{
   author={Balodi, M.},
   author={Banerjee, A.},
   author={Ray, S.},
   title={Entwined modules over linear categories and Galois extensions},
   journal={Israel J. Math.},
   volume={241},
   date={2021},
   number={2},
   pages={623--692},
}

\bib{ABBs}{article}{
   author={Banerjee, A.},
   title={An extension of the Beauville-Laszlo descent theorem},
   journal={Arch. Math (Basel)},
   volume={120},
   date={2023},
   number={6},
   pages={595--604},
}

\bib{Ban}{article}{
   author={Banerjee, A.},
   title={Entwined modules over representations of categories},
   journal={Algebras and Representation Theory},
   volume={(to appear)},
   pages={doi.org/10.1007/s10468-023-10203-3},
}

\bib{BanKr}{article}{
   author={Banerjee, A.},
   author={Kour, S.},
   title={Noncommutative supports, local cohomology and spectral sequences },
   journal={	arXiv:2205.04000 [math.CT]},
}

\bib{BL1}{article}{
   author={Beauville, A.},
   author={Laszlo, Y.},
   title={Conformal blocks and generalized theta functions},
   journal={Comm. Math. Phys.},
   volume={164},
   date={1994},
   number={2},
   pages={385--419},
}

\bib{BL2}{article}{
   author={Beauville, A.},
   author={Laszlo, Y.},
   title={Un lemme de descente},
   journal={C. R. Acad. Sci. Paris S\'{e}r. I Math.},
   volume={320},
   date={1995},
   number={3},
   pages={335--340},
}

\bib{BelR}{article}{
   author={Beligiannis, A.},
   author={Reiten, I.},
   title={Homological and homotopical aspects of torsion theories},
   journal={Mem. Amer. Math. Soc.},
   volume={188},
   date={2007},
   number={883},
   pages={viii+207},
}

\bib{Zhou}{article}{
   author={Borong, Z.},
   author={Caenepeel, S.},
   author={Raianu, \c{S}.},
   title={The coinduced functor for infinite-dimensional Hopf algebras},
   note={Contact Franco-Belge en Alg\`ebre (Diepenbeek, 1993)},
   journal={J. Pure Appl. Algebra},
   volume={107},
   date={1996},
   number={2-3},
   pages={141--151},
}

\bib{Brz99}{article}{
   author={Brzezi\'{n}ski, T.},
   title={On modules associated to coalgebra Galois extensions},
   journal={J. Algebra},
   volume={215},
   date={1999},
   number={1},
   pages={290--317},
}

\bib{BCMZ}{article}{
   author={Brzezi\'{n}ski, T.},
   author={Caenepeel, S.},
   author={Militaru, G.},
   author={Zhu, S.},
   title={Frobenius and Maschke type theorems for Doi-Hopf modules and
   entwined modules revisited: a unified approach},
   conference={
      title={Ring theory and algebraic geometry},
      address={Le\'{o}n},
      date={1999},
   },
   book={
      series={Lecture Notes in Pure and Appl. Math.},
      volume={221},
      publisher={Dekker, New York},
   },
   date={2001},
   pages={1--31},
}

\bib{Brz2002}{article}{
   author={Brzezi\'{n}ski, T.},
   title={The structure of corings: induction functors, Maschke-type
   theorem, and Frobenius and Galois-type properties},
   journal={Algebr. Represent. Theory},
   volume={5},
   date={2002},
   number={4},
   pages={389--410},
}

	\bib{Wis}{book}{
   author={Brzezi\'{n}ski, T.},
   author={Wisbauer, R.},
   title={Corings and comodules},
   series={London Mathematical Society Lecture Note Series},
   volume={309},
   publisher={Cambridge University Press, Cambridge},
   date={2003},
   pages={xii+476},
}

\bib{Ca98}{book}{
   author={Caenepeel, S.},
   title={Brauer groups, Hopf algebras and Galois theory},
   series={$K$-Monographs in Mathematics},
   volume={4},
   publisher={Kluwer Academic Publishers, Dordrecht},
   date={1998},
   pages={xvi+488},
}

\bib{CMZ}{article}{
   author={Caenepeel, S.},
   author={Militaru, G.},
   author={Zhu, Shenglin},
   title={A Maschke type theorem for Doi-Hopf modules and applications},
   journal={J. Algebra},
   volume={187},
   date={1997},
   number={2},
   pages={388--412},
}

\bib{CMZ0}{article}{
   author={Caenepeel, S.},
   author={Militaru, G.},
   author={Zhu, S.},
   title={Doi-Hopf modules, Yetter-Drinfel\cprime d modules and Frobenius type
   properties},
   journal={Trans. Amer. Math. Soc.},
   volume={349},
   date={1997},
   number={11},
   pages={4311--4342},
}

\bib{CMIZ}{article}{
   author={Caenepeel, S.},
   author={Militaru, G.},
   author={Ion, Bogdan},
   author={Zhu, Shenglin},
   title={Separable functors for the category of Doi-Hopf modules,
   applications},
   journal={Adv. Math.},
   volume={145},
   date={1999},
   number={2},
   pages={239--290},
}

\bib{CG}{article}{
   author={Caenepeel, S.},
   author={Gu\'{e}d\'{e}non, T.},
   title={On the cohomology of relative Hopf modules},
   journal={Comm. Algebra},
   volume={33},
   date={2005},
   number={11},
   pages={4011--4034},
}

\bib{DNR}{book}{
   author={D\u{a}sc\u{a}lescu, S.},
   author={N\u{a}st\u{a}sescu, C.},
   author={Raianu, \c{S}.},
   title={Hopf algebras},
   series={Monographs and Textbooks in Pure and Applied Mathematics},
   volume={235},
   note={An introduction},
   publisher={Marcel Dekker, Inc., New York},
   date={2001},
   pages={x+401},

}

\bib{Doi83}{article}{
   author={Doi, Y.},
   title={On the structure of relative Hopf modules},
   journal={Comm. Algebra},
   volume={11},
   date={1983},
   number={3},
   pages={243--255},
}

\bib{Doi92}{article}{
   author={Doi, Y.},
   title={Unifying Hopf modules},
   journal={J. Algebra},
   volume={153},
   date={1992},
   number={2},
   pages={373--385},
}

\bib{Gued}{article}{
   author={Gu\'{e}d\'{e}non, T.},
   title={On the $H$-finite cohomology},
   journal={J. Algebra},
   volume={273},
   date={2004},
   number={2},
   pages={455--488},
}

\bib{KS}{book}{
   author={Kashiwara, M.},
   author={Schapira, P.},
   title={Categories and sheaves},
   series={Grundlehren der mathematischen Wissenschaften },
   volume={332},
   publisher={Springer-Verlag, Berlin},
   date={2006},
   pages={x+497},
}

\bib{LV}{article}{
   author={Lowen, W.},
   author={Van den Bergh, M.},
   title={Deformation theory of abelian categories},
   journal={Trans. Amer. Math. Soc.},
   volume={358},
   date={2006},
   number={12},
   pages={5441--5483},
}

\bib{Pop}{book}{
   author={Popescu, N.},
   title={Abelian categories with applications to rings and modules},
   series={London Mathematical Society Monographs, No. 3},
   publisher={Academic Press, London-New York},
   date={1973},
   pages={xii+467},
}

\bib{Sch}{article}{
   author={Schneider, H.-J},
   title={Principal homogeneous spaces for arbitrary Hopf algebras},
   note={Hopf algebras},
   journal={Israel J. Math.},
   volume={72},
   date={1990},
   number={1-2},
   pages={167--195},
}

\bib{Sten}{book}{
   author={Stenstr\"{o}m, B.},
   title={Rings of quotients},
   series={Die Grundlehren der mathematischen Wissenschaften, Band 217},
   note={An introduction to methods of ring theory},
   publisher={Springer-Verlag, New York-Heidelberg},
   date={1975},
}

\bib{Tak79}{article}{
   author={Takeuchi, M.},
   title={Relative Hopf modules---equivalences and freeness criteria},
   journal={J. Algebra},
   volume={60},
   date={1979},
   number={2},
   pages={452--471},
}

	\end{biblist}
	
	\end{bibdiv}

\end{document}